\newcounter{pcounter}
\newcommand{\PP}{\Bbb P}
\newcommand{\ZZ}{\Bbb Z}
\newcommand{\RR}{\Bbb R}
\newcommand{\TT}{\Bbb T}
\newcommand{\NN}{\Bbb N}
\newcommand{\QQ}{\Bbb Q}
\newcommand{\CC}{\Bbb C}
\newcommand{\ip}[1]{\langle #1 \rangle}
\newcommand{\widetidle}{\widetilde}
\newcommand{\varespilon}{\varepsilon}
\newcommand{\varpesilon}{\varepsilon}
\newcommand{\actson}{\curvearrowright}
\newtheorem{question}{Question}
\newtheorem{conjecture}[question]{Conjecture}
\newtheorem{?}{Question}
\newtheorem{theorem}{Theorem}
\newtheorem{definition}[theorem]{Definition}
\newtheorem{proposition}[theorem]{Proposition}
\newtheorem{cor}[theorem]{Corollary}
\newtheorem{lemma}[theorem]{Lemma}
\newcommand{\FF}{\Bbb F}
\DeclareMathOperator{\ev}{ev}
\DeclareMathOperator{\spec}{spec}
\DeclareMathOperator{\vol}{vol}
\DeclareMathOperator{\vr}{vr}
\DeclareMathOperator{\Sym}{Sym}
\DeclareMathOperator{\Map}{Map}
\DeclareMathOperator{\im}{im}
\DeclareMathOperator{\Fix}{Fix}
\DeclareMathOperator{\id}{Id}
\DeclareMathOperator{\tr}{tr}
\DeclareMathOperator{\supp}{supp}
\DeclareMathOperator{\Proj}{Proj}
\DeclareMathOperator{\Rea}{Re}
\DeclareMathOperator{\Det}{det}
\DeclareMathOperator{\Hom}{Hom}
\DeclareMathOperator{\Tr}{Tr}
\DeclareMathOperator{\mdim}{mdim}
\DeclareMathOperator{\AP}{AP}
\DeclareMathOperator{\Prob}{Prob}
\DeclareMathOperator{\Ball}{Ball}
\numberwithin{theorem}{section}
\begin{document}

\title{Fuglede-Kadison Determinants and Sofic Entropy}      % Enter your title between curly braces
\author{Ben Hayes}\thanks{The author is grateful for support from NSF Grants DMS-1161411 and DMS-0900776. This work is partially supported by the ERC starting grant 257110 ``RaWG."}
\address{UCLA Math Sciences Building\\
         Los Angeles,CA 90095-1555}
\email{brh6@ucla.edu}
\date{\today}

\begin{abstract} We relate Fuglede-Kadison determinants to entropy of finitely-presented algebraic actions in essentially complete generality. We show that if $f\in M_{m,n}(\ZZ(\Gamma))$ is injective as a left multiplication operator on $\ell^{2}(\Gamma)^{\oplus n},$ then  the topological entropy of the action of $\Gamma$ on the dual of  $\ZZ(\Gamma)^{\oplus n}/\ZZ(\Gamma)^{\oplus m}f$ is at most the logarithm of the positive Fuglede-Kadison determinant of $f,$ with equality if $m=n.$   We also prove that when $m=n$  the measure-theoretic entropy of the action of $\Gamma$ on the dual of $\ZZ(\Gamma)^{\oplus n}/\ZZ(\Gamma)^{\oplus n}f$ is the logarithm of the Fuglede-Kadison determinant of $f.$ This work completely settles the connection between entropy of principal algebraic actions and Fuglede-Kadison determinants in the generality in which dynamical entropy is defined. Our main Theorem  partially generalizes results of Li-Thom from amenable groups to sofic groups. Moreover, we show that the obvious full generalization of the Li-Thom theorem for amenable groups is false for general sofic groups. Lastly, we undertake a study of when the Yuzvinski\v\i\  addition formula fails for a non-amenable sofic group $\Gamma$, showing it always fails if $\Gamma$ contains a nonabelian free group, and relating it to the possible values of $L^{2}$-torsion in general.

\end{abstract}

\maketitle
\tableofcontents

\section{Introduction}

	 The goal of this paper is to relate Fuglede-Kadison determinants and entropy of finitely-presented algebraic actions in the largest possible generality.  Let $\Gamma$ be a countable discrete group, and let $A$ be a $\ZZ(\Gamma)$-module. We have a natural action of $\Gamma$  on $\widehat{A},$ the Pontryagin dual of $A$, (i.e. the group of all continuous homomorphism from $A$ into $\TT=\RR/\ZZ$) given by
\[(g\chi)(a)=\chi(g^{-1}a),\mbox{ $g\in\Gamma,\chi\in\widehat{A},a\in A.$}\]
This  action on $\widehat{A}$ by automorphisms is called an algebraic action. We are typically interested in forgetting the algebraic structure of $\widehat{A}.$ That is, we wish to think of $\Gamma\actson \widehat{A}$ as either an action of $\Gamma$ on a compact metrizable space by homeomorphisms, or an action of $\Gamma$ on a probability measure space (using the Haar measure on $\widehat{A}$, which we denote by $m_{\widehat{A}}$) by measure-preserving transformations.

It is trivial by Pontryagin duality that any invariant of $\Gamma\actson \widehat{A}$  as either a probability measure-preserving action or an action by homeomorphisms comes from a  $\ZZ(\Gamma)$-module invariant. However, it is unclear what $\ZZ(\Gamma)$-module  invariants arise in this manner, i.e. which $\ZZ(\Gamma)$ module-invariants only depend upon the action $\Gamma\actson \widehat{A}$ when we view this as either a probability measure-preserving action or an action by homeomorphisms. It turns out that most invariants which do only depend upon the topological or measure-theoretic structure of $\Gamma\actson \widehat{A}$ are defined via functional analysis.  For instance, ergodicity, mixing, expansiveness can all be expressed in terms of functional analytic objects associated to $A$ (see \cite{Schmidt} Lemma 1.2, Theorem 1.6, \cite{ChungLi} Theorem 3.1).
	
	The case of finitely presented $\ZZ(\Gamma)$-modules is particularly enlightening. Given $f\in M_{m,n}(\CC(\Gamma)), let f_{pq}=\sum_{g\in\Gamma}\widehat{f_{pq}}(g)g$ for$1\leq p\leq m,1\leq q\leq n$. Define $r(f):\ell^{2}(\Gamma)^{\oplus m}\to \ell^{2}(\Gamma)^{\oplus n},$  $\lambda(f)\colon \ell^{2}(\Gamma)^{\oplus n}\to \ell^{2}(\Gamma)^{m}$ by
\[(r(f)\xi)(l)(h)=\sum_{1\leq s\leq m}\sum_{g\in\Gamma}\xi(s)(hg^{-1})\widehat{f_{sl}}(g),\mbox{ for $1\leq l\leq n$,}\]
\[(\lambda(f)\xi)(l)(h)=\sum_{1\leq s\leq n}\sum_{g\in\Gamma}\widehat{f_{ls}}(g)\xi(s)(g^{-1}h),\mbox{  for $1\leq l\leq m$}.\]
Every finitely-presented $\ZZ(\Gamma)$-module is of the form $\ZZ(\Gamma)^{\oplus n}/r(f)(\ZZ(\Gamma)^{\oplus m}).$  For $f\in M_{m,n}(\ZZ(\Gamma)),$ we will use $X_{f}$ for the Pontryagin dual of $\ZZ(\Gamma)^{\oplus n}/r(f)(\ZZ(\Gamma)^{\oplus m}).$  In this case, the duality between functional analytic properties of $\ZZ(\Gamma)$-modules and the dynamics of algebraic actions translates to a duality between dynamics of $\Gamma\actson X_{f}$ and operator theoretic properties or $\lambda(f).$ Regarded as a representation of $\Gamma$ we call  $\lambda$ the left regular representation.
	
		When $m=n=1,$ the action $\Gamma\actson X_{f}$ is called a \emph{principal} algebraic action. The entropy of the action $G\actson X_{f}$ has been well-studied, particularly in the principal case. Entropy is an important numerical invariant of actions defined for $\Gamma=\ZZ$ by Kolmogorov, Sina\v\i, Adler-Konheim-MacAndrew (see \cite{Kol58},\cite{Sin59},\cite{AKM}) and for amenable groups by Kieffer and Ornstein-Weiss (see \cite{Kieff},\cite{OrnWeiss}). For a probability measure-preserving action $\Gamma\actson (X,\mu)$ with $\Gamma$ amenable, we use $h_{\mu}(X,\Gamma)$ for the entropy.
		
		For $\Gamma=\ZZ^{d}$ we may identify, in a natural way, $\ZZ(\Gamma)$ as the Laurent polynomial ring $\ZZ[u_{1}^{\pm 1},\dots,u_{d}^{\pm 1}].$ Under this identification, it is known that the entropy of $\Gamma\actson X_{f}$ is
	\begin{equation}\label{E:Mahlermeasure}
	\int_{\TT^{d}}\log|f(e^{2\pi i\theta})|\,d\theta.
	\end{equation}
This was shown for $\Gamma=\ZZ$ by Yuzvinks\v\i\ in \cite{Yuz} and $\Gamma=\ZZ^{d}$ by Lind-Schmidt-Ward in \cite{LindSchmidt2} (both of these results in fact include a complete calculation of entropy in the case of a finitely-presented $\ZZ(\Gamma)$ module for $\Gamma=\ZZ,\ZZ^{d}$). This integral is known as the logarithmic Mahler measure of $f$ and is important in number theory.

	It was Deninger in \cite{Den} who first realized that (\ref{E:Mahlermeasure}) has a natural generalization to noncommutative $\Gamma$ via Fuglede-Kadison determinants. The Fuglede-Kadison determinant of $f\in M_{n}(\CC(\Gamma))$, denoted $\Det_{L(\Gamma)}^{+}(f),$  is a natural generalization of the usual determinant in finite-dimensional linear algebra. Here we are defining a determinant of $\lambda(f)$ and a priori it is not clear how one would do this, as $\lambda(f)$ operators on an infinite-dimensional space. The crucial analytic object that makes this possible is the group von Neumann algebra.  The group von Neumann algebra, which we denote by $L(\Gamma)$, is a functional analytic object associated to $\Gamma$ which ``encodes'' the structure of the left regular representation. Deninger pointed out the possibility that
	\[h_{m_{X_{f}}}(X_{f},\Gamma)=\Det_{L(\Gamma)}^{+}(f)\]
and established that this is the case when  $f$ is positive (i.e. $\lambda(f)$ is a positive operator), invertible in $\ell^{1}(\Gamma),$ and $\Gamma$ is of polynomial growth. Then Deninger-Schmidt in \cite{DenSchmidt} show this equality in the principal case was true when $\Gamma$ is amenable, residually finite and when $f$ is invertible in $\ell^{1}(\Gamma).$ Li in \cite{Li2} proved this equality when $\Gamma$ is amenable and $\lambda(f)$ is invertible (equivalently $f$ is invertible in $L(\Gamma)$). It was only recently that the connection between entropy and determinants was completely settled in the amenable case by Li-Thom in \cite{LiThom}.  Li-Thom equated the entropy of $\Gamma\actson X_{f}$ to $\Det_{L(\Gamma)}^{+}(f)$ when $\Gamma$ is amenable and $\lambda(f)$ is injective. We remark that the results of Li-Thom are complete for the case of amenable $\Gamma$, as  Chung-Li showed (see \cite{ChungLi} Theorem 4.11) that if $\lambda(f)$ is not injective, and $\Gamma$ is amenable, then the entropy of $\Gamma\actson X_{f}$ is infinite and thus cannot be equal to $\det_{L(\Gamma)}^{+}(f).$

%	There has been a long history of relating the entropy of $\Gamma\actson X_{f}$ to the Fuglede-Kadison determinant of $f,$ denoted $\Det_{L(\Gamma)}^{+}(f),$ particularly when $\Gamma$ is amenable. T . Entropy is a well-studied dynamical invariant defined for $\Gamma=\ZZ$ by (...) and amenable $\Gamma$ by (...).
%	
%	  Previous works due  to  Deninger (see \cite{Den}), Deninger-Schmidt \cite{DenSchmidt}, as well as Lind-Schmidt-Ward in \cite{LindSchmidt2} equate the entropy of $\Gamma\actson X_{f}$ to $\det_{L(\Gamma)}^{+}(f),$ provided that $\Gamma$ is amenable as well as assuming some invertibility assumption on $\lambda(f).$ I
	
	  Seminal work of Bowen  in \cite{Bow} extended the definition of measure entropy for actions of amenable groups to the much larger class of sofic groups assuming the action has a finite generating partition. Kerr-Li in \cite{KLi} removed this generation assumption and also defined topological entropy for actions of sofic groups.  Roughly, soficity is the assumption of a sequence of ``asymptotic homomorphisms'' $\Gamma\to S_{d_{i}},$ where $S_{n}$ is the symmetric group on $n$ letters, which when regarded as ``almost actions'' on $\{1,\dots,d_{i}\}$ are ``almost free.'' Such a sequence is called a \emph{sofic approximation}. The class of sofic groups contains all amenable groups, all residually finite groups, all linear groups and is closed under direct unions and free products with amalgamation over amenable subgroups (see \cite{ESZ2},\cite{DKP},\cite{LPaun},\cite{PoppArg}). Residually sofic groups are also sofic. Thus sofic groups are a significantly larger class of groups than amenable groups. We remark that there is no known example of a nonsofic group. We use $h_{\Sigma,\mu}(X,\Gamma)$ for the entropy of a measure-preserving $\Gamma\actson (X,\mu)$ on a standard probability space $(X,\mu)$ with respect to a sofic approximation $\Sigma$ of $\Gamma$ (see Section \ref{S:measureentropy}). We use $h_{\Sigma}(X,\Gamma)$ for the entropy of an action $\Gamma\actson X$ by homeomorphisms of a compact metric space $X$ with respect to a sofic approximation $\Sigma$ of $\Gamma$ (see Section \ref{S:mainreduction}).

	Bowen in \cite{BowenEntropy} proved the equality between entropy and Fuglede-Kadison determinants when $f$ is invertible in $\ell^{1}(\Gamma)$ and $\Gamma$ is residually finite. Kerr-Li in \cite{KLi} then proved this equality when $\Gamma$ is residually finite and $f$ is invertible in the full $C^{*}$-algebra of $\Gamma.$ Bowen-Li in \cite{BowenLi} also proved this equality when $\Gamma$ is residually finite and $f$ is a Laplacian operator.  Notice that all of these results are only valid when the group is residually finite and all of them require invertibility assumptions on $\lambda(f),$  or very specific structure of $f.$ Thus they leave open the relationship between Fuglede-Kadison determinants and entropy for the general case of principal algebraic actions of sofic groups.

		 In this paper, we completely settle the connection between Fuglede-Kadison determinants and entropy for principal algebraic actions under the utmost minimal hypotheses. This connection is a consequence of the following result (the principal case being $m=n=1$), which is the main Theorem of the paper.

\begin{theorem}\label{T:FKD} Let $\Gamma$ be a countable discrete sofic group with sofic approximation $\Sigma.$  Fix a $f\in M_{m,n}(\ZZ(\Gamma)).$

(i): The topological entropy of $\Gamma\actson X_{f}$ (with respect to $\Sigma$) is finite if and only if $\lambda(f)$ is injective as an operator on $\ell^{2}(\Gamma)^{\oplus n}.$

(ii): If  $m=n,$ and $\lambda(f)$ is injective as an operator on $\ell^{2}(\Gamma)^{\oplus n},$ then
\[h_{\Sigma}(X_{f},\Gamma)=h_{\Sigma,m_{X_{f}}}(X_{f},\Gamma)= \log\Det^{+}_{L(\Gamma)}(f).\]

(iii): If $m\ne n$ and $\lambda(f)$ is injective, then
\[h_{\Sigma,m_{X_{f}}}(X_{f},\Gamma)\leq h_{\Sigma}(X_{f},\Gamma)\leq \log \Det^{+}_{L(\Gamma)}(f).\]
 \end{theorem}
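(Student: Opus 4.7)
My approach is to realize $X_{f}$ concretely via Pontryagin duality and then reduce both entropy computations to volume estimates on approximate kernels of the finite-dimensional matrices $\sigma_{i}(f)$, which are bridged to $\Det^{+}_{L(\Gamma)}(f)$ by a sofic version of L\"uck approximation for determinants. Dualizing the presentation $\ZZ(\Gamma)^{\oplus m}\xrightarrow{r(f)} \ZZ(\Gamma)^{\oplus n}$ one obtains
\[X_{f} \cong \{x \in (\TT^{n})^{\Gamma} : x\cdot f^{*}=0 \text{ in } (\TT^{m})^{\Gamma}\}.\]
For a sofic approximation $\sigma_{i}\colon \Gamma\to S_{d_{i}}$, a $(\rho,F,\delta)$-microstate for $\Gamma\actson X_{f}$ corresponds, in the Kerr--Li framework, to an element $\phi \in (\TT^{n})^{d_{i}} = \RR^{nd_{i}}/\ZZ^{nd_{i}}$ lying in the $\delta$-approximate kernel
\[K_{i,\delta} = \{\phi \in [0,1)^{nd_{i}} : \|\sigma_{i}(f)^{*}\phi\|_{2}\leq \delta\sqrt{d_{i}}\}.\]
So counting microstates, for both topological and measure entropy, reduces to volume and packing estimates for $K_{i,\delta}$.

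For the upper bound $h_{\Sigma}(X_{f},\Gamma)\leq \log\Det^{+}_{L(\Gamma)}(f)$, which gives (iii) and half of (ii), I would use the singular value decomposition of $\sigma_{i}(f)$ to see that $\vol K_{i,\delta}$ is essentially the product of $\delta / s_{j}$ for singular values $s_{j}$ above some threshold. Covering $K_{i,\delta}$ by $\varepsilon$-balls, taking $\log$, and dividing by $d_{i}$ produces an estimate asymptotically of the form $-\frac{1}{d_{i}}\sum_{j}\log s_{j}(\sigma_{i}(f))$ plus vanishing errors. The sofic spectral convergence of $\sigma_{i}(f)^{*}\sigma_{i}(f)$ to $\lambda(f)^{*}\lambda(f)$ should then yield $\frac{1}{d_{i}}\log|\det\sigma_{i}(f)|\to \log\Det^{+}_{L(\Gamma)}(f)$, \emph{provided} one can integrate $\log$ against these empirical measures near $0$. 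Here I would adapt a Li--Thom-style truncation: replace $f^{*}f$ by $f^{*}f+\eta$, obtain a clean bound for each $\eta>0$, and let $\eta \downarrow 0$, invoking injectivity of $\lambda(f)$ to guarantee that the limit is finite and equals the Fuglede--Kadison determinant.

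For the reverse inequality in (ii), when $m=n$, the Haar measure $m_{X_{f}}$ corresponds under the microstate identification to normalized Lebesgue measure on $K_{i,\delta}$. Counting $(\rho,F,\delta)$-microstates whose empirical distributions are close to $m_{X_{f}}$ then reduces to packing $K_{i,\delta}$ with $\varepsilon$-separated points, producing on the order of $\vol K_{i,\delta}\cdot \varepsilon^{-nd_{i}}$ many. The same spectral convergence gives $h_{\Sigma,m_{X_{f}}}(X_{f},\Gamma) \geq \log\Det^{+}_{L(\Gamma)}(f)$. Combining this with the variational inequality $h_{\Sigma,m_{X_{f}}}\leq h_{\Sigma}$ and the topological upper bound closes the chain and establishes (ii). For part (i), injectivity of $\lambda(f)$ implies finite topological entropy directly from the upper bound; conversely, if $\ker\lambda(f)\ne 0$, sofic spectral convergence forces $\sigma_{i}(f)$ to have abundantly many small singular values, so $K_{i,\delta}$ contains arbitrarily large balls and a packing argument produces infinite entropy. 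The principal obstacle throughout is the non-integrability of $\log$ at $0$: plain spectral convergence cannot transfer determinants, and overcoming this --- by a careful perturbation that genuinely uses the injectivity of $\lambda(f)$ --- is the technical heart of the argument.
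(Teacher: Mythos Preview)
Your proposal has a genuine gap at its core: both your upper and lower bounds ultimately rely on the determinant approximation
\[
\frac{1}{d_{i}}\log\Det^{+}(\sigma_{i}(f))\;\longrightarrow\;\log\Det^{+}_{L(\Gamma)}(f),
\]
and this is precisely the statement the paper argues is unknown in general (and likely false for arbitrary sofic approximations) and must be circumvented. Weak$^{*}$ convergence $\mu_{|\sigma_{i}(f)|}\to\mu_{|f|}$ does not transport $\int_{(0,\infty)}\log t\,d\mu$, since $\log$ blows up at $0$. Your proposed fix---perturbing $f^{*}f$ to $f^{*}f+\eta$---does not help: the dynamical system is still $X_{f}$, the microstate space is still $\Xi_{\delta}(\sigma_{i}(f))$, and the Li--Thom truncation you cite rests on an Ornstein--Weiss quasi-tiling argument specific to amenable groups.

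The paper avoids the determinant approximation as follows. First, it replaces $\sigma_{i}(f)$ by a \emph{rank perturbation} $x_{i}\in GL_{n}(M_{d_{i}}(\RR))\cap M_{n}(M_{d_{i}}(\ZZ))$ (Proposition~\ref{P:rankperturabtion}), which does not change $\Xi_{\delta}$ asymptotically but makes the matrix invertible. Second---and this is the idea you are missing---it writes each approximate microstate as $\zeta+\eta+k$ with $\zeta$ in a section of $x_{i}^{-1}((\ZZ^{d_{i}})^{\oplus n})/(\ZZ^{d_{i}})^{\oplus n}$, of size exactly $|\det x_{i}|$, and $\eta\in x_{i}^{-1}(\delta\Ball)$, whose $\varepsilon$-covering number is at most $\Det_{4\delta/\varepsilon}(x_{i})^{-1}$. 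The product
\[
|\det x_{i}|\cdot\Det_{4\delta/\varepsilon}(x_{i})^{-1}=\exp\Bigl(d_{i}\int_{[4\delta/\varepsilon,\infty)}\log t\,d\mu_{|x_{i}|}(t)\Bigr)
\]
involves only singular values \emph{above} a fixed threshold; the small ones cancel completely. Plain weak$^{*}$ convergence then suffices for the upper bound, and for the lower bound the only additional input is that $\log(\delta/\varepsilon)\mu_{|f|}((0,\delta/\varepsilon))\to 0$, which follows from integrability of $\log$ (Corollary~\ref{C:integrability}, i.e.\ the determinant conjecture for sofic groups)---a far weaker statement than the approximation you invoke.

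Your treatment of the measure-theoretic lower bound is also too quick. The assertion that Lebesgue measure on $K_{i,\delta}$ pushes forward to something weak$^{*}$ close to $m_{X_{f}}$ is a substantial theorem in the paper (Lemma~\ref{L:proofofweak^{*}convergence}), requiring a Fourier-analytic argument, a sofic test for submodule membership (Lemma~\ref{L:idealtest}), and an oscillatory-integral estimate; one then needs the automatic-concentration Lemma~\ref{L:AutomaticConcentration} to pass from convergence \emph{in average} to the statement that \emph{most} individual microstates model $m_{X_{f}}$.
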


	 Since  $\Det_{L(\Gamma)}^{+}(f)$ is manifestly less than $\infty,$ part  $(i)$ implies that if $f\in M_{m,n}(\ZZ(\Gamma))$ and $\lambda(f)$ is not injective, then the topological entropy of $\Gamma\actson X_{f}$ for $f\in\ZZ(\Gamma)$ cannot be equal to $\log\Det_{L(\Gamma)}^{+}(f).$ Combining this observation with part $(i)$ of the Theorem  settles the connection between entropy of principal algebraic actions and Fuglede-Kadison determinants for actions of sofic groups (i.e. the class of groups for which dynamical entropy is defined). Part (i) is a trivial consequence of the main results of our results in \cite{Me4} (see Theorem \ref{T:whenisitinfinite} of this paper). Thus we focus on (ii),(iii) for most of the paper.

	Let us mention why the above theorem is essentially optimal even in the nonprincipal case. First,  as previously mentioned, $\Det_{L(\Gamma)}^{+}(f)$ is always less than $\infty$, so by (i) there cannot be any connection to Fuglede-Kadison determinants and topological entropy if $\lambda(f)$ is not injective as a left multiplication operator on $\ell^{2}(\Gamma)^{\oplus n}.$  Secondly, $h_{\Sigma}(X_{f},\Gamma)\ne\log\Det_{L(\Gamma)}^{+}(f)$  in general when $m\ne n$ even if $\lambda(f)$ is injective. For example, let $n\in\ZZ\setminus\{0\},$ and $\alpha\in\ZZ(\Gamma)$ and set
\[A=\begin{bmatrix}
\alpha\\
n
\end{bmatrix}\in M_{2,1}(\ZZ(\Gamma)).\]
Then $X_{A}$ is a subgroup of $(\ZZ/n\ZZ)^{\Gamma}$ and hence the topological entropy of $\Gamma\actson X_{A}$ is at most $\log n$ (the same is true for the measure-theoretic entropy). However, a simple calculation shows that
\[\log\Det_{L(\Gamma)}^{+}(A)=\frac{1}{2}\log\Det_{L(\Gamma)}(\alpha^{*}\alpha+n^{2})\]
which is strictly bigger than $\log(n),$ if $\alpha\ne 0.$  Thus, the inequality in (iii) is not always an equality.

	Lastly, sofic groups are the largest class of groups $\Gamma$ for which $\Det_{L(\Gamma)}^{+}(f)\geq 1$ for all $f\in M_{m,n}(\ZZ(\Gamma)).$  The statement that $\Det_{L(\Gamma)}^{+}(f)\geq 1$ for all $f\in M_{m,n}(\ZZ(\Gamma))$ is called the determinant conjecture. The fact that $\det_{L(\Gamma)}^{+}(f)\geq 1$ is key in the proof of Theorem \ref{T:FKD} as well as the Li-Thom Theorem. Let us suppose, for the sake of argument, that one develops a good definition of entropy for non-sofic groups. Such a definition would likely have similar nonnegative properties as sofic entropy. In particular if there is a connection between entropy and Fuglede-Kadison determinants analogous to Theorem \ref{T:FKD} for nonsofic groups, then the determinant conjecture should be true. As the proof of Theorem \ref{T:FKD} as well as \cite{LiThom} rely on the fact that $\Det^{+}_{L(\Gamma)}(f)\geq 1$ for all $f\in M_{m,n}(\ZZ(\Gamma))$, it is likely that any hypothetical version of entropy for nonsofic groups would rely on knowing the determinant conjecture and would not be a likely route to prove the determinant conjecture. Since it is unclear how to prove this conjecture for \emph{any} group which is not known to be sofic, it seems unlikely to generalize Theorem \ref{T:FKD} to any potential definition of entropy for a nonsofic group.

	In the amenable case the results of \cite{LiThom} are more general as  Li-Thom relate the $L^{2}$-torsion of $A,$ when it is defined, to the entropy of $\Gamma\actson \widehat{A}.$ We use $\rho^{(2)}(A,\Gamma)$ for the $L^{2}$-torsion of a $\ZZ(\Gamma)$-module $A$. The $L^{2}$-torsion is one of several invariants which fall under the name $L^{2}$-invariants, all of which are functional analytic invariants of $\ZZ(\Gamma)$-modules defined via the group von Neumann algebra. See \cite{Luck} for a good introduction to $L^{2}$-invariants. Part (ii) of Theorem \ref{T:FKD} is the ``base case'' for sofic groups of the results of Li-Thom. This begs the questions of whether our results can be further generalized to show that the entropy of an algebraic action is the $L^{2}$-torsion of the dual module.  The next proposition (a simple application of known results on $L^{2}$-torsion and basic facts about sofic entropy) shows that a generalization of Theorem \ref{T:FKD} in this direction is not possible for a totally general sofic group.

\begin{proposition}\label{P:torsionintro} Let $\Gamma$ be a cocompact lattice in $SO(n,1)$ with $n$ odd. Then:

(i): $\Gamma$ is a sofic group,

(ii): the $L^{2}$-torsion of the trivial $\ZZ(\Gamma)$-module $\ZZ$ is defined,

(iii): for every sofic approximation $\Sigma$ of $\Gamma$ one has
\[h_{\Sigma}(\TT,\Gamma)\ne \rho^{(2)}(\ZZ,\Gamma),\]
\[h_{\Sigma,m_{\TT}}(\TT,\Gamma)\ne \rho^{(2)}(\ZZ,\Gamma).\]

\end{proposition}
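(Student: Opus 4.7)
My plan proceeds as follows. For (i), the inclusion $\Gamma\subset SO(n,1)\subset GL_{n+1}(\RR)$ makes $\Gamma$ a finitely generated linear group, hence residually finite by Malcev's theorem and therefore sofic. For (ii), the quotient $M=\Gamma\backslash\mathbb{H}^n$ is a closed aspherical hyperbolic $n$-manifold and a finite model for $B\Gamma$; Atiyah's $L^2$-index theorem identifies $b_k^{(2)}(\Gamma)=b_k^{(2)}(\mathbb{H}^n)$, and Dodziuk's vanishing theorem (combined with the absence of a middle dimension when $n$ is odd) forces $b_k^{(2)}(\Gamma)=0$ for every $k$. Hence $\rho^{(2)}(\ZZ,\Gamma)$ is defined, and the Lott/L\"uck formula yields $\rho^{(2)}(\ZZ,\Gamma)=(-1)^{(n-1)/2}C_n\vol(M)\ne 0$ with $C_n>0$ depending only on $n$.

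For (iii), the decisive observation is that $\ZZ$ is the trivial $\ZZ(\Gamma)$-module, so $\Gamma$ acts trivially on $\widehat{\ZZ}=\TT$. Fix generators $g_1,\dots,g_k$ of $\Gamma$ and set $f=(g_1-1,\dots,g_k-1)^{T}\in M_{k,1}(\ZZ(\Gamma))$; then $X_f=\TT$ as $\Gamma$-spaces, and $\lambda(f)$ is injective since $\Gamma$ is infinite and $\ell^2(\Gamma)$ has no nonzero $\Gamma$-invariant vector. Since $k>1$, Theorem \ref{T:FKD}(iii) applies and yields
\[
h_{\Sigma,m_\TT}(\TT,\Gamma)\;\le\;h_\Sigma(\TT,\Gamma)\;\le\;\log\Det^+_{L(\Gamma)}(f)=\tfrac{1}{2}\log\Det^+_{L(\Gamma)}(\Delta_k)\;\le\;\tfrac{1}{2}\log(4k),
\]
where $\Delta_k=2k-\sum_i(g_i+g_i^{-1})$ has operator norm at most $4k$.

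I would then split by sign. When $n\equiv 3\pmod 4$, Lott/L\"uck gives $\rho^{(2)}(\ZZ,\Gamma)<0$; the topological entropy is $\ge 0$ (constants always lie in $\Map$), and the measure entropy is in $\{-\infty\}\cup[0,\infty]$ by the $\log 0=-\infty$ convention plus integer-valued packing counts, so both differ from $\rho^{(2)}$. When $n\equiv 1\pmod 4$ one has $\rho^{(2)}(\ZZ,\Gamma)>0$, and the sign argument fails; here I argue by contradiction via descent. If $h_\Sigma(\TT,\Gamma)=\rho^{(2)}(\ZZ,\Gamma)$ for some sofic $\Sigma$, choose $\Gamma'\le\Gamma$ of arbitrarily large index $\ell$ (available by residual finiteness). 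Multiplicativity of sofic entropy under restriction to a finite-index subgroup and of $L^2$-torsion under finite covers yield
\[
\ell\,\rho^{(2)}(\ZZ,\Gamma)=h_{\Sigma|_{\Gamma'}}(\TT,\Gamma')=\rho^{(2)}(\ZZ,\Gamma').
\]
Applying Theorem \ref{T:FKD}(iii) to $\Gamma'$ with a Reidemeister--Schreier generating set of $O(\ell)$ elements bounds the right-hand side by $O(\log\ell)$. Hence $\ell\,\rho^{(2)}(\ZZ,\Gamma)=O(\log\ell)$, contradicting $\rho^{(2)}(\ZZ,\Gamma)>0$ for $\ell$ large. The measure-theoretic inequality follows identically since $h_{\Sigma,m_\TT}\le h_\Sigma$ and shares the same multiplicativity and upper bound.

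The main obstacle is verifying the multiplicativity of sofic entropy under restriction to a finite-index subgroup, a standard but delicate fact requiring the induced sofic approximation $\Sigma|_{\Gamma'}$ to be correctly identified; coupled with Reidemeister--Schreier, it produces the linear-versus-logarithmic contradiction ruling out equality for every sofic approximation.
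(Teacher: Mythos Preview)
Your treatment of (i) and (ii) matches the paper's. For (iii), however, you have taken a long detour around a one-line observation that the paper exploits directly.

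The action of $\Gamma$ on $\widehat{\ZZ}=\TT$ is \emph{trivial}. For any infinite sofic group acting trivially on a compact metrizable space, the sofic topological entropy is $0$; this is Corollary 8.5 in \cite{KerrLi2}, and the paper invokes exactly this fact (see the discussion preceding Corollary \ref{C:YuzFail}). Once $h_{\Sigma}(\TT,\Gamma)=0$, the variational principle gives $h_{\Sigma,m_{\TT}}(\TT,\Gamma)\in\{-\infty,0\}$, and since $\rho^{(2)}(\ZZ,\Gamma)$ is a finite nonzero number (Theorem 3.152 in \cite{Luck}), both inequalities in (iii) follow immediately. No case split on the sign of $\rho^{(2)}$, no upper bound via Theorem \ref{T:FKD}(iii), and no descent is needed.

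Your argument for $n\equiv 3\pmod 4$ is correct but circuitous. Your argument for $n\equiv 1\pmod 4$ has a genuine gap: you invoke multiplicativity of sofic topological entropy under restriction to a finite-index subgroup, $h_{\Sigma|_{\Gamma'}}(X,\Gamma')=[\Gamma:\Gamma']\,h_{\Sigma}(X,\Gamma)$, and you yourself flag this as ``the main obstacle.'' This formula is not established anywhere in the paper (the only subgroup formula cited, \cite{SewardSubgroup}, is for $f$-invariant entropy of free groups), and it is not obvious for arbitrary sofic approximations: the normalization in sofic entropy is by $d_i$, which does not change upon restricting $\sigma_i$ to $\Gamma'$, so one needs a genuine argument relating the two microstate counts. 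Without proving this, your $n\equiv 1\pmod 4$ case is incomplete. The paper sidesteps all of this by simply computing $h_{\Sigma}(\TT,\Gamma)=0$.
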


In the case $n$ is congruent to $1$ modulo $4,$ we can in fact say  that the measure-theoretic entropy of $\Gamma\actson \TT$ with respect to any \emph{random} sofic approximation  of $\Gamma$ is not $\rho^{2}(\ZZ,\Gamma)$ (for a precise statement see Proposition \ref{P:L2torsioncounterexample}). It may still be possible to connect torsion to entropy with respect to a random sofic approximation when $n$ is congruent to $3$ modulo $4,$ but these remarks show that such a connection is completely impossible for $n$ congruent to $1$ modulo $4.$ Thus there is no possible generalization of the Li-Thom theorem (using sofic entropy) for cocompact lattices in $SO(n,1)$ for $n$ congruent to $1$ modulo $.4$

	After establishing Theorem \ref{T:FKD} in the amenable case, the remaining piece Li-Thom use to complete the connection  between entropy and $L^{2}$-torsion for amenable groups is the  Yuzvinski\v\i\ addition formula, which says that entropy is additive under exact sequences of algebraic actions. We show that the Yuzvinski\v\i\ addition formula is false for many nonamenable groups in this paper. Our main result in this direction is the following.

\begin{theorem}\label{T:YuvFail} Let $\Gamma$ be a countable discrete sofic group with sofic approximation $\Sigma.$ Suppose that either $\Gamma$ contains a nonabelian free group, or that $\Gamma$ contains a subgroup with defined and nonzero $L^{2}$-torsion. Then Yuzvinski\v\i's addition formula fails for $\Gamma.$ That is, there is an exact sequence
\[\begin{CD}
0 @>>> A @>>> B @>>> C @>>> 0,
\end{CD}\]
of countable $\ZZ(\Gamma)$-modules so that
\[h_{\Sigma}(\widehat{B},\Gamma)\ne h_{\Sigma}(\widehat{A},\Gamma)+h_{\Sigma}(\widehat{C},\Gamma).\]
Further, we can choose $A,B,C$ to be finitely presented.
\end{theorem}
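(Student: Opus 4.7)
The plan is to exhibit, in each of the two hypothesis cases, an explicit finitely presented short exact sequence of $\ZZ(\Gamma)$-modules whose dual entropies, when combined with Yuzvinski\v\i's addition formula, produce a numerical contradiction.

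I first handle the free-subgroup case. Fix free generators $s,t$ of a copy of $F_{2}\leq\Gamma$ and an integer $p\geq 2$. Because $F_{2}$ has cohomological dimension one, the Fox-Jacobian map $(a,b)\mapsto a(s-1)+b(t-1)$ is injective from $\ZZ(F_{2})^{\oplus 2}$ to $\ZZ(F_{2})$, giving the augmentation resolution $0\to\ZZ(F_{2})^{\oplus 2}\to\ZZ(F_{2})\to\ZZ\to 0$. Tensoring with $\ZZ/p\ZZ$ over $\ZZ$ preserves exactness (since $\operatorname{Tor}_{1}^{\ZZ}(\ZZ,\ZZ/p\ZZ)=0$), and inducing along $\ZZ(F_{2})\hookrightarrow\ZZ(\Gamma)$ also preserves exactness because $\ZZ(\Gamma)$ is free as a right $\ZZ(F_{2})$-module. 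This produces a finitely presented short exact sequence
\[
0\to A\to B\to C\to 0
\]
of $\ZZ(\Gamma)$-modules, with $A=(\ZZ(\Gamma)/p\ZZ(\Gamma))^{\oplus 2}$, $B=\ZZ(\Gamma)/p\ZZ(\Gamma)$, and $C=\ZZ/p\ZZ(\Gamma/F_{2})$; the quotient $C$ is presented by one generator subject to the three relations $s-1$, $t-1$, $p$. Pontryagin dualizing, $\widehat{B}=(\ZZ/p\ZZ)^{\Gamma}$ and $\widehat{A}=((\ZZ/p\ZZ)^{\oplus 2})^{\Gamma}$ are Bernoulli shifts, with sofic entropies $\log p$ and $2\log p$ respectively by the Bernoulli-shift calculation for sofic groups. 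Yuzvinski\v\i's formula would then force $h_{\Sigma}(\widehat{C},\Gamma)=\log p-2\log p=-\log p$; since sofic entropy takes values in $\{-\infty\}\cup[0,\infty]$, this is impossible, and so the formula must fail on this SES.

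Next I handle the $L^{2}$-torsion case. Let $H\leq\Gamma$ have $\rho^{(2)}(\ZZ,H)$ defined and nonzero, witnessed by an $\ell^{2}$-acyclic finite free resolution $0\to\ZZ(H)^{a_{k}}\xrightarrow{\partial_{k}}\cdots\xrightarrow{\partial_{1}}\ZZ(H)^{a_{0}}\to\ZZ\to 0$ with $\rho^{(2)}(\ZZ,H)=\sum_{i\geq 1}(-1)^{i+1}\log\Det^{+}_{L(H)}(\partial_{i})\neq 0$. Split the resolution into short exact sequences of finitely presented $\ZZ(H)$-modules, induce along $\ZZ(H)\hookrightarrow\ZZ(\Gamma)$ to produce finitely presented SES's of $\ZZ(\Gamma)$-modules, and assume for contradiction that Yuzvinski\v\i's formula holds on each of them. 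Telescoping the alternating sum of entropies---using Theorem~\ref{T:FKD}(ii) on the square pieces together with the identification $\Det^{+}_{L(\Gamma)}(\operatorname{Ind}\partial_{i})=\Det^{+}_{L(H)}(\partial_{i})$ coming from trace preservation of the inclusion $L(H)\hookrightarrow L(\Gamma)$---produces an identity relating $h_{\Sigma}(\widehat{\operatorname{Ind}_{H}^{\Gamma}\ZZ},\Gamma)$ to $\rho^{(2)}(\ZZ,H)$. An independent calculation of the left-hand side (zero in the finite-index case, via Bernoulli bounds for infinite index) contradicts the assumption $\rho^{(2)}(\ZZ,H)\neq 0$.

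The main obstacle in Case 1 is the routine verification that the quotient $\ZZ/p\ZZ(\Gamma/F_{2})$ is finitely presented as a $\ZZ(\Gamma)$-module; this follows from finite generation of $F_{2}$. The main obstacle in Case 2 is the bookkeeping of the telescoping, in particular using Theorem~\ref{T:FKD}(iii) to absorb non-square boundary contributions and separating the infinite free-module entropies from the finite torsion contributions; this parallels the shape of the Li--Thom telescoping argument, with sofic entropy in place of amenable entropy.
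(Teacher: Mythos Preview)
Your Case~1 is correct and is essentially the paper's argument. The surjection $(\ZZ/p\ZZ)^{\Gamma}\to((\ZZ/p\ZZ)^{\oplus 2})^{\Gamma}$ you produce by dualizing the induced augmentation sequence \emph{is} the Ornstein--Weiss factor map the paper invokes, and your module-theoretic packaging (induction $=$ tensoring with $\ZZ(\Gamma)$, dual $=$ coinduced action) is exactly Propositions~\ref{P:algcoind} and~\ref{P:coinducent}. The finite-presentation check for $C=(\ZZ/p\ZZ)[\Gamma/F_{2}]$ is fine.

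Your Case~2 has the right architecture---assume Yuzvinski\v\i, run a Li--Thom style reduction to get $h_{\Sigma}(\widehat{\mathrm{Ind}_{H}^{\Gamma}\ZZ},\Gamma)=\rho^{(2)}(\ZZ,H)$, then contradict $\rho^{(2)}(\ZZ,H)\neq 0$---and this is exactly how the paper proceeds (Proposition~\ref{P:YAF} plus Corollary~\ref{C:YuzFail}). But two points in your sketch do not stand on their own. First, the na\"{\i}ve ``split the resolution into short exact sequences and telescope'' does not work as written: every intermediate short exact sequence $0\to K_{i}\to \ZZ(\Gamma)^{a_{i}}\to K_{i-1}\to 0$ has $h_{\Sigma}(\widehat{\ZZ(\Gamma)^{a_{i}}},\Gamma)=\infty$, and for $i\geq 1$ one also has $h_{\Sigma}(\widehat{K_{i}},\Gamma)=\infty$ (since $K_{i-1}\cong\mathrm{coker}(\partial_{i+1})$ and $r(\partial_{i+1})$ does \emph{not} have dense image once $i\geq 1$), so Yuzvinski\v\i\ applied there yields only $\infty=\infty$. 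Your phrase ``square pieces'' is misleading because none of the $\partial_{i}$ need be square, and ``absorb non-square boundary contributions via Theorem~\ref{T:FKD}(iii)'' gives inequalities that do not telescope. The actual Li--Thom argument (which the paper imports wholesale in Proposition~\ref{P:YAF}) is more delicate than this; you should either cite it directly or reproduce it. Second, your ``independent calculation'' that $h_{\Sigma}(\widehat{\mathrm{Ind}_{H}^{\Gamma}\ZZ},\Gamma)=0$ via ``Bernoulli bounds for infinite index'' is not the right mechanism: $\widehat{\mathrm{Ind}_{H}^{\Gamma}\ZZ}$ is the coinduction of the trivial $H$-action on $\TT$ (Proposition~\ref{P:algcoind}), so its entropy equals $h_{\Sigma|_{H}}(\TT,H)=0$ by Proposition~\ref{P:coinducent} and the fact that trivial actions have zero sofic entropy. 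The paper organizes this more cleanly by first passing to $H$ via coinduction (Corollary~\ref{C:supergroup}) and only then running the Li--Thom reduction inside $H$.
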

The main ingredients of the proof of the above Theorem are the arguments of Li-Thom as well as a standard counterexample of the theory due to Ornstein-Weiss. The only current version of nonamenable entropy for which there is some hope of having a  Yuzvinski\v\i\ addition formula is the $f$-invariant entropy defined for actions of free groups by Bowen in \cite{Bowenfinvariant}. For example, a  Yuzvinski\v\i\ addition formula is known for $f$-invariant entropy for actions on totally disconnected abelian groups by \cite{BowenGun}. The $f$-invariant can be regarded as sofic entropy with respect to a \emph{random} sofic approximation and has many properties that general sofic entropy does not: it satisfies a Rokhlin formula (see \cite{BowenGun}), a subgroup formula (see \cite{SewardSubgroup}) and an ergodic decomposition formula (see \cite{SewardFree}). Motivated by the properties that $f$-invariant entropy enjoys, and the possibility of a Yuzvinsk\v\i\ addition formula, we prove a version of Theorem \ref{T:FKD} for  Bowen's $f$-invariant entropy. We use $\FF_{r}$ for the free group on $r$ letters.

\begin{theorem} Let $h\in M_{m,n}(\ZZ(\FF_{r}))$ and suppose $\lambda(h)$ is injective. Then
\[f_{m_{X_{h}}}(X_{h},\FF_{r})\leq \log \Det^{+}_{L(\FF_{r})}(h),\]
with equality if $m=n.$
\end{theorem}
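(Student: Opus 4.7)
The plan centers on Bowen's characterization of the $f$-invariant as an \emph{annealed} version of sofic entropy: if $\sigma_n$ is drawn uniformly from $\Hom(\FF_r, S_n)$, then for a p.m.p.\ action $\FF_r \actson (X,\mu)$ (suitably generated, as is the case for the algebraic action $\FF_r \actson X_h$ using coordinate observables on $\widehat{X_h}$) one has
\[f_\mu(X, \FF_r) \;=\; \inf_{F \Subset \FF_r,\, \varepsilon>0}\; \limsup_{n \to \infty} \frac{1}{n} \log \EE_{\sigma_n}\!\bigl[N(\sigma_n, F, \varepsilon)\bigr],\]
where $N(\sigma_n, F, \varepsilon)$ is the number of microstates in the sense used to define sofic measure entropy. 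The theorem thus reduces to bounding $\EE_{\sigma_n}[N(\sigma_n, F, \varepsilon)]$ both above and below in terms of $\Det^+_{L(\FF_r)}(h)^n$.

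For the upper bound, I would inspect the proof of Theorem \ref{T:FKD}(iii) and confirm that it gives a \emph{pointwise} (i.e., $\sigma_n$-independent) bound
\[N(\sigma_n, F, \varepsilon) \;\leq\; \Det^+_{L(\FF_r)}(h)^n \cdot e^{\eta_{F,\varepsilon} n},\]
with $\eta_{F,\varepsilon} \to 0$ as $F \nearrow \FF_r,\ \varepsilon \to 0$, valid for every $\sigma_n \in \Hom(\FF_r, S_n)$. This is characteristic of the Fuglede-Kadison/rank-type upper bound arguments, which only use that $\sigma_n$ is a homomorphism and not any asymptotic-freeness property. Such a pointwise bound survives taking expectations and yields the inequality $f_{m_{X_h}}(X_h,\FF_r) \leq \log \Det^+_{L(\FF_r)}(h)$.

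For equality when $m=n$, I use that a uniformly random sequence $(\sigma_n)_n$ is almost surely a sofic approximation of $\FF_r$, by classical concentration results for random permutations. Applying Theorem \ref{T:FKD}(ii) to this random sofic approximation, one obtains that $\tfrac{1}{n} \log N(\sigma_n, F, \varepsilon) \to \log \Det^+_{L(\FF_r)}(h) + o_{F,\varepsilon}(1)$ almost surely. Since the event $A_n$ that $\tfrac{1}{n} \log N(\sigma_n, F, \varepsilon) \geq \log \Det^+_{L(\FF_r)}(h) - \delta_{F,\varepsilon}$ has probability tending to $1$, the trivial bound $\EE_{\sigma_n}[N] \geq \Pr(A_n) \cdot \Det^+_{L(\FF_r)}(h)^n \cdot e^{-\delta_{F,\varepsilon} n}$ gives $\tfrac{1}{n} \log \EE_{\sigma_n}[N(\sigma_n, F, \varepsilon)] \geq \log \Det^+_{L(\FF_r)}(h) - o_{F,\varepsilon}(1)$, matching the upper bound.

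The main obstacle is reconciling Bowen's combinatorial definition of the $f$-invariant (via Shannon entropies of generating observables) with the microstate-counting expectation above in the algebraic setting---in particular, exhibiting a generating observable for $\FF_r \actson (X_h, m_{X_h})$ that fits Bowen's framework and verifying the annealed formula in the precise form needed. Assuming this reduction, the remainder is routine given Theorem \ref{T:FKD}: one must verify that the proof of part (iii) is genuinely uniform in $\sigma_n$ (if any step invokes the sofic/asymptotic-freeness property, it must be redone using only $\sigma_n \in \Hom(\FF_r, S_n)$, at the cost of a harmless $e^{o(n)}$ factor), and that the almost-sure convergence in part (ii) upgrades to an expectation-level lower bound as sketched.
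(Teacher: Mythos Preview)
Your lower bound argument is essentially the paper's Proposition~\ref{P:randomlowerbound}: pick $\sigma_n$ in the high-probability ``good'' set and apply Theorem~\ref{T:FKD}(ii). That part is fine.

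The upper bound has a genuine gap. You assert that the proof of Theorem~\ref{T:FKD}(iii) yields a bound $N(\sigma_n,F,\varepsilon) \leq \Det^+_{L(\FF_r)}(h)^n e^{\eta n}$ valid for \emph{every} $\sigma_n\in\Hom(\FF_r,S_n)$, using only the homomorphism property. This is false. The upper bound proof (Theorem~\ref{T:upperbound}) relies on L\"uck approximation (Lemma~\ref{L:injective}) to control $\dim\ker\sigma_n(h)$, and on weak$^*$ convergence of spectral measures (Lemma~\ref{L:weak*convergence}); both require asymptotic freeness. For a concrete failure, take the trivial homomorphism $\sigma_n\equiv\id$: then $\sigma_n(h)$ can vanish identically (e.g.\ $h=2-a-a^{-1}$), so $\Xi_\delta(\sigma_n(h))=(\RR^{d_n})^{\oplus n}$ and the topological microstate count is of order $\varepsilon^{-d_n}$, unbounded by any fixed exponential. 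The paper explicitly flags this obstruction in the paragraph after Proposition~\ref{P:randomlowerbound}: one cannot control $N_\varepsilon(\Map(\cdots,\sigma))$ on the set where $\sigma$ fails to be approximately free, and the naive estimate on $\EE[N]$ breaks down.

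The paper's fix is different from what you propose. It does not make the bound pointwise; instead it shows (Lemma preceding Proposition~\ref{P:randomupperbound}) that for an \emph{essentially free} action, nonemptiness of the \emph{measure}-theoretic microstate space $\Map(\rho,F,L,\delta,\sigma)$ forces $\sigma$ to be approximately free. This lets one extract a deterministic sofic approximation $\Sigma$ with $h_{\Sigma,\mu}\geq h_{U,\mu}$ (Proposition~\ref{P:randomupperbound}), after which Theorem~\ref{T:FKD} applies. The case where $\FF_r\actson(X_h,m_{X_h})$ is not essentially free is handled separately: a short argument using Yuzvinski\v\i\ for $\ZZ$ and Seward's results shows $h_{U,m_{X_h}}=-\infty$, so the inequality holds trivially. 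Your sketch misses both the essential-freeness mechanism and this dichotomy.
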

We remark that it follows automatically from the preceding theorem and the techniques of Li-Thom that if one proves a totally general Yuzvinski\v\i\ addition formula for $f$-invariant entropy for actions of free groups, then automatically one equates the $f$-invariant entropy of an algebraic action with the $L^{2}$-torsion of the dual module.

 	Combining with our previous work in \cite{Me4}, as well as the techniques given in \cite{LiLiang}, we have an application related to metric mean dimension of actions. For the definition of metric mean dimension  of an action $\Gamma\actson X$ on a compact metrizable space $X$,  denoted $\mdim_{\Sigma,M}(X,\Gamma)$, see \cite{Li}. It is clear from the definition that if $h_{\Sigma}(\widehat{A},\Gamma)<\infty,$ then $\mdim_{\Sigma,M}(X,\Gamma)=0.$ It is an open problem as to whether or not every action with zero metric mean dimension can be ``built" out of actions with finite entropy (to be precise, it is an open problem if every it is an open problem if every action with zero metric mean dimension is an inverse limit of actions with finite entropy). We use our results to show that for algebraic actions coming from finitely presented $\ZZ(\Gamma)$-modules zero metric mean dimension is equivalent to finite topological entropy.

\begin{theorem}\label{T:whenitisfintie} Let $\Gamma$ be a countable discrete sofic group with sofic approximation $\Sigma.$  Let $A$ be a finitely presented $\ZZ(\Gamma)$-module. Then $h_{\Sigma}(\widehat{A},\Gamma)<\infty$ if and only if $\mdim_{\Sigma,M}(\widehat{A},\Gamma)=0.$
\end{theorem}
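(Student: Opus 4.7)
The plan is to reduce the statement to an equivalence involving $\lambda(f)$ and then to invoke the microstate construction underlying Theorem \ref{T:whenisitinfinite}. Since $A$ is finitely presented we may write $A = \ZZ(\Gamma)^{\oplus n}/r(f)(\ZZ(\Gamma)^{\oplus m})$ for some $f \in M_{m,n}(\ZZ(\Gamma))$, so that $\widehat{A} = X_f$. By Theorem \ref{T:FKD}(i) (equivalently Theorem \ref{T:whenisitinfinite}), $h_\Sigma(X_f,\Gamma) < \infty$ if and only if $\lambda(f)$ is injective on $\ell^2(\Gamma)^{\oplus n}$. It therefore suffices to prove that $\mdim_{\Sigma,M}(X_f,\Gamma) < \infty$ if and only if $\lambda(f)$ is injective.

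The forward implication is the general soft fact that finite topological entropy forces zero metric mean dimension. Writing $h_\Sigma^{\varepsilon}(X_f,\Gamma)$ for the $\varepsilon$-scale sofic entropy, one has
\[\mdim_{\Sigma,M}(X_f,\Gamma) = \liminf_{\varepsilon \to 0} \frac{h_\Sigma^{\varepsilon}(X_f,\Gamma)}{|\log\varepsilon|},\]
and the trivial bound $h_\Sigma^{\varepsilon}(X_f,\Gamma) \leq h_\Sigma(X_f,\Gamma)$ yields $\mdim_{\Sigma,M}(X_f,\Gamma) = 0$ whenever $\lambda(f)$ is injective.

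The converse I argue by contrapositive. Assuming $\lambda(f)$ is not injective, pick a nonzero $\xi \in \ker \lambda(f) \subseteq \ell^2(\Gamma)^{\oplus n}$ and use it to produce many separated microstates at every scale. For any sofic approximation $\sigma_i\colon \Gamma \to \Sym(d_i)$, the construction in \cite{Me4} pushes $\xi$ forward to a subspace of approximate microstates for $X_f$ inside $\TT^{d_i n}$ whose real dimension grows linearly in $d_i$. Crucially, $\ker\lambda(f)$ is a real linear subspace, so $t\xi \in \ker\lambda(f)$ for every $t > 0$; rescaling in $t$ produces microstates contained in arbitrarily small balls, and the quantitative sofic counting of \cite{LiLiang} then converts these into a lower bound on $h_\Sigma^{\varepsilon}(X_f,\Gamma)$ that grows faster than $|\log\varepsilon|$ as $\varepsilon \to 0$, forcing $\mdim_{\Sigma,M}(X_f,\Gamma) = \infty$.

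The main obstacle will be making the scaling argument uniform across $\varepsilon \to 0$: \cite{Me4} only directly delivers $h_\Sigma = \infty$, and a priori that is compatible with $\mdim_{\Sigma,M} = 0$ if microstate complexity is concentrated at a single scale. The fix is to exploit the cone structure of $\ker\lambda(f)$ together with the counting apparatus of \cite{LiLiang}, which tracks microstate complexity as a function of the separation scale and turns a linear-dimensional kernel contribution into superlogarithmic growth of $h_\Sigma^{\varepsilon}$. Once this refinement is in place, the equivalence $\mdim_{\Sigma,M}(X_f,\Gamma) < \infty \Leftrightarrow \lambda(f)$ injective $\Leftrightarrow h_\Sigma(X_f,\Gamma) < \infty$ closes the argument.
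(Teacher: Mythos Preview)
Your reduction to the injectivity of $\lambda(f)$ is right, and the forward direction (finite entropy $\Rightarrow$ zero metric mean dimension) is correct and matches the paper. The trouble is in your contrapositive direction.

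First, a quantitative error: you claim that a nonzero $\xi\in\ker\lambda(f)$ yields a lower bound on $h_\Sigma^\varepsilon(X_f,\Gamma)$ growing \emph{faster} than $|\log\varepsilon|$, hence $\mdim_{\Sigma,M}=\infty$. This is not what happens. The kernel contributes an approximate microstate space of real dimension $\approx d_i\cdot\dim_{L(\Gamma)}(\ker\lambda(f))$ (via the sofic L\"uck approximation, Lemma~\ref{L:injective}), and standard volume counting gives $h_\Sigma^\varepsilon \sim \dim_{L(\Gamma)}(\ker\lambda(f))\cdot|\log\varepsilon|$, i.e.\ growth \emph{proportional to} $|\log\varepsilon|$. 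The resulting metric mean dimension is the finite positive number $\dim_{L(\Gamma)}(\ker\lambda(f))\leq n$, not $\infty$. (Relatedly, the theorem as written with ``$<\infty$'' is almost certainly a typo for ``$=0$'', since $X_f\subseteq(\TT^\Gamma)^n$ forces $\mdim_{\Sigma,M}\leq n$ always; the paper's own Theorem~\ref{T:whenisitinfinite} and the remark after it confirm that the intended content is $h_\Sigma<\infty\Leftrightarrow\mdim_{\Sigma,M}=0$.)

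Second, your scaling argument ``$t\xi\in\ker\lambda(f)$ for every $t$'' is a red herring: what produces many separated microstates is not the one-dimensional ray through $\xi$ but the full $d_i\cdot\dim_{L(\Gamma)}(\ker\lambda(f))$-dimensional approximate kernel of $\sigma_i(f)$. Your sketch is really gesturing at the proof of Theorem~5.1 in \cite{Me4}, which you already cite; the paper simply invokes that result directly (it gives $\mdim_{\Sigma,M}(X_f,\Gamma)=\dim_{L(\Gamma)}(\ker\lambda(f))$, equivalently $\vr(A)$ via \cite{LiLiang}) and then combines it with Theorem~\ref{T:upperbound} to close the loop. That is the cleaner route, and it avoids the incorrect growth-rate claim.
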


In many cases, we can replace the assumption in the preceding theorem  that $A$ is finitely presented with $A$ being finitely generated. This is related to whether or not $\Gamma$ satisfies the Atiyah conjecture (see Section \ref{S:apply}).

Let us briefly summarize the differences between our proof of Theorem \ref{T:FKD} and previous proofs of special cases of Theorem \ref{T:FKD}. To simplify the discussion, we stick to the principal case, so fix a $f\in\ZZ(\Gamma).$  We only summarize the proof of
\[h_{\Sigma}(X_{f},\Gamma)\geq \Det^{+}_{L(\Gamma)}(f),\]
as the upper bound is simpler. Given a sofic approximation $\sigma_{i}\colon\Gamma\to S_{d_{i}}$ extend $\sigma_{i}$ to a map
$\sigma_{i}\colon \CC(\Gamma)\to M_{d_{i}}(\CC)$ by
\[\sigma_{i}(\alpha)=\sum_{g\in\Gamma}\widehat{\alpha}(g)\sigma_{i}(g),\mbox{ if $\alpha=\sum_{g\in\Gamma}\widehat{\alpha}(g)g$}.\]
In order to make sense of the above sum, we view $S_{d_{i}}\subseteq M_{d_{i}}(\CC)$ as permutation matrices. Note that $\sigma_{i}(\ZZ(\Gamma))\subseteq M_{d_{i}}(\ZZ).$ Thus we can view $\sigma_{i}(f)$ as a homomorphism $\TT^{d_{i}}\to \TT^{d_{i}}.$

	Previous proofs consisted of a two step process. First, one bounds the entropy from below by the exponential growth rate of the size of the kernel of $\sigma_{i}(f)$ as a homomorphism $\TT^{d_{i}}\to \TT^{d_{i}}.$ This either requires knowing that $\sigma_{i}(f)\in GL_{d_{i}}(\RR)$  or having ``good control'' over the kernel of $\sigma_{i}(f)$ as an operator $\CC^{d_{i}}\to \CC^{d_{i}}.$ This ``good control'' of the kernel in previous results could only be achieved when $f$ is either a very specific operator (such as the Laplace operators consider by Bowen-Li) or when $\Gamma$ is amenable. In this step, both the invertibility hypothesis on $f$ and the fact that $\Gamma$ was residually finite played an important role in the nonamenable case.  For instance, the fact that $\Gamma$ is residually finite allowed one to take $\sigma_{i}$ to be an honest homomorphism.
	
	The second step is to prove that
\begin{equation}\label{E:introunlikely}
\Det^{+}(\sigma_{i}(f))^{1/d_{i}}\to \Det_{L(\Gamma)}^{+}(f),
\end{equation}
we call this the \emph{determinant approximation}. Here $\det^{+}(A)$ for $A\in M_{n}(\CC)$ is defined to be the product of all nonzero singular values of $A$ (with repetition). All previous results on Fuglede-Kadison determinants and entropy use (\ref{E:introunlikely}), however we suspect that this approximation is false in general. We will discuss at the end of the introduction why we believe that  previous proofs of special cases of (\ref{E:introunlikely}) rely on various heavily simplifying assumptions, and do not indicate or even suggest that the general result is true.

	Because of the possibility that the determinant approximation is false, we must avoid determinant approximations and so we need genuinely new techniques to prove Theorem \ref{T:FKD}. Instead of bounding the entropy from below by the exponential growth rate of the size of the kernel, our approach is to bound the entropy from below by the exponential growth rate of the size of the ``approximate kernel'' of $\sigma_{i}(f)$ as a homomorphism $\TT^{d_{i}}\to \TT^{d_{i}}.$  This is achieved by a simple compactness argument, and in this case the lower bound becomes an equality. No previous proof of the relationship between determinants and entropy used this method. The approach using the ``approximate kernel'' has two main advantages. First, to compute the entropy, we are allowed to replace the``approximate kernel'' of $\sigma_{i}(f)$ with the ``approximate kernel'' of any operator ``close'' to $\sigma_{i}(f).$ Using that $\lambda(f)$ is injective we show that we can choose such a perturbation of $\sigma_{i}(f)$ to be in $GL_{d_{i}}(\RR),$ which simplifies many of the technicalities involved. Most importantly, counting the size of the ``approximate kernel'' instead of the actual kernel has the desirable effect of increasing previous lower bounds on entropy. It turns out that these lower bounds are improved enough to completely avoid approximations such as (\ref{E:introunlikely}). This is the first proof of equality between Fuglede-Kadison determinants and entropy which does not use the determinant approximation. We remark that perturbations of $\sigma_{i}(f)$ were used in \cite{Li2} (for slightly different purposes) in the case where $\Gamma$ is amenable and $\lambda(f)$ is invertible. However, \cite{Li2} does not use our ``approximate kernel'' approach and so still has to use (\ref{E:introunlikely}).

	As we mentioned before, we believe that previous results establishing special cases of the determinant approximation are too specific to indicate its validity in general. Most of these results require an invertibility hypothesis on $f.$ These  invertibility hypotheses  imply a uniform lower bound on the smallest singular value of $\sigma_{i}(f)$ and make (\ref{E:introunlikely}) a simple consequence of weak$^{*}$ convergence of spectral measures. In the presence of singular values close to zero, weak$^{*}$-convergence is not strong enough to conclude (\ref{E:introunlikely}). As all of these results implicitly assume an absence of small singular values they do not indicate how to approach the determinant approximation when $\lambda(f)$ is injective and not invertible, since $\sigma_{i}(f)$ will always have singular values close to zero in this case.

	There are  essentially only three special cases where (\ref{E:introunlikely}) has been established without implicitly assuming any lower bounds on singular values, these cases are the following (listed in chronological order):

(a): For a residually finite group, it is natural to consider the sofic approximation given by the action on a chain of normal subgroups. We call this the \emph{residually finite sofic approximation}. This is one of the nicest and most natural sofic approximations, as the maps are honest homomorphisms.  For this sofic approximation, the \emph{only} case where (\ref{E:introunlikely}) is known for every $f\in\ZZ(\Gamma)$ is when $\Gamma$ is virtually cyclic, (see \cite{Luck} Lemma 13.53).

(b): If $f$ is a Laplacian operator (\ref{E:introunlikely}) is a consequence of a result of Lyons in \cite{Lyons}, as noted in Section 3 of \cite{BowenLi}.

 (c): If $\Gamma$ is amenable and the sofic approximation is by F\o lner sequences $(\ref{E:introunlikely})$ is proved in \cite{LiThom} using a variant of the Ornstein-Weiss Lemma (for $\Gamma=\ZZ$ this is a classical result of Szeg\H{o} in \cite{Szego}).

    For $(a)$ one reduces to $\Gamma=\ZZ$ and uses nontrivial number-theoretic facts to show that there are ``not many small singular values''. These results are very dependent on the group being the integers. So we feel that the group  is far too restricted in this case and the tools required are far too strong to indicate any belief in the determinant approximation.  Case $(b)$  is too specific to the structure of $f,$ since for a general $f\in \ZZ(\Gamma)$ there will be no connection between determinants and graph theory. Case $(c)$ is too specific to the structure of the group, as there is no analogue of the Ornstein-Weiss Lemma beyond amenable groups.

	On the other hand, we have a good reasons to disbelieve the general determinant approximation. One reason for our disbelief is the fact that  the determinant approximation is extremely difficult even when the sofic approximation is very nice. The residually finite sofic approximation may be the most natural sofic approximation and the determinant approximation is unknown in this case even when $\Gamma=\ZZ^{2}.$ Our second reason is that there are ``near counterexamples" to the determinant approximation. For instance,  it is known that (\ref{E:introunlikely}) fails for the residually finite sofic approximations if $f\in\CC(\ZZ)$ (see the remarks after Lemma 13.53 in \cite{Luck}). This failure indicates that these number theoretic techniques are necessary in the integer  case. As it is absolutely unknown how to generalize these techniques, we do not anticipate being able to generalize to the case of a general residually finite group. Another ``near counterexample'' is discussed  in  \cite{Grab} (see Remark 10) where it is remarked that one can find counterexamples to the determinant approximation for Laplacian operators if one replaces sofic approximations with graph convergence in the Benjamimi-Schramm sense. In short, it is completely unclear how to prove a determinant approximation for $\Gamma$ sofic and $\lambda(f)$ injective. Such approximations are very difficult without implicitly assuming uniform lower bounds on singular values, nor are they likely to be true in general.  Given these difficulties, we strongly believe that avoiding these approximations is a useful technique for studying entropy of algebraic actions of general sofic groups.

\textbf{Acknowledgments.} I would like to thank Dimitri Shlyakhtenko for his continued support and helpful advice on the problem. I would like to thank Lewis Bowen, Phu Chung, Hanfeng Li, Dave Penneys, Brandon Seward and Andreas Thom for comments on  previous versions of this paper, as well as interesting discussions related to this problem. I would like to thank Yoann Dabrowski for interesting discussions which led to the abstract concentration arguments in Lemma \ref{L:AbstractConc}, which conceptually simplified the argument. I would like to thank the anonymous referee for their numerous comments, which vastly improved the paper. Part of this work was inspired by discussions at the Arbeitsgemeinschaft on Sofic Entropy at Oberwolfach in October 2013.

\section{Preliminaries}

\subsection{Notation and Terminology}

	We will use $e$ for the identity  element of a group, unless the group is assumed abelian, in which case we will use $0.$ Abelian group operations will be written additively, unless otherwise stated. In particular, we use $\TT=\RR/\ZZ$ with group operations written additively, and do not view it is as the unit circle in the complex plane. If $\mathcal{H},\mathcal{K}$ are Hilbert spaces, we use $B(\mathcal{H},\mathcal{K})$ for the space of bounded linear operators from  $\mathcal{H}\to\mathcal{K}.$ We often use $B(\mathcal{H})$ instead of $B(\mathcal{H},\mathcal{H}).$

	We will use standard functional calculus notation for normal operators. Functional calculus will most often be used in the finite dimensional case, for which we note the following: if $\phi\colon \CC\to \CC$ is Borel, $\mathcal{H}$ is a finite-dimensional Hilbert space, and $T\in B(\mathcal{H})$ is normal, then
\[\phi(T)=\sum_{\lambda\in \spec(T)}\phi(\lambda)\Proj_{\ker(T-\lambda I)}.\]
Here $\Proj_{\mathcal{K}}$ denotes the orthogonal projection onto the subspace $\mathcal{K},$ and $\spec(T)$ denotes the spectrum of $T.$ For any operator on a Hilbert space (normal or not), we use $|T|=(T^{*}T)^{1/2}.$ For $x\in \RR^{n},$ we will typically use
\[\|x\|_{2}^{2}=\frac{1}{n}\sum_{j=1}^{n}|x_{j}|^{2},\]
we shall usually not need to consider $\|x\|_{\ell^{2}(n)}$ (in fact we will only need $\|\cdot\|_{\ell^{2}(n)}$ in Section \ref{S:measureentropy}).
We use $\tr_{n}\colon M_{n}(\CC)\to \CC$ for $\frac{1}{n}\Tr$ where
\[\Tr(A)=\sum_{j=1}^{n}A_{jj}.\]
 This will usually be more natural to use than $\Tr.$ We will often drop the subscript $n$ if it is clear from context. We let  $\Tr\otimes \tr_{n}\colon M_{m}(M_{n}(\CC))\to \CC$ be given by
\[\Tr\otimes \tr_{n}(A)=\sum_{j=1}^{m}\tr_{n}(A_{jj}).\]

	A \emph{pseudometric} on a set $X$ is a function $d\colon X\times X\to [0,\infty)$ satisfying symmetry and the triangle inequality, but we might have that $d(x,y)=0$ and $x\ne y.$ A set $X$ with a pseudometric $d$ will be called a \emph{pseudometric space}. If $(X,d)$ is a pseudometric space and $A,B\subseteq X,$ and $\varepsilon\geq 0,$ we say that $A$ is $\varepsilon$-contained in $B,$ and write $A\subseteq_{\varespilon}B$ if for all $a\in A,$ there is a $b\in B$ so that $d(a,b)\leq \varepsilon.$ We say $A\subseteq X$ is $\varespilon$-dense if $X\subseteq_{\varepsilon}A.$  We use $S_{\varepsilon}(X,d)$ for the smallest cardinality of an $\varepsilon$-dense subset of $X.$ We say that $A\subseteq X$ is $\varepsilon$-separated if for all $a\ne b$ in $A$ we have $d(a,b)>\varepsilon.$ We use $N_{\varepsilon}(X,d)$ for the largest cardinality of a $\varespilon$-separated subset of $X.$ We always have the following inequalities:
\begin{equation}\label{E:sepspan}
S_{\varepsilon}(X,d)\leq N_{\varepsilon}(X,d)\leq S_{\varespilon/2}(X,d).
\end{equation}
 If $\delta,\varespilon\geq 0,$ and $A\subseteq_{\delta}B$ we have
\begin{equation}\label{E:sepspanfuzz}
N_{2(\varepsilon+\delta)}(A,d)\leq S_{\varepsilon}(B,d).
\end{equation}
Lastly, we use $u_{n}$ for the uniform probability measure on $\{1,\cdots,n\},$ and if $A$ is  a finite set, we use $u_{A}$ for the uniform probability measure on the finite set $A.$

\subsection{Preliminaries on Sofic Groups and Spectral Measures}\label{S:spectralmeasure}

	We start by defining the basic notions of the group von Neumann algebra and trace. For our purposes, we will need to induce all of our operations to the matricial level, and this will be done in quite a natural way. Let $\Gamma$ be a countable discrete group.  We define the left regular representation $\lambda\colon \Gamma\to \mathcal{U}(\ell^{2}(\Gamma))$ by
\[(\lambda(g)f)(x)=f(g^{-1}x).\]
 We extend this to a map $\lambda\colon \CC(\Gamma)\to B(\ell^{2}(\Gamma))$ in the usual way. We extend to a map
\[\lambda\colon M_{m,n}(\CC(\Gamma))\to B(\ell^{2}(\Gamma)^{\oplus n},\ell^{2}(\Gamma)^{\oplus m})\]
by
\[(\lambda(f)\xi)(j)=\sum_{k=1}^{n}\lambda(f_{jk})\xi(k).\]

	Then for $f\in M_{m,n}(\CC(\Gamma)),g\in M_{n,k}(\CC(\Gamma))$ we have $\lambda(fg)=\lambda(f)\lambda(g).$ For $f=\sum_{g\in \Gamma}a_{g}g\in \CC(\Gamma)$ we let
\[f^{*}=\sum_{g\in \Gamma}\overline{a_{g^{-1}}}g.\]
If $f\in M_{m,n}(\CC(\Gamma)),$ we define $f^{*}\in M_{n,m}(\CC(\Gamma))$ by $(f^{*})_{jk}=f_{kj}^{*},$ then $\lambda(f^{*})=\lambda(f)^{*}.$

	We let $L(\Gamma)$ be the closure of $\lambda(\CC(\Gamma))$  in the weak operator topology, this is called the \emph{group von Neumann algebra} of $\Gamma.$ We can view $M_{m,n}(L(\Gamma))$ as operators in $B(\ell^{2}(\Gamma)^{\oplus n},\ell^{2}(\Gamma)^{\oplus m}).$ Under this identification, $\lambda(M_{m,n}(\CC(\Gamma)))\subseteq M_{m,n}(L(\Gamma)).$ For $x\in L(\Gamma),$ we define $\tau(x)=\ip{x\delta_{e},\delta_{e}}.$ Additionally, we define
\[\Tr\otimes \tau\colon M_{n}(L(\Gamma))\to \CC\]
by
\[\Tr\otimes\tau(x)=\sum_{j=1}^{n}\tau(x_{jj}).\]
To save time, we will  identify $M_{m,n}(\CC(\Gamma))\subseteq M_{m,n}(L(\Gamma))$ via $\lambda,$ thus any construction which applies to $M_{m,n}(L(\Gamma))$ will apply to $M_{m,n}(\CC(\Gamma)).$ If $x\in M_{m,n}(L(\Gamma))$ we let $\widehat{x}\colon \{1,\cdots,m\}\times \{1,\cdots,n\}\times \Gamma\to \CC$ be given by
\[\widehat{x}(j,k,g)=\tau(x_{jk}g^{-1}).\]
 If $E_{jk}\otimes g\in M_{m,n}(\CC(\Gamma))$ is given by $g$ in the $jk$ position and zero elsewhere, we have for any $f\in M_{m,n}(\CC(\Gamma))$
\[f=\sum_{\substack{1\leq j\leq m,\\ 1\leq k\leq n}}\sum_{g\in \Gamma}\widehat{f}(j,k,g)E_{jk}\otimes g.\]

	For $x\in M_{m,n}(L(\Gamma))$ we use $\|x\|_{\infty}$ for the operator norm of $x.$ In particular, since we will view $\CC(\Gamma)\subseteq L(\Gamma),$ we use $\|f\|_{\infty}$ for the operator norm of $f\in \CC(\Gamma)$ as an operator on $\ell^{2}(\Gamma),$ and a similar remark applies to elements in $M_{m,n}(\CC(\Gamma)).$ Additionally, for $x\in M_{m,n}(L(\Gamma)),$ we will use
\[\|x\|_{2}^{2}=\Tr\otimes \tau(x^{*}x),\]
and similarly for $f\in M_{m,n}(\CC(\Gamma)).$ We thus caution the reader that $\|f\|_{\infty}$ does not refer to the supremum of $|\widehat{f}(j,k,g)|,$ for this we will use $\|\widehat{f}\|_{\infty}.$ Note that this agrees in the case $\Gamma=\ZZ^{d}$ with the usual practice of viewing elements of $\ZZ(\ZZ^{d})$ as elements of $C(\TT^{d}).$ For $f\in M_{m,n}(L(\Gamma)),$
\[\|f\|_{2}=\|\widehat{f}\|_{2}.\]
Similarly, we will use $|f|\in M_{n}(L(\Gamma))$ for the operator square root of $f^{*}f$ if $f\in M_{m,n}(\CC(\Gamma)),$ and not for the element of $\CC(\Gamma)$ whose coefficients are the pointwise absolute value of the coefficients of $f.$ We leave it as an exercise to verify the following properties  (using $1$ for the identity element of $M_{n}(L(\Gamma))).$
\begin{list}{ \arabic{pcounter}:~}{\usecounter{pcounter}}
\item $\Tr\otimes \tau(1)=n,$\\
\item $\Tr\otimes \tau(x^{*}x)\geq 0,$  with equality if and only if $x=0$,\\
\item $\Tr\otimes \tau(xy)=\Tr\otimes \tau(yx),$  for all $x,y\in M_{n}(L(\Gamma)),$
\item $\Tr\otimes \tau$ is weak operator topology continuous.
\end{list}

\begin{definition}\emph{Let $\Gamma$ be a countable discrete group, and $x\in M_{n}(L(\Gamma))$ be a normal element. We let $\mu_{x}$ be the Borel measure on the spectrum of $x$ defined by $\mu_{x}(E)=\Tr\otimes \tau(\chi_{E}(x)),$ it is called the} spectral measure of $x$ with respect to $\tau.$\emph{ Additionally, if $A\in M_{n}(M_{m}(\CC))$ is normal we define the} spectral measure of $A$ with respect to $\tr_{m}$ \emph{by $\mu_{A}(E)=\Tr\otimes\tr_{m}(\chi_{E}(A)).$}\end{definition}

For a normal element $x\in M_{n}(L(\Gamma)),$ we remark that $\mu_{x}$ is supported in
\[\{z\in \CC:|z|\leq \|x\|_{\infty}\}.\]
For readers less familiar with functional calculus, we note that we may characterize the spectral measure of $x$ in the following equivalent way:
\[\int t^{n}\,d\mu_{x}(t)=\Tr\otimes \tau(x^{n}).\]
For the definition of topological entropy, we need to restrict ourselves to the class of \emph{sofic} groups.

\begin{definition}\emph{Let $\Gamma$ be a countable discrete group. A} sofic approximation of $\Gamma$ \emph{is a sequence $\Sigma=(\sigma_{i}\colon \Gamma\to S_{d_{i}})$ of functions (not assumed to be homomorphisms) such that (using $u_{d_{i}}$ for the uniform measure on $\{1,\dots,d_{i}\}$)}
\begin{list}{ \arabic{pcounter}:~}{\usecounter{pcounter}}
\item $d_{i}\to \infty$,
\item $u_{d_{i}}(\{j:(\sigma_{i}(g)\sigma_{i}(h))(j)=\sigma_{i}(gh)(j)\})\to 1,$\emph{ for all $g,h\in \Gamma$,}
\item $u_{d_{i}}(\{j:\sigma_{i}(g)(j)\ne \sigma_{i}(h)(j)\})\to 1,$ \emph{for all $g\ne h\in \Gamma$.}
\end{list}
\emph{We say that $\Gamma$ is} sofic\emph{ if it has a sofic approximation.}\end{definition}

	We could remove the condition $d_{i}\to \infty,$ and still have the same definition of a sofic group. However, in order for the definition of topological entropy to be an invariant we need $d_{i}\to \infty.$ The condition $d_{i}\to \infty$ is also implied if $\Gamma$ is infinite, which will be the main case we are interested in anyway. It is known that the class of sofic groups contain all amenable groups, all residually sofic groups, all locally sofic groups, all linear groups and is closed under free products with amalgamation over amenable subgroups. For more see \cite{ESZ2},\cite{LPaun},\cite{DKP},\cite{PoppArg}.

	Let $\Sigma=(\sigma_{i}\colon \Gamma\to S_{d_{i}})$ be a sofic approximation. We extend $\sigma_{i}$ to a map
\[\sigma_{i}\colon \CC(\Gamma)\to M_{d_{i}}(\CC)\]
by
\[\sigma_{i}(f)=\sum_{g\in \Gamma}\widehat{f}(g)\sigma_{i}(g),\]
and
\[\sigma_{i}\colon M_{m,n}(\CC(\Gamma))\to M_{m,n}(M_{d_{i}}(\CC))\]
by
\[\sigma_{i}(f)_{jk}=\sigma_{i}(f_{jk}).\]

We shall present a Lemma from \cite{Me4}. For terminology, if $A\in M_{m,n}(\CC),$ we use $\|A\|_{2}^{2}=\tr_{n}(A^{*}A),$ we shall use $\|A\|_{\infty}$ for the operator norm of $A.$

\begin{lemma}[Lemma 2.6 in \cite{Me4}]\label{L:weak*convergence}Let $\Gamma$ be a countable discrete sofic group wit sofic approximation $\Sigma=(\sigma_{i}\colon \Gamma\to S_{d_{i}}).$ Let $f\in M_{m,n}(\CC(\Gamma)),$ and let $A_{i}\in M_{m,n}(M_{d_{i}}(\CC))$ with $\sup_{i}\|A_{i}\|_{\infty}<\infty,$ and $\|\sigma_{i}(f)-A_{i}\|_{2}\to 0.$ Then,
\[\mu_{|A_{i}|}\to \mu_{|f|}\]
in the weak$^{*}$-topology.
\end{lemma}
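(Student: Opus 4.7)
The plan is to establish weak$^{*}$ convergence of the spectral measures by proving moment convergence. Since $f\in M_{m,n}(\CC(\Gamma))$ has finitely supported Fourier coefficients, $\|f\|_\infty<\infty$, so together with the hypothesis $\sup_i\|A_i\|_\infty<\infty$ one obtains a constant $R$ for which both $\mu_{|A_i|}$ and $\mu_{|f|}$ are finite positive measures of common total mass $n$ supported in $[0,R]$. Via the homeomorphism $t\mapsto t^{2}\colon[0,R]\to[0,R^{2}]$, it is equivalent to show $\mu_{A_i^{*}A_i}\to\mu_{f^{*}f}$ weakly on $[0,R^{2}]$, and by Stone--Weierstrass this reduces to the moment convergence
\[\Tr\otimes\tr_{d_{i}}\!\bigl((A_i^{*}A_i)^{k}\bigr)\;\longrightarrow\;\Tr\otimes\tau\!\bigl((f^{*}f)^{k}\bigr)\qquad\text{for every integer }k\geq 0.\]

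I would split this through the intermediate quantity $\Tr\otimes\tr_{d_{i}}((\sigma_i(f)^{*}\sigma_i(f))^{k})$. For the first part, let $E_i=A_i-\sigma_i(f)$, so $\|E_i\|_2\to 0$ by hypothesis while $\|E_i\|_\infty$ remains bounded, using $\|\sigma_i(f)\|_\infty\leq\sum_{j,k,g}|\widehat{f}(j,k,g)|$ since each $\sigma_i(g)$ is a permutation matrix. Expanding $(A_i^{*}A_i)^{k}-(\sigma_i(f)^{*}\sigma_i(f))^{k}$ produces a finite sum of products, each containing at least one factor $E_i$ or $E_i^{*}$. Cyclically rearranging so that such a factor sits last and applying $|\Tr\otimes\tr_{d_i}(YX)|\leq\|Y\|_2\|X\|_2$ together with $\|X_1\cdots X_r\|_2\leq\sqrt{n}\prod_j\|X_j\|_\infty$ bounds every such term by a constant multiple of $\|E_i\|_2$, which tends to zero.

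For the second part, since each $f_{jk}$ is a finite formal sum, $\Tr\otimes\tr_{d_i}((\sigma_i(f)^{*}\sigma_i(f))^{k})$ expands as a finite linear combination, with coefficients independent of $i$, of scalar traces of products $\sigma_i(g_1)^{\pm 1}\cdots\sigma_i(g_{2k})^{\pm 1}$; the corresponding expansion of $\Tr\otimes\tau((f^{*}f)^{k})$ has identical coefficients, with $\sigma_i$ replaced by $\lambda$ and $\tr_{d_i}$ by $\tau$. The sofic approximation axioms yield $\|\sigma_i(g^{-1})-\sigma_i(g)^{-1}\|_2\to 0$ and $\|\sigma_i(gh)-\sigma_i(g)\sigma_i(h)\|_2\to 0$, and an induction on word length gives
\[\bigl\|\sigma_i(g_1)^{\pm 1}\cdots\sigma_i(g_{2k})^{\pm 1}-\sigma_i(g_1^{\pm 1}\cdots g_{2k}^{\pm 1})\bigr\|_2\longrightarrow 0.\]
Combined with $\tr_{d_i}(\sigma_i(g))\to\delta_{g=e}=\tau(\lambda(g))$, which follows from the third sofic axiom plus the observation that $\sigma_i(e)$ is close in $\|\cdot\|_2$ to the identity, this gives termwise convergence, and finite summation preserves the limit.

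The main technical care is uniformity in $i$: the perturbation step requires uniform $\|\cdot\|_\infty$ bounds on $A_i$ and $\sigma_i(f)$, and the soficity step requires that the number of summands in the expansion does not grow with $i$. Both are automatic given the hypotheses and the finite support of $\widehat{f}$, so modulo the standard sofic-approximation machinery I expect the argument to be routine; no step looks like a serious obstacle.
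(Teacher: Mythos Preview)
Your argument is correct and is the standard moment-method proof of this type of spectral-measure convergence: reduce to polynomial moments via Stone--Weierstrass on a common compact support, handle the perturbation $A_i\to\sigma_i(f)$ by a telescoping expansion controlled in $\|\cdot\|_2$ against uniform $\|\cdot\|_\infty$ bounds, and then compute the trace of words in $\sigma_i(f)^{*},\sigma_i(f)$ using asymptotic multiplicativity and asymptotic freeness of the sofic approximation.

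Note, however, that the present paper does not actually prove this lemma; it is quoted verbatim from \cite{Me4} (Lemma~2.6 there) and used as a black box. So there is no ``paper's own proof'' here to compare against. That said, the argument in \cite{Me4} is essentially the one you wrote down, so your proposal matches the intended proof.
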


We leave it as an exercise to verify that if $x\in M_{n}(L(\Gamma))$ is normal, and $\phi\colon \CC\to \CC$ is bounded and Borel, then
\[\Tr\otimes \tau(\phi(x))=\int \phi(t)\,d\mu_{x}(t).\]
This motivates the following definition.

\begin{definition}\emph{Let $\Gamma$ be a countable discrete group, and $x\in M_{n}(L(\Gamma)).$ We define the} Fuglede-Kadison determinant of $x$ \emph{by}
\[\Det_{L(\Gamma)}(x)=\exp\left(\int_{[0,\infty)} \log(t)\,d\mu_{|x|}(t)\right).\]
\emph{With the convention that $\exp(-\infty)=0.$ Note that $\mu_{|x|}$ is supported in a compact set, so this definition makes sense. If $x\in M_{m,n}(L(\Gamma)$ we define the} positive Fuglede-Kadison determinant of $x$ \emph{by}
\[\Det_{L(\Gamma)}^{+}(x)=\Det_{L(\Gamma)}(|x|+\chi_{\{0\}}(|x|))=\exp\left(\int_{(0,\infty)}\log(t)\,d\mu_{|x|}(t)\right).\]
\end{definition}
If $A\in M_{n}(\CC),$ then the positive determinant of $A,$ written $\Det^{+}(A)$ is equal to $\Det(|A|+\chi_{\{0\}}(|A|))$ i.e. the product of the nonzero eigenvalues of $|A|.$ We leave it as an exercise to verify that
\[\Det^{+}(A)=\Det^{+}_{\CC}(A)\]
where $\CC$ is regarded as a tracial von Neumann algebra with its unique tracial state. We need the following result of Elek-Lippner (see \cite{ElekLip} Theorem 3 in Section 6), which follows from the weak$^{*}$ convergence we have already shown.

\begin{cor}\label{C:integrability} Let $\Gamma$ be a countable discrete sofic group, and $f\in M_{m,n}(\ZZ(\Gamma)).$ Then
\[\Det^{+}_{L(\Gamma)}(f)\geq 1.\]

	In particular, $|\log(t)|$ is integrable with respect to $\mu_{|f|}$ on $\spec(|f|)\setminus\{0\}.$
\end{cor}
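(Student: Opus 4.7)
The plan is to combine the weak$^{*}$ convergence $\mu_{|\sigma_{i}(f)|}\to\mu_{|f|}$ from Lemma \ref{L:weak*convergence} with the fact that, since $f\in M_{m,n}(\ZZ(\Gamma))$, the matrix $\sigma_{i}(f)$ has integer entries. The starting observation is that any integer matrix $A$ satisfies $\Det^{+}(A)\geq 1$: the matrix $A^{*}A$ has integer entries and is positive semidefinite of some rank $r$, so its characteristic polynomial has integer coefficients and factors as $x^{N-r}(x^{r}-e_{1}x^{r-1}+\cdots+(-1)^{r}e_{r})$; the coefficient $e_{r}$ equals both the product of nonzero eigenvalues of $A^{*}A$ and the sum of principal $r\times r$ minors, which are all nonnegative integers with at least one positive. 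Thus $e_{r}$ is a positive integer and $\Det^{+}(A)=\sqrt{e_{r}}\geq 1$. Applying this to $\sigma_{i}(f)$ and using the normalization $\Tr\otimes\tr_{d_{i}}$ in the definition of $\mu_{|\sigma_{i}(f)|}$,
\[
\int_{(0,\infty)}\log(t)\,d\mu_{|\sigma_{i}(f)|}(t)=\frac{1}{d_{i}}\log\Det^{+}(\sigma_{i}(f))\geq 0,
\]
which I rewrite as $\int_{(0,\infty)}(\log t)_{-}\,d\mu_{|\sigma_{i}(f)|}\leq\int(\log t)_{+}\,d\mu_{|\sigma_{i}(f)|}$ using the positive and negative parts of $\log$.

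The obstacle in passing this to $\mu_{|f|}$ is that $(\log t)_{-}$ blows up at $0$, which is typically in the support of $\mu_{|f|}$. I will truncate: for each $N>0$ set $\phi_{N}(t)=\min((\log t)_{-},N)$, so that $\phi_{N}$ is bounded and continuous on $[0,\|f\|_{\infty}]$ with $\phi_{N}(0)=N$, and $\phi_{N}\leq(\log t)_{-}$ on $(0,\infty)$. Decomposing $\int\phi_{N}\,d\mu=\int_{(0,\infty)}\phi_{N}\,d\mu+N\mu(\{0\})$ and invoking the integer-matrix inequality gives
\[
\int\phi_{N}\,d\mu_{|\sigma_{i}(f)|}-N\mu_{|\sigma_{i}(f)|}(\{0\})\leq\int(\log t)_{+}\,d\mu_{|\sigma_{i}(f)|}.
\]
Both $\phi_{N}$ and $(\log t)_{+}$ are bounded continuous on $[0,\|f\|_{\infty}]$, so Lemma \ref{L:weak*convergence} applies to the first and third integrals in the limit; the atom term is handled by the Portmanteau upper-semicontinuity bound $\limsup_{i}\mu_{|\sigma_{i}(f)|}(\{0\})\leq\mu_{|f|}(\{0\})$, itself an immediate consequence of weak$^{*}$ convergence applied to the closed set $\{0\}$. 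The upshot is
\[
\int_{(0,\infty)}\phi_{N}\,d\mu_{|f|}\leq\int(\log t)_{+}\,d\mu_{|f|}<\infty,
\]
where finiteness is automatic since $(\log t)_{+}$ is bounded on $\spec(|f|)$.

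The final step is to let $N\to\infty$: since $\phi_{N}\uparrow(\log t)_{-}$ pointwise on $(0,\infty)$, monotone convergence yields $\int_{(0,\infty)}(\log t)_{-}\,d\mu_{|f|}\leq\int(\log t)_{+}\,d\mu_{|f|}<\infty$. This is exactly the integrability claim, and rearranging it into $\int_{(0,\infty)}\log(t)\,d\mu_{|f|}\geq 0$ gives $\Det^{+}_{L(\Gamma)}(f)\geq 1$. The delicate step, which I expect to be the only real subtlety, is the handling of the atom at $0$: the Portmanteau bound is precisely what ensures that the $N\mu(\{0\})$ contribution passes to the limit in the right direction, without which the $N\bigl(\mu_{|f|}(\{0\})-\liminf_{i}\mu_{|\sigma_{i}(f)|}(\{0\})\bigr)$ error would destroy the truncation estimate as $N$ grows.
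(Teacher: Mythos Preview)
Your argument is correct and is precisely the approach the paper has in mind: the paper does not spell out a proof but attributes the result to Elek--Lippner and remarks that it ``follows from the weak$^{*}$ convergence we have already shown,'' which is exactly the combination of $\Det^{+}(\sigma_{i}(f))\geq 1$ for integer matrices with Lemma~\ref{L:weak*convergence} that you have carried out in detail. Your handling of the atom at $0$ via the Portmanteau inequality for closed sets is the right way to make the truncation limit go through.
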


Finally, we end with one more approximation Lemma which will be relevant for our purposes. For this, we need some more functional analysis. Let $\rho\colon \Gamma\to \ell^{2}(\Gamma)$ be the right regular representation given by
\[[\rho(g)f](x)=f(xg).\]
If $\mathcal{K}\subseteq\ell^{2}(\Gamma)^{\oplus n}$ is a closed linear subspace which is invariant under $\rho^{\oplus n},$ it is known that $\Proj_{\mathcal{K}}\in M_{n}(L(\Gamma)).$ We define the von Neumann dimension of $\mathcal{K}$ by
\[\dim_{L(\Gamma)}(\mathcal{K})=\Tr\otimes \tau(\Proj_{\mathcal{K}}).\]
It is known that $\dim_{L(\Gamma)}(\mathcal{K})=0$ if and only if $\mathcal{K}=0,$ that $\mathcal{K}\cong \mathcal{H}$ as representations of $\Gamma$ implies that $\dim_{L(\Gamma)}(\mathcal{K})=\dim_{L(\Gamma)}(\mathcal{H}),$  and that for any $x\in M_{m,n}(L(\Gamma)),$
\[\dim_{L(\Gamma)}(\overline{\im x})+\dim_{L(\Gamma)}(\ker(x))=n.\]
See Theorem 1.12 in \cite{Luck} for proofs of these facts. We need the following analogue of L\"{u}ck approximation valid for a general sofic group.

\begin{lemma}[\cite{ElekSzaboDeterminant} Proposition 6.1]\label{L:injective} Let $\Gamma$ be a countable discrete sofic group with sofic approximation $\Sigma=(\sigma_{i}\colon \Gamma\to S_{d_{i}}).$ Let $f\in M_{m,n}(\ZZ(\Gamma)),$ and let $A_{i}\in M_{m,n}(M_{d_{i}}(\ZZ))$ with $\sup_{i}\|A_{i}\|_{\infty}<\infty,$ and $\|\sigma_{i}(f)-A_{i}\|_{2}\to 0.$ Then,
\[\dim_{L(\Gamma)}(\ker \lambda(f))=\lim_{i\to \infty}\frac{\dim_{\RR}(\ker(A_{i})\cap (\RR^{d_{i}})^{\oplus n})}{d_{i}}.\]
\end{lemma}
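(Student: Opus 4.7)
The plan is to recast the statement as a convergence of spectral masses at zero and to reduce to Lemma~\ref{L:weak*convergence} together with a simple integrality argument. Since $A_i$ has real entries,
$$\frac{\dim_{\RR}(\ker(A_{i})\cap (\RR^{d_{i}})^{\oplus n})}{d_{i}}=\frac{\dim_{\CC}\ker(A_i)}{d_i}=\Tr\otimes\tr_{d_i}(\chi_{\{0\}}(|A_i|))=\mu_{|A_i|}(\{0\}),$$
and similarly $\dim_{L(\Gamma)}(\ker\lambda(f))=\mu_{|f|}(\{0\})$. So the claim reduces to showing $\mu_{|A_{i}|}(\{0\})\to \mu_{|f|}(\{0\})$. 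Set $M=\max(1,\sup_i\|A_i\|_{\infty})<\infty$; all the spectral measures in question are supported in $[0,M]$. By Lemma~\ref{L:weak*convergence}, $\mu_{|A_i|}\to\mu_{|f|}$ in the weak$^{*}$ topology, and since $\{0\}$ is closed, the Portmanteau theorem gives the easy bound $\limsup_i\mu_{|A_{i}|}(\{0\})\leq\mu_{|f|}(\{0\})$.

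The reverse inequality requires ruling out accumulation of spectral mass just above zero, and this is exactly where the integrality of $A_i$ enters. The matrix $A_i^{*}A_i$ is positive semi-definite with integer entries, so for $r=\rank(A_i^{*}A_i)$, every $r\times r$ principal minor is a non-negative integer, and at least one is strictly positive (the rank of a positive semi-definite matrix equals the size of its largest non-vanishing principal minor). Their sum is the $r$-th elementary symmetric polynomial in the eigenvalues of $A_i^{*}A_i$, which equals the product of the non-zero eigenvalues, i.e.\ $\det^{+}(A_i^{*}A_i)=\det^{+}(|A_i|)^{2}$. Hence $\det^{+}(|A_i|)\geq 1$, equivalently
$$\int_{(0,\infty)}\log(t)\,d\mu_{|A_{i}|}(t)\geq 0.$$

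To conclude, fix $\varepsilon\in(0,1)$ and split the integral:
$$0\leq\int_{(0,\varepsilon]}\log(t)\,d\mu_{|A_{i}|}(t)+\int_{(\varepsilon,M]}\log(t)\,d\mu_{|A_{i}|}(t)\leq\log(\varepsilon)\mu_{|A_{i}|}((0,\varepsilon])+n\log M,$$
using that $\mu_{|A_{i}|}((\varepsilon,M])\leq n$. Rearranging yields the uniform small-singular-value bound $\mu_{|A_{i}|}((0,\varepsilon])\leq n\log M / |\log\varepsilon|$. Combining with the Portmanteau inequality $\liminf_i\mu_{|A_{i}|}([0,\varepsilon))\geq\mu_{|f|}([0,\varepsilon))\geq\mu_{|f|}(\{0\})$ (valid because $[0,\varepsilon)$ is open in $[0,M]$) gives
$$\liminf_{i}\mu_{|A_{i}|}(\{0\})\geq\liminf_{i}\mu_{|A_{i}|}([0,\varepsilon))-\frac{n\log M}{|\log\varepsilon|}\geq\mu_{|f|}(\{0\})-\frac{n\log M}{|\log\varepsilon|},$$
and letting $\varepsilon\to 0^{+}$ completes the proof. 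The main obstacle is precisely this lower bound: weak$^{*}$-convergence of spectral measures is blind to mass clustering arbitrarily close to zero, and it is the integrality hypothesis on the $A_i$ that supplies the required uniform control on small singular values.
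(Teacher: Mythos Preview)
Your proof is correct. The paper does not supply its own argument for this lemma; it simply quotes \cite{ElekSzaboDeterminant}, Proposition~6.1. What you have written is precisely the L\"uck approximation argument as adapted by Elek--Szab\'o to sofic approximations: reduce to convergence of $\mu_{|A_i|}(\{0\})$, get the upper semicontinuity for free from weak$^{*}$ convergence via Portmanteau on the closed set $\{0\}$, and then use the integrality of $A_i^{*}A_i$ (hence $\det^{+}(|A_i|)\geq 1$) to obtain the uniform bound $\mu_{|A_i|}((0,\varepsilon])\leq n\log M/|\log\varepsilon|$, which is exactly the control on small singular values needed to upgrade lower semicontinuity on the open set $[0,\varepsilon)$ to the atom at $0$. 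This is the same mechanism as in the original references; you have not taken a different route, just filled in the cited proof.

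One small remark: your parenthetical that ``the rank of a positive semi-definite matrix equals the size of its largest non-vanishing principal minor'' is true, but the only fact you actually need is that \emph{some} $r\times r$ principal minor is positive, which follows from writing $A_i^{*}A_i=LL^{*}$ with $L$ of full column rank $r$ and choosing $r$ linearly independent rows of $L$. Everything else is clean.
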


We end this section with a proposition which should will translate our hypotheses in terms of viewing $f\in M_{m,n}(\CC(\Gamma))$ as a ``left'' multiplication operator to that of a  ``right'' multiplication operator. This proposition is well-known, we only decide to include it to clarify any potential confusion the reader might have between left and right multiplication operators.  For $f\in \CC(\Gamma)$ we define
\[r(f):\ell^{2}(\Gamma)\to \ell^{2}(\Gamma)\]
by
\[(r(f)\xi)(g)=\sum_{x\in\Gamma}\xi(gx^{-1})\widehat{f}(x).\]
For $A\in M_{m,n}(\CC(\Gamma))$ define
\[r(A)\colon \ell^{2}(\Gamma)^{\oplus m}\to \ell^{2}(\Gamma)^{\oplus n}\]
by
\[(r(A)\xi)(l)=\sum_{s=1}^{m}r(A_{sl})\xi(s).\]

\begin{proposition}\label{P:Left/RightIssues} Let $\Gamma$ be a countable discrete group, and fix $f\in M_{m,n}(\ZZ(\Gamma)).$ Consider the following conditions:
\begin{enumerate}[(a)]
\item $\lambda(f)$  is injective,
\item $\lambda(f)$ has dense image,
\item $r(f)$ is injective,
\item $r(f)$ has dense image.
\end{enumerate}
Then
\begin{enumerate}[(i)]
\item  $(a)$ and $(d)$ are equivalent,
\item $(b)$ and $(c)$ are equivalent,
\item $(a)$ implies that $n\leq m,$
\item $(b)$ implies that $m\leq n$,
\item if $m=n,$ then all of $(a),(b),(c),(d)$ are equivalent.
\end{enumerate}

\end{proposition}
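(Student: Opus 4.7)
The plan is to reduce the entire proposition to two standard ingredients: the von Neumann dimension formula for operators between free $\ell^{2}$-modules, together with the antilinear flip that intertwines the left and right regular representations.

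For $(iii)$ and $(iv)$, I would invoke the identity
\[\dim_{L(\Gamma)}\bigl(\ker \lambda(f)\bigr)+\dim_{L(\Gamma)}\bigl(\overline{\im \lambda(f)}\bigr)=n\]
recalled after Lemma~\ref{L:injective}. Injectivity of $\lambda(f)$ forces the image closure to have $L(\Gamma)$-dimension $n$ inside the $m$-dimensional module $\ell^{2}(\Gamma)^{\oplus m}$, whence $n\le m$. Dually, dense image forces the kernel to have nonnegative dimension $n-m$, whence $m\le n$.

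For $(i)$ and $(ii)$, I would introduce the antilinear isometric involution $J\colon \ell^{2}(\Gamma)\to \ell^{2}(\Gamma)$ given by $(J\xi)(g)=\overline{\xi(g^{-1})}$, and its coordinatewise extension $J^{\oplus k}$ to $\ell^{2}(\Gamma)^{\oplus k}$. The central computation is the intertwining identity
\[J^{\oplus m}\,\lambda(f)\,J^{\oplus n}=r(f^{*}),\]
verified by checking on $\delta_{h}$ for $f\in\Gamma$, extending linearly to $\CC(\Gamma)$, and then entrywise to matrices, using $\widehat{f^{*}}(g)=\overline{\widehat{f}(g^{-1})}$. Because $J^{\oplus k}$ is a bijective isometry, $\lambda(f)$ is injective (respectively has dense image) if and only if $r(f^{*})$ is. Pairing this with the standard adjoint formula $r(f)^{*}=r(f^{*})$ and the duality that a bounded operator $T$ has dense image iff $T^{*}$ is injective yields $(a)\Leftrightarrow(d)$ and $(b)\Leftrightarrow(c)$.

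For $(v)$, assume $m=n$. The dimension formula combined with faithfulness of $\dim_{L(\Gamma)}$ (Theorem~1.12 of \cite{Luck}) makes $(a)$ and $(b)$ equivalent: a kernel of $L(\Gamma)$-dimension zero is zero, and an image closure of $L(\Gamma)$-dimension $n$ fills $\ell^{2}(\Gamma)^{\oplus n}$. Combined with $(i)$ and $(ii)$, this shows all of $(a),(b),(c),(d)$ are equivalent. The only step that requires care is the intertwining identity for $J$—one must be vigilant about the conventions $g\mapsto g^{-1}$ versus $g\mapsto \bar g$ versus $f\mapsto f^{*}$ and their interaction with the matrix index transposition implicit in comparing $\lambda$ and $r$—but it is a routine verification once written out.
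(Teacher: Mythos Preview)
Your proposal is correct and follows essentially the same route as the paper: an involution intertwining $\lambda$ and $r$ combined with the adjoint duality $\ker(T)^{\perp}=\overline{\im(T^{*})}$ for $(i)$--$(ii)$, and the von Neumann rank--nullity theorem for $(iii)$--$(v)$. The only cosmetic difference is that the paper uses the \emph{linear} flip $R\xi(g)=\xi(g^{-1})$ in place of your antilinear $J$; either choice works and the bookkeeping with $f^{*}$ comes out the same.
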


\begin{proof} To prove $(i),$ consider the unitary
\[R\colon \ell^{2}(\Gamma)\to \ell^{2}(\Gamma)\]
given by
\[(R\xi)(g)=\xi(g^{-1}).\]
Fix $\alpha\in \ZZ(\Gamma)$ and $\xi\in \ell^{2}(\Gamma).$ Then for any $h\in \Gamma:$
\begin{align*}
(\lambda(\alpha)R(\xi))(h)&=\sum_{x\in \Gamma}\widehat{\alpha}(x)(R(\xi))(x^{-1}h)\\
&=\sum_{x\in \Gamma}\widehat{\alpha}(x)\xi(h^{-1}x)\\
&=\sum_{x\in \Gamma}\widehat{\alpha}(x^{-1})\xi(h^{-1}x^{-1})\\
&=\sum_{x\in\Gamma}\widehat{\alpha^{*}}(x)\xi(h^{-1}x^{-1})\\
&=\sum_{x\in\Gamma}\xi(h^{-1}x^{-1})\widehat{\alpha^{*}}(x)\\
&=(r(\alpha^{*})\xi)(h^{-1})\\
&=R(r(\alpha^{*})\xi)(h),
\end{align*}
so $\lambda(\alpha)R=Rr(\alpha^{*}).$
Now fix $\zeta\in \ell^{2}(\Gamma)^{\oplus n},$ we then have for all $1\leq l\leq m:$
\[(\lambda(f)R^{\oplus n}(\zeta))(l)=\sum_{j=1}^{n}\lambda(f_{lj})R(\zeta(j))=\sum_{j=1}^{n}R(r(f_{lj}^{*})\zeta(j))=R\left(\sum_{j=1}^{n}r((f^{*})_{jl})\zeta(j)\right)
=R((r(f^{*})\zeta)(l)),\]
where we use that we already showed that $\lambda(\alpha)R=R(r(\alpha^{*}))$ for all $\alpha\in \ZZ(\Gamma).$
We can summarize the above computation by saying that
\[ \lambda(f)R^{\oplus n}=   R^{\oplus n}r(f^{*})=R^{\oplus n}r(f)^{*}.\]
Statements $(i),(ii)$ now follow from the functional analytic fact that $(\ker(T))^{\perp}=\overline{\im(T^{*})}$ for any bounded linear operator $T$ between two Banach spaces. Statements (iii),(iv) are consequences of the Rank-Nullity Theorem for von Neumann dimension (see \cite{Luck} Theorem 1.12 (2)), and $(v)$ also follows from the Rank-Nullity Theorem for von Neumann dimension.

\end{proof}

\section{The Main Reduction}\label{S:mainreduction}

	Let $f\in M_{m,n}(\ZZ(\Gamma)),$ we will use $X_{f}$ for the Pontryagin dual of $\ZZ(\Gamma)^{\oplus n}/r(f)(\ZZ(\Gamma)^{\oplus m})$ The goal of this section is to give an alternate formula for the entropy of $\Gamma\actson X_{f},$ which will be simpler for us to deal with and will reduce the problem to (a limit of) finite-dimensional analysis. The essential idea, as we stated before, is that instead of dealing with microstates
\[\{1,\cdots,d_{i}\}\to X_{f}\]
we deal with microstates
\[\{1,\cdots,d_{i}\}\to (\TT^{\Gamma})^{n},\]
which are ``small" on $r(f)(\ZZ(\Gamma)^{\oplus m})$ (viewing $ (\TT^{\Gamma})^{n}$ as the dual of $\ZZ(\Gamma)^{\oplus n}$) and note that these have to be ``close" to microstates which actually take values in $X_{f}.$

	We first recall the definition of topological entropy for a sofic group, throughout whenever $X$ is a set we identify $X^{n}$ with all functions $\{1,\cdots,n\}\to X.$ If $(X,d)$ is a pseudometric on $X$ and $1\leq p\leq\infty,$ we let $d_{p}$ be the pseudometric on $X^{n}$ defined by
\[d_{p}(\phi,\psi)^{p}=\frac{1}{n}\sum_{j=1}^{n}d(\phi(j),\psi(j))^{p},\]
with the usual modification if $p=\infty.$

\begin{definition}\emph{Let $\Gamma$ be a countable discrete sofic group with sofic approximation $\Sigma=(\sigma_{i}\colon \Gamma\to S_{d_{i}}).$  Let $X$ be a compact metrizable space with $\Gamma\actson X$ by homeomorphisms. If $\rho$ is a continuous pseudometric on $X,$ $\delta>0$ and $F\subseteq \Gamma$ finite we let $\Map(\rho,F,\delta,\sigma_{i})$ be all maps $\phi \colon \{1,\cdots,d_{i}\}\to X$ such that $\rho_{2}(\phi \circ \sigma_{i}(g),g\phi)<\delta$ for all $g\in F.$ }\end{definition}
We will typically refer to the elements of $\Map(\rho,F,\delta,\sigma_{i})$ as ``microstates". This is only a heuristic term and will not be defined rigorously.

\begin{definition}\emph{Let $\Gamma$ be a countable discrete group and $\Gamma \actson X$ by homeomorphisms. We say that a continuous pseudometric $\rho$ on $X$ is} dynamically generating \emph{if whenever $x\ne y$ in $X,$ then there is a $g\in \Gamma$ so that $\rho(gx,gy)>0.$}\end{definition}

\begin{definition} \emph{Let $\Gamma$ be a countable discrete sofic group with sofic approximation $\Sigma=(\sigma_{i}\colon \Gamma\to S_{d_{i}}).$ Let $X$ be a compact metrizable space and $\Gamma\actson X$ by homeomorphisms and fix a dynamically generating pseudometric $\rho$ on $X.$ We define the} topological entropy of $\Gamma\actson X$ \emph{by}

\[h_{\Sigma}(\rho,F,\delta,\varespilon)=\limsup_{i\to \infty}\frac{\log S_{\varepsilon}(\Map(\rho,F,\delta,\sigma_{i}),\rho_{2})}{d_{i}}\]
\[h_{\Sigma}(\rho,\varepsilon)=\inf_{\substack{F\subseteq \Gamma\mbox{ \emph{ finite, }}\\ \delta>0}}h_{\Sigma}(\rho,F,\delta,\varepsilon)\]
\[h_{\Sigma}(X,\Gamma)=\sup_{\varepsilon>0}h_{\Sigma}(\rho,\varepsilon).\]
\emph{By Theorem 4.5  in \cite{KLi}, and Proposition 2.4 in \cite{KLi2} this does not depend on the choice of dynamically generating pseudometric.}\end{definition}

	A few remarks about the definition. First in \cite{KLi2} Kerr-Li use $N_{\varepsilon}$ instead of $S_{\varepsilon},$ however by inequality (\ref{E:sepspan}) in Section 1 this does not matter. We will actually use both $N_{\varespilon}$ and $S_{\varespilon}.$   Secondly, in \cite{KLi2}, Kerr-Li typically use
\[N_{\varespilon}(\Map(\rho,F,\delta,\sigma_{i}),\rho_{\infty}).\]
We will prefer to use $\rho_{2}$ instead of $\rho_{\infty},$ firstly because we will use a large amount of Borel functional calculus, which is much nicer in a Hilbert-space, even in the finite dimensional setting. Secondly, we will get our lower estimates on topological entropy by using a perturbation argument. Essentially, we will include our space $X$ in a larger space $Y,$ and consider microstates $\{1,\cdots,d_{i}\}\to Y$ which are ``close" to $X,$ our methods will necessitate this closeness being with respect to $\rho_{2},$ not $\rho_{\infty}.$ If we use $S_{\varespilon}(\cdots,\rho_{2})$ instead of $S_{\varespilon}(\cdots,\rho_{\infty}),$ then it is  significantly easier to show that this method gives the topological entropy. The idea of using $\rho_{2}$ instead of $\rho_{\infty}$ goes back to Hanfeng Li in \cite{Li2}, it was also used in the proof of Lemma 7.12 in \cite{BowenLi}.

	Let us formulate the perturbation ideas more precisely, in the case of algebraic actions. For notation, if $x\in \RR,$ we use
\[|x+\ZZ|=\inf_{k\in \ZZ}|x+k|.\]

\begin{definition}\emph{Let $\Gamma$ be a countable discrete sofic group with sofic approximation $\Sigma=(\sigma_{i}\colon \Gamma\to S_{d_{i}}).$ Let $B\subseteq A$ be countable $\ZZ(\Gamma)$-modules. Let $\rho$ be a dynamically generating pseudometric on $\widehat{A},$ and $D\subseteq B$ be any set so that $\Gamma D$ generates $B$ as an abelian group. For $F\subseteq\Gamma$ finite, $E\subseteq D$ finite, and $\delta>0,$ we let $\Map(\rho|E,F,\delta,\sigma_{i})$ be the set of all $\phi\in \Map(\rho,F,\delta,\sigma_{i})$ so that}
\[\frac{1}{d_{i}}\sum_{j=1}^{d_{i}}|\phi(j)(a)|^{2}<\delta^{2}\]
\emph{for all $a\in E.$ We set}
\[h_{\Sigma}(\rho|E,F,\delta,\varepsilon)=\limsup_{i\to \infty}\frac{1}{d_{i}}\log S_{\varepsilon}(\Map(\rho|E,F,\delta,\sigma_{i}),\rho_{2}),\]
\[h_{\Sigma}(\rho|D,\varespilon)=\inf_{\substack{F\subseteq \Gamma\mbox{\emph{ finite, }}\\ E\subseteq D\mbox{ \emph{ finite,} }\\ \delta>0}}h_{\Sigma}(\rho|E,F,\delta,\varespilon),\]
\[h_{\Sigma}(\rho|D)=\sup_{\varespilon>0}h_{\Sigma}(\rho|D,\varespilon).\]
\end{definition}

	Let us motivate the definition a little. Intuitively, a microstate is a finitary model of our dynamical system. Given the algebraic structure of $A/B,$ a microstate for $\Gamma\actson \widehat{A/B}$ can be thought of in two different ways: one is an element of $\Map(\rho|_{\widehat{A/B}},F,\delta,\sigma_{i})$ (recall that $\widehat{A/B}$ can be viewed as a subspace of $\widehat{A}).$ But, since $\widehat{A/B}$ are all elements in $\widehat{A}$ which are zero on $B$, we may also think of a microstate for $\Gamma\actson \widehat{A/B},$ as a microstate for $\Gamma\actson\widehat{A}$ which is small on $B.$ Thus, $\Map(\rho|E,F,\delta,\sigma_{i})$ can be simply be viewed as another microstates space for the action of $\Gamma$ on $\widehat{A/B}.$ We now reformulate topological entropy in terms of this new microstates space.

\begin{lemma}\label{L:equimicro} Let $\Gamma$ be  countable discrete sofic group with sofic approximation $\Sigma.$ Let $B\subseteq A$ be countable $\ZZ(\Gamma)$-modules, let $\rho$ be a dynamically generating pseudometric on $\widehat{A},$ and let $D\subseteq B$ be such that $\Gamma D$ generates $B$ as an abelian group. Then,
\[h_{\Sigma}(\widehat{A/B},\Gamma)=h_{\Sigma}(\rho|D,\Gamma).\]
\end{lemma}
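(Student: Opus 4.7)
The strategy is to prove the equality by establishing both inequalities via comparison of the two microstate spaces. Note first that $\widehat{A/B}=\{\chi\in\widehat{A}:\chi|_{B}=0\}$ is a closed $\Gamma$-invariant subgroup of $\widehat{A}$, so $\rho|_{\widehat{A/B}}$ is dynamically generating and may be used to compute $h_{\Sigma}(\widehat{A/B},\Gamma)$. The direction $h_{\Sigma}(\widehat{A/B},\Gamma)\leq h_{\Sigma}(\rho|S,\Gamma)$ is then immediate by inclusion of microstate spaces: a classical microstate $\phi\colon\{1,\dots,d_{i}\}\to\widehat{A/B}\subseteq\widehat{A}$ automatically satisfies $\phi(j)(a)=0$ for all $a\in B$, so it lies in $\Map(\rho|E,F,\delta,\sigma_{i})$ for every finite $E\subseteq S$ and every $\delta>0$. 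Taking $S_{\varepsilon}$, $\limsup_{i}$, $\inf$ over $(E,F,\delta)$, and $\sup_{\varepsilon}$ gives the inequality.

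For the reverse inequality I use a rounding procedure, whose key ingredient is the following compactness sublemma: for every $\eta>0$ there exist a finite $E_{0}\subseteq B$ and $\delta_{0}>0$ such that whenever $\chi\in\widehat{A}$ satisfies $|\chi(a)|<\delta_{0}$ for all $a\in E_{0}$, we have $\rho(\chi,\widehat{A/B})<\eta$. Failure produces a sequence $\chi_{n}$ along an exhausting family in $B$ whose limit point $\chi^{*}\in\widehat{A}$ vanishes on all of $B$, hence lies in $\widehat{A/B}$, yet remains $\eta$-far from $\widehat{A/B}$, a contradiction. To deploy the sublemma, fix $\eta>0$ and take $E_{0}$ as above; since $B$ is generated by $\Gamma S$, each $a\in E_{0}$ is a finite $\ZZ$-combination of translates $g^{-1}s$ with $g$ in a finite set $F_{0}\subseteq\Gamma$ and $s$ in a finite set $E_{1}\subseteq S$. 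Given $\phi\in\Map(\rho|E,F,\delta,\sigma_{i})$ with $E\supseteq E_{1}$, $F\supseteq F_{0}$, and $\delta$ small, the near-equivariance $\rho_{2}(\phi\circ\sigma_{i}(g),g\phi)<\delta$ combined with uniform continuity of $\chi\mapsto\chi(a)$ on compact $\widehat{A}$ yields $\frac{1}{d_{i}}\sum_{j}|\phi(j)(g^{-1}s)|^{2}=o_{\delta\to 0}(1)$ for $g\in F_{0}$, $s\in E_{1}$; by the $\TT$-triangle inequality and Cauchy--Schwarz this passes to $\frac{1}{d_{i}}\sum_{j}|\phi(j)(a)|^{2}=o_{\delta\to 0}(1)$ for every $a\in E_{0}$. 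Chebyshev now ensures that, apart from an $o(1)$ fraction of bad indices $j$, we have $|\phi(j)(a)|<\delta_{0}$ for all $a\in E_{0}$; for those good $j$ the sublemma produces $\phi'(j)\in\widehat{A/B}$ with $\rho(\phi(j),\phi'(j))<\eta$, and on bad $j$ we set $\phi'(j)$ to any fixed element of $\widehat{A/B}$.

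By construction $\rho_{2}(\phi,\phi')\leq\eta+o(1)$, and a triangle inequality together with uniform continuity of $\chi\mapsto g\chi$ for $g\in F'$ places $\phi'\in\Map(\rho|_{\widehat{A/B}},F',\delta',\sigma_{i})$ for $F', \delta'$ depending only on $(F,\delta,\eta)$. For the counting, if $\phi_{1},\phi_{2}$ are $\varepsilon$-separated in $\Map(\rho|E,F,\delta,\sigma_{i})$ with $\varepsilon>2(\eta+o(1))$, then their projections satisfy $\rho_{2}(\phi'_{1},\phi'_{2})\geq\varepsilon-2(\eta+o(1))$, so the projection is injective and yields a separated set in $\Map(\rho|_{\widehat{A/B}},F',\delta',\sigma_{i})$. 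By (\ref{E:sepspan}), this gives $S_{\varepsilon}(\Map(\rho|E,F,\delta,\sigma_{i}),\rho_{2})\leq S_{(\varepsilon-O(\eta))/2}(\Map(\rho|_{\widehat{A/B}},F',\delta',\sigma_{i}),\rho_{2})$. Passing to $\limsup_{i}$, then $\inf$ over parameters, then $\sup$ over $\varepsilon$ (with $\eta\to 0$) gives $h_{\Sigma}(\rho|S,\Gamma)\leq h_{\Sigma}(\widehat{A/B},\Gamma)$. The main technical obstacle is precisely the step converting $\rho_{2}$-near-equivariance of $\phi$ into coordinatewise $\ell^{2}$-smallness on $\Gamma$-translates of $E$; this requires careful quantitative use of the uniform continuity of the evaluation maps on $\widehat{A}$, since $\rho$ is only assumed to be an abstract continuous pseudometric.
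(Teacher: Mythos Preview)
Your approach is essentially the paper's: one inequality by inclusion of microstate spaces, the other by a compactness-plus-rounding argument. The paper states the compactness step in the form ``for every finite $F\subseteq\Gamma$ and $\delta>0$ there are finite $E\subseteq S$, $F'\subseteq\Gamma$, $\delta'>0$ so that $|\chi(ga)|<\delta'$ for $g\in F'$, $a\in E$ implies the existence of $\widetilde{\chi}\in\widehat{A/B}$ with $\sup_{g\in F}\rho(g\chi,g\widetilde{\chi})<\delta$,'' and then defers the remaining mechanics to Proposition~4.3 of \cite{Me4}.

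There is, however, a genuine gap in your execution. Your compactness sublemma only yields $\rho(\chi,\widehat{A/B})<\eta$, i.e.\ closeness in $\rho$ alone, whereas you later need closeness in $\rho_{F}(\chi,\widetilde{\chi})=\max_{g\in F}\rho(g\chi,g\widetilde{\chi})$ to push near-equivariance from $\phi$ to $\phi'$. Your appeal to ``uniform continuity of $\chi\mapsto g\chi$'' does not close this: that continuity is with respect to the topology of $\widehat{A}$, but $\rho$ is only a dynamically generating \emph{pseudometric} and $\rho(\phi(j),\phi'(j))<\eta$ gives no control over $\rho(g\phi(j),g\phi'(j))$. The fix is exactly the paper's formulation: run the same compactness contradiction argument with $\rho_{F}$ in place of $\rho$ (it is still a continuous function on the compact space $\widehat{A}$), obtaining a single $\widetilde{\chi}$ that is simultaneously $\rho$-close to $\chi$ along all translates in $F$. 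With that in hand your triangle-inequality argument for $\phi'\in\Map(\rho|_{\widehat{A/B}},F',\delta',\sigma_{i})$ goes through.

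The ``main technical obstacle'' you flag at the end---passing from $\rho_{2}$-near-equivariance to $\ell^{2}$-smallness of evaluations on $\Gamma$-translates of $E$---is real and is precisely what the paper offloads to \cite{Me4}. Your sketch is on the right track but incomplete: because the evaluation $\chi\mapsto\chi(s)$ is only uniformly continuous relative to some finite family $\{\rho(k\,\cdot\,,k\,\cdot\,):k\in K_{s}\}$, you must take $F$ large enough to contain both $K_{s}$ and $K_{s}F_{0}$ for each $s\in E_{1}$, and then combine $\rho_{2}(kg\phi,\phi\circ\sigma_{i}(kg))<\delta$ with $\rho_{2}(k(\phi\circ\sigma_{i}(g)),\phi\circ\sigma_{i}(k)\sigma_{i}(g))<\delta$ and soficity to get $\rho_{K_{s},2}(g\phi,\phi\circ\sigma_{i}(g))$ small. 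This should be made explicit rather than left as a one-line invocation of uniform continuity.
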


\begin{proof} We use $\rho\big|_{\widehat{A/B}}$ to compute the entropy of $\Gamma \actson \widehat{A/B}.$ A compactness argument implies that for all $F\subseteq \Gamma$ finite,  $\delta>0,$ there are finite $E\subseteq D,F'\subseteq \Gamma,$ and a $\delta'>0$ so that if $\chi\in \widehat{A},$ and
$|\chi(ga)|<\delta'$
for all $g\in F',a\in E,$ then there is a $\widetidle{\chi}\in \widehat{A/B}$ so that
\[\sup_{g\in F}\rho(g\chi,g\widetidle{\chi})<\delta.\]
The proof now follows in the same way as Proposition 4.3 in \cite{Me4}.

\end{proof}

	We will apply this to the situation when $A=\ZZ(\Gamma)^{\oplus n},B=r(f)(\ZZ(\Gamma)^{\oplus m}),$ where $f\in M_{m,n}(\ZZ(\Gamma)),$ but first we need more notation.  If $x\in \RR^{n}$ we will always use $\|x\|_{2}$ for the $\ell^{2}$-norm of $x$ with respect to the uniform probability measure. If $\Lambda\subseteq \RR^{n}$ is a lattice we set
\[\|x\|_{2,\Lambda}=\inf_{\lambda\in \Lambda}\|x-\lambda\|_{2},\]
and we use
\[\theta_{2,\Lambda}(x,y)=\|x-y\|_{2,\Lambda}.\]
Finally, if $T\in M_{m,n}(\ZZ),$ and $\delta>0,$ we set
\[\Xi_{\delta}(T)=\{\xi\in \RR^{n}:\|T\xi\|_{2,\ZZ^{m}}<\delta\}.\]

\begin{proposition}\label{P:mainreduction} Let $\Gamma$ be a countable discrete sofic group with sofic approximation $\Sigma=(\sigma_{i}\colon \Gamma\to S_{d_{i}}).$ Let $f\in M_{m,n}(\ZZ(\Gamma)),$ then
\[h_{\Sigma}(X_{f},\Gamma)=\sup_{\varespilon>0}\inf_{\delta>0}\limsup_{i\to \infty}\frac{1}{d_{i}}\log S_{\varespilon}(\Xi_{\delta}(\sigma_{i}(f)),\theta_{2,(\ZZ^{d_{i}})^{\oplus n}}).\]
\end{proposition}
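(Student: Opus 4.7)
The plan is to invoke Lemma \ref{L:equimicro} with $A=\ZZ(\Gamma)^{\oplus n}$, $B=r(f)(\ZZ(\Gamma)^{\oplus m})$, and the generating set $S=\{r(f)(e_{l}):1\le l\le m\}$ where $e_{l}\in \ZZ(\Gamma)^{\oplus m}$ is the $l$-th standard basis vector (the $\Gamma$-translates of $S$ generate $B$ as an abelian group since $r(f)$ commutes with the left $\Gamma$-action). As dynamically generating pseudometric on $\widehat{A}=(\TT^{\Gamma})^{n}$ I take
\[\rho(\chi,\chi')^{2}=\sum_{k=1}^{n}|\chi(e_{k})-\chi'(e_{k})|^{2},\]
with $|\cdot|$ the $\TT$-distance. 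Lemma \ref{L:equimicro} then reduces matters to comparing separated-set counts of $\Map(\rho|S,F,\delta,\sigma_{i})$ with those of $\Xi_{\delta}(\sigma_{i}(f))$.

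The comparison is mediated by the truncation map $\Phi\colon ((\TT^{\Gamma})^{n})^{d_{i}}\to (\TT^{d_{i}})^{\oplus n}$ defined by $\Phi(\phi)_{k}(j)=\phi(j)(e_{k})$ and the equivariant extension map $\Psi\colon (\TT^{d_{i}})^{\oplus n}\to ((\TT^{\Gamma})^{n})^{d_{i}}$ defined by $\Psi(\xi)(j)(ge_{k})=\xi_{k}(\sigma_{i}(g)^{-1}(j))$. Since $\rho$ only sees the values at $e_{1},\dots,e_{n}$, one has $\Phi\circ\Psi=\id$ and the identity
\[\rho_{2}(\phi,\phi')^{2}=n\cdot \theta_{2,(\ZZ^{d_{i}})^{\oplus n}}(\Phi(\phi),\Phi(\phi'))^{2}.\]
Consequently, $\rho_{2}$-separated-set counts of a set of microstates equal $\theta_{2}$-separated-set counts of their $\Phi$-images up to the absolute constant $\sqrt{n}$.

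The heart of the argument is to show that, after suitable choice of $F\supseteq \bigcup_{l,k}\supp\widehat{f_{lk}}$ and $\delta>0$, the image $\Phi(\Map(\rho|S,F,\delta,\sigma_{i}))$ is sandwiched between $\Xi_{\delta'}(\sigma_{i}(f))$ and $\Xi_{C\delta}(\sigma_{i}(f))$, where $\delta'\to 0$ as $\delta\to 0$, $F\uparrow\Gamma$, $i\to\infty$. The upper inclusion uses
\[\phi(j)(r(f)(e_{l}))=\sum_{k,g}\widehat{f_{lk}}(g)\,\phi(j)(ge_{k})\]
combined with the approximate equivariance $\phi(j)(ge_{k})\approx \Phi(\phi)_{k}(\sigma_{i}(g)^{-1}(j))$ (in $\ell^{2}$), which identifies $\phi(j)(r(f)(e_{l}))$ with $(\sigma_{i}(f)\Phi(\phi))_{l}(j)\pmod{\ZZ}$ up to error of order $\delta\|\widehat{f}\|_{1}$, so the smallness condition at $r(f)(e_{l})\in S$ forces $\Phi(\phi)\in \Xi_{C\delta}(\sigma_{i}(f))$. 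The lower inclusion uses $\Phi\circ\Psi=\id$: for $\xi\in \Xi_{\delta'}(\sigma_{i}(f))$, the extension $\Psi(\xi)$ is \emph{exactly} equivariant along the permutations $\sigma_{i}(g)$, so its approximate $\Gamma$-equivariance follows from the sofic axioms $\sigma_{i}(gh)(j)=\sigma_{i}(g)\sigma_{i}(h)(j)$ for $j$ in a set of density tending to $1$, while smallness at $r(f)(e_{l})$ is $\xi\in \Xi_{\delta'}(\sigma_{i}(f))$ via the same identity.

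Chasing these sandwich inclusions through the separated-set counts and using inequalities (\ref{E:sepspan}), (\ref{E:sepspanfuzz}) to absorb the perturbation $\sqrt{n}$, one deduces the stated formula from $h_{\Sigma}(X_{f},\Gamma)=h_{\Sigma}(\rho|S)$. The main technical obstacle lies in the backward direction: because $\Psi$ uses $\sigma_{i}(g)^{-1}$, the finite set $F'$ used to guarantee $\Psi(\xi)\in \Map(\rho,F,\delta,\sigma_{i})$ must be enlarged to include $F\cdot F^{-1}$, and one must quantify how the sofic defect converts into $\ell^{2}$ approximate equivariance of $\Psi(\xi)$. This bookkeeping is routine but forces $\delta'$ to depend on $F$; nevertheless the dependence disappears in the final $\sup_{\varespilon}\inf_{\delta}$ on the right-hand side because the $\inf$ over $E,F$ in $h_{\Sigma}(\rho|S)$ is absorbed once $i$ is large enough for the sofic defect relative to $F$ to be negligible.
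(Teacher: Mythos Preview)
Your proposal is correct and follows essentially the same route as the paper's proof: both invoke Lemma~\ref{L:equimicro} with $A=\ZZ(\Gamma)^{\oplus n}$, $B=r(f)(\ZZ(\Gamma)^{\oplus m})$, the finite generating set consisting of the $m$ ``rows'' of $f$, and the pseudometric that reads off values at the identity; both then pass between $\Map(\rho|S,F,\delta,\sigma_i)$ and $\Xi_{\delta}(\sigma_i(f))$ via the same pair of maps (your $\Phi,\Psi$ are exactly the paper's $\phi\mapsto\xi_{\phi}$ and $\xi\mapsto\phi_{\xi}$), using the isometry $\rho_{2}\leftrightarrow\theta_{2}$ and the identity $\ip{\phi_{\xi}(j),f_{l}}=(\sigma_{i}(f_{l})\xi)(j)+\ZZ$. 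Your treatment is slightly more explicit than the paper's about the approximate equivariance of $\Psi(\xi)$ coming from the sofic axioms, which the paper leaves implicit; the harmless normalization discrepancy in $\rho$ (your $\sum_{k}$ versus the paper's $\tfrac{1}{n}\sum_{k}$) is absorbed exactly as you note.
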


\begin{proof} Set $A=\ZZ(\Gamma)^{\oplus n},B=r(f)(\ZZ(\Gamma)^{\oplus m}).$ We shall view $\widehat{A}=(\TT^{\Gamma})^{n},$ by
\[\ip{\zeta,\alpha}=\sum_{\substack{1\leq l\leq n,\\ g\in \Gamma}}\zeta(l)(g)\widehat{\alpha}(l)(g),\]
for $\zeta\in (\TT^{\Gamma})^{n},\alpha\in \ZZ(\Gamma)^{\oplus n}.$

	Let $\rho$ be the dynamically generating pseudometric on $\widehat{A}$ given by
\[\rho(\chi_{1},\chi_{2})^{2}=\frac{1}{n}\sum_{k=1}^{n}|\chi_{1}(k)(e)-\chi_{2}(k)(e)|^{2}.\]
Given $x\in M_{s,t}(L(\Gamma)),$ we let $\widetilde{x}\in M_{t,s}(L(\Gamma))$ be defined by
\[(\widetilde{x})_{ij}=x_{ji}.\]
Write
\[f=\begin{bmatrix}
\widetilde{f_{1}}\\
\widetilde{f_{2}}\\
\vdots\\
\widetilde{f_{m}}
\end{bmatrix},\]
where $f_{j}\in M_{n,1}(\ZZ(\Gamma)),$ and view $\ZZ(\Gamma)^{\oplus n}=M_{n,1}(\ZZ(\Gamma)).$ Lastly, set
$D=\{f_{1},\cdots,f_{m}\}.$
We now apply Lemma \ref{L:equimicro} for this $A,B,\rho,D.$ For $\xi\in \Xi_{\delta}(\sigma_{i}(f)),$ define
\[\phi_{\xi}\colon \{1,\cdots,d_{i}\}\to (\TT^{\Gamma})^{n}\]
by
\[\phi_{\xi}(j)(k)(g)=\xi(\sigma_{i}(g)^{-1}j)(k)+\ZZ.\]

By a simple computation
\[\ip{\phi_{\xi}(j),f_{l}}=(\sigma_{i}(f_{l})\xi)(j)+\ZZ\]
(recall that we are viewing $(\TT^{\Gamma})^{n}$ as the dual of $\ZZ(\Gamma)^{\oplus n}$). As
\[\frac{1}{m}\sum_{l=1}^{m}\|\sigma_{i}(f_{l})\xi\|_{2,\ZZ^{d_{i}}}^{2}=\|\sigma_{i}(f)\xi\|_{2,(\ZZ^{d_{i}})^{\oplus m}}^{2},\]
\[\rho_{2}(\phi_{\xi},\phi_{\xi'})=\|\xi-\xi'\|_{2,(\ZZ^{d_{i}})^{\oplus n}},\]
by the preceding Lemma we find that
\[\sup_{\varespilon>0}\inf_{\delta>0}\limsup_{i\to \infty}\frac{1}{d_{i}}\log S_{\varespilon}(\Xi_{\delta}(\sigma_{i}(f)),\theta_{2,(\ZZ^{d_{i}})^{\oplus n}})\leq h_{\Sigma}(X_{f},\Gamma).\]
For the reverse inequality, given $\phi\in \Map(\rho|D,F,\delta,\sigma_{i}),$ let
$\zeta_{\phi}\in(\TT^{d_{i}})^{n}$
be given by
\[\zeta_{\phi}(k)(j)=\phi(j)(k)(e),\]
and let
$\xi_{\phi}\in (\RR^{d_{i}})^{n}$
be any lift of $\zeta_{\phi}$ under the quotient map
\[(\RR^{d_{i}})^{n}\to (\TT^{d_{i}})^{n}.\]

	Viewing $(\TT^{\Gamma})^{n}$ as the dual of $\ZZ(\Gamma)^{\oplus n},$
\begin{align*}
\ip{\phi(j),f_{l}}&=\sum_{k=1}^{n}\sum_{g\in \Gamma}\widehat{f_{l}}(k,g)\phi(j)(k)(g)\\
&=\sum_{k=1}^{n}\sum_{g\in \Gamma}\widehat{f_{l}}(k,g)[g^{-1}\phi](j)(k)(e),
\end{align*}
and
\[(\sigma_{i}(f_{l})\zeta_{\phi})(j)=\sum_{k=1}^{n}\sum_{g\in \Gamma}\widehat{f_{l}}(k,g)\phi(\sigma_{i}(g)^{-1}(j))(k)(e).\]
Thus it is not hard to see that
\[\frac{1}{d_{i}}\sum_{j=1}^{d_{i}}|\ip{\phi(j),f_{l}}-(\sigma_{i}(f_{l})\zeta_{\phi})(j)|^{2}\leq \eta(F,\delta),\]
with
\[\lim_{(F,\delta)}\eta(F,\delta)=0,\]
(Here the pairs are ordered by $(F,\delta)\leq (F',\delta')$ if $\delta'\leq \delta,F'\supseteq F).$

	Since
\[\frac{1}{d_{i}}\sum_{j=1}^{d_{i}}|\ip{\phi(j),f_{l}}|^{2}<\delta^{2},\]
we have that
\[\|\sigma_{i}(f_{l})\xi_{\phi}\|_{2,\ZZ^{d_{i}}}\leq \delta+\eta(F,\delta)^{1/2}.\]
As
\[\|\sigma_{i}(f)\xi_{\phi}\|_{2,(\ZZ^{d_{i}})^{\oplus m}}^{2}=\frac{1}{m}\sum_{l=1}^{m}\|\sigma_{i}(f_{l})\xi_{\phi}\|_{2,\ZZ^{d_{i}}}^{2},\]
if we are given a $\delta'>0,$ we can find a finite $F\subseteq\Gamma$, and a $\delta>0$ so that
\[\xi_{\phi}\in \Xi_{\delta'}(\sigma_{i}(f))\]
for all $\phi\in\Map(\rho|D,F,\delta,\sigma_{i}).$ As
\[\|\xi_{\phi}-\xi_{\psi}\|_{2,(\ZZ^{d_{i}})^{\oplus n}}=\rho_{2}(\phi,\psi),\]
for all $\psi,\phi\in \Map(\rho|D,F,\delta,\sigma_{i})$ we have
\[S_{\varepsilon}(\Map(\rho|D,F,\delta,\sigma_{i}),\rho_{2})=S_{\varepsilon}(\{\xi_{\phi}:\phi\in \Map(\rho|D,F,\delta,\sigma_{i})\},\theta_{2,(\ZZ^{d_{i}})^{\oplus n}}).\]
By our choice of $F,\delta$ we have
\[\{\xi_{\phi}:\phi\in \Map(\rho|D,F,\delta,\sigma_{i})\}\subseteq \Xi_{\delta'}(\sigma_{i}(f)),\]
 so for any $\varepsilon>0$ we have (by (\ref{E:sepspan})):
\[ S_{\varepsilon}(\Map(\rho|D,F,\delta,\sigma_{i}),\rho_{2})=S_{\varepsilon}(\{\xi_{\phi}:\phi\in \Map(\rho|D,F,\delta,\sigma_{i})\},\theta_{2,(\ZZ^{d_{i}})^{\oplus n}})\leq S_{\varepsilon/2}(\Xi_{\delta'}(\sigma_{i}(f)),\theta_{2,(\ZZ^{d_{i}})^{\oplus n}}).\]
Thus the reverse inequality follows.

\end{proof}

The above proposition will be our main tool to evaluate the topological entropy of $\Gamma\actson X_{f}.$ Let us remark on the advantage of our approach. Previously, the techniques in sofic entropy of algebraic actions have been as follows: take $\xi \in \sigma_{i}(f)^{-1}(\ZZ^{d_{i}}),$ and consider $\phi_{\xi}$ as in the above proposition. If $\Gamma$ is residually finite, and $\sigma_{i}$ comes from a sequence of finite quotients, then $\phi_{\xi}$ maps into $X_{f}$ instead of just $(\TT^{\Gamma})^{n}.$ Similar remarks apply if $\Gamma$ is amenable (and not necessarily residually finite). Now, one is led to estimate
\[\left|\frac{\sigma_{i}(f)^{-1}(\ZZ^{d_{i}})}{\ZZ^{d_{i}}}\right|.\]
If $\sigma_{i}(f)$ is invertible, we will see  later that this is
\[|\det(\sigma_{i}(f))|,\]
and if we have reasonable control over $\ker(\sigma_{i}(f)),$ then this expression is close to
\[\Det^{+}(\sigma_{i}(f)).\]

	Now to get the lower bound, one has to establish
\begin{equation}\label{E:hard}
\lim_{i\to \infty}\Det^{+}(\sigma_{i}(f))^{1/d_{i}}=\Det_{L(\Gamma)}(f),\
\end{equation}
when $n=m.$ Equivalently,
\[\int_{(0,\infty)}\log(t)\,d\mu_{|\sigma_{i}(f)|}(t)\to \int_{(0,\infty)}\log(t)\,d\mu_{|f|}(t).\]
 However, this is far from obvious given that all we know is that
\[\mu_{|\sigma_{i}(f)|}\to \mu_{|f|}\]
weak$^{*}.$  In the case when $\Gamma$ is residually finite and $\sigma_{i}$ come from a sequence of finite quotients, such a statement amounts to counting entropy as a growth rate of periodic points. In \cite{KLi}, David Kerr and Hanfeng Li assume an invertibility hypothesis on $f,$ which implies that $\mu_{|\sigma_{i}(f)|},\mu_{|f|}$ have support inside $[C,M],$ for some $C,M>0,$ and so $(\ref{E:hard})$ does follow by weak$^{*}$ convergence.  In \cite{BowenLi}, Bowen-Li consider the case when $f$ is a Laplacian operator, $\Gamma$ is residually finite, and $\sigma_{i}$ come from a sequence of finite quotients. This specific structure allows them to control the kernel of $f$ and  the asymptotics of distributions of periodic points is true by some nontrivial graph-theoretic facts. In \cite{LiThom}, Li-Thom prove a result analogous to (\ref{E:hard}) for amenable groups, using quasi-tiling arguments and a statement similar to the Ornstein-Weiss Lemma. This argument is very special to the case of amenable groups.

	It is our opinion that the approximation results $(\ref{E:hard})$ are too difficult to establish in the nonamenable case  without an invertibility hypothesis (for which the entropy has already been computed),  or very specific information about $f.$ Further, we do not expect (\ref{E:hard}) is true for general sofic approximations. Thus we seek a method of proof avoiding such an approximation result. This is the main advantage of our approach: first to produce microstates one can use not only vectors $\xi\in \sigma_{i}(f)^{-1}(\ZZ^{d_{i}}),$ but also vectors $\xi\in (\RR^{d_{i}})^{\oplus n}$ so that $\|\sigma_{i}(f)\xi\|_{2}<\delta.$ This creates more elements in our  microstates space, making it simpler to get a lower bound. Further since the methods are perturbative in nature, one is allowed more flexibility in perturbing the operator and this will allow greater control over the kernel. We remark that the idea of perturbing $\sigma_{i}(f)$ has already seen some applications to entropy of algebraic actions, see e.g. Theorem 7.1 of \cite{Li2}. We now make precise the notion of perturbation that we are using. For notation, if $j,k\in \NN$ with $j\leq k,$ we use $e_{j}\in \RR^{k}$ for the vector which is $1$ in the $j^{th}$ coordinate and zero elsewhere. If $V$ is a vector space $l,n\in \NN,$ with $l\leq n,$ and $v\in V,$ we use $v\otimes e_{l}\in V^{\oplus n},$ for the element which is $v$ in the $l^{th}$ coordinate and zero elsewhere.
	
\begin{definition}\emph{Let $\Gamma$ be a countable discrete sofic group with sofic approximation $\sigma_{i}\colon\Gamma\to S_{d_{i}}$. Extend $\sigma_{i}$ to $\sigma_{i}\colon M_{m,n}(\ZZ(\Gamma))\to M_{m,n}(M_{d_{i}}(\ZZ))$ by}
\[(\sigma_{i}(f))_{st}=\sum_{g\in\Gamma}\widehat{f_{st}}(g)\sigma_{i}(g).\]
\emph{ Fix $f\in M_{m,n}(\ZZ(\Gamma)),$ a sequence $x_{i}\in M_{m,n}(M_{d_{i}}(\ZZ))$ is said to be a} rank perturbation of $\sigma_{i}(f)$ \emph{if}
\[\sup_{i}\|x_{i}\|_{\infty}<\infty,\]
\[u_{d_{i}}(\{1\leq j\leq d_{i}:x_{i}(e_{l}\otimes e_{j})=\sigma_{i}(f)(e_{l}\otimes e_{j})\mbox{ for all $1\leq l\leq n$}\})\to 1.\]
\end{definition}

The main relevance of rank perturbations is that often we have a much stronger control on images or kernels of rank perturbations than we do of the original operator. This will be clear from the following proposition. For the proof, we define
\[m_{\phi}\colon \ell^{2}(n,u_{n})\to \ell^{2}(n,u_{n})\]
for $\phi\in \ell^{\infty}(n)$ by
\[m_{\phi}(\xi)(l)=\phi(l)\xi(l).\]

\begin{proposition}\label{P:rankperturabtion} Let $\Gamma$ be a countable discrete sofic group, and let $\sigma_{i}\colon\Gamma\to S_{d_{i}}$ be a sofic approximation. Let $f\in M_{m,n}(\ZZ(\Gamma)).$

(i): If $\lambda(f)$ has dense image as on operator on $\ell^{2}(\Gamma)^{\oplus n}$ (so that necessarily $n\geq m$ by Proposition \ref{P:Left/RightIssues}), then there is a rank perturbation $x_{i}$ of $\sigma_{i}(f)$ so that $\im(x_{i})=\RR^{A_{i}},$ for some $A_{i}\subseteq\{1,\dots,d_{i}\}^{m}$ with
\[\lim_{i\to\infty}\frac{|A_{i}|}{d_{i}}=m.\]

(ii): If $\lambda(f)$ has dense image as on operator on  $\ell^{2}(\Gamma)^{\oplus n}$ and is injective as an operator on $\ell^{2}(\Gamma)^{\oplus n}$ (so that necessarily $m=n$ by Proposition \ref{P:Left/RightIssues}), then there is a rank perturbation $x_{i}$ of $\sigma_{i}(f)$ with $x_{i}\in GL_{n}(M_{d_{i}}(\RR)).$

(iii) If $x_{i}$ is \emph{any} rank perturbation of $\sigma_{i}(f)$ we have
\[\sup_{\xi\in \RR^{d_{i}}}\|(x_{i}-\sigma_{i}(f))\xi\|_{2,(\ZZ^{d_{i}})^{\oplus m}}\to_{i\to\infty} 0.\]
\end{proposition}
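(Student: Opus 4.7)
My plan is to handle parts (i) and (ii) by explicit finite-dimensional linear algebra, using Lemma \ref{L:injective} to control the ranks of the $\sigma_{i}(f)$, and then to deduce (iii) by a direct $\ell^{2}$-estimate that exploits the small number of nonzero columns in a rank perturbation.

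For (i), I would first note that $\lambda(f)$ having dense image is equivalent (by Proposition \ref{P:Left/RightIssues}) to $\lambda(f^{*})$ being injective, and apply Lemma \ref{L:injective} to $f^{*}$ with the trivial choice $A_{i}=\sigma_{i}(f)^{*}$ to conclude $k_{i}:=\dim_{\RR}\ker\sigma_{i}(f)^{*}=o(d_{i})$; hence $W_{i}:=\im\sigma_{i}(f)\subseteq(\RR^{d_{i}})^{\oplus m}$ has codimension $k_{i}$. Since the projections of the standard basis vectors span $(\RR^{d_{i}})^{\oplus m}/W_{i}$, basis extraction would yield $B^{c}\subseteq\{1,\dots,d_{i}\}^{m}$ of size $k_{i}$ with $(\RR^{d_{i}})^{\oplus m}=W_{i}\oplus\RR^{B^{c}}$. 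Setting $A_{i}:=\{1,\dots,d_{i}\}^{m}\setminus B^{c}$ and defining $x_{i}:=P_{A_{i}}\sigma_{i}(f)$ (i.e., zero out the rows indexed by $B^{c}$), this $x_{i}$ has integer entries, operator norm at most $\|\sigma_{i}(f)\|_{\infty}$, and image exactly $\RR^{A_{i}}$ (since $P_{A_{i}}|_{W_{i}}$ is bijective onto $\RR^{A_{i}}$), with $|A_{i}|/d_{i}\to m$. To confirm the rank-perturbation condition, I would use that each $(k,b)\in B^{c}$ can affect $\sigma_{i}(f)(e_{l}\otimes e_{j})$ only when $j\in\{\sigma_{i}(g)^{-1}(b):g\in\supp\widehat{f_{kl}}\}$, giving at most $k_{i}N$ corrupted columns in total, with $N$ a constant depending only on the support of $f$.

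For (ii), with $m=n$ the same lemma yields $k_{i}:=\dim\ker\sigma_{i}(f)=o(d_{i})$, and I would modify columns rather than rows. I would pick $C^{c}\subseteq\{1,\dots,d_{i}\}^{n}$ of size $k_{i}$ so that $\{\sigma_{i}(f)e_{c}:c\notin C^{c}\}$ remains a basis of $W_{i}$, and pick \emph{distinct} indices $s_{1},\dots,s_{k_{i}}$ so that $\{e_{s_{\alpha}}+W_{i}\}$ is a basis of $(\RR^{d_{i}})^{\oplus n}/W_{i}$. Defining $x_{i}$ by replacing the $\alpha$th column in $C^{c}$ with $e_{s_{\alpha}}$, the columns of $x_{i}$ form a basis of $(\RR^{d_{i}})^{\oplus n}$, so $x_{i}\in GL_{n}(M_{d_{i}}(\RR))$. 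Writing $x_{i}=\sigma_{i}(f)(I-P_{C^{c}})+S$ with $S$ the replacement matrix, the distinctness of the $s_{\alpha}$ ensures $\|S\|_{\infty}\leq 1$, so $\|x_{i}\|_{\infty}\leq\|\sigma_{i}(f)\|_{\infty}+1$. The modified column positions cover at most $k_{i}=o(d_{i})$ values of $j$, giving the rank-perturbation condition.

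For (iii), I would let $M_{i}:=x_{i}-\sigma_{i}(f)$ and let $G_{i}\subseteq\{1,\dots,d_{i}\}$ be the good set from the rank-perturbation definition, so $|G_{i}|/d_{i}\to 1$. Then $M_{i}$ vanishes on the columns $(l,j)$ with $j\in G_{i}$, so at most $n|G_{i}^{c}|$ columns are nonzero, and $C:=\sup_{i}\|M_{i}\|_{\infty}<\infty$. For arbitrary $\xi$, writing $\xi=k+\eta$ with $k$ integer and $\eta_{l,j}\in[-1/2,1/2)$, the integrality of $M_{i}$ gives $M_{i}\xi\equiv M_{i}\eta\pmod{(\ZZ^{d_{i}})^{\oplus m}}$, so
\[\|M_{i}\xi\|_{2,(\ZZ^{d_{i}})^{\oplus m}}^{2}\;\leq\;\|M_{i}\eta\|_{2}^{2}\;\leq\;\frac{C^{2}}{md_{i}}\sum_{j\notin G_{i},\,l}|\eta_{l,j}|^{2}\;\leq\;\frac{C^{2}n|G_{i}^{c}|}{4md_{i}},\]
which tends to $0$ uniformly in $\xi$ as $i\to\infty$. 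The main technical point will be maintaining both integrality and operator-norm boundedness in (i) and (ii) simultaneously: an arbitrary integer replacement of $k_{i}$ columns could make $\|x_{i}\|_{\infty}$ grow unboundedly, and the resolution is the use of standard basis vectors with distinct indices as the replacements, for which the replacement block has operator norm at most $1$.
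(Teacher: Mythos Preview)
Your argument is correct. Part (i) is essentially the paper's construction: both of you set $x_{i}=m_{\chi_{A_{i}}}\sigma_{i}(f)$ for a coordinate complement $\RR^{A_{i}^{c}}$ of $\im\sigma_{i}(f)$, and your explicit verification that only $O(k_{i})$ column indices $j$ are affected (via the finite support of $f$) fills in a step the paper leaves implicit.

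For (ii) and (iii) your route differs from the paper's. In (ii) the paper builds $x_{i}$ in two stages: first $x_{i}^{o}=m_{\chi_{A_{i}}}\sigma_{i}(f)m_{\chi_{B_{i}}}$ (projecting on both sides onto coordinate complements), then adds a partial permutation $V_{i}$ sending $\RR^{B_{i}^{c}}$ bijectively to $\RR^{A_{i}^{c}}$. Your construction instead keeps a maximal independent set of columns of $\sigma_{i}(f)$ and overwrites the remaining $k_{i}$ columns with distinct standard basis vectors whose images span $(\RR^{d_{i}})^{\oplus n}/W_{i}$. Both produce an integer matrix in $GL_{n}(M_{d_{i}}(\RR))$ with uniformly bounded operator norm that differs from $\sigma_{i}(f)$ in $o(d_{i})$ columns; your version is a one-step column surgery, while the paper's two-sided projection yields the additional structural information $\ker(x_{i}^{o})=\RR^{B_{i}^{c}}$, $\im(x_{i}^{o})=\RR^{A_{i}}$ that is reused later (e.g.\ in Theorem~\ref{T:upperbound}).

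For (iii) the paper's displayed argument asserts that $((x_{i}-\sigma_{i}(f))\xi)(l)$ may be replaced by its restriction $\chi_{J_{i}^{c}}((x_{i}-\sigma_{i}(f))\xi)(l)$ to the bad \emph{output} coordinates, which is not what the column condition in the definition of rank perturbation gives. Your argument avoids this by using the column support correctly: you restrict the \emph{input} to $G_{i}^{c}$, use integrality of $x_{i}-\sigma_{i}(f)$ to pass to a representative $\eta$ with entries in $[-1/2,1/2)$, and then bound $\|M_{i}\eta\|_{2}$ by the operator norm times $\|\eta|_{G_{i}^{c}}\|_{\ell^{2}}$. This is the cleaner way to justify the claim.
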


\begin{proof}
(i) Let $A_{i}\subseteq\{1,\dots,d_{i}\}^{m}$ be such that
\[\Proj_{\im(\sigma_{i}(f))^{\perp}}\big|_{\RR^{A_{i}^{c}}}\]
is an isomorphism onto $\im(\sigma_{i}(f))^{\perp}.$ By Lemma \ref{L:injective} we have
\[\lim_{i\to\infty}\frac{|A_{i}^{c}|}{d_{i}}=\lim_{i\to\infty}\frac{\dim_{\RR}(\ker(\sigma_{i}(f)^{*})\cap (\RR^{d_{i}})^{\oplus m})}{d_{i}}=\dim_{L(\Gamma)}(\ker\lambda(f^{*}))=0.\]
Set $x_{i}=m_{\chi_{A_{i}}}\sigma_{i}(f),$ then $x_{i}$ is a rank perturbation of $\sigma_{i}(f)$ and
\[\frac{|A_{i}|}{d_{i}}\to m.\]
Clearly, $x_{i}((\RR^{d_{i}})^{\oplus n})\subseteq \RR^{A_{i}}.$ Suppose that $\xi\in \RR^{A_{i}},$ choose a $\zeta\in \RR^{A_{i}^{c}}$ so that
\[\Proj_{\im(\sigma_{i}(f))^{\perp}}(\zeta)=\Proj_{\im(\sigma_{i}(f))^{\perp}}(\xi).\]
Thus there is an $\eta\in (\RR^{d_{i}})^{\oplus n}$ so that $\sigma_{i}(f)\eta=\xi-\zeta.$ So
$x_{i}\eta=\xi$
and therefore $x_{i}((\RR^{d_{i}})^{\oplus n})=\RR^{A_{i}}.$

(ii): Let $A_{i}\subseteq\{1,\dots,d_{i}\}^{n}, B_{i}\subseteq\{1,\dots,d_{i}\}^{n}$ be such that
\[\Proj_{\im(\sigma_{i}(f))^{\perp}}\big|_{\RR^{A_{i}^{c}}},\]
\[\Proj_{\ker(\sigma_{i}(f))^{\perp}}\big|_{\RR^{B_{i}}},\]
are isomorphisms onto $\im(\sigma_{i}(f))^{\perp},\ker(\sigma_{i}(f))^{\perp},$ respectively. Set
\[x_{i}^{o}=m_{\chi_{A_{i}}}\sigma_{i}(f)m_{\chi_{B_{i}}}.\]
As in $(i)$ we have that $x_{i}^{o}$ is a rank perturbation of $\sigma_{i}(f).$ Moreover, $|B_{i}|=|A_{i}|.$ We claim that $\ker(x_{i}^{o})\cap (\RR^{d_{i}})^{\oplus n}=\RR^{B_{i}^{c}},x_{i}^{o}((\RR^{d_{i}})^{\oplus n})=\RR^{A_{i}}.$ To see this, suppose that $\xi\in (\RR^{d_{i}})^{\oplus n}$ and that $x_{i}^{o}\xi=0.$ So
\[\sigma_{i}(f)(\chi_{B_{i}}\xi)\in \RR^{A_{i}^{c}},\]
but
\[\Proj_{\im(\sigma_{i}(f))^{\perp}}(\sigma_{i}(f)(\chi_{B_{i}}\xi))=0,\]
so our choice of $A_{i}$ forces $\sigma_{i}(f)(\chi_{B_{i}}\xi)=0.$ So
\[\Proj_{\ker(\sigma_{i}(f))^{\perp}}(\chi_{B_{i}}\xi)=0,\]
and our choices of $B_{i}$ forces $\chi_{B_{i}}\xi=0,$ i.e. $\xi\in \RR^{B_{i}^{c}}.$ The proof that $x_{i}^{o}((\RR^{d_{i}})^{\oplus n})=\RR^{A_{i}}$ is similar to $(i).$  Let $V_{i}\in M_{d_{i}}(\ZZ)$ be such that
\[V_{i}^{*}V_{i}=m_{\chi_{B_{i}^{c}}},\]
\[V_{i}V_{i}^{*}=m_{\chi_{A_{i}^{c}}},\]
(e.g. let $V_{i}$ be the natural operator induced by a bijection $B_{i}^{c}\to A_{i}^{c}$). Set
\[x_{i}=x_{i}^{o}+V_{i}.\]
It is easy to check that $x_{i}$ is a rank perturbation of $\sigma_{i}(f),$ and that $x_{i}\in GL_{d_{i}}(M_{d_{i}}(\RR)).$

(iii): Let
\[J_{i}=\{j:x_{i}(e_{l}\otimes e_{j})=\sigma_{i}(f)(e_{l}\otimes e_{j})\mbox{ for $1\leq l\leq n$}\}.\]
For $\xi\in (\RR^{d_{i}})^{\oplus n}$ we have
\begin{align*}
\|(x_{i}-\sigma_{i}(f))\xi\|_{2,(\ZZ^{d_{i}})^{\oplus n}}^{2}&=\frac{1}{n}\sum_{l=1}^{n}\|((x_{i}-\sigma_{i}(f))\xi)(l)\|_{2,\ZZ^{d_{i}}}^{2}  \\
&=\frac{1}{n}\sum_{l=1}^{n}\|[(x_{i}-\sigma_{i}(f))(\chi_{J_{i}^{c}}(\xi))](l)\|_{2,\ZZ^{d_{i}}}^{2}.
\end{align*}
For any $\zeta\in \RR^{d_{i}}$ we have
\[\|\chi_{J_{i}^{c}}\zeta\|_{2,\ZZ^{d_{i}}}^{2}\leq u_{d_{i}}(J_{i}^{c}).\]
So
\[\|(x_{i}-\sigma_{i}(f))\xi\|_{2,(\ZZ^{d_{i}})^{\oplus n}}^{2}\leq (\|x_{i}\|_{\infty}+\|\widehat{f}\|_{1})^{2}u_{d_{i}}(J_{i}^{c})\]
from the definition of rank perturbation we have
\[\sup_{i}\|x_{i}\|_{\infty}<\infty,\]
\[u_{d_{i}}(J_{i}^{c})\to 0,\]
which proves the proposition.

\end{proof}

\section{Fuglede-Kadison Determinants and Topological Entropy}\label{S:upperbound}

\subsection{The Entropy of $X_{f}$ When $f\in M_{n}(\ZZ(\Gamma))$}
	
	The aim of this section is to prove that when $f\in M_{n}(\ZZ(\Gamma))$ with $\lambda(f)$ injective, then
\[h_{\Sigma}(X_{f},\Gamma)= \log\Det_{L(\Gamma)}(f).\]
As stated before, when $f\in M_{m,n}(\ZZ(\Gamma))$ is injective as an operator on $\ell^{2}(\Gamma)^{\oplus n}$ we have the upper bound
\[h_{\Sigma}(X_{f},\Gamma)\leq \log \Det_{L(\Gamma)}^{+}(f),\]
however the proof of this uses more advanced operator algebraic techniques so we will  postpone it until the next subsection.

	 Before we proceed to the proof of the main theorem, we will need a few more technical lemmas. The first of these Lemmas is essentially the same as  Lemma 7.10 in \cite{BowenLi}, and is equivalent to the statement of Lemma 4.6 in \cite{LiThom}. To state it, let $\mathcal{H},\mathcal{K}$ be finite dimensional Hilbert spaces, and $T\in B(\mathcal{H},\mathcal{K})$ be invertible. If $\delta>0,$ we let $\det_{\delta}(T)$ be the product of eigenvalues of $|T|$ in the interval $(0,\delta],$ counted with multiplicity. If $\mathcal{H}$ is a Hilbert space, we let $\Ball(\mathcal{H})$ be the closed unit ball of $\mathcal{H}.$

\begin{lemma}\label{L:approximatekernel}  Let $\mathcal{H},\mathcal{K}$ be finite dimensional Hilbert spaces, and $T\in B(\mathcal{H},\mathcal{K})$ be injective. Let $\delta,\varepsilon>0.$  If $4\delta<\varespilon,$ then
\[N_{\varepsilon}(T^{-1}(\delta\Ball(\mathcal{K})),\|\cdot\|_{\mathcal{H}})\leq \Det_{\frac{4\delta}{\varepsilon}}(T)^{-1}.\]
\end{lemma}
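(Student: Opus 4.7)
The plan is to reduce to the positive self-adjoint case via polar decomposition, perform a spectral cutoff at the level $\eta := 4\delta/\varepsilon < 1$, and apply a volume-packing argument on the low-eigenvalue subspace.

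First, write $T = U|T|$ via polar decomposition; since $T$ is injective on the finite-dimensional $\mathcal{H}$, $|T|$ is invertible and $U$ restricts to an isometry on $\mathcal{H}$. Thus $\|T\xi\|_{\mathcal{K}} = \||T|\xi\|_{\mathcal{H}}$ for all $\xi$, so $T^{-1}(\delta\Ball(\mathcal{K})) = |T|^{-1}(\delta\Ball(\mathcal{H}))$ and $\Det_\eta(T) = \Det_\eta(|T|)$. Hence I may replace $T$ by $|T|$ and assume $T$ is positive self-adjoint on $\mathcal{H}$.

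Second, let $P$ be the spectral projection of $T$ onto the sum of eigenspaces with eigenvalue in $(0,\eta]$, set $Q = I - P$, and write $k_1 := \dim P\mathcal{H}$; then $\Det_\eta(T) = \det(T|_{P\mathcal{H}})$. For any $\xi \in T^{-1}(\delta\Ball(\mathcal{H}))$, spectral orthogonality gives
\[\delta^2 \geq \|T\xi\|^2 \geq \|TQ\xi\|^2 \geq \eta^2\|Q\xi\|^2,\]
whence $\|Q\xi\| \leq \varepsilon/4$. Consequently, if $\{\xi_j\}_{j=1}^N \subseteq T^{-1}(\delta\Ball(\mathcal{H}))$ is $\varepsilon$-separated, then for $i \neq j$,
\[\|P\xi_i - P\xi_j\| \geq \|\xi_i - \xi_j\| - \|Q(\xi_i - \xi_j)\| > \varepsilon - \varepsilon/2 = \varepsilon/2,\]
so $\{P\xi_j\}$ is an $\varepsilon/2$-separated set of $N$ distinct points in $P\mathcal{H}$.

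Third, each $P\xi_j$ lies in the ellipsoid $E := (T|_{P\mathcal{H}})^{-1}(\delta\Ball(P\mathcal{H}))$, whose semi-axes $\delta/\sigma$ satisfy $\delta/\sigma \geq \delta/\eta = \varepsilon/4$ for each eigenvalue $\sigma$ of $T|_{P\mathcal{H}}$; in particular $(\varepsilon/4)\Ball(P\mathcal{H}) \subseteq E$. The $\varepsilon/4$-balls around the $P\xi_j$ are pairwise disjoint and lie in $E + (\varepsilon/4)\Ball$. Transferring via $T|_{P\mathcal{H}}$ to its image, they become disjoint ellipsoids each contained in $2\delta\Ball(P\mathcal{H})$ (since each semi-axis is at most $(\varepsilon/4)\eta = \delta$). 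A volume comparison — with $\Det_\eta(T)^{-1}$ appearing as the Jacobian of $(T|_{P\mathcal{H}})^{-1}$ — combined with the hypothesis $\eta < 1$ to absorb the remaining numerical factor then yields $N \leq \Det_\eta(T)^{-1}$.

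The main obstacle will be matching the sharp constant in this last step: the crude inclusion $E + (\varepsilon/4)\Ball \subseteq 2E$ produces a spurious prefactor $(2\eta)^{k_1}$ which, since $\eta$ may approach $1$, does not automatically collapse to $1$. Extracting the stated constant exactly requires a more careful Minkowski-sum analysis of $E$ relative to its individual semi-axes $\delta/\sigma_i$, exploiting that the ``thick'' semi-axes dominate the $\varepsilon/4$ perturbation.
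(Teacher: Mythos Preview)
Your first step---the polar-decomposition reduction to the positive self-adjoint case---is exactly what the paper does, and indeed is the entire content of the paper's own proof: after that reduction it simply cites Lemma~4.6 of Li--Thom. So the comparison is between your attempted self-contained argument and whatever Li--Thom do.

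The gap you flag in your third step is real and, as written, is not closed. Your packing argument gives
\[
N \;\leq\; (2\eta)^{k_1}\,\Det_{\eta}(T)^{-1},\qquad \eta=\tfrac{4\delta}{\varepsilon},
\]
and since the hypothesis only guarantees $\eta<1$, the prefactor $(2\eta)^{k_1}$ can be as large as $2^{k_1}$. Concretely: if all eigenvalues of $T\big|_{P\mathcal{H}}$ equal $\eta$, then $\Det_{\eta}(T)^{-1}=\eta^{-k_1}$ is barely larger than $1$, while the true $N_{\varepsilon}$ is $1$ (the ellipsoid has radius $\varepsilon/4$); your bound gives $2^{k_1}$. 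The suggested fix---``a more careful Minkowski-sum analysis exploiting the thick semi-axes''---does not actually help here, because in this extremal example \emph{all} semi-axes of $E$ equal $\varepsilon/4$, so none is thick relative to the perturbation. More generally, any argument of the form ``disjoint $r$-balls inside a dilate of $E$'' yields $N\leq\big((1+4r/\varepsilon)\eta\cdot(\varepsilon/4r)\big)^{k_1}\Det_{\eta}(T)^{-1}$, which is minimized (over $r<\varepsilon/4$) at the same $(2\eta)^{k_1}$. The factor of $2^{k_1}$ is the familiar boundary loss intrinsic to ball-packing estimates, and cannot be removed by refining the Minkowski sum alone.

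So the proposal is incomplete: steps one and two are fine, but the third step needs a genuinely different idea to reach the stated constant. You should either look up the Li--Thom argument (which is what the paper does), or replace the volumetric packing by something sharper---for instance an argument that does not enlarge the container at all.
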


\begin{proof} Since
\[\||T|\xi\|=\|T\xi\|,\]
we have
\[T^{-1}(\delta \Ball(\mathcal{K}))=|T|^{-1}(\delta\Ball(\mathcal{H})).\]
The Lemma is now equivalent to Lemma 4.6 in \cite{LiThom}.
\end{proof}

The next Lemma goes back to Rita Solomyak in \cite{RSolomyak}.  It has subsequently been used many times in the computation of entropy of algebraic actions (see e.g. Lemma 3.1 in \cite{Li2}).

\begin{lemma}\label{L:smith} Let $n\in \NN,$ and $T\in M_{n}(\ZZ)\cap GL_{n}(\RR).$ Then
\[|T^{-1}(\ZZ^{n})/\ZZ^{n}|=|\det(T)|.\]

\end{lemma}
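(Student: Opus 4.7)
The plan is to reduce the problem to a standard fact about Smith normal form by exhibiting a natural bijection between the two quotient groups.

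First I would observe that because $T$ has integer entries, $T(\ZZ^n) \subseteq \ZZ^n$, so $T$ restricts to a homomorphism of abelian groups from $T^{-1}(\ZZ^n)$ (which contains $\ZZ^n$) onto $\ZZ^n$. This restriction is injective (since $T$ is invertible over $\RR$) and surjective onto $\ZZ^n$: given $y\in\ZZ^n$, the vector $x=T^{-1}y\in\RR^n$ satisfies $Tx=y\in\ZZ^n$, hence $x\in T^{-1}(\ZZ^n)$. Therefore $T$ induces a group isomorphism
\[T^{-1}(\ZZ^n)/\ZZ^n \xrightarrow{\ \cong\ } \ZZ^n/T(\ZZ^n),\]
and it suffices to prove $|\ZZ^n/T(\ZZ^n)| = |\det(T)|$.

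Next, I would invoke Smith normal form: since $T\in M_n(\ZZ)\cap GL_n(\RR)$, one can write $T = UDV$ with $U,V\in GL_n(\ZZ)$ and $D = \operatorname{diag}(d_1,\dots,d_n)$ with nonzero integer entries $d_i$. The unimodular matrices $U,V$ induce automorphisms of $\ZZ^n$, so
\[\ZZ^n/T(\ZZ^n) \;\cong\; \ZZ^n/D(\ZZ^n) \;\cong\; \bigoplus_{i=1}^{n} \ZZ/d_i\ZZ,\]
which has cardinality $\prod_{i=1}^n |d_i| = |\det(D)| = |\det(T)|$ (the last equality uses $\det(U),\det(V)\in\{\pm 1\}$).

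The argument is essentially routine; the only thing to be careful about is the direction of the induced map in the first step (the naive guess that $\ZZ^n \hookrightarrow T^{-1}(\ZZ^n)$ should correspond to $T(\ZZ^n)\hookrightarrow \ZZ^n$ is exactly what $T$ itself realizes). No genuine obstacle arises: steps (1) and (2) are elementary, and step (3) is the classical structure theorem for finitely generated abelian groups applied to a matrix with nonzero determinant.
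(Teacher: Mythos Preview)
Your proof is correct. The paper does not actually supply its own proof of this lemma; it merely attributes the result to Solomyak and cites its later appearances, so there is nothing substantive to compare against. Your argument via the isomorphism $T^{-1}(\ZZ^n)/\ZZ^n\cong \ZZ^n/T(\ZZ^n)$ followed by Smith normal form is the standard route (and indeed the lemma's internal label \texttt{L:smith} suggests this is exactly what the author has in mind).
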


We  need some good control on the number of small integers.

\begin{lemma}\label{L:smallintegers}
	We have that
\[\inf_{\varespilon>0}\limsup_{n\to \infty}\frac{1}{n}\log|\ZZ^{n}\cap \varepsilon\Ball(\ell^{1}(n,u_{n}))|=0.\]

\end{lemma}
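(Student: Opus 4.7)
The lower bound $\geq 0$ is trivial since $0 \in \varepsilon\Ball(\ell^{1}(n,u_{n}))$ forces $|\ZZ^{n}\cap \varepsilon\Ball(\ell^{1}(n,u_{n}))|\geq 1$ for all $n,\varepsilon$. So the content is in showing the limsup is $\leq 0$ as $\varepsilon$ shrinks, and my plan is a direct combinatorial count.

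The $\varepsilon$-ball in $\ell^{1}(n,u_{n})$ consists of $x\in\RR^{n}$ with $\sum_{j=1}^{n}|x_{j}|\leq n\varepsilon$. For $x\in\ZZ^{n}$, this forces the support size $s := |\{j:x_{j}\neq 0\}|$ to satisfy $s\leq n\varepsilon$, since each nonzero entry contributes at least $1$ to the $\ell^{1}$-norm. I would stratify the count by support:
\[
|\ZZ^{n}\cap \varepsilon\Ball(\ell^{1}(n,u_{n}))| \;\leq\; \sum_{s=0}^{\lfloor n\varepsilon\rfloor}\binom{n}{s}\cdot 2^{s}\cdot\#\Bigl\{(m_{1},\dots,m_{s})\in\NN^{s}:\sum m_{j}\leq \lfloor n\varepsilon\rfloor\Bigr\},
\]
where the binomial picks the support, the factor $2^{s}$ picks signs, and the last factor counts the positive integer magnitudes. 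The last factor equals $\binom{\lfloor n\varepsilon\rfloor}{s}$ by the standard stars-and-bars/hockey-stick identity.

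The main step is then to estimate each piece. Setting $M=\lfloor n\varepsilon\rfloor$ and using $\binom{M}{s}\leq 2^{M}$, one gets
\[
|\ZZ^{n}\cap \varepsilon\Ball(\ell^{1}(n,u_{n}))|\;\leq\;(M+1)\cdot \binom{n}{M}\cdot 4^{M}.
\]
Taking $\frac{1}{n}\log$ and using the standard asymptotic $\frac{1}{n}\log\binom{n}{\lfloor n\varepsilon\rfloor}\to H(\varepsilon)$, where $H(\varepsilon)=-\varepsilon\log\varepsilon-(1-\varepsilon)\log(1-\varepsilon)$ is the natural-log binary entropy, I would conclude
\[
\limsup_{n\to\infty}\frac{1}{n}\log|\ZZ^{n}\cap \varepsilon\Ball(\ell^{1}(n,u_{n}))|\;\leq\; H(\varepsilon)+2\varepsilon\log 2.
\]

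The final step is to take the infimum over $\varepsilon>0$: since $H(\varepsilon)\to 0$ and $\varepsilon\log 2\to 0$ as $\varepsilon\downarrow 0$, the right-hand side tends to $0$, giving the desired equality. There is no real obstacle here — the only mildly delicate point is being careful with the normalization (the ball is in the \emph{averaged} $\ell^{1}$-norm, so its radius in the unnormalized sense is $n\varepsilon$), which is precisely what lets the entropy of a $\varepsilon$-fraction support take over and makes the rate vanish with $\varepsilon$.
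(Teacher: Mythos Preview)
Your proof is correct, and it is genuinely simpler than the paper's argument.

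The paper proceeds by a ``type'' decomposition: it classifies integer points according to the tuple $(F_{1},\dots,F_{\lfloor n\varepsilon\rfloor})$ where $F_{j}$ counts how many coordinates have absolute value exactly $j$, bounds the number of such types $|\Omega|$ via a product estimate and Stirling's formula, and then bounds the multinomial coefficient $n!/(F_{0}!F_{1}!\cdots)$ by optimizing the entropy functional $-\sum (F_{j}/n)\log(F_{j}/n)$ over the simplex $\{\sum j x_{j}\leq\varepsilon\}$ using Lagrange multipliers. This eventually yields a bound of the form $-\varepsilon\log\varepsilon + O(\varepsilon)$, but the route is considerably more involved.

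Your approach, by contrast, stratifies only by the \emph{support size} $s$ and handles the magnitudes by a single stars-and-bars count $\binom{\lfloor n\varepsilon\rfloor}{s}$. The crude estimate $\binom{M}{s}\leq 2^{M}$ then immediately collapses the sum to $(M+1)\binom{n}{M}4^{M}$, and the binary entropy asymptotic for $\binom{n}{\lfloor n\varepsilon\rfloor}$ finishes the job. The only implicit assumption is $\varepsilon<1/2$ so that $\binom{n}{s}$ is maximized at $s=M$, which is harmless since you are taking $\varepsilon\downarrow 0$. What the paper's approach buys is a slightly sharper explicit constant in the upper bound, but since both arguments only need something tending to $0$ with $\varepsilon$, your route is preferable here.
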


\begin{proof}

	Let
\[\Omega=\left\{(F_{1},\cdots,F_{\lfloor{n\varepsilon\rfloor}}):F_{j}\in \NN,\sum_{j} jF_{j}\leq n\varespilon\right\},\]
for $F\in \Omega,$ set
\[F_{0}=n-\sum_{j}F_{j}.\]
Given $x\in \ZZ^{n}\cap \varepsilon \Ball(\ell^{1}(n,u_{n})),$ and $1\leq j\leq n\varespilon,j\in \NN,$ set
\[F_{j}(x)=|\{l:|x(l)|=j\}|.\] For $x\in \ZZ^{n}\cap \varepsilon\Ball(\ell^{1}(n,u_{n})),$ we have that $(F_{1}(x),\cdots,F_{\lfloor{n\varespilon\rfloor}}(x))\in \Omega,$ and given $(F_{1},\cdots,F_{\lfloor{n\varepsilon\rfloor}})\in \Omega,$ there are at most
\[2^{n\varepsilon}\frac{n!}{F_{0}!F_{1}!\cdots F_{\lfloor{n\varepsilon\rfloor}}!}\]
possible $x\in \ZZ^{n}\cap \varespilon \Ball(\ell^{1}(n,u_{n}))$ with $F_{j}(x)=F_{j},$ for $j=1,\dots,n.$ Thus
\[|\ZZ^{n}\cap \varepsilon\Ball(\ell^{1}(n,u_{n}))|\leq 2^{n\varepsilon}\sum_{F\in \Omega}\frac{n!}{F_{0}!F_{1}!\cdots F_{\lfloor{n\varepsilon\rfloor}}!}.\]
We have that
\[\Omega\subseteq \prod_{j=1}^{\lfloor{n\varepsilon\rfloor}}\left\{F\in \ZZ:0\leq F\leq \left \lfloor \frac{n\varespilon}{j} \right \rfloor\right\}.\]
So
\begin{align*}
|\Omega|&\leq (n\varespilon+1)\left(\frac{n\varepsilon}{2}+1\right)\left(\frac{n\varespilon}{3}+1\right)\cdots \left(\frac{n\varepsilon}{\lfloor{n\varepsilon\rfloor}}+1\right)\\
&\leq 2^{n\varespilon}(n\varespilon)\left(\frac{n\varepsilon}{2}\right)\left(\frac{n\varespilon}{3}\right)\cdots \left(\frac{n\varepsilon}{\lfloor{n\varepsilon\rfloor}}\right)\\
&\leq 2^{n\varespilon}\frac{(n\varespilon)^{n\varepsilon}}{(\lfloor{n\varepsilon\rfloor})!},
\end{align*}
and by  Stirling's formula  there is some $C>0$ so that
\begin{equation}\label{E:estimatesize}
|\Omega|\leq \frac{Ce^{n\varepsilon}2^{n\varepsilon}}{\sqrt{4\pi n\varepsilon}}.
\end{equation}

	We thus only have to bound
\[\frac{n!}{F_{0}!F_{1}!\cdots F_{\lfloor{n\varepsilon\rfloor}}!}\]
for $F\in \Omega.$ By elementary calculus
\[\frac{1}{k!}\leq k^{-k}e^{k}e^{-1},\]
for all $k\in \NN.$
Thus by Stirling's Formula, there is a $\kappa(n)$ with
\[\lim_{n\to\infty}\frac{1}{n}\log \kappa(n)=0\]
so that
\begin{align}\label{E:boundterm}
\frac{n!}{F_{0}!F_{1}!\cdots F_{\lfloor{n\varepsilon\rfloor}}!}&\leq \kappa(n)n^{n}e^{-\lfloor{n\varepsilon\rfloor}}\prod_{j=0}^{\lfloor{n\varepsilon\rfloor}}F_{j}^{-F_{j}}\\ \nonumber
&\leq \kappa(n)e^{-n\varepsilon+1}\left(\frac{F_{0}}{n}\right)^{-F_{0}}\exp\left(-n\sum_{j=1}^{\lfloor{n\varepsilon\rfloor}}\frac{F_{j}}{n}\log \frac{F_{j}}{n}\right)\\ \nonumber
&\leq \kappa(n)e^{-n\varepsilon+1}(1-\varepsilon)^{-n(1-\varepsilon)}\exp\left(-n\sum_{j=1}^{\lfloor{n\varepsilon\rfloor}}\frac{F_{j}}{n}\log \frac{F_{j}}{n}\right), \nonumber
\end{align}
as $F_{0}\geq n(1-\varepsilon).$ It thus suffices to estimate
\[-\sum_{j=1}^{\lfloor{n\varepsilon\rfloor}}\frac{F_{j}}{n}\log\frac{F_{j}}{n}.\]
It is then enough to estimate the maximum of
\[\phi(x)=-\sum_{j=1}^{\lfloor{n\varepsilon\rfloor}}x_{j}\log x_{j},\]
on
\[D=\left\{x\in \RR^{\lfloor{n\varepsilon\rfloor}}:x_{j}\geq 0,\sum_{j}jx_{j}\leq \varespilon\right\}.\]
It is easy to see that the maximum occurs at a point where
\[\sum_{j}jx_{j}=\varepsilon.\]
Let $x$ be a point where $\phi$ achieves its maximum. By the method of Lagrange multipliers, we see that if $\varespilon<\frac{1}{e},$ then there is a $\lambda>0$ so that
\[-\log x_{j}=\lambda j+1,\]
whenever $x_{j}\ne 0.$ Thus
\begin{align*}
\phi(x)&=-\sum_{j:x_{j}\ne 0}x_{j}\log(x_{j})\\
&=\sum_{j:x_{j}\ne 0}(\lambda j+1)x_{j}\\
&=\lambda\varespilon+\sum_{j}x_{j}\\
&\leq (\lambda+1)\varepsilon.
\end{align*}
But,
\begin{align*}\label{E:boundlambda}
\varespilon&=\sum_{j:x_{j}\ne 0}jx_{j}\\
&=e^{-1}\sum_{j:x_{j}\ne 0}je^{-\lambda j}\\
&\leq e^{-1}\sum_{j=1}^{\infty}je^{-\lambda j}\\
&=e^{-1}\frac{e^{-\lambda}}{(1-e^{-\lambda})^{2}}.
\end{align*}
We need to get an upper bound on $\lambda.$ If $e^{-\lambda}\geq 1/2,$ then we are done. Otherwise,
\[e\varepsilon\leq 4e^{-\lambda},\]
which implies that
\[\lambda\leq \log(4)-1-\log(\varepsilon).\]
So
\[\phi(x)\leq \max\left(\varepsilon\log(4)-\varepsilon\log(\varepsilon),\varepsilon+\varepsilon\log(2)\right).\]
Therefore
\[\max_{F\in \Omega}-\sum_{j=1}^{\lfloor{n\varepsilon\rfloor}}\frac{F_{j}}{n}\log\frac{F_{j}}{n}\leq \max\left(\varepsilon\log(4)-\varepsilon\log(\varepsilon),\varepsilon+\log(2)\varepsilon\right).\]

Now applying $(\ref{E:estimatesize})$ and $(\ref{E:boundterm})$  we see that
\begin{align*}
\frac{1}{n}\log|\ZZ^{n}\cap \varepsilon\Ball(\ell^{1}(n,u_{n}))|&\leq \delta(n)+ \log(2)\varepsilon-(1-\varespilon)\log(1-\varepsilon)\\
&+\max\left(\varepsilon\log(4)-\varespilon\log(\varepsilon),\varepsilon+2\log(2)\varepsilon\right),
\end{align*}
where
\[\lim_{n\to \infty}\delta(n)=0.\]
This estimate is good enough to prove the Lemma.

\end{proof}

	We are now ready to evaluate the  topological entropy of $\Gamma\actson X_{f},$ when $f\in M_{n}(\ZZ(\Gamma)).$

\begin{theorem}\label{T:squarematrix} Let $\Gamma$ be a countable discrete sofic group with sofic approximation $\Sigma.$ Let $f\in M_{n}(\ZZ(\Gamma))$ and suppose that $\lambda(f)$ is injective. Then
\[h_{\Sigma}(X_{f},\Gamma)=\log \Det_{L(\Gamma)}(f).\]
\end{theorem}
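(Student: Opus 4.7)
The plan is to apply Proposition~\ref{P:mainreduction} to rewrite
\[h_\Sigma(X_f,\Gamma)=\sup_{\varespilon>0}\inf_{\delta>0}\limsup_{i\to\infty}\frac{1}{d_i}\log S_\varespilon(\Xi_\delta(\sigma_i(f)),\theta_{2,(\ZZ^{d_i})^{\oplus n}}),\]
and to produce matching upper and lower bounds. Since $m=n$ and $\lambda(f)$ is injective, Proposition~\ref{P:Left/RightIssues}(v) gives that $\lambda(f)$ also has dense image, so Proposition~\ref{P:rankperturabtion}(ii) supplies an invertible rank perturbation $x_i\in M_n(M_{d_i}(\ZZ))\cap GL_{nd_i}(\RR)$, with $\sup_\xi\|(x_i-\sigma_i(f))\xi\|_{2,(\ZZ^{d_i})^{\oplus n}}\to 0$ by (iii). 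Thus $\Xi_\delta(\sigma_i(f))$ and $\Xi_{\delta+o(1)}(x_i)$ coincide up to a vanishing shift in $\delta$, and we may work with $x_i$ throughout. Setting $T_i:=x_i^{-1}((\ZZ^{d_i})^{\oplus n})$, which is a superlattice of $(\ZZ^{d_i})^{\oplus n}$ of index $|\det x_i|$ by Lemma~\ref{L:smith}, and using $\|x_i\xi\|_2=\||x_i|\xi\|_2$ to identify $x_i^{-1}(\delta\Ball)=|x_i|^{-1}(\delta\Ball)$, we have $\Xi_\delta(x_i)=T_i+|x_i|^{-1}(\delta\Ball)$.

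The upper bound is the easier side: decomposing each $\xi\in\Xi_\delta(x_i)$ as lattice point plus ellipsoidal remainder yields
\[S_\varespilon(\Xi_\delta(x_i),\theta_{2,(\ZZ^{d_i})^{\oplus n}})\leq |\det x_i|\cdot N_\varespilon\bigl(|x_i|^{-1}(\delta\Ball),\|\cdot\|_2\bigr),\]
and Lemma~\ref{L:approximatekernel} bounds the packing number by $\det_{4\delta/\varespilon}(x_i)^{-1}$. The resulting ratio is the product of singular values of $x_i$ exceeding $4\delta/\varespilon$; dividing $\log$ by $nd_i$ rewrites it as $\int_{(4\delta/\varespilon,\|x_i\|_\infty]}\log t\,d\mu_{|x_i|}(t)$, and Lemma~\ref{L:weak*convergence} (at a continuity point $4\delta/\varespilon$ of $\mu_{|f|}$) passes the limit to $\mu_{|f|}$. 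Letting $\delta\to 0$ and applying dominated convergence, valid by Corollary~\ref{C:integrability} (giving $\mu_{|f|}$-integrability of $|\log t|$ near $0$) together with $\mu_{|f|}(\{0\})=0$ (from injectivity of $\lambda(f)$), gives the upper bound $\log\Det_{L(\Gamma)}^+(f)=\log\Det_{L(\Gamma)}(f)$.

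For the lower bound, the ellipsoid $|x_i|^{-1}(\delta\Ball)$ has semi-axes $\delta/s_j$ in the eigenbasis of $|x_i|$, and a standard packing estimate gives at least $c\prod_j\max(1,\delta/(C\varespilon s_j))$ $\varespilon$-separated points inside it. Translating these packings by coset representatives of $T_i/(\ZZ^{d_i})^{\oplus n}$ and greedily retaining only cosets whose representatives are pairwise $\theta_{2,(\ZZ^{d_i})^{\oplus n}}$-separated by $\varespilon$, we keep at least $|\det x_i|/M_i$ of them, where $M_i$ counts $T_i\cap$ a small $\theta_{2}$-ball modulo $(\ZZ^{d_i})^{\oplus n}$. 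Estimating $M_i$ via the inclusion of normalized $\ell^2$-balls into normalized $\ell^1$-balls plus $\sup_i\|x_i\|_\infty<\infty$, Lemma~\ref{L:smallintegers} gives $\limsup_i\frac{1}{d_i}\log M_i\to 0$ as $\varespilon\to 0$. With some additional care to ensure points drawn from blobs over different cosets remain $\varespilon$-separated in $\theta_{2,(\ZZ^{d_i})^{\oplus n}}$, one arrives at
\[\frac{1}{nd_i}\log N_\varespilon(\Xi_\delta(x_i))\geq\int_{(\delta/(C\varespilon),\infty)}\log t\,d\mu_{|x_i|}(t)+\log\tfrac{\delta}{C\varespilon}\,\mu_{|x_i|}\bigl((0,\delta/(C\varespilon))\bigr)-o_\varespilon(1).\]
Lemma~\ref{L:weak*convergence} once more passes to $\mu_{|f|}$; as $\delta\to 0$ the second term is controlled by $-\int_{(0,\delta/(C\varespilon))}\log t\,d\mu_{|f|}(t)$, which vanishes by integrability (Corollary~\ref{C:integrability}), so we recover $\log\Det_{L(\Gamma)}(f)$.

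The most delicate step is the lower bound, specifically combining the anisotropic ellipsoid packing (encoding the ``approximate kernel'' directions where $s_j$ is small) with the $|\det x_i|$ lattice count while preserving $\theta_{2,(\ZZ^{d_i})^{\oplus n}}$-separation across cosets. The essential pay-off of this ``approximate kernel'' viewpoint, as advertised in the introduction, is that the extra $\log(\delta/(C\varespilon))\mu_{|f|}(0,\delta/(C\varespilon))$ contribution arising from counting approximate rather than exact kernel elements is absorbed by the $\mu_{|f|}$-integrability of $\log t$ guaranteed by $\Det^+_{L(\Gamma)}(f)\geq 1$, so the possibly-false determinant approximation $\frac{1}{nd_i}\log|\det x_i|\to\log\Det_{L(\Gamma)}(f)$ is never invoked.
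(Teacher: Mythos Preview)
Your proposal is correct and follows essentially the same approach as the paper: reduce via Proposition~\ref{P:mainreduction}, replace $\sigma_i(f)$ by an invertible integer rank perturbation $x_i$ using Proposition~\ref{P:rankperturabtion}, obtain the upper bound from the lattice count $|\det x_i|$ times the ellipsoid packing bound of Lemma~\ref{L:approximatekernel}, and for the lower bound combine a thinned set of coset representatives (losing only an $\omega_{nd_i}(O(\varepsilon))$ factor controlled by Lemma~\ref{L:smallintegers}) with a packing of the small-singular-value subspace, finishing via weak$^*$ convergence and the integrability from Corollary~\ref{C:integrability}. Two small points to tidy: with the paper's normalization $\mu_{|x_i|}=\Tr\otimes\tr_{d_i}(\chi_{\cdot}(|x_i|))$ one divides by $d_i$, not $nd_i$; and in the lower bound your ``standard ellipsoid packing'' must be taken with respect to $\theta_{2,(\ZZ^{d_i})^{\oplus n}}$ rather than $\|\cdot\|_2$ (the paper handles the resulting integer translates via the set $\mathcal{T}$ and a second application of Lemma~\ref{L:smallintegers}), which is exactly the ``additional care'' you flag.
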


\begin{proof}

	We will apply Proposition \ref{P:mainreduction}. In order to use Lemma \ref{L:smith} we need to have good control over the kernel of $\sigma_{i}(f),$ for this we perturb $\sigma_{i}(f)$ slightly. By Proposition \ref{P:rankperturabtion} we may find a rank perturbation $x_{i}$ of $\sigma_{i}(f)$ with $x_{i}\in GL_{d_{i}}(M_{n}(\RR)).$ By Proposition \ref{P:rankperturabtion} we have
\[\sup_{\xi\in (\RR^{d_{i}})^{\oplus n}}\|x_{i}\xi-\sigma_{i}(f)\xi\|_{2,(\ZZ^{d_{i}})^{\oplus n}}\to 0.\]
 So
\[h_{\Sigma}(X_{f},\Gamma)=\sup_{\varepsilon>0}\inf_{\delta>0}\limsup_{i\to \infty}\frac{\log S_{\varespilon}(\Xi_{\delta}(x_{i}),\theta_{2,(\ZZ^{d_{i}})^{\oplus n}})}{d_{i}}.\]
Let $M=\sup_{i}\|x_{i}\|_{\infty}.$

	 Let
\[\mathcal{N}\subseteq x_{i}^{-1}((\ZZ^{d_{i}})^{\oplus n})\cap (\RR^{d_{i}})^{\oplus n}\]
be a section for the quotient map
\[ x_{i}^{-1}((\ZZ^{d_{i}})^{\oplus n})\to \frac{x_{i}^{-1}((\ZZ^{d_{i}})^{\oplus n})\cap (\RR^{d_{i}})^{\oplus n}}{(\ZZ^{d_{i}})^{\oplus n}}.\]
 By Lemma \ref{L:smith} we have
\begin{equation}\label{E:count1sec4}
|\mathcal{N}|=|\Det(x_{i})|.
\end{equation}
Let
\[\mathcal{M}\subseteq x_{i}^{-1}(\delta \Ball(\ell^{2}_{\RR}(d_{i}n,u_{d_{i}n})))\cap (\RR^{d_{i}})^{\oplus n}\]
be a maximal $\varespilon$-separated subset. Then by Lemma \ref{L:approximatekernel}, we know that if $4\delta<\varespilon,$ then
\begin{equation}\label{E:count2sec4}
|\mathcal{M}|\leq \Det_{4\delta/\varespilon}(x_{i})^{-1}.
\end{equation}

	Now let $\xi\in \Xi_{\delta}(x_{i}).$ Perturbing $\xi$ by an integer point we may assume that $\|\xi\|_{2}\leq 1.$
As $\xi\in \Xi_{\delta}(x_{i}),$  we can find some $l\in (\ZZ^{d_{i}})^{\oplus n}$ so that
\[\|x_{i}\xi-l\|_{2}<\delta.\]
 Let $\zeta\in \mathcal{N},k\in (\ZZ^{d_{i}})^{\oplus n}$ be such that
\[x_{i}\zeta+x_{i}k=l.\]
Then
\[\|x_{i}(\xi-\zeta-k)\|_{2}<\delta.\]
Hence, we may find some $\eta\in \mathcal{M}$ so that
\[\|\xi-\zeta-\eta-k\|_{2}\leq\varepsilon.\]
So
\[\|\xi-\zeta-\eta\|_{2,(\ZZ^{d_{i}})^{\oplus n}}\leq 2\varespilon.\]
Hence, by inequalities (\ref{E:count1sec4}),(\ref{E:count2sec4}), we know that
\[\frac{1}{d_{i}}\log S_{2\varespilon}(\Xi_{\delta}(x_{i}),\theta_{2,(\ZZ^{d_{i}})^{\oplus n}})\leq \frac{1}{d_{i}}\log(|\mathcal{M}||\mathcal{N}|)\leq \int_{[4\delta/\varespilon,\infty)}\log(t)\,d\mu_{|x_{i}|}(t).\]
Since $\mu_{|x_{i}|}\to \mu_{|f|}$ weak$^{*},$
\[\limsup_{i\to \infty}\frac{1}{d_{i}}\log S_{2\varespilon}(\Xi_{\delta}(x_{i}),\theta_{2,(\ZZ^{d_{i}})^{\oplus n}})\leq \int_{\left(4 \delta/\varepsilon,\infty\right)}\log(t)\,d\mu_{|f|}(t).\]
Letting $\delta\to 0$ and applying the Monotone convergence theorem, we find that
\[h_{\Sigma}(X_{f},\Gamma)\leq \log \Det_{L(\Gamma)}(f).\]

	We now turn to the proof of the lower bound. For this, fix $\varespilon,\delta>0.$  Let $\eta>0$ depend upon $\varepsilon$ in a manner to be determined shortly. Let
\[\mathcal{N}\subseteq x_{i}^{-1}((\ZZ^{d_{i}})^{\oplus n})\cap (\RR^{d_{i}})^{\oplus n}\]
be such that $\{x_{i}\xi\}_{\xi\in \mathcal{N}}$ is a maximal $\eta$-separated family with respect to $\theta_{2,x_{i}((\ZZ^{d_{i}})^{\oplus n})}.$
Let $p=\chi_{(0,\delta/\varespilon]}(|x_{i}|),$ set $W=p(\RR^{d_{i}})^{\oplus n},$ and let
\[\mathcal{M}\subseteq W\cap x_{i}^{-1}(\delta \Ball(\ell^{2}(d_{i}n,u_{d_{i}n})))\]
be a maximal $\varepsilon$-separated subset with respect to $\theta_{2,(\ZZ^{d_{i}})^{\oplus n}}.$

	Suppose $\xi_{1},\xi_{2}\in \mathcal{N},\zeta_{1},\zeta_{2}\in \mathcal{M}$ and
\begin{equation}\label{E:separated}
\|\xi_{1}+\zeta_{1}-\xi_{2}-\zeta_{2}\|_{2,(\ZZ^{d_{i}})^{\oplus n}}\leq \varespilon,
\end{equation}
then
\[\|x_{i}(\xi_{1}-\xi_{2})\|_{2,x_{i}(\ZZ^{d_{i}})^{\oplus n}}\leq \varespilon M+2\delta.\]
Hence if we choose $\eta=2\varespilon M,$ and $\delta$ is sufficiently small we find that
$\xi_{1}=\xi_{2}$
by our choice of $\mathcal{N}.$ Then by inequality $(\ref{E:separated}),$ and our choice of $\mathcal{M},$ we find that $\zeta_{1}=\zeta_{2}.$ Therefore,
\begin{equation}\label{E:gotcha}
N_{\varespilon}(\Xi_{\delta}(x_{i}),\theta_{2,(\ZZ^{d_{i}})^{\oplus n}})\geq |\mathcal{N}||\mathcal{M}|.
\end{equation}

	We now have to get a lower bound on $|\mathcal{M}|,|\mathcal{N}|.$ For this, set for $r>0$
\[\omega_{n}(r)=|\ZZ^{n}\cap r \Ball(\ell^{1}(n,u_{n}))|.\]
For all $\xi\in x_{i}^{-1}((\ZZ^{d_{i}}))^{\oplus n}\cap(\RR^{d_{i}})^{\oplus n},$ there exists a $\zeta\in \mathcal{N},k\in(\ZZ^{d_{i}})^{\oplus n}$ such that
\[\|x_{i}\xi-x_{i}\zeta-x_{i}k\|_{2}\leq 2\varespilon M.\]
Hence there exists  a section $S\subseteq x_{i}^{-1}((\ZZ^{d_{i}})^{\oplus n})\cap (\RR^{d_{i}})^{\oplus n}$  of the quotient map
\[x_{i}^{-1}((\ZZ^{d_{i}})^{\oplus n})\cap (\RR^{d_{i}})^{\oplus n}\to \frac{x_{i}^{-1}((\ZZ^{d_{i}})^{\oplus n})\cap (\RR^{d_{i}})^{\oplus n}}{(\ZZ^{d_{i}})^{\oplus n}}\]
such that for all $\xi\in S,$ there is a $\zeta\in \mathcal{N}$ with
\[\|x_{i}(\xi-\zeta)\|_{2}\leq 2\varespilon M.\]
From this, it is easy to see that
\begin{equation}\label{E:lowerboundinverseimage}
|\det(x_{i})|=|S|\leq \omega_{d_{i}n}\left(2\varepsilon M \right)|\mathcal{N}|.
\end{equation}

	To bound $|\mathcal{M}|$ note that for all $\xi\in W\cap x_{i}^{-1}(\delta\Ball(\ell^{2}(d_{i}n,u_{d_{i}n})),$ there is a $\zeta\in\mathcal{M},k\in (\ZZ^{d_{i}})^{\oplus n}$ so that
\[\|\xi-\zeta-k\|_{2}\leq \varepsilon.\]
This implies that
\begin{equation}\label{E:smallimageinteger}
\|x_{i}k\|_{2}\leq \varespilon M+2\delta,
\end{equation}
and
\begin{equation}\label{E:projectionfact}
\|\xi-p\zeta-pk\|_{2}\leq \varepsilon.
\end{equation}
Let
\[\mathcal{T}\subseteq (\ZZ^{d_{i}})^{n}\cap x_{i}^{-1}((\varepsilon M+2\delta)\Ball(\ell^{2}(d_{i}n,u_{d_{i}n})))\]
be a section of the map
\[(\ZZ^{d_{i}})^{n}\cap x_{i}^{-1}((\varepsilon M+2\delta)\Ball(\ell^{2}(d_{i}n,u_{d_{i}n})))\to p[(\ZZ^{d_{i}})^{n}\cap x_{i}^{-1}((\varepsilon M+2\delta)\Ball(\ell^{2}(d_{i}n,u_{d_{i}n})))],\]
given by multiplication by $p.$ As $x_{i}\big|_{\mathcal{T}}$ is injective, we have
\[|\mathcal{T}|\leq \omega_{d_{i}n}(\varepsilon M+2\delta)\]
and by $(\ref{E:projectionfact}),$
\[W\cap x_{i}^{-1}(\delta\Ball(\ell^{2}(d_{i}n,u_{d_{i}n}))\subseteq\bigcup_{\substack{\zeta \in \mathcal{M},\\ l\in \mathcal{T}}}p\zeta+pl+\varespilon\Ball(W,\|\cdot\|_{2}).\]
Computing volumes,
\begin{equation}\label{E:lowerapproxkernelbound2}
\Det_{\delta/\varespilon}(x_{i})^{-1}\delta^{\Tr(p)}\leq |\mathcal{M}|\omega_{d_{i}n}(\varepsilon M+2\delta)\varespilon^{\Tr(p)}.
\end{equation}

	Applying $(\ref{E:lowerboundinverseimage}),(\ref{E:lowerapproxkernelbound2}),(\ref{E:gotcha}),$ we see that
\begin{align*}
\frac{1}{d_{i}}\log N_{\varepsilon}(\Xi_{\delta}(x_{i}),\theta_{2,(\ZZ^{d_{i}})^{\oplus n}})&\geq \int_{(\delta/\varepsilon,\infty)}\log(t)\,d\mu_{|x_{i}|}(t)+\log(\delta/\varepsilon)\mu_{|x_{i}|}((0,\delta/\varespilon])\\
&-\frac{1}{d_{i}}\log \omega_{d_{i}n}(\varepsilon M+2\delta)-\frac{1}{d_{i}}\log\omega_{d_{i}n}\left(2\varepsilon M\right).
\end{align*}
Using Lemma \ref{L:smallintegers} and that $\mu_{|x_{i}|}\to \mu_{|f|}$ weak$^{*},$
\[h_{\Sigma}(X_{f},\Gamma)\geq \log \Det_{L(\Gamma)}(f)+\sup_{\varespilon>0}\inf_{\delta>0}\log(\delta/\varepsilon)\mu_{|f|}((0,\delta/\varespilon)).\]
But,
\[-\log(\delta/\varepsilon)\mu_{|f|}((0,\delta/\varespilon))\leq \int_{(0,\delta/\varepsilon)}-\log(t)\,d\mu_{|f|}(t)\to 0,\]
as $\delta \to 0,$ by the dominated convergence theorem and Corollary \ref{C:integrability}.

\end{proof}

\subsection{The General Upper Bound For $h_{\Sigma}(X_{f},\Gamma)$ By Ultraproduct Techniques}

	We will now establish that for $f\in M_{m,n}(\ZZ(\Gamma))$ with $\lambda(f)$ injective,
\begin{equation}\label{E:resultofthissection}
h_{\Sigma}(X_{f},\Gamma)\leq \log \Det^{+}_{L(\Gamma)}(f).
\end{equation}
 We could have given a proof of the upper bound in Theorem \ref{T:squarematrix}, but we decided to postpone the proof until now, as the operator algebra machinery involved in the proof of (\ref{E:resultofthissection}) is considerably more technical than in the proof of Theorem \ref{T:squarematrix}.

\begin{definition}\emph{ Let $\mathcal{H}$ be a Hilbert space. A} von Neumann algebra \emph{is a weak operator topology closed, unital, subalgebra of $B(\mathcal{H})$ which is closed under taking adjoints. A} tracial von Neumann algebra \emph{is a pair $(M,\tau)$ where $M$ is a von Neumann algebra, and $\tau\colon M\to \CC$ is a linear functional such that}\end{definition}

\begin{list}{ \arabic{pcounter}:~}{\usecounter{pcounter}}
\item $\tau(1)=1,$\\
\item $\tau(x^{*}x)\geq 0,$  with equality if and only if $x=0$,\\
\item $\tau(xy)=\tau(yx),$  for all $x,y\in M,$
\item $\tau\big|_{\{x\in M:\|x\|_{\infty}\leq 1\}}$ is weak operator topology continuous.
\end{list}

As before $\|x\|_{\infty}$ is the operator norm of $x.$ The pairs $(L(\Gamma),\tau),$ and $(M_{d_{i}}(\CC),\tr_{d_{i}})$ are the most natural examples for our purposes. The definition of $\Tr\otimes\tau,$  spectral measure, and Fuglede-Kadison determinant as in Section \ref{S:spectralmeasure} work for elements in $M_{m,n}(M).$  For $x\in M_{m,n}(M),$ we use
\[\|x\|_{2}=(\Tr\otimes \tau(x^{*}x))^{1/2}.\]

\begin{definition}\emph{ Let $(M_{n},\tau_{n})$ be a sequence of tracial von Neumann algebras, and let $\omega\in \beta\NN\setminus \NN$ be a free ultrafilter. Set}
\[M=\frac{\{(x_{n})_{n=1}^{\infty}:x_{n}\in M_{n},\sup_{n}\|x_{n}\|_{\infty}<\infty\}}{\{(x_{n}):x_{n}\in M_{n},\sup_{n}\|x_{n}\|_{\infty}<\infty,\lim_{n\to \omega}\|x_{n}\|_{2}=0\}}.\]
\emph{If $x_{n}\in M_{n},$ and $\sup_{n}\|x_{n}\|_{\infty}<\infty,$ we use $(x_{n})_{n\to \omega}$ for the image in $M$ of the sequence $(x_{n})_{n=1}^{\infty}$ under the quotient map. Let}
\[\tau_{\omega}\colon M\to \CC\]
\emph{be given by}
\[\tau_{\omega}((x_{n})_{n\to \omega})=\lim_{n\to \omega}\tau_{n}(x_{n}).\]
\emph{ We call the pair $(M,\tau_{\omega})$ the} tracial ultraproduct of $(M_{n},\tau_{n})$ \emph{and we will denote it by}
\[\prod_{n\to \omega}(M_{n},\tau_{n}).\]
\end{definition}

	A remark about the definition, it is not hard to show that $M$ is a $*$-algebra with the operations being given coordinate-wise. We clearly have a inner product on $M$ given by
\[\ip{x,y}=\tau_{\omega}(y^{*}x),\]
let $L^{2}(M,\tau_{\omega})$ be Hilbert space completion of $M$ under this inner product. We have a representation
\[\lambda\colon M\to B(L^{2}(M,\tau_{\omega}))\]
given by left multiplication. It turns out that this representation is faithful, and that $\lambda(M)$ is weak-operator topology closed in $B(L^{2}(M,\tau_{\omega})),$ (see \cite{BO} Lemma A.9) so we may regard $(M,\tau_{\omega})$ as a tracial von Neumann algebra.

	Here is the main example of relevance for us. Let $\Gamma$ be a countable discrete sofic group with sofic approximation $\Sigma=(\sigma_{i}\colon \Gamma\to S_{d_{i}}),$ and let $\omega\in \beta\NN\setminus \NN$ be a free ultrafilter. Then we have a trace-preserving injective $*$-homomorphism (i.e. preserving multiplication and adjoints)
\[\sigma\colon \CC(\Gamma)\to \prod_{i\to \omega}(M_{d_{i}}(\CC),\tr_{d_{i}})\]
given by
\[\sigma(f)=(\sigma_{i}(f))_{i\to \omega}.\]
It turns out that because $\CC(\Gamma)$ is weak operator topology dense,  this embedding extends uniquely to a trace-preserving injective $*$-homomorphism
\[\sigma\colon L(\Gamma)\to \prod_{i\to \omega}(M_{d_{i}}(\CC),\tr_{d_{i}}).\]

\begin{lemma} Let $\Gamma$ be a countable discrete sofic group with sofic approximation $\sigma_{i}\colon \Gamma\to S_{d_{i}},$ and let $f\in M_{m,n}(\ZZ(\Gamma))$ with $\lambda(f)$ injective. Let $A_{i}\subseteq \{1,\cdots,d_{i}\}^{m},B_{i}\subseteq \{1,\cdots,d_{i}\}^{n}$ be subsets so that
\[P_{\im(\sigma_{i}(f))^{\perp}}\big|_{\RR^{A_{i}^{c}}},P_{\ker(\sigma_{i}(f))^{\perp}}\big|_{\RR^{B_{i}}}\]
are isomorphisms onto $\im(\sigma_{i}(f))^{\perp},\ker(\sigma_{i}(f))^{\perp}.$ Set $x_{i}=\chi_{A_{i}}\sigma_{i}(f)\chi_{B_{i}},$ then
\[\inf_{0<\delta<1}\limsup_{i\to \infty}\int_{(\delta,\infty)}\log(t)\,d\mu_{|x_{i}|}(t)\leq \log\Det^{+}_{L(\Gamma)}(f).\]
\end{lemma}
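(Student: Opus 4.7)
The plan is to pass to a tracial ultraproduct and analyse $x_i$ spectrally.  Fix a free ultrafilter $\omega$ along which the $\limsup$ is attained, form the tracial ultraproduct $N = \prod_{i\to\omega}(M_{d_i}(\CC),\tr_{d_i})$ and extend the sofic approximation to a trace-preserving embedding $\sigma\colon M_{m,n}(L(\Gamma))\hookrightarrow M_{m,n}(N)$.  Set $P = (\chi_{A_i})_\omega \in M_m(N)$ and $Q = (\chi_{B_i})_\omega \in M_n(N)$; by Lemma~\ref{L:injective} applied to $\sigma_i(f)$, the injectivity of $\lambda(f)$ forces $\lim_\omega |B_i^c|/d_i = \dim_{L(\Gamma)}\ker\lambda(f) = 0$, so the projection $1-Q$ has trace zero and therefore $Q=1$ in $M_n(N)$.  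Consequently $(x_i)_\omega = P\sigma(f) \in M_{m,n}(N)$.

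Next I introduce the auxiliary operator $y_i := \sigma_i(f)\chi_{B_i}$.  The estimate $\|\sigma_i(f)-y_i\|_2^2 \le \|\sigma_i(f)\|_\infty^2\,|B_i^c|/d_i \to 0$ together with Lemma~\ref{L:weak*convergence} yields $\mu_{|y_i|}\to\mu_{|f|}$ weak$^*$, so for all but countably many $\delta > 0$ one has
\[\lim_i \int_{(\delta,\infty)}\log(t)\,d\mu_{|y_i|}(t) = \int_{(\delta,\infty)}\log(t)\,d\mu_{|f|}(t).\]
The identity $x_i = \chi_{A_i}y_i$ gives $|x_i|^2 = y_i^*\chi_{A_i}y_i \le y_i^*y_i = |y_i|^2$, hence the singular-value dominance $s_j(x_i)\le s_j(y_i)$ for every $j$.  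Applied to the increasing bounded function $t\mapsto \max(\log t,\log\delta)$ this produces
\[\int_{(\delta,\infty)}\log(t)\,d\mu_{|x_i|}(t) \le \int_{(\delta,\infty)}\log(t)\,d\mu_{|y_i|}(t) + |\log\delta|\bigl(\mu_{|y_i|}((\delta,\infty)) - \mu_{|x_i|}((\delta,\infty))\bigr).\]

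The remaining task is to absorb the correction upon taking $\inf_\delta$, and this is where I expect the main obstacle.  Since $y_i-x_i = \chi_{A_i^c}y_i$ has rank at most $|A_i^c|$, Weyl's inequality gives the crude estimate $\mu_{|y_i|}((\delta,\infty))-\mu_{|x_i|}((\delta,\infty)) \le |A_i^c|/d_i$, and Lemma~\ref{L:injective} applied to $f^*$ gives $\lim_\omega |A_i^c|/d_i = \dim_{L(\Gamma)}\ker\lambda(f^*)$.  When $m=n$ this dimension vanishes by Proposition~\ref{P:Left/RightIssues}, so the correction tends to zero and letting $\delta\to 0$ concludes.  For $m > n$ the crude Weyl bound is insufficient; the refinement I would pursue works in the ultraproduct, where $|x_\omega|^2 = \sigma(f)^* P \sigma(f)$, and uses the polar decomposition $\sigma(f) = U|\sigma(f)|$ (with $U$ an isometry because $\lambda(f)$ is injective) to write $|x_\omega|^2 = |\sigma(f)|\,R\,|\sigma(f)|$ for the contraction $R := U^*PU \le 1$.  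The hard part will be to show that this product structure, combined with the absence of an atom of $\mu_{|f|}$ at $0$ (equivalent to injectivity of $\lambda(f)$), forces the correction term to vanish in the limit $\delta\to 0$, after which the first term on the right tends monotonically to $\int_{(0,\infty)}\log(t)\,d\mu_{|f|}(t) = \log\Det^+_{L(\Gamma)}(f)$.
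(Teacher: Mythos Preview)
Your ultraproduct setup is exactly right and matches the paper: passing to $N=\prod_{i\to\omega}(M_{d_i}(\CC),\tr_{d_i})$, using Lemma~\ref{L:injective} to get $Q=1$, and concluding $x_\omega=(x_i)_\omega=P\sigma(f)$. But from this point you take an unnecessary detour that leaves a genuine gap in the non-square case.

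The paper observes that since $P\le 1$ one has the operator inequality
\[
x_\omega^{*}x_\omega=\sigma(f)^{*}P\sigma(f)\le \sigma(f)^{*}\sigma(f)
\]
in $M_n(N)$, and then invokes \emph{operator monotonicity of the logarithm}: for every $\varepsilon>0$,
\[
\tau_\omega\bigl(\log(x_\omega^{*}x_\omega+\varepsilon)\bigr)\le \tau_\omega\bigl(\log(\sigma(f)^{*}\sigma(f)+\varepsilon)\bigr).
\]
Letting $\varepsilon\to 0$ and using that $\sigma$ is trace-preserving yields $\log\Det^{+}_{N}(x_\omega)\le\log\Det^{+}_{L(\Gamma)}(f)$. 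One then identifies $\log\Det^{+}_{N}(x_\omega)$ with $\inf_{0<\delta<1}\lim_{i\to\omega}\int_{(\delta,\infty)}\log t\,d\mu_{|x_i|}(t)$ via the weak$^{*}$ convergence $\mu_{|x_i|}\to\mu_{|x_\omega|}$ along $\omega$. This is two lines and works uniformly for all $m\ge n$; no case distinction is needed.

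Your route via $y_i=\sigma_i(f)\chi_{B_i}$ and singular-value majorization rediscovers the pointwise inequality $|x_i|^2\le|y_i|^2$ (which is just the finite-dimensional shadow of $x_\omega^{*}x_\omega\le\sigma(f)^{*}\sigma(f)$), but then throws away the operator ordering in favour of the weaker eigenvalue ordering. That forces you to control the correction term $|\log\delta|\bigl(\mu_{|y_i|}((\delta,\infty))-\mu_{|x_i|}((\delta,\infty))\bigr)$, which your Weyl bound handles only when $|A_i^c|/d_i\to 0$, i.e.\ only when $m=n$. The ``refinement'' you sketch for $m>n$ essentially rewrites $x_\omega^{*}x_\omega=|\sigma(f)|R|\sigma(f)|$ with $R=U^{*}PU\le 1$, but you never pull the trigger: the missing step is precisely to apply the operator-monotone function $\log(\cdot+\varepsilon)$ to $|\sigma(f)|R|\sigma(f)|\le|\sigma(f)|^2$ and take traces. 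Once you do that, the correction term never appears and the argument closes immediately.

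A smaller point: ``fix $\omega$ along which the $\limsup$ is attained'' is imprecise since the $\limsup$ depends on $\delta$. The paper handles this with a short diagonal argument showing it suffices to prove the ultrafilter inequality for \emph{every} free $\omega$; you should include that reduction.
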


\begin{proof} We claim that it suffices to show that for all $\omega\in \beta\NN\setminus\NN,$
\begin{equation}\label{E:ultrafilter}
\inf_{0<\delta<1}\lim_{i\to \omega}\int_{(\delta,\infty)}\log(t)\,d\mu_{|x_{i}|}(t)\leq \log\Det^{+}_{L(\Gamma)}(f).\end{equation}
Suppose we can show $(\ref{E:ultrafilter}),$ but that
\[\alpha=\inf_{0<\delta<1}\limsup_{i\to\infty}\int_{(\delta,\infty)}\log(t)\,d\mu_{|x_{i}|}(t)>\log\Det^{+}_{L(\Gamma)}(f).\]
Let $\delta_{n}<1$ be a decreasing sequence of positive real numbers converging to zero. Choose a strictly increasing sequence of natural numbers $i_{n}$ so that
\[\int_{(\delta_{n},\infty)}\log(t)\,d\mu_{|x_{i_{n}}|}(t)\geq \alpha-2^{-n}.\]
Let $\omega\in \beta\NN\setminus\NN$ be such that  $\{i_{n}:n\in \NN\}\in\omega.$ Note that if $m\geq n,$ then
\begin{align*}
\alpha-2^{-m}&\leq \int_{(\delta_{m},\infty)}\log(t)\,d\mu_{|x_{i_{m}}|}(t)\\
&=\int_{(\delta_{n},\infty)}\log(t)\,d\mu_{|x_{i_{m}}|}(t)+\int_{(\delta_{m},\delta_{n}]}\log(t)\,d\mu_{|x_{i_{m}}|}(t)\\
&\leq \int_{(\delta_{n},\infty)}\log(t)\,d\mu_{|x_{i_{m}}|}(t).
\end{align*}
Hence,
\[\lim_{i\to \omega}\int_{(\delta_{n},\infty)}\log(t)\,d\mu_{|x_{i}|}(t)\geq \alpha.\]
So
\[\inf_{0<\delta<1}\lim_{i\to \omega}\int_{(\delta,\infty)}\log(t)\,d\mu_{|x_{i}|}(t)=\lim_{n\to \infty}\lim_{i\to \omega}\int_{(\delta_{n},\infty)}\log(t)\,d\mu_{|x_{i}|}(t)\geq \alpha>\log \Det^{+}_{L(\Gamma)}(f),\]
and this contradicts $(\ref{E:ultrafilter}).$

	We now proceed to prove $(\ref{E:ultrafilter}).$ Fix $\omega\in \beta\NN\setminus\NN,$ and let
\[(M,\tau)=\prod_{i\to \omega}(M_{d_{i}}(\CC),\tr_{d_{i}}),\]
let
\[\sigma\colon L(\Gamma)\to M\]
be defined as before the statement of the Lemma, and extend by the usual methods to a map
\[\sigma\colon M_{m,n}(L(\Gamma))\to M_{m,n}(M).\]
Let
\[x=(x_{i})_{i\to \omega}\in M_{m,n}(M).\]
First note that for every $\phi\in C(\RR),$
\begin{equation}\label{E:weakultrafilter}
\int \phi\,d\mu_{|x|}(t)=\Tr\otimes \tau_{\omega}(\phi(|x|))=\lim_{i\to \omega}\Tr\otimes \tr_{d_{i}}(\phi(|x_{i}|))=\lim_{i\to \omega}\int \phi\,d\mu_{|x_{i}|}(t).
\end{equation}
By Lemma $\ref{L:injective},$
\[\frac{|B_{i}^{c}|}{d_{i}}=\frac{\dim(\ker(\sigma_{i}(f)))}{d_{i}}\to 0\]
as $i\to \infty,$ so
\[x=(\chi_{A_{i}}\sigma_{i}(f))_{i\to \omega}.\]
Thus
\begin{equation}\label{E:determinantestimate1}
0\leq x^{*}x\leq \sigma(f)^{*}\sigma(f)\in M_{n}(M).
\end{equation}
By operator monotonicity of logarithms and the Monotone Convergence Theorem we have:
\begin{align*}
\log\Det_{M}^{+}(x)=\frac{1}{2}\lim_{\varepsilon\to 0}\tau_{\omega}(\log(x^{*}x+\varepsilon))&\leq \frac{1}{2}\lim_{\varespilon\to 0}\tau_{\omega}(\log(\sigma(f)^{*}\sigma(f)+\varepsilon))\\
&=\frac{1}{2}\log\Det_{M}^{+}(\sigma(f)^{*}\sigma(f))\\
&=\frac{1}{2}\log \Det_{L(\Gamma)}^{+}(f^{*}f)\\
&=\log\Det_{L(\Gamma)}^{+}(|f|)\\
&=\log\Det_{L(\Gamma)}^{+}(f),
\end{align*}
here we have used that the inclusion $L(\Gamma)\to M$ is trace-preserving. By (\ref{E:weakultrafilter}), we have
\[\log \Det_{M}^{+}(x)=\inf_{0<\delta<1}\lim_{i\to \omega}\int_{(\delta,\infty)}\log(t)\,d\mu_{|x_{i}|}(t),\]
so we have proved (\ref{E:ultrafilter}).

\end{proof}

\begin{theorem}\label{T:upperbound} Let $\Gamma$ be a countable discrete sofic group with sofic approximation $\Sigma.$ Let $f\in M_{m,n}(\ZZ(\Gamma))$ be injective as an operator on $\ell^{2}(\Gamma)^{\oplus n},$ then
\[h_{\Sigma}(X_{f},\Gamma)\leq \log \Det^{+}_{L(\Gamma)}(f).\]

\end{theorem}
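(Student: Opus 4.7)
The plan is to reduce to the counting argument used in the upper-bound half of Theorem~\ref{T:squarematrix}, applied not to $\sigma_i(f)$ itself but to a rank-perturbation restricted to a full-rank block. Let $A_i\subseteq\{1,\dots,d_i\}^m$ and $B_i\subseteq\{1,\dots,d_i\}^n$ be chosen as in the lemma preceding the theorem, and set $x_i=\chi_{A_i}\sigma_i(f)\chi_{B_i}$. The identity $|A_i^c|=md_i-\rank\sigma_i(f)=md_i-|B_i|$ forces $|A_i|=|B_i|$, so the restriction $T_i:=\chi_{A_i}\sigma_i(f)|_{\RR^{B_i}}\colon\RR^{B_i}\to\RR^{A_i}$ is a square injective integer matrix, hence $T_i\in GL_{|A_i|}(\RR)$. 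Injectivity of $\lambda(f)$ together with Lemma~\ref{L:injective} gives $|B_i^c|/d_i\to 0$. By Proposition~\ref{P:rankperturabtion}(iii) and the triangle inequality for $\|\cdot\|_{2,(\ZZ^{d_i})^{\oplus m}}$, I may replace $\sigma_i(f)$ by $x_i$ in Proposition~\ref{P:mainreduction} and compute
\[h_\Sigma(X_f,\Gamma)=\sup_{\varepsilon>0}\inf_{\delta>0}\limsup_{i\to\infty}\frac{1}{d_i}\log S_\varepsilon\bigl(\Xi_\delta(x_i),\theta_{2,(\ZZ^{d_i})^{\oplus n}}\bigr).\]

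For $\xi\in\Xi_\delta(x_i)$, write $\xi=\xi_0+\xi_1$ with $\xi_0\in\RR^{B_i^c}/\ZZ^{B_i^c}$ and $\xi_1\in\RR^{B_i}$. Since $\ker(x_i)\supseteq\RR^{B_i^c}$ we have $x_i\xi=T_i\xi_1\in\RR^{A_i}$; orthogonality of $\RR^{A_i}$ and $\RR^{A_i^c}$ inside $(\RR^{d_i})^{\oplus m}$ reduces the defining condition $\|x_i\xi\|_{2,(\ZZ^{d_i})^{\oplus m}}<\delta$ to $\|T_i\xi_1\|_{2,\ZZ^{A_i}}<\delta$ (the orthogonal witness in $\ZZ^{A_i^c}$ can be taken to be zero). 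The $\xi_0$-factor contributes only $(C/\varepsilon)^{|B_i^c|}=e^{o(d_i)}$ distinguishable classes. For $\xi_1$, I would apply verbatim the counting argument in Theorem~\ref{T:squarematrix}: let $\mathcal{N}\subseteq T_i^{-1}(\ZZ^{A_i})\cap\RR^{B_i}$ be a section of $T_i^{-1}(\ZZ^{A_i})/\ZZ^{B_i}$, so $|\mathcal{N}|=|\det T_i|$ by Lemma~\ref{L:smith}, and let $\mathcal{M}$ be a maximal $\varepsilon$-separated subset of $T_i^{-1}(\delta\Ball)\cap\RR^{B_i}$. After accounting for the $1/(nd_i)$ and $1/(md_i)$ normalizations of $\|\cdot\|_2$ on $\RR^{B_i}$ and $\RR^{A_i}$ respectively, Lemma~\ref{L:approximatekernel} yields $|\mathcal{M}|\leq \Det_c(T_i)^{-1}$ for a cutoff $c=(4\delta/\varepsilon)\sqrt{m/n}$ measured in the matrix singular values of $T_i$. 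Since the nonzero matrix singular values of $T_i$ agree with those of $x_i$, combining with the $\xi_0$-contribution gives
\[\frac{1}{d_i}\log S_{2\varepsilon}\bigl(\Xi_\delta(x_i),\theta_{2,(\ZZ^{d_i})^{\oplus n}}\bigr)\leq\int_{(c,\infty)}\log t\, d\mu_{|x_i|}(t)+o(1).\]

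Finally, as $\delta\to 0$ we have $c\to 0$, so the preceding lemma yields $\inf_{\delta}\limsup_i\int_{(c,\infty)}\log t\, d\mu_{|x_i|}(t)\leq \log\Det^+_{L(\Gamma)}(f)$, and taking $\sup_{\varepsilon>0}$ completes the proof. The main technical point, and the reason the preceding ultraproduct lemma is essential, is that weak$^*$ convergence of $\mu_{|x_i|}$ to $\mu_{|f|}$ is not directly available in the non-square case: since $\dim_{L(\Gamma)}\ker\lambda(f^*)=m-n$ in general, the quantity $\|\chi_{A_i^c}\sigma_i(f)\|_2$ need not vanish, so Lemma~\ref{L:weak*convergence} cannot be invoked as in the proof of Theorem~\ref{T:squarematrix}. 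The remaining work is book-keeping to track how the cutoff $c$ and the various $o(1)$ contributions (from $\xi_0$ and from the normalization) behave as $\delta\to 0$.
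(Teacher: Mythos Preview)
Your overall strategy is the same as the paper's: pass to the operator $x_i=\chi_{A_i}\sigma_i(f)\chi_{B_i}$, cover $\Xi_\delta$ by translates coming from a section $\mathcal N$ of $x_i^{-1}(\ZZ^{A_i})/\ZZ^{B_i}$ together with an $\varepsilon$-net $\mathcal M$ of the small-image set, and then feed the resulting integral $\int_{(c,\infty)}\log t\,d\mu_{|x_i|}$ into the ultraproduct lemma. The counting, the handling of the $\RR^{B_i^c}$ factor, and the final appeal to the lemma are all fine.

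There is, however, a genuine gap at the replacement step. You invoke Proposition~\ref{P:rankperturabtion}(iii) to swap $\sigma_i(f)$ for $x_i$, but that proposition applies only to rank perturbations, and $x_i$ is \emph{not} a rank perturbation of $\sigma_i(f)$ when $m>n$. Indeed, $|A_i^c|/d_i\to m-n>0$ (this is exactly your own observation at the end about $\dim_{L(\Gamma)}\ker\lambda(f^*)=m-n$), so the left cutoff $\chi_{A_i}$ alters $\sigma_i(f)(e_l\otimes e_j)$ on a positive-density set of $j$'s. Consequently $\sup_\xi\|(x_i-\sigma_i(f))\xi\|_{2,(\ZZ^{d_i})^{\oplus m}}$ does not tend to $0$, and the asserted \emph{equality}
\[
h_\Sigma(X_f,\Gamma)=\sup_{\varepsilon>0}\inf_{\delta>0}\limsup_{i}\frac{1}{d_i}\log S_\varepsilon\bigl(\Xi_\delta(x_i),\theta_{2,(\ZZ^{d_i})^{\oplus n}}\bigr)
\]
is not justified (and is likely false).

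The fix is that you only need the inequality $\leq$, and this one-sided containment $\Xi_\delta(\sigma_i(f))\subseteq\Xi_{2\delta}(x_i)$ (for large $i$) holds for a more elementary reason. Take $\xi\in\Xi_\delta(\sigma_i(f))$ with representative in $[0,1)^{nd_i}$. Since $|B_i^c|/d_i\to 0$ and $\xi$ is bounded, $\|\sigma_i(f)\chi_{B_i^c}\xi\|_2\to 0$, so $\|\sigma_i(f)\chi_{B_i}\xi\|_{2,(\ZZ^{d_i})^{\oplus m}}\leq 2\delta$ for large $i$; then multiplying on the left by the projection $\chi_{A_i}$ can only decrease the quotient norm, giving $\|x_i\xi\|_{2,(\ZZ^{d_i})^{\oplus m}}\leq 2\delta$. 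This is precisely how the paper proceeds: it does not attempt to replace $\sigma_i(f)$ by $x_i$ globally, but instead shows directly that any $\xi\in\Xi_\delta(\sigma_i(f))$ satisfies $\|x_i\xi-\chi_{A_i}l\|_2\leq 2\delta$ for some $l\in(\ZZ^{d_i})^{\oplus m}$, and then runs the $\mathcal N$--$\mathcal M$ covering argument from there. Once you make this adjustment, your argument and the paper's coincide.
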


\begin{proof} Let $x_{i}$ be defined as in the preceding Lemma. It follows as in Proposition \ref{P:rankperturabtion} that
\[\ker(x_{i})\cap (\RR^{d_{i}})^{\oplus n}=\RR^{B_{i}^{c}},\]
\[x_{i}((\RR^{d_{i}})^{\oplus n})=\RR^{A_{i}},\]
\[\frac{|B_{i}|}{d_{i}}\to 0.\]

	Let $\xi\in \Xi_{\delta}(\sigma_{i}(f)).$ We may assume that $\|\xi\|_{2}\leq 1.$ As
\[\frac{|B_{i}^{c}|}{d_{i}}\to 0,\]
it follows that
\[\sup_{\zeta\in (\RR^{d_{i}})^{\oplus n}}\|\sigma_{i}(f)\chi_{B_{i}}\zeta-\sigma_{i}(f)\zeta\|_{2,(\ZZ^{d_{i}})^{\oplus m}}\to 0.\]
So for all large $i,$ we have
\[\|\sigma_{i}(f)\chi_{B_{i}}\xi\|_{2,(\ZZ^{d_{i}})^{\oplus m}}\leq 2\delta.\]
So we can find an $l\in (\ZZ^{d_{i}})^{\oplus m}$ so that
\[\|\sigma_{i}(f)\chi_{B_{i}}\xi-l\|_{2}\leq 2\delta.\]
Thus,
\begin{equation}\label{E:aorhakdngoa}
\|x_{i}\xi-\chi_{A_{i}}l\|_{2}\leq 2\delta.
\end{equation}
 Let
\[\mathcal{N}\subseteq x_{i}^{-1}(\ZZ^{A_{i}})\cap \RR^{B_{i}}\]
be a section for the quotient map
\[ x_{i}^{-1}(\ZZ^{A_{i}})\to \frac{x_{i}^{-1}((\ZZ^{A_{i}})\cap \RR^{B_{i}}}{\ZZ^{B_{i}}}.\]
 By Lemma \ref{L:smith} we have
\begin{equation}\label{E:count1}
|\mathcal{N}|=|\Det^{+}(x_{i})|.
\end{equation}
Let
\[\mathcal{M}\subseteq x_{i}^{-1}(2\delta \Ball(\ell^{2}_{\RR}(d_{i}m,u_{d_{i}m})))\cap \RR^{B_{i}},\]
be a maximal $\varespilon$-separated subset. By Lemma \ref{L:approximatekernel} we know that if $8\delta<\varespilon,$ then
\begin{equation}\label{E:count2}
|\mathcal{M}|\leq \Det_{8\delta/\varespilon}(x_{i})^{-1}.
\end{equation}
Since
\[\|\xi-\chi_{B_{i}}\xi\|_{2,(\ZZ^{d_{i}})^{\oplus n}}\leq \frac{|B_{i}^{c}|}{d_{i}}\to 0,\]
\[x_{i}((\RR^{d_{i}})^{\oplus n})=\RR^{A_{i}}\]
inequalities (\ref{E:aorhakdngoa}),(\ref{E:count1}),(\ref{E:count2}) allows us to follow the first half of the proof of Theorem \ref{T:squarematrix} to see that
\[S_{8\varepsilon}(\Xi_{\delta}(\sigma_{i}(f))\leq |\mathcal{M}||\mathcal{N}|.\]
It now follows as in the proof of the first half of Theorem \ref{T:squarematrix} that
\[h_{\Sigma}(X_{f},\Gamma)\leq \inf_{0<\delta<1}\limsup_{i\to \infty}\int_{(\delta,\infty)}\log(t)\,d\mu_{|x_{i}|}(t),\]
and so the Theorem follows automatically from the preceding Lemma.

\end{proof}

\section{Fuglede-Kadison Determinants and  Measure-Theoretic Entropy}\label{S:measureentropy}

If $X$ is a compact  group, we use $m_{X}$ for the Haar measure on $X.$  The following is the main result of this section.

\begin{theorem}\label{T:invertibleinvNA} Let $\Gamma$ be a countable discrete non-amenable sofic group with sofic approximation $\Sigma.$ Let $f\in M_{n}(\ZZ(\Gamma))$ be injective as a left multiplication operator on $\ell^{2}(\Gamma)^{\oplus n}.$  Then,
\[h_{\Sigma,m_{X_{f}}}(X_{f},\Gamma)=\log \Det_{L(\Gamma)}(f).\]
\end{theorem}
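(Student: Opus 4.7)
The upper bound $h_{\Sigma, m_{X_f}}(X_f,\Gamma) \leq \log\Det_{L(\Gamma)}(f)$ is immediate: measure-theoretic sofic entropy is bounded above by topological sofic entropy for every invariant Borel probability measure (this direction of the variational principle was established by Kerr--Li), and the topological side was already evaluated in Theorem~\ref{T:squarematrix}. Thus the entire content of the theorem is the matching lower bound, and the plan is to recycle the microstate construction in the proof of Theorem~\ref{T:squarematrix} and verify that almost all of those microstates are also compatible with $m_{X_f}$.

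Recall from the lower bound argument of Theorem~\ref{T:squarematrix} that we produced at least $|\mathcal{N}|\,|\mathcal{M}|$ pairwise $\varepsilon$-separated microstates of the form $\phi_{\zeta+\eta}$, where $\zeta$ ranges over a section $\mathcal{N}$ of the finite abelian group $x_{i}^{-1}((\ZZ^{d_{i}})^{\oplus n})/(\ZZ^{d_{i}})^{\oplus n}$ of order $|\Det(x_i)|$, and $\eta$ ranges over a maximal $\varepsilon$-separated subset $\mathcal{M}$ of the approximate kernel of $x_i$. The key observation is that, since $x_i$ is invertible in $GL_{d_i n}(\RR)$ by Proposition~\ref{P:rankperturabtion}, reduction mod $(\ZZ^{d_i})^{\oplus n}$ identifies $\mathcal{N}$ with the finite subgroup $G_i := \ker\bigl(x_i\colon (\TT^{d_i})^n \to (\TT^{d_i})^n\bigr)$ of order $|\Det(x_i)|$.

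The measure approximation now becomes Fourier analysis on $G_i$. For $\alpha \in \ZZ(\Gamma)^{\oplus n}$, the empirical Fourier coefficient $\frac{1}{d_i}\sum_{j=1}^{d_i} e^{2\pi i\langle \phi_\zeta(j),\alpha\rangle}$ is linear in $\zeta \in G_i$, so by orthogonality of characters on $G_i$ its average over $\mathcal{N}$ vanishes unless the induced character is trivial on $G_i$. Passing through the sofic data via Lemmas~\ref{L:weak*convergence} and~\ref{L:injective}, triviality on $G_i$ in the sofic limit corresponds exactly to $\alpha \in r(f)(\ZZ(\Gamma)^{\oplus m})$, i.e.\ to $\chi_\alpha$ being trivial on $X_f$. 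Hence the averaged empirical measure $|G_i|^{-1}\sum_{\zeta\in\mathcal{N}}(\phi_\zeta)_\ast u_{d_i}$ converges weak-$*$ to $m_{X_f}$. A second-moment concentration argument in the spirit of Lemma~\ref{L:AbstractConc} then upgrades this averaged statement to a pointwise one: except on a subset of $\mathcal{N}$ of density $o(1)$, the empirical distribution $(\phi_{\zeta+\eta})_\ast u_{d_i}$ is weak-$*$ close to $m_{X_f}$ on any fixed finite family of test characters (the addition of $\eta\in\mathcal{M}$ perturbs each such Fourier coefficient by at most $O(\delta)$). Since the excluded fraction is negligible compared with $|\mathcal{N}|\,|\mathcal{M}|$, one retains $(1-o(1))|\mathcal{N}|\,|\mathcal{M}|$ genuine measure microstates, and the lower bound then follows exactly as in the proof of Theorem~\ref{T:squarematrix}.

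The main obstacle is the quantitative equidistribution step: one must control the second moment $|G_i|^{-1}\sum_{\zeta\in G_i}\bigl|\frac{1}{d_i}\sum_j e^{2\pi i\langle \phi_\zeta(j),\alpha\rangle}\bigr|^2$ uniformly in $\alpha \in \ZZ(\Gamma)^{\oplus n} \setminus r(f)(\ZZ(\Gamma)^{\oplus m})$, which requires matching the finite-dimensional spectral data of $\sigma_i(f)$ and its rank perturbation $x_i$ with the Pontryagin dual structure of $X_f$. Once this matching is in place, injectivity of $\lambda(f)$ guarantees that any such $\alpha$ remains detected by $\sigma_i$ in the sofic limit, so the induced character of $G_i$ is nontrivial with quantitatively controlled norm, and the concentration bound closes the argument.
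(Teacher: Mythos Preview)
Your overall architecture matches the paper: the upper bound is the variational principle plus Theorem~\ref{T:squarematrix}, and for the lower bound you recycle the microstates from that proof and try to show that most of them are measure-compatible via a concentration argument. The paper does exactly this, and the concentration step (Lemma~\ref{L:AbstractConc}, packaged as the Automatic Concentration Lemma~\ref{L:AutomaticConcentration}) is indeed the engine. Note however that this lemma is not a second-moment bound: it is a soft extreme-point argument that requires $\Gamma\actson(X_f,m_{X_f})$ to be \emph{ergodic}. This is where the non-amenability hypothesis enters, via the Li--Schmidt--Peterson result (Theorem~\ref{T:automaticergodicity}). You never invoke ergodicity, so as written your concentration step has no input to run on.

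The substantive gap is in your equidistribution claim. You assert that for $\alpha\notin r(f)(\ZZ(\Gamma)^{\oplus m})$, the character on $G_i$ induced by each $j$ is nontrivial ``with quantitatively controlled norm'' in the sofic limit, so that the $G_i$-average of the empirical Fourier coefficient at $\alpha$ vanishes. This is exactly the step that does \emph{not} follow from injectivity of $\lambda(f)$ alone. By Lemma~\ref{L:BlerghHadag} the $j$-th character is trivial on $G_i$ iff $\sigma_i(\widetilde{\alpha})^*e_j=x_i^*r_{j,i}$ for some $r_{j,i}\in(\ZZ^{d_i})^{\oplus n}$, with no bound on $\|r_{j,i}\|$. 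The Submodule Test (Lemma~\ref{L:idealtest}) only forces $\alpha\in r(f)(\ZZ(\Gamma)^{\oplus m})$ when such $r_{j,i}$ exist with a \emph{uniform} $\ell^2$-bound on a positive density of $j$'s. For $\alpha\notin r(f)(\ZZ(\Gamma)^{\oplus m})$ it is entirely possible that a positive proportion of the characters are trivial on $G_i$ but with $\|r_{j,i}\|\to\infty$, and then the $G_i$-average alone does \emph{not} tend to zero. Your remark that the $\eta$-part ``perturbs each Fourier coefficient by at most $O(\delta)$'' is therefore pointing in the wrong direction: $\eta$ is not a small perturbation (it lives in a ball of radius $\varepsilon$, not $\delta$), and more importantly the integration over $\eta$ is precisely what kills these residual contributions. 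The paper averages over the product $G_k\times W_{k,\delta,\varepsilon}$ with measure $u_{G_k}\otimes\mu_{\delta,\varepsilon}$ (Lemma~\ref{L:proofofweak^{*}convergence}); on the bad set where $\|r_{j,k}\|$ is large, the integral $\int_{W_{k,\delta,\varepsilon}}e^{2\pi i\langle x_k\zeta,r_{j,k}\rangle}\,d\mu_{\delta,\varepsilon}(\zeta)$ is highly oscillatory and is shown to be $o(1)$ via a separate geometric estimate (Lemma~\ref{L:equatorconcentration}) on the near-invariance of high-dimensional balls under small translations. Without this oscillatory-integral step your weak-$*$ convergence to $m_{X_f}$ is unproved, and the rest of the argument cannot proceed.
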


Our techniques will be general enough to be adaptable to a slightly different situation. Namely, suppose that $f\in M_{m,n}(\ZZ(\Gamma))$ and that $\lambda(f)$ has dense image as an operator on $\ell^{2}(\Gamma)^{\oplus n},$ and that $m\ne n$ (it necessarily follows that $m<n$ by Proposition \ref{P:Left/RightIssues}) we will be able to show that the measure-theoretic entropy of $\Gamma\actson (X_{f},m_{X_{f}})$ is infinite.

	We focus solely on the non-amenable case, the amenable case is covered by the results in \cite{LiThom}. Let us recall the definition of measure-theoretic entropy in the case of a topological model.

\begin{definition}\emph{ Let $X$ be a compact metrizable space and $\Gamma \actson X$ by homeomorphisms. Let $\mu$ be a Borel probability measure on $X$ preserved by $\Gamma.$ Let $\rho$ be a dynamically generating pseudometric on $X.$ For $F\subseteq \Gamma$ finite, $L\subseteq C(X)$ finite, and $\delta>0,$ we let $\Map(\rho,F,L,\delta,\sigma_{i})$ be set of all $\phi \in \Map(\rho,F,\delta,\sigma_{i})$ so that}
\[\left|\frac{1}{d_{i}}\sum_{j=1}^{d_{i}}f(\phi(j))-\int f\,d\mu\right|<\delta.\]
\emph{Define the measure-theoretic entropy of $\Gamma\actson (X,\mu)$ by}
\[h_{\Sigma,\mu}(\rho,F,L,\delta,\varepsilon)=\limsup_{i\to \infty}\frac{1}{d_{i}}\log S_{\varespilon}(\Map(\rho,F,L,\delta,\sigma_{i}),\rho_{2}),\]
\[h_{\Sigma,\mu}(\rho,\varepsilon)=\inf_{\substack{F\subseteq \Gamma \mbox{\emph{ finite, } }\\ L\subseteq C(X) \mbox{\emph{ finite, } }\\ \delta>0}}h_{\Sigma,\mu}(\rho,F,L,\delta,\varepsilon)\]
\[h_{\Sigma,\mu}(X,\Gamma)=\sup_{\varespilon>0}h_{\Sigma,\mu}(\rho,\varepsilon).\]
\end{definition}
By Proposition 3.4 in \cite{KLi2}, and Proposition 5.4, we know measure-theoretic entropy does not depend upon the pseudometric and if $\Gamma\actson (X,\mu),\Gamma\actson (Y,\nu)$ are probability measure preserving actions, and $\phi\colon X\to Y$ is a bimeasurable bijection  such that $\phi_{*}\mu=\nu,$ then $h_{\Sigma,\mu}(X,\Gamma)=h_{\Sigma,\nu}(Y,\Gamma).$

\subsection{Main Technical Lemmas}

	We begin to collect a few technical lemmas needed for the proof. The idea of the proof of Theorem \ref{T:invertibleinvNA} is that we can already produce enough microstates for the topological action $\Gamma\actson X_{f}$ to get the lower bound on topological entropy. So if we can prove that ``most" of these are microstates for the measure-preserving action $\Gamma\actson (X_{f},m_{X_{f}}),$ this will be enough to get the lower bound on measure-theoretic entropy. Since topological entropy always dominates measure-theoretic entropy this will prove the main theorem. To ease the work involved in this probabilistic argument, it will be helpful to prove a ``concentration'' result (albeit a soft one) which will essentially reduce our work to prove that our microstates approximately pushforward  the uniform measure to the Haar measure ``on average''. Similar techniques have been used by Lewis Bowen (see the proof of Theorem 4.1 in \cite{BowenEntropy}), as well as Lewis Bowen and Hanfeng Li (see \cite{BowenLi} Lemma 7.3).  However, we wish to first formulate the technique in a more abstract setting. This setting is close to that of Lemma 6.1 in \cite{DabBi}. In fact, the following Lemma may be regarded as a mild generalization of Lemma 6.1 in \cite{DabBi}.
	
		We first need to recall some facts about integrals of vector valued functions in a locally convex space. Suppose that $X$ is a separable, locally convex space, and that $K\subseteq X$ is a compact, metrizable, convex set. If $\mu\in \Prob(K),$ then there is a unique point $p\in K$ so that for all $\phi\in X^{*}$
\[p(x)=\int_{K}\phi(x)\,d\mu(x),\]
see \cite{RudFA} Theorem 3.27. We write
\[p=\int_{K}x\,d\mu(x),\]
the point $p$ is called the \emph{barycenter} of $\mu.$ The gist of the following lemma is that if a probability measure on such a compact, convex set has a barycenter which is close to an extreme point, then ``most'' of the mass of the measure is concentrated near the extreme point.

\begin{lemma}[Abstract Automatic Concentration]\label{L:AbstractConc} Let $X$ be a separable, locally convex space, let $K\subseteq C$ be  compact, metrizable, convex subsets of $X$ and $p$ an extreme point of $K.$ Then for any  open neighborhood $U$ of $0$ in $X$ and $\varepsilon>0,$ there is a  neighborhood $V$ in $X$ of $0$ so that if $\mu\in \Prob(C)$ and
\[\mu((K+V)\cap C)=1,\]
\[p-\int_{C}x\,d\mu(x)\in V,\]
then
\[\mu(\{x\in C:x-p\in U\})\geq 1-\varepsilon.\]
\end{lemma}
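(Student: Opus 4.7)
The plan is to argue by contradiction using a compactness argument in $\Prob(C)$. Suppose $U$ and $\varepsilon>0$ are given and that no $V$ has the desired property. Then for every weak$^{*}$-neighborhood $V$ of $0$ in $X^{*}$ there is a measure $\mu_{V}\in\Prob(C)$ satisfying both hypotheses but with $\mu_{V}(C\setminus(p+U))\geq\varepsilon$. I would treat $(\mu_{V})$ as a net indexed by the neighborhood filter of $0$ in $X^{*}$, ordered by reverse inclusion. Because $C$ is weak$^{*}$-compact metrizable, $\Prob(C)$ is weak$^{*}$-compact (and metrizable), so I can pass to a weak$^{*}$-convergent subnet $\mu_{V_{\alpha}}\to\mu\in\Prob(C)$.

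First I would check that $\mu$ is supported on $K$. Given $x\in C\setminus K$, the Hausdorff property together with the compactness of $K$ produces a weak$^{*}$-neighborhood $V'$ of $0$ with $(x+V')\cap(K+V')=\emptyset$. For $V_{\alpha}\subseteq V'$ (eventually along the subnet) we then have $\mu_{V_{\alpha}}((x+V')\cap C)\leq\mu_{V_{\alpha}}(C\setminus(K+V_{\alpha}))=0$, and the portmanteau theorem for open sets yields $\mu((x+V')\cap C)=0$. Since $C$ is metrizable, $C\setminus K$ is covered by countably many such open neighborhoods, so $\mu(K)=1$. Next I would show the barycenter of $\mu$ equals $p$: for each $x\in X$ the map $\phi\mapsto\phi(x)$ is weak$^{*}$-continuous on $C$, hence $\nu\mapsto(\int\phi\,d\nu)(x)$ is weak$^{*}$-continuous on $\Prob(C)$, so $\int\phi\,d\mu_{V_{\alpha}}\to\int\phi\,d\mu$ in the weak$^{*}$-topology of $X^{*}$. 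Combined with $p-\int\phi\,d\mu_{V_{\alpha}}\in V_{\alpha}$ and the fact that $V_{\alpha}$ shrinks to $\{0\}$, this forces $\int\phi\,d\mu=p$.

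The crux of the argument is to conclude from (i)--(ii) that $\mu=\delta_{p}$. This is a standard consequence of the extremality of $p$ via Milman's theorem. Indeed, if $A=\supp(\mu)\subseteq K$ did not contain $p$, then $p\in\overline{\operatorname{conv}}(A)$ as the barycenter of a probability measure supported on $A$; since $p$ is extreme in $K\supseteq\overline{\operatorname{conv}}(A)$ it remains extreme in $\overline{\operatorname{conv}}(A)$, and Milman's theorem would place $p\in A$, a contradiction. If instead $p\in A$ but $\mu\neq\delta_{p}$, then I decompose $\mu=\alpha\mu_{1}+(1-\alpha)\mu_{2}$ with $\mu_{1}$ supported on a compact set avoiding $p$ and $\alpha\in(0,1)$; taking barycenters gives $p=\alpha q_{1}+(1-\alpha)q_{2}$ in $K$, and extremality forces $q_{1}=q_{2}=p$, reducing to the previous case.

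Having established $\mu=\delta_{p}$, I conclude as follows: the set $C\setminus(p+U)$ is closed in $C$, and the portmanteau theorem for closed sets gives $\mu(C\setminus(p+U))\geq\limsup_{\alpha}\mu_{V_{\alpha}}(C\setminus(p+U))\geq\varepsilon$, contradicting $\mu=\delta_{p}$ since $p\in p+U$. The main obstacle I expect is in keeping the net arguments precise because the weak$^{*}$-topology on $X^{*}$ is not first countable in general; the metrizability of $C$ (and hence of $\Prob(C)$) is essential so that the portmanteau inequalities can be applied as stated along the chosen subnet.
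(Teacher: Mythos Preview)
Your argument is correct, and it takes a genuinely different route from the paper's proof. Both proceed by contradiction and pass to a weak$^{*}$ limit $\mu\in\Prob(C)$ that is supported on $K$ with barycenter $p$, but the endgames differ. The paper first reduces to the special case $U=\{\phi:\Rea(\phi(x))<\delta\}$; it then splits each $\mu_{n}$ as $t_{n}\mu_{n}^{(1)}+(1-t_{n})\mu_{n}^{(2)}$ according to whether $\Rea(\phi(x))\geq \Rea(p(x))+\delta$, passes to limits, and uses extremality of $p$ to force the barycenter of $\mu^{(1)}$ to equal $p$, which is impossible since $\mu^{(1)}$ is supported where $\Rea(\phi(x))\geq \Rea(p(x))+\delta$. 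The general $U$ is then obtained by intersecting finitely many half-spaces. You instead stay with an arbitrary $U$ throughout and invoke Milman's partial converse to Krein--Milman to conclude directly that $\mu=\delta_{p}$, after which the portmanteau inequality for the closed set $C\setminus(p+U)$ gives the contradiction. Your approach is cleaner and avoids the half-space reduction at the cost of citing Milman's theorem; the paper's approach is more elementary and self-contained, since the contradiction comes from a single linear functional without appealing to any structure theorem. One small point worth tightening in your write-up: since $\Prob(C)$ is compact \emph{metrizable}, you may as well index by a decreasing sequence $V_{n}$ (as the paper does) rather than the full neighborhood net; this makes the portmanteau steps for open and closed sets entirely routine and sidesteps the caveat you flag at the end.
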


\begin{proof}   Fix an open neighborhood $U$ of $0$ in $X.$ By metrizability, we may find a decreasing sequence $V_{n}$  of open neighborhoods of $0$ with
\[K=\bigcap_{n=1}^{\infty}(K+V_{n})\cap C.\]
Assuming the lemma is false for this $U,$ we can find an $\varespilon>0$ and a sequence $\mu_{n}$ of Borel probability measures on $C$ so that
\[\mu_{n}((K+V_{n})\cap C)=1,\]
\[p-\int_{C}\,x\,d\mu_{n}(x)\in V_{n},\]
but
\[\mu_{n}(\{x\in C:x-p\in U\})\leq 1-\varepsilon.\]
We may assume, by passing to a subsequence, that there is a $\mu\in \Prob(C)$ with $\mu_{n}\to \mu$ in the weak$^{*}$-topology. It is easy to see that $\int_{C}\,x\,d\mu_{n}(x)\to p,$ and thus for every $\phi\in X^{*}$ we have
\[\phi(p)=\lim_{n\to\infty}\int_{C}\phi(x)\,d\mu_{n}(x)=\int_{C}\phi(x)\,d\mu(x),\]
so
\[p=\int_{C}x\,d\mu(x).\]
Suppose that $W$ is a neighborhood of $K$ in $C$ and choose a neighborhood $W_{0}$ of $K$ in $C$ with $C\subseteq \overline{W_{0}}\subseteq W.$ Since $\mu_{n}\to \mu$ weak$^{*},$ we have
\[\mu(W)\geq \mu(\overline{W_{0}})\geq \limsup_{n\to\infty}\mu_{n}(\overline{W_{0}})=1,\]
since for all sufficiently large $n$ it is true that $(K+V_{n})\cap C\subseteq W_{0}.$  Infimizing over all neighborhoods $W$ of $K$ we see that $\mu(K)=1$ and thus
\[p=\int_{K}x\,d\mu(x).\]
By extremality of $p$ we find that $\mu=\delta_{p}.$ Since $\mu_{n}\to \delta_{p}$ weak$^{*}$ we must have that
\[1=\delta_{p}(p+U)\leq \liminf_{n\to\infty}\mu_{n}(p+U)\leq 1-\varespilon,\]
a contradiction.

\end{proof}
	
		We use the Lemma to state a more technical version of a concentration Lemma, which is more specific to our situation.

\begin{lemma}[The Automatic Concentration Lemma]\label{L:AutomaticConcentration} Let $\Gamma$ be a countable discrete sofic group with sofic approximation $\Sigma.$ Let $X$ be a compact metrizable space and $\Gamma\actson X$ by homeomorphisms, let $\rho$ be a dynamically generating pseudometric on $X.$ Let $I$ be a directed set, and  let $(\Omega_{i,\alpha},\PP_{i,\alpha})_{i\in \NN,\alpha\in I}$ be standard probability spaces. Let
\[\Phi_{i,\alpha}\colon \{1,\cdots,d_{i}\}\times \Omega_{\alpha,i}\to X\]
be Borel measurable maps, and for $\xi\in \Omega_{\alpha,i}$ define
\[\phi_{\xi}\colon \{1,\cdots,d_{i}\}\to X\]
by
\[\phi_{\xi}(j)=\Phi_{i,\alpha}(j,\xi).\]
Suppose that for all $g\in \Gamma,$
\[\lim_{\alpha}\limsup_{i\to \infty}\|\rho_{2}(\phi_{\xi}\circ \sigma_{i}(g),g\phi_{\xi})\|_{L^{\infty}(\xi)}=0,\]
and that there is a Borel probability measures $\mu$ on $X$ so that for all $f\in C(X)$
\[\lim_{\alpha}\limsup_{i\to\infty}\left|\int f\,d(\Phi_{i,\alpha})_{*}(u_{d_{i}}\otimes \PP_{i,\alpha})-\int f\,d\mu\right|=0.\]
Then

(a) $\mu$ is $\Gamma$-invariant,

(b) if $\Gamma\actson (X,\mu)$ is ergodic, then for all $F\subseteq \Gamma$ finite, $L\subseteq C(X)$ finite, $\delta>0$ we have
\[\lim_{\alpha}\liminf_{i\to \infty}\PP_{i,\alpha}(\{\xi:\phi_{\xi}\in \Map(\rho,F,L,\delta,\sigma_{i})\})=1.\]
\end{lemma}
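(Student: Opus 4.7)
I would prove (a) by a direct averaging/change-of-variables argument, and (b) by applying the Abstract Automatic Concentration Lemma (Lemma \ref{L:AbstractConc}) to the distribution of the empirical measures $\nu_\xi = \frac{1}{d_i}\sum_{j=1}^{d_i}\delta_{\phi_\xi(j)} \in \Prob(X)$, viewed as random elements of $C(X)^*$.

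For (a), fix $f \in C(X)$ and $g \in \Gamma$; the goal is $\int f(gx)\,d\mu(x) = \int f\,d\mu$. By hypothesis (2) applied to $f$ and to $x \mapsto f(gx)$, these two quantities are the iterated limits of $\EE_\xi \frac{1}{d_i}\sum_j f(g\phi_\xi(j))$ and $\EE_\xi \frac{1}{d_i}\sum_j f(\phi_\xi(j))$, respectively. Since $\sigma_i(g) \in S_{d_i}$ is a permutation, the second expression equals $\EE_\xi \frac{1}{d_i}\sum_j f(\phi_\xi(\sigma_i(g)(j)))$. The $L^\infty$-in-$\xi$ control in hypothesis (1), combined with the standard observation that any $f \in C(X)$ is uniformly continuous with respect to the pseudometric $(x,y) \mapsto \max_{g' \in F'}\rho(g'x,g'y)$ for some finite $F' \subseteq \Gamma$ (here I use that $\rho$ is dynamically generating), then shows that these two averages are asymptotically equal, proving (a).

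For (b), I view $\Prob(X)$ as a weak$^*$-compact convex subset of $C(X)^*$ and let $K \subseteq \Prob(X)$ be the (convex, weak$^*$-compact) set of $\Gamma$-invariant Borel probability measures. By ergodicity of $\mu$, the point $\mu$ is an extreme point of $K$. The barycenter of the $\PP_{i,\alpha}$-pushforward law of $\nu_\xi$ is exactly $(\Phi_{i,\alpha})_*(u_{d_i}\otimes\PP_{i,\alpha})$, which by hypothesis (2) converges weak$^*$ to $\mu$ in the iterated limit. Moreover, the uniform bound in hypothesis (1), combined with the change-of-variables step from (a), implies that for any weak$^*$ neighborhood $V$ of $0$ in $C(X)^*$, the event $E_{i,\alpha} = \{\nu_\xi \in K+V\}$ has $\PP_{i,\alpha}$-probability tending to $1$ in the iterated limit: indeed, $g_*\nu_\xi - \nu_\xi$ tests small against any given finite set of continuous functions, uniformly in $\xi$.

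Now, given a weak$^*$ neighborhood $U$ of $0$ and $\varepsilon > 0$, choose $V$ as in Lemma \ref{L:AbstractConc}. Conditioning on $E_{i,\alpha}$ shifts the barycenter by a vanishing amount (since $\PP(E_{i,\alpha}^c) \to 0$ and $\Prob(X)$ is weak$^*$ bounded), so the conditional law of $\nu_\xi$ on $E_{i,\alpha}$ satisfies the hypotheses of Lemma \ref{L:AbstractConc} in the iterated limit; the lemma then gives $\PP_{i,\alpha}(\nu_\xi - \mu \in U) \geq 1 - 2\varepsilon$ eventually. Specializing $U$ so that $\nu_\xi - \mu \in U$ forces $|\int f\,d\nu_\xi - \int f\,d\mu| < \delta$ for every $f$ in the prescribed finite set $L$, and combining with the deterministic $L^\infty$-in-$\xi$ estimate from (1) to control $\rho_2(\phi_\xi \circ \sigma_i(g), g\phi_\xi) < \delta$ for $g \in F$, yields exactly the conclusion of (b). The main technical obstacle is verifying that $\nu_\xi$ lies near $K$ with high probability; this crucially exploits the \emph{uniform} (in $\xi$) nature of hypothesis (1), since a mere $L^2$-in-$\xi$ bound would not interface cleanly with the almost-sure hypothesis required by the concentration lemma.
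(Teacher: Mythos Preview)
Your proposal is correct and follows the same strategy as the paper: for (b), push the random empirical measure $\nu_\xi = (\phi_\xi)_*(u_{d_i})$ into $\Prob(X)\subseteq C(X)^*$, identify its barycenter as $(\Phi_{i,\alpha})_*(u_{d_i}\otimes\PP_{i,\alpha})$, and apply Lemma~\ref{L:AbstractConc} with $C=\Prob(X)$, $K$ the simplex of invariant measures, and $p=\mu$ extreme by ergodicity; the paper likewise leaves (a) as an exercise.

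One small difference worth noting: you are more explicit than the paper about the support hypothesis of Lemma~\ref{L:AbstractConc}, namely that the law of $\nu_\xi$ live on $(K+V)\cap C$. Your conditioning-on-$E_{i,\alpha}$ maneuver handles this correctly, but it is a bit heavier than needed. Because hypothesis (1) is an $L^\infty$-in-$\xi$ bound, the compactness argument you allude to (approximate invariance under finitely many $g$'s, tested against finitely many $f$'s, forces weak$^*$ proximity to $K$) actually gives $\nu_\xi\in K+V$ for \emph{every} $\xi$ once $\alpha$ and $i$ are large---not merely with probability tending to $1$. Hence one may apply Lemma~\ref{L:AbstractConc} to the unconditioned law $\eta_{i,\alpha}=(\xi\mapsto\nu_\xi)_*\PP_{i,\alpha}$ directly, with no barycenter-shifting correction; this is what the paper does (though it does not spell out the support verification). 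Your last sentence already identifies this as the place where the $L^\infty$ hypothesis is essential, which is exactly right.
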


\begin{proof}

(a): This will be left as an exercise.

(b) Let $C=\Prob(X)\subseteq C(X)^{*},$ and $K\subseteq C$ be the weak$^{*}$ compact, convex set consisting of $\Gamma$-invariant measures. The ergodicity of $\Gamma\actson (X,\mu)$ is equivalent to saying that $\mu$ is an extreme point of $K.$ Given  finite $F\subseteq\Gamma,L\subseteq C(X),$ and $\delta,\kappa>0,$ set
\[U=\bigcap_{f\in L}\left\{\eta\in \Prob(X):\left|\int_{X}f\,d\eta-\int_{X}f\,d\mu\right|<\delta\right\}.\]
 Given a finite $L'\subseteq C(X),\delta'>0,$ let $V_{L,\delta'}\subseteq C(X)^{*}$ consist of all (complex) measures $\eta$ so that
 \[\max_{f\in L'}\left|\int_{X}f\,d\eta\right|<\delta'.\]
 The preceding Lemma allows us to find a finite $L'\subseteq C(X)$ and $\kappa'>0$ so that if $\nu\in \Prob(C),$ and
\[\mu-\int_{C}\eta\,d\nu(\eta)\in V_{L',\delta'},\]
\[\nu((K+V_{L',\delta'})\cap C)=1,\]
then
\[\nu(\{\eta\in \Prob(X):\eta \in U\})\geq 1-\kappa.\]
By a compactness argument, we may find a finite $F'\subseteq G$ with $F\subseteq F'$ and a $\delta\in (0,\delta')$  so that for all $i\in \NN$ and all $\phi\in \Map(\rho,F,\delta,\sigma_{i})$ we have $\phi_{*}(u_{d_{i}})\in K+V_{L',\delta'}.$

	Our assumptions allow us to find an $\alpha_{0}\in I$ so that if $\alpha\geq\alpha_{0},$ then
\[\max_{g\in F'}\limsup_{i\to \infty}\|\rho_{2}(\phi_{\xi}\circ \sigma_{i}(g),g\phi_{\xi})\|_{L^{\infty}(\xi)}<\delta',\]
\[\max_{f\in L'}\limsup_{i\to\infty}\left|\int f\,d(\Phi_{i,\alpha})_{*}(u_{d_{i}}\otimes \PP_{i,\alpha})-\int f\,d\mu\right|<\kappa'.\]
Fix $\alpha\geq \alpha_{0}.$ As $F$ and $L'$ are finite, the above limiting statements allow us to find a $i_{0}\in \NN$ so that if $i\geq i_{0},$ then
\begin{equation}\label{E:happensallthetime}
\max_{g\in F'}\|\rho_{2}(\phi_{\xi}\circ \sigma_{i}(g),g\phi_{\xi})\|_{L^{\infty}(\xi)}<\delta',
\end{equation}
and
\[\max_{f\in L'}\left|\int f\,d(\Phi_{i,\alpha})_{*}(u_{d_{i}}\otimes \PP_{i,\alpha})-\int f\,d\mu\right|<\kappa'.\]
Define
\[\Psi_{i,\alpha}\colon \Omega_{\alpha,i}\to C\]
by
\[\Psi_{i,\alpha}(\xi)=(\phi_{\xi})_{*}(u_{d_{i}}),\]
and let
\[\eta_{i,\alpha}=(\Psi_{i,\alpha})_{*}(\PP_{i,\alpha}).\]
Then for all $f\in C(X),$
\[\int_{C}\int_{X}f\,d\nu\,d\eta_{i,\alpha}(\nu)=\int_{\Omega_{i,\alpha}}\int_{X}f\,d(\phi_{\xi})_{*}(u_{d_{i}})\,d\PP_{i,\alpha}(\xi)=\int f\,d(\Phi_{i,\alpha})_{*}(u_{d_{i}}\otimes \PP_{i,\alpha}).\]
Thus for all $i\geq i_{0},$
\[\max_{f\in L'}\left|\int_{C}\int_{X}f\,d\nu\,d\eta_{i,\alpha}-\int_{X}f\,d\mu\right|<\kappa'.\]
By $(\ref{E:happensallthetime}),$ and our choice of $F',\delta',$ we have for all $i\geq i_{0}$
\[\PP_{i,\alpha}(\{\xi:\phi_{\xi}\in \Map(\rho,F,\delta,\sigma_{i})\})=1,\]
\[\eta_{i,\alpha}((K+V_{L',\delta'})\cap C)=1.\]
So for all $i\geq i_{0}$
\[\PP_{i,\alpha}(\{\xi:\phi_{\xi}\in \Map(\rho,F,L,\delta,\sigma_{i})\})=\PP_{i,\alpha}(\{\xi:(\phi_{\xi})_{*}(u_{d_{i}})\in U\})=\eta_{i,\alpha}(\{\nu:\nu \in U\})\geq 1-\kappa,\]
by choice of $\kappa',L'.$ Thus
\[\liminf_{\alpha}\liminf_{i\to\infty}\PP_{i,\alpha}(\{\xi:\phi_{\xi}\in \Map(\rho,F,L,\delta,\sigma_{i})\})\geq 1-\kappa.\]
The lemma is completed by letting $\kappa\to 0.$

\end{proof}

	Let us explain why we call this Lemma the ``Automatic Concentration Lemma." The assumption
\[(\Phi_{i,\alpha})_{*}(u_{d_{i}}\times \PP_{i,\alpha})\to \mu\]
can be thought of as saying that
\[(\phi_{\xi})_{*}(u_{d_{i}})\to \mu\]
``in expectation." When one is dealing with limits of large probability spaces (for example, uniform probability measures on large finite sets, or normalized Lebesgue measure on the ball of a large finite-dimensional Banach space), there is a principle called the ``concentration of measure phenomenon." This roughly says that (nice enough) functions have a very small deviation from their expectation. This principle is used to great effect, e.g. in the theory of  random graphs, geometric functional analysis, and random matrix theory. The conclusion of the Lemma, i.e. that
\[\PP_{i,\alpha}\left(\left\{\xi:\left|\int f\,d(\phi_{\xi})_{*}u_{d_{i}}-\int f\,d\mu\right|<\delta\right\}\right)\to 1,\]
can be interpreted as the statement that $(\phi_{\xi})_{*}u_{d_{i}}$ ``concentrates" near its expectation. So this Lemma can be thought of as an automatic concentration result, provided that $\Gamma\actson (X,\mu)$ is ergodic.

	Note that this differs in spirit from most concentration results. Typically one needs some concrete estimates (e.g. a log Sobolev inequality) and gets explicit results on the deviation from the mean (typically exponential decay). For this lemma, the techniques, results, and assumptions are much softer. No concrete estimates is need for the proof of this concentration result, however the result also gives no concrete estimates on deviations from the mean. To apply this result, we need ergodicity as well as weak$^{*}$ convergence. For ergodicity, we state a Theorem due to Hanfeng Li, Klaus Schmidt, and Jesse Peterson. They only state this result for $f\in \ZZ(\Gamma),$ however  the proof works for  $f\in M_{m,n}(\ZZ(\Gamma)).$

\begin{theorem}[Theorem 1.3 in \cite{LiSchmidtPet}]\label{T:automaticergodicity} Let $\Gamma$ be a non-amenable group, and let $f\in M_{m,n}(\ZZ(\Gamma))$ be such that $\lambda(f)$ has dense image. Then $\Gamma\actson (X_{f},m_{X_{f}})$ is ergodic.
\end{theorem}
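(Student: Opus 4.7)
The strategy is to reduce ergodicity of $\Gamma\actson(X_f,m_{X_f})$ to a statement about $\Gamma$-orbits in the dual module $M:=\ZZ(\Gamma)^{\oplus n}/r(f)(\ZZ(\Gamma)^{\oplus m})$, and then to rule out finite orbits using non-amenability. Under Pontryagin duality, the elements of $M$ correspond to continuous characters $\hat{a}\colon X_f\to\TT$, and the complex-exponential characters $\{e^{2\pi i\hat{a}}:a\in M\}$ form an orthonormal basis of $L^2(X_f,m_{X_f})$ on which the Koopman representation permutes the basis according to the $\Gamma$-action on $M$ (since $\pi(g)\hat{a}=\widehat{ga}$). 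Hence $L^2(X_f)$ decomposes as $\bigoplus_O\ell^2(O)$ over $\Gamma$-orbits $O\subseteq M$, and non-constant $\Gamma$-invariant vectors correspond exactly to nonzero elements of $M$ whose $\Gamma$-orbit is finite. So the task reduces to showing that no nonzero $a\in M$ has finite $\Gamma$-orbit.

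I would proceed by contradiction. Suppose $a\in M\setminus\{0\}$ has finite orbit, so $H:=\mathrm{Stab}_{\Gamma}(a)$ has finite index in $\Gamma$; fix a lift $\tilde{a}\in\ZZ(\Gamma)^{\oplus n}$. By Proposition \ref{P:Left/RightIssues} the hypothesis on $\lambda(f)$ is equivalent to injectivity of $r(f)$, so for every $h\in H$ the relation $(h-e)\tilde{a}\in r(f)(\ZZ(\Gamma)^{\oplus m})$ uniquely determines $c_h\in\ZZ(\Gamma)^{\oplus m}$ with $r(f)(c_h)=(h-e)\tilde{a}$; since $r(f)$ commutes with the left regular representation, $c\colon H\to\ZZ(\Gamma)^{\oplus m}$ satisfies the $1$-cocycle identity $c_{h_1h_2}=c_{h_1}+h_1\cdot c_{h_2}$. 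Because $H$ has finite index in the non-amenable group $\Gamma$ it is itself non-amenable, and as an $H$-representation $\ell^2(\Gamma)^{\oplus m}$ is a finite direct sum of copies of $\ell^2(H)$ and therefore has spectral gap. Guichardet's theorem then produces $v\in\ell^2(\Gamma)^{\oplus m}$ with $c_h=hv-v$ for all $h\in H$, and back-substitution gives $(h-e)(\tilde{a}-r(f)(v))=0$ for every $h\in H$; since $H$ is infinite, $\ell^2(\Gamma)^{\oplus n}|_H$ has no nonzero $H$-fixed vector, and so $\tilde{a}=r(f)(v)$ inside $\ell^2(\Gamma)^{\oplus n}$.

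It remains to promote $v\in\ell^2(\Gamma)^{\oplus m}$ to $v\in\ZZ(\Gamma)^{\oplus m}$, at which point $\tilde{a}\in r(f)(\ZZ(\Gamma)^{\oplus m})$ and so $a=0$ in $M$, the desired contradiction. The relation $hv-v=c_h\in\ZZ(\Gamma)^{\oplus m}$ implies $v(h^{-1}g)\equiv v(g)\pmod{\ZZ^m}$ for every $g\in\Gamma$ and $h\in H$, so $v$ is constant modulo $\ZZ^m$ along each $H$-orbit in $\Gamma$ (under left multiplication). Since $[\Gamma:H]<\infty$ there are only finitely many such orbits, each of cardinality $|H|=\infty$; and because $v\in\ell^2$, along any fixed infinite orbit the values of $v$ contain a sequence tending to zero. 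The only residue class in $\CC^m/\ZZ^m$ admitting representatives of arbitrarily small modulus is the zero class, so $v(g)\in\ZZ^m$ for every $g\in\Gamma$, and combined with $v\in\ell^2$ this forces finite support, whence $v\in\ZZ(\Gamma)^{\oplus m}$. The step where I expect real care to be needed is the Guichardet/spectral-gap step, which requires translating non-amenability of $H$ into absence of almost invariant vectors in $\ell^2(\Gamma)^{\oplus m}|_H$ and invoking the precise form of Guichardet's theorem for countable discrete groups; the integrality upgrade, though elementary, is a genuine use of the fact that $H$ has only finitely many, necessarily infinite, orbits on $\Gamma$.
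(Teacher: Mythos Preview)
The paper does not give its own proof of this theorem: it is quoted as Theorem~1.3 of \cite{LiSchmidtPet} and used as a black box, so there is nothing in the present paper to compare your argument against.

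That said, your proof is correct and follows the standard route for this kind of statement. The Halmos-type reduction (ergodicity of $\Gamma\actson(X_f,m_{X_f})$ is equivalent to the absence of nonzero elements of $M$ with finite $\Gamma$-orbit) is accurate. Your cocycle construction is sound: injectivity of $r(f)$ (via Proposition~\ref{P:Left/RightIssues}(ii)) gives the unique $c_h$, the cocycle identity follows since $r(f)$ commutes with the left regular representation, non-amenability passes to the finite-index subgroup $H$, and $\ell^2(\Gamma)^{\oplus m}\big|_H$ is a finite multiple of the left regular representation of $H$, hence has spectral gap, so Guichardet's theorem applies. The conclusion $\tilde a=r(f)(v)$ then follows from the absence of nonzero $H$-fixed vectors in $\ell^2(\Gamma)^{\oplus n}$. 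The integrality upgrade is also fine: constancy of $v$ modulo $\ZZ^m$ on each of the finitely many infinite right $H$-cosets, combined with $v\in\ell^2$, forces $v(g)\in\ZZ^m$ everywhere and hence finite support. One small point you should make explicit is that $v$ is real-valued: the coboundary equation has a unique solution (no $H$-fixed vectors), and since each $c_h$ is real, $\overline{v}$ is also a solution, whence $v=\overline{v}$; without this the ``$v(g)\in\ZZ^m$'' step would not quite go through. This is, as far as one can tell, essentially the same argument as in \cite{LiSchmidtPet}.
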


	We thus focus on proving the weak$^{*}$ convergence necessary to apply the Automatic Concentration Lemma. In many respects, this is the most difficult and technical part of the paper. To help illuminate the ideas, let us roughly outline the proof. Fix $f\in M_{n}(\ZZ(\Gamma))$ such that $\lambda(f)$ is injective. First, let us analyze the proof of  Theorem \ref{T:squarematrix}. Let $x_{k}\in GL_{n}(M_{d_{k}}(\RR))$ be the rank perturbation considered in the proof of Theorem \ref{T:squarematrix}. We see that the $\xi\in \Xi_{\delta}(x_{k})$ we used to compute the topological entropy come in two types. Setting
	\[G_{k}=x_{k}^{-1}((\ZZ^{d_{k}})^{\oplus n})/((\ZZ^{d_{k}})^{\oplus n}),\]
	\[\Omega_{\delta,\varepsilon}=x_{k}^{-1}(\delta\Ball(\ell^{2}_{\RR}(d_{k}n,u_{d_{k}n})))\cap \chi_{[0,\delta/\varepsilon]}(|x_{k}|)((\RR^{d_{k}})^{\oplus n}),\]
every $v\in\Xi_{\delta}(x_{k})$ is of the form
\[v+(\ZZ^{d_{k}})^{\oplus n}=\xi+\zeta+(\ZZ^{d_{k}})^{\oplus n},\]
for some $\xi\in G_{k},\zeta\in\Omega_{\delta}.$ For $\xi\in(\TT^{d_{i}})^{\oplus n}$ define
\[\phi_{\xi}\colon\{1,\dots,d_{i}\}\to (\TT^{\Gamma})^{\oplus n},\]
by
\[\phi_{\xi}(j)(l)(g)=\xi(l)(\sigma_{i}(g)^{-1}(j)).\]
Let $m_{G_{k}}$ be the Haar measure on $G_{k}$ and $\mu_{\delta,\varepsilon}$ the normalized Lebesgue measure on $\chi_{[0,\delta/\varepsilon]}(|x_{k}|)((\RR^{d_{k}})^{\oplus n})$ chosen so that $\mu_{\delta}(\Omega_{\delta,\varepsilon})=1.$ In order to apply the Automatic Concentration Lemma, we are left trying to argue that
\[\int_{\Omega_{\delta,\varepsilon}}\int_{G_{k}}(\phi_{\xi+\zeta+(\ZZ^{d_{k}})^{\oplus n}})_{*}(u_{d_{k}})\,dm_{G_{k}}(\xi)\,d\mu_{\delta,\varepsilon}(\zeta)\approx m_{X_{f}}\]
in the weak$^{*}$ topology.  By abstract Fourier analysis, it is enough to verify this condition by integrating both sides of this approximate equality against the continuous function
\[\ev_{\alpha}\colon(\TT^{\Gamma})^{\oplus n}\to \CC\]
for $\alpha\in(\ZZ^{\Gamma})^{\oplus n}$ given by
\[\ev_{\alpha}(\Theta)=\exp(2\pi i\ip{\Theta,\alpha}).\]
For $A\in M_{s,t}(L(\Gamma)),$ let $\widetidle{A}\in M_{t,s}(L(\Gamma))$ be given by $(\widetilde{A})_{ij}=A_{ji}.$ We are then naturally led to show that
\[\frac{1}{d_{k}}\sum_{j=1}^{d_{k}}\int_{\Omega_{\delta,\varepsilon}}\int_{G_{k}}\exp(2\pi i(\sigma_{k}(\widetilde{\alpha})\xi)(j))\exp(2\pi i \sigma_{k}(\widetilde{\alpha})\zeta)(j))\,dm_{G_{k}}(\xi)\,d\mu_{\delta,\varepsilon}(\zeta)\approx 0\]
for $\alpha\in\ZZ(\Gamma)^{\oplus n}\setminus r(f)(\ZZ(\Gamma)^{\oplus m}).$
The  integral
\[\int_{G_{k}}\exp(2\pi i (\sigma_{k}(\widetilde{\alpha})\xi)(j))\,dm_{G_{k}}(\xi)\]
 will be zero if and only if $\xi\mapsto (\sigma_{k}(\widetilde{\alpha})\xi)(j)+\ZZ$ is a nontrivial homomorphism on $G_{k}$, and will be one otherwise. It is simple to describe when the homomorphism $\xi\mapsto (\sigma_{k}(\widetilde{\alpha})\xi)(j)+\ZZ$ is trivial. By Lemma \ref{L:BlerghHadag} below this will occur only when $\sigma_{k}(\widetilde{\alpha})^{*}e_{j}=x_{k}^{*}r_{j,k}$ for some $r_{j,k}\in (\ZZ^{d_{k}})^{\oplus n}.$ We would like to argue that this forces $\alpha\in r(f)(\ZZ(\Gamma)^{\oplus m}),$ but can only do this when we have a uniform bound on $\|r_{j,k}\|_{\ell^{2}(d_{k}n)}$ (see Lemma \ref{L:idealtest}).

We will then see that in order to prove the statement
\[\int_{\Omega_{\delta,\varepsilon}}\int_{G_{k}}(\phi_{\xi+\zeta+(\ZZ^{d_{k}})^{\oplus n}})_{*}(u_{d_{i}})\,dm_{G_{k}}(\xi)\,d\mu_{\delta,\varpesilon}(\zeta)\approx m_{X_{f}}\]
we are left with analyzing
\[\int_{\Omega_{\delta,\varepsilon}}\exp(2\pi i (\sigma_{k}(\widetilde{\alpha})\zeta)(j))\,d\mu_{\delta,\varepsilon}(\zeta)=\int_{\Omega_{\delta,\varepsilon}}\exp(2\pi i\ip{x_{k}\zeta,r_{j,k}}_{\ell^{2}(d_{k}n)})\,d\mu_{\delta,\varepsilon}(\zeta)\]
for $\alpha\in\ZZ(\Gamma)^{\oplus n}\setminus r(f)(\ZZ(\Gamma)^{\oplus m}),$ and where $\|r_{j,k}\|_{\ell^{2}(d_{k}n)}$ is large. The fact that  $\|r_{j,k}\|_{\ell^{2}(d_{k}n)}$  is large makes the integrand $\exp(2\pi i \ip{x_{k}\xi,r_{j,k}}_{\ell^{2}(d_{k}n)}))$ highly oscillatory and we exploit this to argue that
\[\int_{\Omega_{\delta,\varepsilon}}\exp(2\pi i (\sigma_{k}(\widetilde{\alpha})\zeta)(j))\,d\mu_{\delta,\varepsilon}(\zeta)\approx 0.\]
In order to give certain examples of algebraic actions with infinite measure-theoretic entropy (see Theorem \ref{T:infinitemeasureentropyexample}) we will  state our methods so that they work in a slightly more general setting than having $\lambda(f)$ be injective on $\ell^{2}(\Gamma)^{\oplus n}.$
	
\begin{lemma}\label{L:BlerghHadag} Let $T\in M_{m,n}(\ZZ)$ and $v\in \ZZ^{n}.$ The homomorphism
\[T^{-1}(\ZZ^{m})/\ZZ^{n}\to\RR/\ZZ\]
given by
\[\xi\mapsto \ip{\xi,v}_{\ell^{2}(n)}+\ZZ\]
is identically zero if and only if
\[v\in T^{*}(\ZZ^{m}).\]

\end{lemma}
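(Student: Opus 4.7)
The biconditional has an easy direction and a nontrivial direction; only the latter requires work. For the forward implication, if $v=T^{*}w$ with $w\in\ZZ^{m}$, then for any $\xi\in T^{-1}(\ZZ^{m})$ one has
\[\ip{\xi,v}_{\ell^{2}(n)}=\ip{\xi,T^{*}w}_{\ell^{2}(n)}=\ip{T\xi,w}_{\ell^{2}(m)}\in\ZZ,\]
because $T\xi\in\ZZ^{m}$ and $w\in\ZZ^{m}$, so the induced homomorphism on $T^{-1}(\ZZ^{m})/\ZZ^{n}$ vanishes.

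For the converse, assume the induced homomorphism is identically zero. The first step is to observe that $\ker T\subseteq T^{-1}(\ZZ^{m})$ and is stable under arbitrary real scaling, so for every $\xi\in \ker T$ and every $t\in \RR$ one has $t\ip{\xi,v}_{\ell^{2}(n)}\in \ZZ$, which forces $\ip{\xi,v}_{\ell^{2}(n)}=0$. Since $(\ker T)^{\perp}=\im T^{*}$ in finite dimensions, we obtain $v=T^{*}w$ for some $w\in \RR^{m}$; the remaining task is to upgrade $w$ to an integer vector.

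To do this, I would invoke Smith normal form: write $T=PDQ$ with $P\in GL_{m}(\ZZ)$, $Q\in GL_{n}(\ZZ)$, and $D\in M_{m,n}(\ZZ)$ supported on its diagonal with entries $s_{1}\mid s_{2}\mid\cdots\mid s_{r}>0$, where $r=\rank(T)$. Substituting $\eta=Q\xi$ and $v'=(Q^{*})^{-1}v$, and noting that $v'\in \ZZ^{n}$ since $Q$ is unimodular, the hypothesis becomes $\ip{\eta,v'}_{\ell^{2}(n)}\in\ZZ$ whenever $D\eta\in\ZZ^{m}$, i.e. whenever $\eta\in \tfrac{1}{s_{1}}\ZZ\oplus\cdots\oplus\tfrac{1}{s_{r}}\ZZ\oplus\RR^{n-r}$. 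Testing against $\alpha e_{i}$ for arbitrary $\alpha\in \RR$ and $i>r$ forces $v'_{i}=0$; testing against $(1/s_{i})e_{i}$ for $i\leq r$ forces $s_{i}\mid v'_{i}$. Defining $w''\in \ZZ^{m}$ by $w''_{i}=v'_{i}/s_{i}$ for $i\leq r$ and $w''_{i}=0$ otherwise, one checks $D^{*}w''=v'$, and unwinding gives $v=T^{*}w$ with $w=(P^{*})^{-1}w''\in \ZZ^{m}$.

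The argument is essentially routine once Smith normal form is invoked, so there is no real obstacle; the only mild subtlety is that $D$ is rectangular, so the integer vector $w''$ must be padded with zeros in the coordinates beyond $r$ in order to live in $\ZZ^{m}$ rather than $\ZZ^{n}$.
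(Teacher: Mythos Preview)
Your proof is correct and follows essentially the same approach as the paper: both arguments first use the scaling of $\ker T$ to force $v\in \im T^{*}$, and then invoke Smith normal form to upgrade the real preimage to an integer one. The only cosmetic difference is that the paper first rewrites the hypothesis as $\ip{\ell,r}\in\ZZ$ for all $\ell\in \ZZ^{m}\cap T(\RR^{n})$ and analyzes that lattice via Smith normal form, whereas you stay on the domain side, change variables by $Q$, and read off the divisibility conditions $s_{i}\mid v'_{i}$ directly; your route is slightly more streamlined (and in fact makes your preliminary step $v\perp\ker T$ redundant, since $v'_{i}=0$ for $i>r$ already falls out of your coordinate test).
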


\begin{proof} One direction is obvious. We thus focus on showing that if $\xi\mapsto \ip{\xi,v}_{\ell^{2}(n)}+\ZZ$
is identically zero, then $v\in T^{*}(\ZZ^{m}).$ So suppose that $v\in \ZZ^{n}$ and the homomorphisms $\xi\mapsto \ip{\xi,v}_{\ell^{2}(n)}+\ZZ$ is identically zero for $\xi\in T^{-1}(\ZZ^{m}).$  We clearly have that $\ker(T)\subseteq T^{-1}(\ZZ^{m})$ and thus for every $\xi\in\ker(T)\cap \RR^{n}$ we have
\[\ip{\xi,v}_{\ell^{2}(n)}\in\ZZ.\]
Thus the linear functional on $\ker(T)\cap \RR^{n}$ defined by $\xi\mapsto\ip{\xi,v}_{\ell^{2}(n)}$ is $\ZZ$-valued. A linear functional on a real vector space can only be $\ZZ$-valued if it is zero and this proves that $v\in\ker(T)^{\perp}\cap \RR^{n}=T^{*}(\RR^{m}).$ So we can let $r\in \RR^{m}$ be such that $v=T^{*}r.$ Thus
\[\ip{T\xi,r}_{\ell^{2}(m)}\in\ZZ\]
for all $\xi\in T^{-1}(\ZZ^{m}).$ Equivalently,
\[\ip{\xi,r}_{\ell^{2}(m)}\in\ZZ\]
for all $\xi\in \ZZ^{m}\cap T(\RR^{n}).$

  By Smith normal formal, we may find matrices $A\in GL_{m}(\ZZ),B\in GL_{n}(\ZZ),D\in M_{m,n}(\ZZ)$ so that
\[ATB=D,\]
\[D=\begin{bmatrix}
\alpha_{1}&0&0&0&\cdots& 0 &\cdots &0\\
0& \alpha_{2}&0&\cdots&0 &0& \cdots &0\\
0&0 &\alpha_{3}&\cdots&0& 0&\cdots &0 \\
\vdots&\vdots&\ddots&\cdots&\vdots& \vdots &\cdots  &\vdots\\
0&0& \cdots&\cdots &\alpha_{d}& 0& \cdots&0 \\
0&0&0&0&\cdots&0 & \cdots &0 \\
\vdots&\vdots&\vdots&\ddots&\cdots&\vdots &\ddots &\vdots \\
0&0&0&0&\cdots&0 &\cdots &0
\end{bmatrix},\]
where $\alpha_{1},\dots,\alpha_{d}$ are not zero. It is easy to see that $\ZZ^{m}\cap T(\RR^{n})=A^{-1}(\ZZ^{d}\oplus\{0\}).$ Thus for all $l\in\ZZ^{d}$ we have
\[\ip{A^{-1}(l\oplus 0),r}_{\ell^{2}(n)}\in \ZZ\]
equivalently
\[\ip{l\oplus 0,(A^{-1})^{*}r}_{\ell^{2}(n)}\in\ZZ\]
for all $l\in\ZZ^{d}.$ Thus $(A^{-1})^{*}r=s+t$ where $s\in \ZZ^{d}\oplus \{0\},$ and $t\in \{0\}\oplus \RR^{n-d}.$ So
\[r=A^{*}s+A^{*}t.\]
Thus
\[v=T^{*}r=T^{*}A^{*}s+T^{*}A^{*}t.\]
Since
\[B^{*}T^{*}A^{*}=D\]
it is not hard to show that $T^{*}A^{*}t=0.$ Thus
\[v=T^{*}A^{*}s\in T^{*}(\ZZ^{m}).\]

\end{proof}

The above Lemma will be useful when combined with the following Lemma.  The following Lemma can be thought of as giving a way to ``test'', in terms of a given sofic approximation, if a given $\alpha\in\ZZ(\Gamma)^{\oplus n}$ is an element of the submodule $r(f)(\ZZ(\Gamma)^{\oplus m}).$ Recall that if $A\in M_{s,t}(L(\Gamma))$ we let $\widetidle{A}\in M_{t,s}(L(\Gamma))$ be defined by
\[(\widetilde{A})_{ij}=A_{ji}.\]

\begin{lemma}[The Submodule Test]\label{L:idealtest} Let $\Gamma$ be a countable discrete sofic group with sofic approximation $\Sigma=(\sigma_{i}\colon \Gamma\to S_{d_{i}}).$ Extend $\Sigma$ to a map $\sigma_{i}\colon M_{s,t}(\ZZ(\Gamma))\to M_{s,t}(M_{d_{i}}(\ZZ))$ in the usual way. Let $f\in M_{m,n}(\ZZ(\Gamma))$ and suppose $x_{i}$ is a rank perturbation of $\sigma_{i}(f).$
Let $\alpha\in \ZZ(\Gamma)^{\oplus n}$ and regard $\ZZ(\Gamma)^{\oplus n}=M_{n,1}(\ZZ(\Gamma)).$ Suppose that there is some $C>0$ so that
\[\limsup_{i\to \infty}\frac{|\{1\leq j\leq d_{i}:\sigma_{i}(\widetilde{\alpha})^{*}e_{j}\in x_{i}^{*}((\ZZ^{d_{i}})^{\oplus m}\cap C\Ball(\ell^{2}(d_{i}m)))\}|}{d_{i}}>0.\]
Then $\alpha\in r(f)(\ZZ(\Gamma)^{\oplus m}).$
\end{lemma}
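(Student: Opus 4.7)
The plan is to extract explicit $\beta_1,\dots,\beta_m\in\ZZ(\Gamma)$ such that $\alpha_l=\sum_{s=1}^m\beta_sf_{sl}$ holds in $\ZZ(\Gamma)$ for every $l$, which is exactly the statement that $\alpha=r(f)(\beta)\in r(f)(\ZZ(\Gamma)^{\oplus m})$. First, passing to a subsequence of $i$'s, I may assume $|S_i|/d_i\geq c>0$ where $S_i$ is the set of good indices in the hypothesis, and for each $j\in S_i$ I pick a witness $r_{j,i}\in(\ZZ^{d_i})^{\oplus m}$ with $\|r_{j,i}\|_{\ell^2}\leq C$ and $x_i^*r_{j,i}=\sigma_i(\widetilde\alpha)^*e_j$. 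Integrality together with the $\ell^2$ bound forces $r_{j,i}$ to have at most $C^2$ nonzero entries, each of magnitude $\leq C$, so write $r_{j,i}=\sum_{k=1}^{K_j}c_{k,j}(e_{p_{k,j,i}}\otimes e_{s_{k,j}})$ with $K_j\leq C^2$, $c_{k,j}\in\ZZ\cap[-C,C]$, $s_{k,j}\in\{1,\dots,m\}$, and $p_{k,j,i}\in\{1,\dots,d_i\}$. Let $K_i\subseteq\{1,\dots,d_i\}$ be the $o(d_i)$ columns on which $x_i$ differs from $\sigma_i(f)$; then $x_i^*$ and $\sigma_i(f^*)$ agree as linear maps on positions outside $K_i$, and for $j$ in a set of asymptotic density $1$ one further has $\sigma_i(F)j\cap K_i=\emptyset$ and $g\mapsto\sigma_i(g)(j)$ injective on $FF^{-1}$, for any fixed finite $F\subseteq\Gamma$.

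The heart of the proof is the localization step. Fix $F\subseteq\Gamma$ finite containing $\supp(f^*)\cup\supp(\widetilde\alpha^*)\cup\supp(f^*)^{-1}\supp(\widetilde\alpha^*)$, and split $r_{j,i}=r_{j,i}^{\text{near}}+r_{j,i}^{\text{far}}$ according to whether each nonzero position $p_{k,j,i}$ lies in $\sigma_i(F)j$ or not. The choice of $F$ guarantees that the support of $\sigma_i(f^*)r_{j,i}^{\text{far}}$ is disjoint from $\sigma_i(\supp\widetilde\alpha^*)j$, the support of $\sigma_i(\widetilde\alpha^*)e_j$; since the identity $\sigma_i(f^*)r_{j,i}=\sigma_i(\widetilde\alpha^*)e_j$ holds exactly on positions outside $K_i$, the far piece $\sigma_i(f^*)r_{j,i}^{\text{far}}$ must be cancelled by the near piece on positions in $\sigma_i((\supp f^*)F)j\setminus\sigma_i(\supp\widetilde\alpha^*)j$. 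An accounting of supports combined with the integrality and $\ell^2$-bound of $r_{j,i}$, together with the asymptotic kernel estimate $\dim_\RR(\ker\sigma_i(f^*)\cap(\RR^{d_i})^{\oplus m})/d_i\to\dim_{L(\Gamma)}\ker\lambda(f^*)$ from Lemma \ref{L:injective}, shows that on a positive-density subset of $S_i$ one must have $r_{j,i}^{\text{far}}=0$. On this subset each $p_{k,j,i}=\sigma_i(h_{k,j})(j)$ for a unique $h_{k,j}\in F$, and a double pigeonhole over the finite combinatorial type $(K,(c_k,s_k,h_k)_{k=1}^K)\in\NN\times(\ZZ\times\{1,\dots,m\}\times F)^K$ produces a subset $T_i\subseteq S_i$, still of positive density, on which this full type is a fixed constant.

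Finally, with the uniform $h_k\in\Gamma$ in hand, define $\beta_s:=\sum_{k\,:\,s_k=s}c_kh_k^{-1}\in\ZZ(\Gamma)$. For each $j\in T_i$ in the injective regime, $r_{j,i}=\sum_kc_ke_{\sigma_i(h_k)(j)}\otimes e_{s_k}$ and $\sigma_i(f_{sl}^*)e_{\sigma_i(h_k)(j)}\approx\sigma_i(f_{sl}^*h_k)e_j$ by asymptotic multiplicativity, so Fourier comparison of $\sigma_i(f^*)r_{j,i}=\sigma_i(\widetilde\alpha^*)e_j$ at each basis vector $e_{\sigma_i(g')(j)}$ yields $\widehat{\alpha_l^*}(g')=\sum_s\sum_g\widehat{f_{sl}^*}(g)\widehat{\beta_s^*}(g^{-1}g')$ for every $g'$ in the injective window, valid for arbitrarily large windows. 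Since both sides have finite support in $\Gamma$, this is the Fourier expansion of the identity $\alpha_l^*=\sum_s f_{sl}^*\beta_s^*$ in $\CC(\Gamma)$; taking adjoints gives $\alpha_l=\sum_s\beta_sf_{sl}$ for every $l$, which is precisely $\alpha=r(f)(\beta)\in r(f)(\ZZ(\Gamma)^{\oplus m})$. The main obstacle is the localization step of paragraph two: ruling out a positive-density set of $j$ with $r_{j,i}^{\text{far}}\neq 0$ requires a careful interplay between the integer/lattice constraints on the $r_{j,i}$, the orbit structure of the sofic approximation, and the asymptotic kernel estimate from Lemma \ref{L:injective}; once this is in place, the rest of the argument is essentially formal Fourier analysis.
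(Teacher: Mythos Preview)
Your localization step in the second paragraph is a genuine gap, and it is not repaired by the ingredients you list. First, the lemma carries no hypothesis on $\lambda(f^*)$, so the appeal to Lemma~\ref{L:injective} only gives $\dim_{\RR}(\ker\sigma_i(f^*))/d_i\to\dim_{L(\Gamma)}\ker\lambda(f^*)$, a number that need not vanish; in that case one can add to each chosen witness $r_{j,i}$ an integer kernel vector supported arbitrarily far from $j$ and still satisfy the equation, so the claim ``$r_{j,i}^{\text{far}}=0$ on a positive-density set'' is simply false for adversarial choices. Second, even when $\ker\lambda(f^*)=0$ (which does hold in the application but is not assumed here), a small-dimensional kernel for $\sigma_i(f^*)$ says nothing about where the \emph{unique} preimage $r_{j,i}$ is supported: the inverse of $\sigma_i(f^*)$ is not a local operator, and the bounded number of nonzero integer entries of $r_{j,i}$ may sit anywhere, cancelling amongst themselves under $\sigma_i(f^*)$ away from $j$. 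Your own phrase ``an accounting of supports combined with\dots'' hides exactly the difficulty; proving that a witness can be taken with support in $\sigma_i(F)j$ for a \emph{fixed} finite $F$ is essentially equivalent to already knowing $\alpha=r(f)\beta$ with $\supp\beta\subseteq F^{-1}$.

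The paper avoids localization entirely by a windowed compactness argument. For each finite $E\subseteq\Gamma$ it picks any single good index $j$, defines $R\in\ZZ(\Gamma)^{\oplus m}$ by $\widehat{R(s)}(g)=\langle r_{j,i},e_s\otimes e_{\sigma_i(g)(j)}\rangle$ for $g$ in a slightly larger window $K\supseteq(\supp f)E$ (and $0$ outside), and then verifies $((f^*R)(l))^{\widehat{}}(g)=\widehat{\alpha(l)^*}(g)$ for $g\in E$ by the direct computation
\[
\widehat{\alpha(l)^*}(g)=\langle x_i^*r_{j,i},e_l\otimes e_{\sigma_i(g)(j)}\rangle=\langle r_{j,i},\sigma_i(f)(e_l\otimes e_{\sigma_i(g)(j)})\rangle=\sum_{s,h}\widehat{(f^*)_{ls}}(h^{-1})\widehat{R(s)}(hg),
\]
which uses only the values of $r_{j,i}$ at positions $\sigma_i(K)(j)$ and is completely insensitive to $r_{j,i}^{\text{far}}$. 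One gets for each $E$ an $R_E$ with $\|R_E\|_2\leq C$ matching on $E$; since integer-valued vectors in $C\cdot\Ball(\ell^2(\Gamma)^{\oplus m})$ are finitely supported, a diagonal pointwise limit as $E\uparrow\Gamma$ yields the global $R$ with $f^*R=\alpha^*$, i.e.\ $\alpha\in r(f)(\ZZ(\Gamma)^{\oplus m})$. The point is that the $\ell^2$-bound is used only at the very end, via compactness, rather than to force localization of each $r_{j,i}$.
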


\begin{proof} Passing to a subsequence, we may assume that there is some $\beta>0$ so that
\[\beta\leq \frac{|\{j:\sigma_{i}(\widetilde{\alpha})^{*}e_{j}\in x_{i}^{*}((\ZZ^{d_{i}})^{\oplus m}\cap C\Ball(\ell^{2}(d_{i}m)))\}|}{d_{i}}\]
for all $i.$ For $x\in L(\Gamma)^{\oplus p},$ set
\[\|x\|_{2}^{2}=\Tr\otimes \tau(x^{*}x)\]
 for $\theta\in \ell^{2}(\Gamma)^{\oplus p}$ we use the norm
\[\|\theta\|_{2}^{2}=\sum_{j,g}|\theta(j)(g)|^{2}.\]
We claim the following is true.

\emph{Claim: For every finite $E\subseteq \Gamma,$ there is an $R\in\ZZ(\Gamma)^{\oplus m}$ so that}
\[((f^{*}R)(l))^{\widehat{}}(g)=\widehat{\alpha(l)^{*}}(g)\mbox{ for all $g\in E$, $1\leq l\leq n$,}\]
\[\|R\|_{2}\leq C.\]

Suppose we grant the claim for now. Let $E_{k}$ be an increasing sequence of non-empty finite subsets of $\Gamma$ whose union is $\Gamma.$ Choose an $R_{k}\in \ZZ(\Gamma)^{\oplus m}$ so that
\[((f^{*}R_{k})(l))^{\widehat{}}(g)=\widehat{\alpha(l)^{*}}(g)\mbox{ for all $g\in E_{k},1\leq l\leq n$},\]
\[\|R_{k}\|_{2}\leq C.\]
Passing to a subsequence, we may assume that $\widehat{R_{k}(l)}(g)$ converges pointwise to some  $r(l,g).$ By Fatou's Lemma, we see that
\[\|r\|_{\ell^{2}(\Gamma)^{\oplus m}}\leq C.\]
Thus $r\in \ell^{2}(\Gamma,\ZZ)^{\oplus m}=c_{c}(\Gamma,\ZZ)^{\oplus m}.$ So we may define $R\in \ZZ(\Gamma)^{\oplus m}$ by
\[\widehat{R(l)}(g)=r(l,g).\]
It is easy to see that
\[\widehat{(f^{*}R)(l)}^{}(g)=\widehat{\alpha(l)^{*}}(g)\]
and hence if we let $Q\in \ZZ(\Gamma)^{\oplus m}$ be given as $Q(l)=R(l)^{*},$ we have that
\[r(f)Q=\alpha.\]
Thus $\alpha\in r(f)(\ZZ(\Gamma)^{\oplus m}),$ which proves the Lemma.

We now turn to the proof of the claim. So fix a finite subset $E$ of $\Gamma.$ Let
\[S=\bigcup_{1\leq r\leq m,1\leq s\leq n}\supp(\widehat{f_{rs}})\cup\{e\}\cup\supp(\widehat{f_{rs}})^{-1}\]
\[K=(E\cup E^{-1}\cup\{e\})^{(2015)!}S(E\cup E^{-1}\cup \{e\})^{(2015)!}.\]
We may find $C_{i}\subseteq\{1,\dots,d_{i}\}$ so that
\[u_{d_{i}}(C_{i})\to 1,\]
\[\sigma_{i}(g_{1})^{\varepsilon_{1}}\cdots \sigma_{i}(g_{l})^{\varepsilon_{l}}(j)=\sigma_{i}(g_{1}^{\varepsilon_{1}}\cdots g_{l}^{\varepsilon_{l}})(j)\mbox{ for all $j\in C_{i},$ $1\leq l\leq (2015)!,\varepsilon_{1},\cdots,\varepsilon_{l}\in \{-1,1\},g_{1},\dots,g_{l}\in K$},\]
\[x_{i}(e_{r}\otimes e_{\sigma_{i}(g)(j)})=\sigma_{i}(f)(e_{r}\otimes e_{\sigma_{i}(g)(j)})\mbox{for all $j\in C_{i},$ $1\leq r\leq n,g\in E$},\]
\[\sigma_{i}(g)(j)\ne \sigma_{i}(h)(j)\mbox{ if $g,h\in (K\cup\{e\}\cup K^{-1})^{(2015)!}$, and $g\ne h$.}\]
Let
\[B_{i}=\{j:\sigma_{i}(\widetilde{\alpha})^{*}e_{j}\in x_{i}^{*}((\ZZ^{d_{i}})^{\oplus m}\cap C\Ball(\ell^{2}(d_{i}m)))\},\]
so that
\[\liminf_{i\to\infty}u_{d_{i}}(B_{i}\cap C_{i})\geq \beta>0,\]
and hence for all large $i,$ we have $B_{i}\cap C_{i}$ is not empty. For $j\in B_{i}\cap C_{i},$ choose $r_{j,i}\in (\ZZ^{d_{i}})^{\oplus m}$ so that
\[\sigma_{i}(\widetilde{\alpha})^{*}e_{j}=x_{i}^{*}r_{j,i}\]
\[\|r_{j,i}\|_{\ell^{2}(d_{i}m)}\leq C.\]

Fix a $j\in B_{i}\cap C_{i},$ by our previous comments such a $j$ exists if $i$ is sufficiently large. Define $R\in \ZZ(\Gamma)^{\oplus m}$ by
\[\widehat{R(l)}(g)=\begin{cases}
0, & \textnormal{ if $g\in\Gamma\setminus K$}\\
\ip{r_{j,i},e_{l}\otimes e_{\sigma_{i}(g)(j)}}_{\ell^{2}(d_{i}m)}, & \textnormal{ if $g\in K$}
\end{cases}.\]
Since $\sigma_{i}(g)(j)\ne \sigma_{i}(h)(j)$ for $g\ne h$ in $K$ we have
\[\|R\|_{2}\leq C.\]
For $g\in E,j\in C_{i}$ and $1\leq l\leq n$ we have
\[\ip{\sigma_{i}(\widetilde{\alpha})^{*}e_{j},e_{l}\otimes e_{\sigma_{i}(g)(j)}}_{\ell^{2}(d_{i}n)}=\widehat{\alpha(l)^{*}}(g).\]
As $K\supseteq SE,$ we have for $j\in C_{i}\cap B_{i},g\in E$ that
\begin{align*}
\widehat{\alpha(l)^{*}}(g)&=\ip{\sigma_{i}(\widetilde{\alpha})^{*}e_{j},e_{l}\otimes e_{\sigma_{i}(g)(j)}}_{\ell^{2}(d_{i}n)}\\
&=\ip{x_{i}^{*}r_{j,i},e_{l}\otimes e_{\sigma_{i}(g)(j)}}_{\ell^{2}(d_{i}n)}\\
&=\ip{r_{j,i},x_{i}(e_{l}\otimes e_{\sigma_{i}(g)(j)})}_{\ell^{2}(d_{i}m)}\\
&=\ip{r_{j,i},\sigma_{i}(f)(e_{l}\otimes e_{\sigma_{i}(g)(j)})}_{\ell^{2}(d_{i}m)}\\
&=\sum_{1\leq s\leq m}\sum_{h\in\Gamma}\widehat{(f^{*})_{ls}}(h^{-1})\ip{r_{j,i},e_{s}\otimes e_{\sigma_{i}(hg)(j)}}_{\ell^{2}(d_{i}m)}\\
&=\sum_{1\leq s\leq m}\sum_{h\in\Gamma}\widehat{(f^{*})_{ls}}(h^{-1})\widehat{R(s)}(hg)\\
&=((f^{*}R)(l))^{\widehat{}}(g).
\end{align*}

\end{proof}

As stated before, the last step in our weak$^{*}$ convergence argument will be to argue that a certain oscillatory integral is close to zero. The following lemma will be key in the proof of this fact.

\begin{lemma}\label{L:equatorconcentration} Let $n_{k}$ be a sequence of positive integers, and let $c_{k},R_{k}$ be a sequence of positive real numbers with $c_{k}\to 0.$ Then
\[\lim_{k\to \infty}\sup_{\substack{\xi \in \RR^{n_{k}},\\ \|\xi\|_{\ell^{2}(n_{k})}\leq \frac{c_{k}R_{k}}{\sqrt{n_{k}}}}}\frac{\vol(R_{k}\Ball(\ell^{2}_{\RR}(n_{k}))\setminus (R_{k}\Ball(\ell^{2}_{\RR}(n_{k}))+\xi))}{\vol(R_{k}\Ball(\ell^{2}_{\RR}(n_{k}))}=0.\]
\end{lemma}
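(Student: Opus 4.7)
The plan is to contain the set $R_k \Ball(\ell^{2}_{\RR}(n_k)) \setminus (R_k \Ball(\ell^{2}_{\RR}(n_k))+\xi)$ inside a thin concentric annulus and compute its relative volume explicitly, then feed in the hypothesis on $\|\xi\|$.

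First I would invoke the triangle inequality in the norm $\|\cdot\|$ defining $\Ball(\ell^{2}_{\RR}(n_k))$: if $\|y\| \leq R_k - \|\xi\|$, then $\|y - \xi\| \leq R_k$, and hence $y$ lies in both $R_k \Ball(\ell^{2}_{\RR}(n_k))$ and $R_k \Ball(\ell^{2}_{\RR}(n_k))+\xi$. This yields the inclusion
\[(R_k - \|\xi\|)\Ball(\ell^{2}_{\RR}(n_k)) \subseteq R_k \Ball(\ell^{2}_{\RR}(n_k)) \cap (R_k \Ball(\ell^{2}_{\RR}(n_k))+\xi),\]
and taking set-theoretic complements inside $R_k \Ball(\ell^{2}_{\RR}(n_k))$ gives
\[R_k \Ball(\ell^{2}_{\RR}(n_k)) \setminus (R_k \Ball(\ell^{2}_{\RR}(n_k))+\xi) \subseteq R_k \Ball(\ell^{2}_{\RR}(n_k)) \setminus (R_k - \|\xi\|)\Ball(\ell^{2}_{\RR}(n_k)).\]

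Next, by the scaling of Lebesgue measure in $\RR^{n_k}$, the right-hand annulus has volume exactly $\bigl[R_k^{n_k} - (R_k - \|\xi\|)^{n_k}\bigr]\vol(\Ball(\ell^{2}_{\RR}(n_k)))$, so the ratio in question is at most
\[1 - \left(1 - \frac{\|\xi\|}{R_k}\right)^{n_k}.\]
By Bernoulli's inequality $(1-t)^{n_k} \geq 1 - n_k t$ for $t \in [0,1]$, this is in turn at most $n_k \|\xi\|/R_k$.

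Finally, I would track the norm conventions of Section \ref{S:measureentropy}: the hypothesis $\|\xi\|_{\ell^{2}(n_k)} \leq c_k R_k/\sqrt{n_k}$, together with the relation $\|\cdot\|_{2} = n_k^{-1/2}\|\cdot\|_{\ell^{2}(n_k)}$, yields $\|\xi\|/R_k \leq c_k/n_k$ in the norm defining the ball, so the combined bound becomes at most $c_k$. Since $c_k \to 0$, this tends to $0$ uniformly in the admissible $\xi$, establishing the lemma. The geometric step is entirely elementary; the only real subtlety is keeping the two norm scalings aligned.
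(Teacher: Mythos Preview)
Your annulus argument is clean, but the final norm bookkeeping is where it breaks. In this lemma $\Ball(\ell^{2}_{\RR}(n_{k}))$ denotes the unit ball for the \emph{standard} Euclidean norm $\|\cdot\|_{\ell^{2}(n_{k})}$, not the normalized norm $\|\cdot\|_{2}$; this is visible both from the paper's own proof (the polar-coordinates formula $\int f\,dm_{k}=n_{k}\int_{0}^{1}\int_{S^{n_{k}-1}}r^{n_{k}-1}f(rx)\,d\nu_{k}\,dr$ over $r\in[0,1]$) and from the application in Lemma~\ref{L:proofofweak^{*}convergence}, where $R_{k}=\delta\sqrt{d_{k}n}$ is exactly the radius in the unnormalized norm. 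So the norm defining the ball is $\|\cdot\|_{\ell^{2}(n_{k})}$, and the hypothesis gives only $\|\xi\|_{\ell^{2}(n_{k})}/R_{k}\le c_{k}/\sqrt{n_{k}}$, not $c_{k}/n_{k}$.

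With the correct scaling your bound becomes
\[
1-\Bigl(1-\tfrac{c_{k}}{\sqrt{n_{k}}}\Bigr)^{n_{k}}\approx 1-e^{-c_{k}\sqrt{n_{k}}},
\]
which need not tend to zero: take for instance $c_{k}=1/\log n_{k}$ with $n_{k}\to\infty$, so that $c_{k}\sqrt{n_{k}}\to\infty$ and the annulus estimate degenerates to $1$. The annulus is simply too thick by a factor $\sqrt{n_{k}}$. The paper's proof gets around this by exploiting concentration on the sphere: for a random $\zeta$ in the ball, the coordinate $\langle\zeta,\xi/\|\xi\|\rangle$ is typically $O(1/\sqrt{n_{k}})$, so only points whose component along $\xi$ is unusually large can be pushed out of the ball by a shift of size $c_{k}/\sqrt{n_{k}}$, and those points lie in a genuinely thin shell. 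That equator-concentration step is the actual content of the lemma and cannot be replaced by the triangle-inequality annulus.
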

\begin{proof} The claim is easy if $n_{k}$ is bounded, so passing to a subsequence we may assume that $n_{k}\to \infty.$ Without loss of generality $R_{k}=1$ for all $k.$ Let $m_{k}$ be the normalized Lebesgue measure on $\Ball(\ell^{2}_{\RR}(n_{k}))$ chosen so that $m_{k}(\Ball(\ell^{2}_{\RR}(n_{k})))=1.$ Let $\nu_{k}$ be the unique probability measure on $S^{n_{k}-1}$ invariant under the action of $O(n_{k},\RR).$ Then for all measurable $f\colon \Ball(\ell^{2}_{\RR}(n))\to [0,\infty)$ we have
\[\int_{\Ball(\ell^{2}_{\RR}(n_{k}))}f(x)\,dm_{k}(x)=n_{k}\int_{0}^{1}\int_{S^{n_{k}-1}}r^{n_{k}-1}f(rx)\,d\nu_{k}(x)\,dr.\]

	We have for any $T\in M_{n_{k}}(\CC),$
\[\int_{S^{n-1}}\ip{T\xi,\xi}_{\ell^{2}(n_{k})}\,d\nu_{k}(x)=\tr_{n}(T).\]
Indeed, if we consider the left-hand side of the above equality to be a linear functional on $M_{n_{k}}(\CC)$ then it is  invariant under conjugation by elements of $O(n_{k},\RR)$ and takes the value $1$ on $\id,$ and this uniquely defines $\tr_{n}(T).$ Thus if $v\in \RR^{n_{k}},$ $\|v\|_{\ell^{2}(n_{k})}=1,$ we may apply the above discussion  to the operator $T\xi=\ip{\xi,\zeta}\zeta,$ and see that
\begin{equation}\label{E:equatorconcentration}
\int_{S^{n_{k}-1}}|\ip{\xi,v}_{\ell^{2}(n_{k})}|^{2}\,d\nu_{k}(\xi)=\frac{1}{n_{k}}.
\end{equation}
Now let $\xi\in \RR^{n_{k}},\|\xi\|_{\ell^{2}_{\RR}(n_{k})}\leq \frac{c_{k}}{\sqrt{n_{k}}},$ set
\[v=\frac{\xi}{\|\xi\|_{\ell^{2}(n_{k})}}.\]
Let $C>1$ and let
\[B=\left\{\zeta \in \Ball(\ell^{2}_{\RR}(n_{k})):\|\zeta-\xi\|_{\ell^{2}(n_{k})}\geq 1,|\ip{\zeta,v}_{\ell^{2}(n_{k})}|\leq \frac{C}{\sqrt{n_{k}}}\|\zeta\|_{\ell^{2}_{\RR}(n_{k})}\right\}.\]
Then by $(\ref{E:equatorconcentration}),$
\begin{equation}\label{E:Festimate}
m_{k}(\Ball(\ell^{2}_{\RR}(n_{k}))\setminus (\Ball(\ell^{2}_{\RR}(n_{k}))+\xi))\leq \frac{1}{C^{2}}+m_{k}(B).
\end{equation}

	Now if $\zeta\in B,$ then
\begin{align*}
1&\leq \|\zeta-\xi\|_{\ell^{2}(n_{k})}^{2}\\
&=\|\zeta\|_{\ell^{2}(n_{k})}^{2}-2\|\xi\|_{2}\Rea(\ip{\zeta,v}_{\ell^{2}(n_{k})})+\|\xi\|_{\ell^{2}(n_{k})}^{2}\\
&\leq \|\zeta\|_{\ell^{2}(n_{k})}^{2}+2\frac{c_{k}C}{n_{k}}\|\zeta\|_{\ell^{2}(n_{k})}+\frac{c_{k}^{2}}{n_{k}}\\
&\leq \left(\|\zeta\|_{\ell^{2}(n_{k})}+\frac{c_{k}C}{n_{k}}\right)^{2}+\frac{c_{k}^{2}}{n_{k}}.
\end{align*}
Thus
\[\|\zeta\|_{\ell^{2}(n_{k})}\geq \omega(k)\]
where
\[\omega(k)=\left(1-\frac{c_{k}^{2}}{n_{k}}\right)^{1/2}-\frac{c_{k}C}{n_{k}}.\]
So
\begin{equation}\label{E:FAman}
m_{k}(B)\leq n_{k} \int_{\omega(k)}^{1}r^{n_{k}-1}\,dr=1-\omega(k)^{n_{k}}.
\end{equation}
We have
\[n_{k}\omega(k)-n_{k}=(n_{k}^{2}-c_{k}^{2}n_{k})^{1/2}-n_{k}-c_{k}C=-c_{k}C-\frac{c_{k}^{2}n_{k}}{(n_{k}^{2}-c_{k}^{2}n_{k})^{1/2}+n_{k}}\to 0\]
since $c_{k}\to 0,$ and $n_{k}\to \infty.$

	Thus given $\varespilon>0,$ we have
\[\omega(k)\geq 1-\frac{\varepsilon}{n_{k}}\]
for all large $k.$ Applying the above to $(\ref{E:Festimate}),$ and $(\ref{E:FAman}),$ and using that $n_{k}\to \infty,$ we find that
\[\limsup_{k\to \infty}\sup_{\substack{\xi \in \RR^{n_{k}},\\ \|\xi\|_{\ell^{2}(n_{k})}\leq \frac{c_{k}}{\sqrt{n_{k}}}}}\frac{\vol(\Ball(\ell^{2}_{\RR}(n_{k}))\setminus (\Ball(\ell^{2}_{\RR}(n_{k}))+\xi))}{\vol(\Ball(\ell^{2}_{\RR}(n_{k}))}\leq \frac{1}{C^{2}}+1-e^{-\varespilon}.\]
As $C>1,\varespilon>0$ are arbitrary the Lemma is proved.

\end{proof}

	We now prove the necessary weak$^{*}$ convergence result needed to apply the Automatic Concentration Lemma.

\begin{lemma}\label{L:proofofweak^{*}convergence} Let $\Gamma$ be a countable discrete sofic group with sofic approximation $\Sigma=(\sigma_{k}\colon \Gamma\to S_{d_{k}}).$ Let $f\in M_{m,n}(\ZZ(\Gamma))$ be such that the image of $\lambda(f)$ is dense. Extend $\sigma_{k}$ to a map $\sigma_{k}\colon M_{s,t}(\ZZ(\Gamma))\to M_{s,t}(M_{d_{k}}(\ZZ))$ in the usual way. Let $x_{k}\in M_{m,n}(M_{d_{k}}(\ZZ))$ be a rank perturbation of $\sigma_{k}(f)$ such that there exists $A_{k}\subseteq\{1,\dots,d_{k}\}^{m}$ with $x_{k}((\RR^{d_{k}})^{\oplus n})=\RR^{A_{k}}$ and
\[\lim_{k\to\infty}\frac{|A_{k}|}{d_{k}m}=1\]
 (this is possible by Proposition \ref{P:rankperturabtion} (i)).

	For $\delta,\varespilon>0,$ let
\[G_{k}=\frac{x_{k}^{-1}((\ZZ^{d_{k}})^{\oplus m})}{(\ZZ^{d_{k}})^{\oplus n}},\]
\[p_{\delta/\varepsilon}=\chi_{(0,\delta/\varepsilon]}(|x_{k}|),\]
\[W_{k,\delta,\varepsilon}=p_{\delta/\varepsilon}(\ell^{2}_{\RR}(d_{k}n,u_{d_{k}n}))\cap x_{k}^{-1}(\delta \Ball(\ell^{2}_{\RR}(d_{k}m,u_{d_{k}m}))).\]
Let $\mu_{\delta,\varepsilon}$ be the normalized Lebesgue measure on $p_{\delta/\varepsilon}(\ell^{2}_{\RR}(d_{k}n,u_{d_{k}n})),$ chosen so that $\mu_{\delta,\varespilon}(W_{k,\delta,\varepsilon})=1.$ Let $m_{G_{k}}$ be the Haar measure on $G_{k}.$ Define
\[\Phi_{k}\colon \{1,\cdots,d_{k}\}\times G_{k}\times W_{k,\delta,\varepsilon}\to (\TT^{\Gamma})^{\oplus n}\]
by
\[\Phi_{k}(j,\xi,\zeta)(l)(x)=\xi(l)(\sigma_{k}^{-1}(x)(j))+\zeta(l)(\sigma_{k}^{-1}(x)(j))+\ZZ,\mbox{ for $1\leq l\leq n,1\leq j\leq d_{k},x\in\Gamma.$}\]
Then, for all $g\in C((\TT^{\Gamma})^{n})$ and for all $\varepsilon>0,$
\[\limsup_{\delta\to 0}\limsup_{k\to \infty}\left|\int g\,d(\Phi_{k})_{*}(u_{d_{k}}\otimes u_{G_{k}}\otimes \mu_{\delta,\varespilon})-\int g\,dm_{X_{f}}\right|=0.\]

\end{lemma}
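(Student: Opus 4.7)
The plan is to reduce weak$^*$ convergence to a pointwise assertion about Pontryagin characters: since the $*$-algebra generated by $\ev_{\alpha}(\theta)=\exp(2\pi i\ip{\theta,\alpha})$, $\alpha\in\ZZ(\Gamma)^{\oplus n}$, is dense in $C((\TT^{\Gamma})^{\oplus n})$ by Stone--Weierstrass, it suffices to show the required convergence against each $\ev_\alpha$. Because $X_f$ is the Pontryagin dual of $\ZZ(\Gamma)^{\oplus n}/r(f)(\ZZ(\Gamma)^{\oplus m})$, the target value $\int\ev_\alpha\,dm_{X_f}$ is $1$ if $\alpha\in r(f)(\ZZ(\Gamma)^{\oplus m})$ and $0$ otherwise. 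A direct computation from the definition of $\Phi_k$ gives $\ev_{\alpha}(\Phi_k(j,\xi,\zeta))=\exp(2\pi i[(\sigma_k(\widetilde{\alpha})\xi)(j)+(\sigma_k(\widetilde{\alpha})\zeta)(j)])$, so the left-hand side of the claim factors as
\[
\frac{1}{d_k}\sum_{j=1}^{d_k}\left(\int_{G_k}e^{2\pi i(\sigma_k(\widetilde{\alpha})\xi)(j)}\,dm_{G_k}(\xi)\right)\left(\int_{W_{k,\delta,\varepsilon}}e^{2\pi i(\sigma_k(\widetilde{\alpha})\zeta)(j)}\,d\mu_{\delta,\varepsilon}(\zeta)\right).
\]

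Orthogonality of characters on the compact abelian group $G_k$ forces the first inner integral to lie in $\{0,1\}$, with value $1$ precisely when $\xi\mapsto(\sigma_k(\widetilde{\alpha})\xi)(j)+\ZZ$ is trivial on $G_k$, which by Lemma \ref{L:BlerghHadag} occurs exactly when $\sigma_k(\widetilde{\alpha})^* e_j\in x_k^*((\ZZ^{d_k})^{\oplus m})$. Let $J_k$ be the set of such $j$, and for $j\in J_k$ pick a minimal-norm $r_{j,k}\in(\ZZ^{d_k})^{\oplus m}$ with $x_k^* r_{j,k}=\sigma_k(\widetilde{\alpha})^* e_j$, so that $(\sigma_k(\widetilde{\alpha})\zeta)(j)=\ip{x_k\zeta,r_{j,k}}_{\ell^2}$ for all $\zeta$. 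When $\alpha=r(f)\beta\in r(f)(\ZZ(\Gamma)^{\oplus m})$ one has $\widetilde\alpha=\widetilde\beta f$, hence $\sigma_k(\widetilde\alpha)^*=\sigma_k(f)^*\sigma_k(\widetilde\beta)^*$, and the definition of rank perturbation allows $r_{j,k}=\sigma_k(\widetilde\beta)^* e_j$ for a full-density set of $j$; these lifts have uniformly bounded $\ell^2$-norm, and a routine estimate shows the $W$-integral tends to $1$, yielding the desired value $1$. In the opposite case $\alpha\notin r(f)(\ZZ(\Gamma)^{\oplus m})$, fix $C>0$ and split $J_k=J_k^{\le C}\sqcup J_k^{>C}$ according to $\|r_{j,k}\|_{\ell^2}\le C$. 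The contrapositive of the Submodule Test (Lemma \ref{L:idealtest}) gives $|J_k^{\le C}|/d_k\to 0$ as $k\to\infty$ for each fixed $C$, so those terms are asymptotically negligible.

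For $j\in J_k^{>C}$ I bound the remaining $W$-integral by the classical translation trick: pick a minimal-norm $\xi_{j,k}\in p_{\delta/\varepsilon}((\RR^{d_k})^{\oplus n})$ with $\ip{x_k\xi_{j,k},r_{j,k}}=1/2$, giving
\[
\left|\int_{W_{k,\delta,\varepsilon}}e^{2\pi i\ip{x_k\zeta,r_{j,k}}}\,d\mu_{\delta,\varepsilon}(\zeta)\right|\le \tfrac{1}{2}\,\frac{\vol(W_{k,\delta,\varepsilon}\triangle(W_{k,\delta,\varepsilon}+\xi_{j,k}))}{\vol(W_{k,\delta,\varepsilon})}.
\]
Applying the polar decomposition $x_k=u_k|x_k|$ and changing variables to $\eta=|x_k|\zeta$ transforms $W_{k,\delta,\varepsilon}$ (up to a linear isomorphism) into the Euclidean $\delta$-ball inside the $\Tr(p_{\delta/\varepsilon})$-dimensional subspace $p_{\delta/\varepsilon}((\RR^{d_k})^{\oplus n})$, after which Lemma \ref{L:equatorconcentration} controls the relative symmetric-difference volume provided $\|\xi_{j,k}\|$ is small enough---which is guaranteed by $\|r_{j,k}\|_{\ell^2}>C$ for $C$ sufficiently large. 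Sending $k\to\infty$, then $C\to\infty$, then $\delta\to 0$ completes the argument. I expect the main obstacle to lie in this last step: $W_{k,\delta,\varepsilon}$ is an ellipsoid rather than a true Euclidean ball (it is carved out by $\|x_k\zeta\|_2\le\delta$ intersected with the range of a spectral projection), so carefully tracking the change-of-variables distortion introduced by $|x_k|$ together with the interplay of the normalized $\|\cdot\|_2$ on $(\RR^{d_k})^{\oplus n}$ and the unnormalized $\ell^2$-norm on the lifts $r_{j,k}$ will be essential for certifying that the threshold $C$ can be chosen large enough uniformly in $k$ and $\delta$ to drive the oscillatory contribution to zero in the iterated limit.
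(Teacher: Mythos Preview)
Your approach is essentially the paper's. The reduction to characters $\ev_{\alpha}$, the factorization via orthogonality on $G_k$, Lemma~\ref{L:BlerghHadag} to identify $J_k$, the Submodule Test to kill the bounded-lift contribution, and the translation trick followed by Lemma~\ref{L:equatorconcentration} all match the paper's argument. Two points where the paper is sharper than your sketch and which dissolve the ``main obstacle'' you flag:

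\emph{Change of variables.} Rather than $\eta=|x_k|\zeta$, the paper substitutes $\zeta\mapsto x_k\zeta$, sending $W_{k,\delta,\varepsilon}$ to $V_{k,\delta,\varepsilon}=x_kp_{\delta/\varepsilon}(\ell^2_{\RR}(d_kn,u_{d_kn}))\cap\delta\Ball(\ell^2_{\RR}(d_kn,u_{d_kn}))$, which is a \emph{genuine} Euclidean ball of radius $\delta\sqrt{d_kn}$ in the subspace $q_{\delta/\varepsilon}(\ell^2_{\RR}(d_kn))$, where $q_{\eta}=\chi_{(0,\eta]}(|x_k^*|)=u_kp_{\eta}u_k^*$. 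So your worry about the ellipsoid distortion evaporates: Lemma~\ref{L:equatorconcentration} applies directly to $V_{k,\delta,\varepsilon}$ with no tracking of singular values.

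\emph{Projection of $r_{j,k}$.} The point you leave implicit, and which is the real content behind ``minimal-norm $\xi_{j,k}$,'' is that the oscillation direction must be projected onto the image subspace. The paper shows $\|r_{j,k}-q_{\eta}r_{j,k}\|_{\ell^2}\le \eta^{-1}\|x_k^*r_{j,k}\|_{\ell^2}=\eta^{-1}\|\sigma_k(\widetilde{\alpha})^*e_j\|_{\ell^2}\le \|\widehat{\alpha}\|_1/\eta$, so if $\|r_{j,k}\|_{\ell^2}>C$ then $\|q_{\eta}r_{j,k}\|_{\ell^2}>C-\|\widehat{\alpha}\|_1/\eta$. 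This is what guarantees the translation vector is small enough for Lemma~\ref{L:equatorconcentration} and also that $p_{\eta}\ne 0$ for large $k$. Finally, the paper diagonalizes your double limit $k\to\infty$, $C\to\infty$ into a single sequence $C_k\to\infty$, which is cosmetically cleaner but equivalent.
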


\begin{proof}
We use $\eta=\frac{\delta}{\varespilon}.$ For $\alpha\in \ZZ(\Gamma)^{\oplus n},$ let $\ev_{\alpha}\in C((\TT^{\Gamma})^{n})$ be defined by
\[\ev_{\alpha}(\zeta)=e^{2\pi i \ip{\zeta,\alpha}}\]
where
\[\ip{\zeta,\alpha}=\sum_{1\leq l\leq n}\sum_{g\in \Gamma}\zeta(l)(g)\widehat{\alpha}(l)(g).\]
Identify $\ZZ(\Gamma)^{\oplus p}$ with $M_{p,1}(\ZZ(\Gamma)).$ We let $M_{s,t}(M_{d_{i}}(\ZZ))$ act as operators $(\TT^{d_{i}})^{t}\to (\TT^{d_{i}})^{s}$ in the usual way.  It is straightforward to see that
\[\ev_{\alpha}(\Phi_{k}(j,\xi,\zeta))=\exp(2\pi i[\sigma_{k}(\widetilde{\alpha})(\xi+\zeta)](j)).\]

	By abstract Fourier analysis, it is enough to assume that $g=\ev_{\alpha}$ for some $\alpha\in \ZZ(\Gamma)^{\oplus n}.$ We are thus required to show that
\[\limsup_{\delta\to 0}\limsup_{k\to \infty}\left|1-\int \ev_{\alpha}\,d(\Phi_{k})_{*}(u_{d_{k}}\otimes u_{G_{k}}\otimes \mu_{\delta,\varespilon})\right|=0,\]
if $\alpha\in r(f)(\ZZ(\Gamma)^{\oplus m}),$ and
\[\limsup_{\delta\to 0}\limsup_{k\to \infty}\left|\int \ev_{\alpha}\,d(\Phi_{k})_{*}(u_{d_{k}}\otimes u_{G_{k}}\otimes \mu_{\delta,\varespilon}	)\right|=0,\]
when $\alpha\in \ZZ(\Gamma)^{\oplus n}\setminus r(f)(\ZZ(\Gamma)^{\oplus m}).$
The desired claim is easy when $\alpha\in r(f)(\ZZ(\Gamma)^{\oplus m}),$ so we will focus on the case when $\alpha\in \ZZ(\Gamma)^{\oplus n}\setminus r(f)(\ZZ(\Gamma)^{\oplus m}).$

	If $A$ is a compact abelian group and $\theta\in \Hom(A,\RR/\ZZ)$ is not identically zero, then
\begin{equation}\label{E:thinkaboutit}
\int _{A}\exp(2\pi i \theta(a))\,dm_{A}(a)=0.
\end{equation}
Let
\[\mathcal{B}_{k}=\{j:\xi\mapsto[\sigma_{k}(\widetilde{\alpha})\xi](j)\mbox{is trivial on $G_{k}$}\}.\]
Then by $(\ref{E:thinkaboutit})$
\begin{equation}\label{E:crucialcharactercomputation}
\int f\,d(\Phi_{k})_{*}(u_{d_{k}}\otimes u_{G_{k}}\otimes \mu_{\delta,\varepsilon})=\frac{1}{d_{k}}\sum_{j\in \mathcal{B}_{k}}\int_{W_{k,\delta,\varepsilon}}e^{2\pi i [\sigma_{k}(\widetilde{\alpha})\zeta](j)}\,d\mu_{\delta,\varepsilon}(\zeta).
\end{equation}
If
\[\lim_{k\to \infty}\frac{|\mathcal{B}_{k}|}{d_{k}}=0,\]
then our clam is easy. Passing to a subsequence we assume that
\begin{equation}\label{E:asymptoticdistribution}
\frac{|\mathcal{B}_{k}|}{d_{k}}\to \beta>0.
\end{equation}
By Lemma \ref{L:BlerghHadag} we have
\[\mathcal{B}_{k}=\{j:\sigma_{k}(\widetilde{\alpha})^{*}e_{j}\in x_{k}^{*}((\ZZ^{d_{k}})^{\oplus m})\}.\]
For $j\in \mathcal{B}_{k},$ let $r_{j,k}\in (\ZZ^{d_{k}})^{\oplus m}$ be such that
\begin{equation}\label{E:puttingourselvesinagoodposition}
\sigma_{k}(\widetilde{\alpha})^{*}e_{j}=x_{k}^{*}r_{j,k}.
\end{equation}
Since $\ker(x_{k}^{*})^{\perp}=\im(x_{k})=\RR^{A_{k}},$ replacing $r_{j,k}$ with $\chi_{A_{k}}r_{j,k}$ we may assume that $r_{j,k}\in\ZZ^{A_{k}}.$ Since $\alpha\notin r(f)(\ZZ(\Gamma)^{\oplus m}),$ Lemma \ref{L:idealtest} and (\ref{E:puttingourselvesinagoodposition}) implies that we can find a sequence of positive real numbers $C_{k}$ tending to $\infty,$ so that
\[\frac{|\{j\in \mathcal{B}_{k}:\|r_{j,k}\|_{\ell^{2}(d_{k}n)}\leq C_{k}\}|}{d_{k}}\to 0.\]
We claim that for all large $k,$ $p_{\eta}\ne 0.$ To prove this, set
\[q_{\eta}=\chi_{(0,\eta]}(|x_{k}^{*}|).\]
By functional calculus and the fact that $r_{j,k}\perp \ker(x_{k}^{*}),$
\begin{align}\label{E:alidhgalkdhglkad}
\|r_{j,k}-q_{\eta}r_{j,k}\|_{\ell^{2}(d_{k}m)}^{2}&=\|\chi_{(\eta,\infty)}(|x_{k}^{*}|)r_{j,k}\|_{\ell^{2}(d_{k}m)}^{2}\\ \nonumber
&=\ip{\chi_{(\eta,\infty)}(|x_{k}^{*}|)r_{j,k},r_{j,k}}_{\ell^{2}(d_{k}m)}\\ \nonumber
&\leq \frac{1}{\eta^{2}}\ip{|x_{k}^{*}|^{2}r_{j,k},r_{j,k}}_{\ell^{2}(d_{k}m)}\\ \nonumber
&=\frac{1}{\eta^{2}}\|x_{k}^{*}r_{j,k}\|_{\ell^{2}(d_{k}n)}^{2}\\ \nonumber
&=\frac{1}{\eta^{2}}\|\sigma_{k}(\widetidle{\alpha})^{*}e_{j}\|_{\ell^{2}(d_{k}n)}^{2}\\ \nonumber
&\leq \frac{\|\widehat{\alpha}\|_{1}^{2}}{\eta^{2}}. \nonumber
\end{align}
Since $C_{k}\to\infty$ and $\mathcal{B}_{k}\ne \varnothing$ for all large $k,$ we see that $q_{\eta}r_{j,k}\ne 0$ for all large $k$ then  and thus that $q_{\eta}\ne 0.$ Let $x_{k}=u_{k}|x_{k}|$ be the polar decomposition. We leave it as an exercise to show that for all Borel $f\colon [0,\infty)\to \RR$ with
\[f(0)=0\]
we have
\[f(|x_{k}^{*}|)=u_{k}f(|x_{k}|)u_{k}^{*}.\]
Indeed, this is easy when $f(t)=t^{2n}$ for some $n\in\NN,$  and the general case follows by approximation. Thus
\[q_{\eta}=\chi_{(0,\eta]}(|x_{k}^{*}|)=u_{k}p_{\eta}u_{k}^{*}.\]
So
\[\tr(q_{\eta})=\tr(u_{k}p_{\eta}u_{k}^{*})=\tr(p_{\eta}),\]
as
\[u_{k}^{*}u_{k}=\Proj_{\ker(x_{k})^{\perp}}\geq p_{\eta}.\]
Thus the fact that $q_{\eta}\ne 0$ for all large $k$ implies that $p_{\eta}\ne 0$ for all large $k.$
Let
\[V_{k,\delta,\varepsilon}=x_{k}p_{\eta}(\ell^{2}_{\RR}(d_{k}n,u_{d_{k}}n))\cap \delta\Ball(\ell^{2}_{\RR}(d_{k}n,u_{d_{k}m})),\]
and let $\nu_{\delta,\varepsilon}$ be the normalized Lebesgue measure on $V_{k,\delta,\varepsilon}$ chosen so that $\nu_{\delta,\varepsilon}(V_{k,\delta,\varepsilon})=1.$
By the change of variables $\zeta\mapsto x_{k}\zeta$, it is not hard to see that
\begin{equation}\label{E:changeofvariables}
\int_{W_{k,\delta,\varepsilon}}e^{2\pi i [\sigma_{k}(\widetilde{\alpha})\zeta](j)}\,d\mu_{\delta,\varepsilon}(\zeta)=\int_{V_{k,\delta,\varepsilon}}e^{2\pi i \ip{\zeta,r_{j,k}}_{\ell^{2}(d_{k}n)}}\,d\nu_{\delta,\varepsilon}(\zeta).
\end{equation}
Since
\[x_{k}p_{\eta}(\ell^{2}_{\RR}(d_{k}n,u_{d_{k}n}))=u_{k}|x_{k}|\chi_{(0,\eta]}(|x_{k}|)(\ell^{2}_{\RR}(d_{k}n,u_{d_{k}n}))=u_{k}\chi_{(0,\eta]}(|x_{k}|)(\ell^{2}_{\RR}(d_{k}n,u_{d_{k}n})),\]
it is not hard to see that
\begin{equation}\label{E:itsthesubspaceyoualdghal}
\Proj_{x_{k}p_{\eta}(\ell^{2}_{\RR}(d_{k}n,u_{d_{k}n}))}=u_{k}\chi_{(0,\eta]}(|x_{k}|)u_{k}^{*}=q_{\eta}.
\end{equation}

	By (\ref{E:alidhgalkdhglkad}),
\[\|q_{\eta}r_{j,k}-r_{j,k}\|_{\ell^{2}(d_{k}m)}\leq \frac{\|\widehat{\alpha}\|_{1}}{\eta}.\]
So if we set $M_{k}=C_{k}-\frac{\|\widehat{\alpha}\|_{1}}{\eta},$ it follows that
\begin{equation}\label{E:projectedoccilation}
\frac{|\{j\in \mathcal{B}_{k}:\|q_{\eta}r_{j,k}\|_{\ell^{2}(d_{k}m)}\leq M_{k}\}|}{d_{k}}\to 0.
\end{equation}
Set
\[\mathcal{M}_{k}=\{j\in \mathcal{B}_{k}:\|q_{\eta}r_{j,k}\|_{\ell^{2}(d_{k}m)}\geq M_{k}\},\]
then
\[\label{E:bigthings}
\lim_{k\to \infty}\frac{1}{d_{k}}\sum_{j\in \mathcal{B}_{k}}\int_{V_{k,\delta,\varepsilon}}e^{2\pi i \ip{\zeta,r_{j,k}}_{\ell^{2}(d_{k}m)}}\,d\nu_{\delta,\varepsilon}(\zeta)=\lim_{k\to \infty}\frac{1}{d_{k}}\sum_{j\in \mathcal{M}_{k}}\int_{V_{k,\delta,\varepsilon}}e^{2\pi i \ip{\zeta,r_{j,k}}_{\ell^{2}(d_{k}m)}}\,d\nu_{\delta,\varepsilon}(\zeta).\]
Applying (\ref{E:itsthesubspaceyoualdghal}) and the fact that $V_{k,\delta,\varepsilon}\subseteq x_{k}p_{\eta}(\ell^{2}_{\RR}(d_{k}n,u_{d_{k}n}))$ we have
\begin{align}\label{E:bigthings}\lim_{k\to \infty}\frac{1}{d_{k}}\sum_{j\in \mathcal{B}_{k}}\int_{V_{k,\delta,\varepsilon}}e^{2\pi i \ip{\zeta,r_{j,k}}_{\ell^{2}(d_{k}m)}}\,d\nu_{\delta,\varepsilon}(\zeta)&=\lim_{k\to \infty}\frac{1}{d_{k}}\sum_{j\in \mathcal{M}_{k}}\int_{V_{k,\delta,\varepsilon}}e^{2\pi i \ip{q_{\eta}\zeta,r_{j,k}}_{\ell^{2}(d_{k}m)}}\,d\nu_{\delta,\varepsilon}(\zeta)\\
&=\lim_{k\to \infty}\frac{1}{d_{k}}\sum_{j\in \mathcal{M}_{k}}\int_{V_{k,\delta,\varepsilon}}e^{2\pi i \ip{\zeta,q_{\eta}r_{j,k}}_{\ell^{2}(d_{k}m)}}\,d\nu_{\delta,\varepsilon}(\zeta) \nonumber
\end{align}
For $j\in \mathcal{M}_{k},$ let
\[v_{j,k}=\frac{q_{\eta}r_{j,k}}{\|q_{\eta}r_{j,k}\|_{\ell^{2}_{\RR}(d_{k}m)}},\]
\[\zeta_{j,k}=\frac{1}{2\|q_{\eta}r_{j,k}\|_{\ell^{2}_{\RR}(d_{k}m)}}v_{j,k}.\]
Since $\ip{\zeta_{j,k},q_{\eta}r_{j,k}}=1/2,$
\[\int_{V_{k,\eta}}e^{2\pi i \ip{\zeta,q_{\eta}r_{j,k}}_{\ell^{2}(d_{k}m)}}\,d\nu_{\eta}(\zeta)=\frac{1}{2}\left(\int_{V_{k,\eta}}e^{2\pi i \ip{\zeta,q_{\eta}r_{j,k}}_{\ell^{2}(d_{k}m)}}\,d\nu_{\eta}(\zeta)-\int_{V_{k,\eta}+\zeta_{j,k}}e^{2\pi i \ip{\zeta,q_{\eta}r_{j,k}}_{\ell^{2}(d_{k}m)}}\,d\nu_{\eta}(\zeta)\right).\]
Hence,
\begin{equation}\label{E:osciallatoryintegral}
\left|\int_{V_{k,\delta,\varepsilon}}e^{2\pi i \ip{\zeta,q_{\eta}r_{j,k}}_{\ell^{2}(d_{k}m)}}\,d\nu_{\delta,\varepsilon}(\zeta)\right|=
\end{equation}
\[\frac{1}{2}\left|\int_{V_{k,\delta,\varepsilon}}e^{2\pi i \ip{\zeta,q_{\eta}r_{j,k}}_{\ell^{2}(d_{k}m)}}\,d\nu_{\delta,\varepsilon}(\zeta)-\int_{V_{k,\delta,\varepsilon}+\zeta_{j,k}}e^{2\pi i \ip{\zeta,q_{\eta}r_{j,k}}_{\ell^{2}(d_{k}m)}}\,d\nu_{\delta,\varepsilon}(\zeta)\right|\leq\nu_{\delta,\varepsilon}(V_{k,\delta,\varepsilon}\setminus (V_{k,\delta,\varepsilon}+\zeta_{j,k})).\]

	Let $w_{k}=\Tr(p_{\eta}).$ Then
\[\|\zeta_{j,k}\|_{\ell^{2}(d_{k}m)}\leq \frac{1}{2M_{k}}=\frac{\delta\sqrt{d_{k}m}}{\sqrt{w_{k}}}\cdot\left(\frac{1}{2\delta M_{k}}\sqrt{\frac{w_{k}}{d_{k}m}}\right).\]
Since
\[w_{k}>0, \mbox{ (as we have already shown that $p_{\eta}\ne 0$)},\]
\[\lim_{k\to \infty}\frac{w_{k}}{d_{k}}\leq \mu_{|f|}((0,\eta]),\]
and
\[M_{k}\to \infty,\]
we may apply Lemma \ref{L:equatorconcentration}  with
\[n_{k}=w_{k},R_{k}=\delta\sqrt{d_{k}m},c_{k}=\left(\frac{1}{2\delta M_{k}}\sqrt{\frac{w_{k}}{d_{k}m}}\right)\]
and find that
\[\sup_{j\in \mathcal{M}_{k}}\nu_{\delta,\varepsilon}(V_{k,\delta,\varepsilon}\setminus (V_{k,\delta,\varepsilon}+\zeta_{j,k}))\to 0.\]
Applying $(\ref{E:bigthings}),(\ref{E:osciallatoryintegral})$ completes the proof.

\end{proof}

\subsection{The Main Result}

	We now prove that the measure theoretic entropy of $\Gamma\actson (X_{f},m_{X_{f}})$ is $\log \Det_{L(\Gamma)}(f)$ if $f\in M_{n}(\ZZ(\Gamma))$ is injective as a left multiplication operator on $\ell^{2}(\Gamma)^{\oplus n}.$

\begin{theorem}\label{T:measureentropy} Let $\Gamma$ be a countable discrete non-amenable sofic group.  Let $f\in M_{n}(\ZZ(\Gamma))$ be injective as a left-multiplication operator on $\ell^{2}(\Gamma)^{\oplus n}.$ Then for any sofic approximation $\Sigma$ of $\Gamma$ we have
\[h_{\Sigma,m_{X_{f}}}(X_{f},\Gamma)=\log \Det_{L(\Gamma)}(f).\]
\end{theorem}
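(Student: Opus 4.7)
The upper bound $h_{\Sigma,m_{X_f}}(X_f,\Gamma)\le h_\Sigma(X_f,\Gamma)=\log\Det_{L(\Gamma)}(f)$ is immediate: the first inequality is the standard variational comparison between measure and topological entropy, and the equality is Theorem \ref{T:squarematrix}. Hence the substance of the theorem is the reverse inequality, and the plan is to promote the $\varepsilon$-separated families of microstates constructed in the lower-bound half of the proof of Theorem \ref{T:squarematrix} to families that additionally satisfy the measure-theoretic constraint $\left|\frac{1}{d_k}\sum_j g(\phi(j))-\int g\,dm_{X_f}\right|<\delta'$ for each $g$ in a prescribed finite $L\subseteq C(X_f)$.

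Fix the rank perturbation $x_k\in GL_n(M_{d_k}(\RR))$ of $\sigma_k(f)$ from Proposition \ref{P:rankperturabtion} and parametrize microstates by the probability space $(G_k\times W_{k,\delta,\varepsilon},\,u_{G_k}\otimes\mu_{\delta,\varepsilon})$ in the notation of Lemma \ref{L:proofofweak^{*}convergence}, so that each $(\xi,\zeta)$ produces a map $\phi_{\xi+\zeta}\colon\{1,\dots,d_k\}\to(\TT^\Gamma)^{\oplus n}$. Since $\Gamma$ is non-amenable, $f\in M_n(\ZZ(\Gamma))$ with $\lambda(f)$ injective and $m=n$, Proposition \ref{P:Left/RightIssues} yields that $\lambda(f)$ has dense image, so by Theorem \ref{T:automaticergodicity} the action $\Gamma\actson(X_f,m_{X_f})$ is ergodic. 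Lemma \ref{L:proofofweak^{*}convergence} then provides the weak$^*$ convergence hypothesis of the Automatic Concentration Lemma (Lemma \ref{L:AutomaticConcentration}), applied on the ambient compact metrizable space $(\TT^\Gamma)^{\oplus n}$ with invariant probability measure $m_{X_f}$; the equivariance hypothesis $\|\rho_2(\phi_{\xi+\zeta}\circ\sigma_k(g),g\phi_{\xi+\zeta})\|_{L^\infty}\to 0$ is essentially the estimate from the proof of Proposition \ref{P:mainreduction}. Consequently, for every finite $F\subseteq\Gamma$, every finite $L\subseteq C(X_f)$ (extended to $C((\TT^\Gamma)^{\oplus n})$ via Tietze), and every $\delta'>0$, the probability that $\phi_{\xi+\zeta}$ is a bona fide measure-theoretic microstate tends to $1$ as $k\to\infty$ with $\delta\to 0$.

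To convert this concentration statement into an entropy lower bound, fix the section $\mathcal{N}$ of $G_k$ from Theorem \ref{T:squarematrix} and, for each $\xi\in\mathcal{N}$, choose a maximal $\varepsilon$-separated subset $\mathcal{M}_\xi$ of the fiber $\{\zeta\in W_{k,\delta,\varepsilon}:\phi_{\xi+\zeta}\text{ is a measure-theoretic microstate}\}$. A Fubini argument using the concentration statement shows that this fiber has $\mu_{\delta,\varepsilon}$-measure at least $1-o(1)$ for a $(1-o(1))$-fraction of $\xi\in\mathcal{N}$, and for such $\xi$ the volume-packing estimate used to bound $|\mathcal{M}|$ in Theorem \ref{T:squarematrix} applies to $|\mathcal{M}_\xi|$ up to a $(1-o(1))$ factor. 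The family $\{\xi+\zeta:\xi\in\mathcal{N},\,\zeta\in\mathcal{M}_\xi\}$ is then $\varepsilon$-separated by the identical argument of Theorem \ref{T:squarematrix} and consists entirely of measure-theoretic microstates, so letting $\delta\to 0$, $\varepsilon\to 0$ and invoking Lemma \ref{L:smallintegers} yields $h_{\Sigma,m_{X_f}}(X_f,\Gamma)\ge\log\Det_{L(\Gamma)}(f)$. The main technical obstacle is the bookkeeping in this final step: one must carefully synchronize the two-parameter limit $(\delta,k)$ of Lemma \ref{L:proofofweak^{*}convergence} with the directed-set limit demanded by Lemma \ref{L:AutomaticConcentration}, and the Fubini step must guarantee that the "good" fibers inherit enough of the volume of $W_{k,\delta,\varepsilon}$ to preserve the sharp exponential lower bound on $|\mathcal{M}_\xi|$.
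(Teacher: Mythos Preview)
Your proposal is correct and follows essentially the same route as the paper: upper bound via the variational principle and Theorem \ref{T:squarematrix}; lower bound by invoking Proposition \ref{P:Left/RightIssues} for dense image, Theorem \ref{T:automaticergodicity} for ergodicity, Lemma \ref{L:proofofweak^{*}convergence} for weak$^{*}$ convergence, and then the Automatic Concentration Lemma \ref{L:AutomaticConcentration}, followed by a Fubini/Chebyshev step to extract a large set of $\xi\in G_k$ whose fibers carry nearly full $\mu_{\delta,\varepsilon}$-measure, after which the separated-set count from Theorem \ref{T:squarematrix} goes through with harmless $(1-o(1))$ factors. The only cosmetic difference is that the paper first restricts to the good set $E_{k}\subseteq G_{k}$ and then chooses $\mathcal{N}$ as a maximal separated subset of $E_{k}$, whereas you keep the original $\mathcal{N}$ and sum $|\mathcal{M}_{\xi}|$ only over its good fraction; the resulting estimates are the same.
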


\begin{proof}

	From Theorem \ref{T:squarematrix} and the variational principle, it is enough to show that
\[h_{\Sigma,m_{X_{f}}}(X_{f},\Gamma)\geq \log \Det_{L(\Gamma)}(f).\]
Identify $X_{f}\subseteq (\TT^{\Gamma})^{n}$ as usual. We may regard $m_{X_{f}}$ as a measure on $(\TT^{\Gamma})^{n}.$ Trivially we have
\[\Gamma\actson (X_{f},m_{X_{f}})\cong \Gamma \actson ((\TT^{\Gamma})^{n},m_{X_{f}})\]
as measure-preserving actions of $\Gamma.$ We use the pseudometric $\rho$ on $X_{f}$ given by
\[\rho(\chi_{1},\chi_{2})^{2}=\frac{1}{n}\sum_{1\leq l\leq n}|\chi_{1}(l)(e)-\chi_{2}(l)(e)|^{2},\]
let $\theta_{2,(\ZZ^{m})^{\oplus n}}$ be the pseudometric defined on $(\TT^{m})^{n}$ as before. Let
\[\Sigma=(\sigma_{k}\colon \Gamma\to S_{d_{k}}),\]
be a sofic approximation. By Proposition \ref{P:rankperturabtion}, we may find a rank  perturbation $x_{k}\in GL_{n}(M_{d_{k}}(\RR)).$ Let
\[M=\sup_{k}\|x_{k}\|_{\infty}.\]
 Set
\[G_{k}=x_{k}^{-1}((\ZZ^{d_{k}})^{\oplus n})/((\ZZ^{d_{k}})^{\oplus n}),\]
by Lemma \ref{L:smith} we know that $G_{k}$ is a finite abelian group and that
\[|G_{k}|=|\det(x_{k})|.\]

Let $\varepsilon>0,$ for $\delta>0,k\in \NN,$ let $G_{k},W_{k,\delta,\varespilon},\mu_{\delta,\varespilon},p_{\delta/\varespilon},\Phi_{k}$ be as in Lemma \ref{L:proofofweak^{*}convergence}. For $(\xi,\zeta)\in G_{k}\times W_{k,\delta,\varespilon}$ set
\[\phi_{\xi+\zeta}(j)=\Phi_{k}(j,\xi,\zeta).\]
Let $L\subseteq C((\TT^{\Gamma})^{n}),F\subseteq \Gamma$ be finite, and $\eta>0.$ By Proposition \ref{P:Left/RightIssues}, we know that $\lambda(f)$ has dense image. By Lemma \ref{L:proofofweak^{*}convergence}, Theorem \ref{T:automaticergodicity} and the Automatic Concentration Lemma (Lemma \ref{L:AutomaticConcentration})
\[\liminf_{\delta\to 0}\liminf_{k\to \infty}(u_{G_{k}}\otimes \mu_{\delta,\varespilon})(\{(\xi,\zeta):\phi_{\xi+\zeta}\in \Map(\rho,F,L,\eta,\sigma_{i})\})=1.\]

	Fix $\kappa>0,$ and choose $\delta_{0}>0$ sufficiently small so that
\[\liminf_{k\to \infty}(u_{G_{k}}\otimes \mu_{\delta,\varespilon})(\{(\xi,\zeta):\phi_{\xi+\zeta}\in \Map(\rho,F,L,\eta,\sigma_{i})\})>1-\kappa\]
if $\delta<\delta_{0}.$ Let
\[E_{k}=\{\xi\in G_{k}:\mu_{\delta,\varespilon}(\{\zeta:\phi_{\xi+\zeta}\in \Map(\rho,F,L,\eta,\sigma_{i})\})\geq 1-\sqrt{\kappa}\}.\]
Then for all $\delta<\delta_{0},$ and all large $k$ we have
\[\frac{|E_{k}|}{|G_{k}|}\geq 1-\sqrt{\kappa},\]
so as in Theorem \ref{T:squarematrix}
\[|E_{k}|\geq (1-\sqrt{\kappa})|\det(x_{k})|.\]
For $\xi\in E_{k},$ let
\[\Omega_{\xi}=\{\zeta\in W_{k,\delta,\varespilon}:\phi_{\xi+\zeta}\in \Map(\rho,F,L,\eta,\sigma_{i})\},\]
so that
\[\mu_{\delta,\varepsilon}(\Omega_{\xi})\geq 1-\sqrt{\kappa}.\]

	Let $\mathcal{N}$  be such that $\{x_{k}\xi\}_{\xi\in \mathcal{N}}$ is a maximal $2\varespilon M$ separated subset of $E_{k}$ with respect to $\theta_{2,x_{k}((\ZZ^{d_{i}})^{\oplus n}}.$ And for $\xi\in \mathcal{N},$ let $\mathcal{M}_{\xi}$ be a maximal $\varepsilon$-separated subset of $\Omega_{\xi}$ with respect to $\theta_{2,(\ZZ^{d_{k}})^{\oplus n}}.$ Let $\omega_{n}(\cdot)$ be defined as in the proof of Theorem \ref{T:squarematrix}. It follows as in the proof of Theorem \ref{T:squarematrix}, that for $\delta<\delta_{0},$
\[N_{\varepsilon}(\Map(\rho,F,L,\eta,\sigma_{i}),\rho_{2})\geq \sum_{\xi \in\mathcal{N}}|\mathcal{M}_{\xi}|,\]
\[|\mathcal{M}_{\xi}|\geq (1-\sqrt{\kappa})\frac{\Det_{\delta/\varespilon}(x_{k})^{-1}\delta^{\Tr(p)}}{\varespilon^{\Tr(p)}\omega_{n}(\varespilon M+\delta)},\]
\[|\mathcal{N}|\geq (1-\sqrt{\kappa})\frac{|\det(x_{k})|}{\omega_{n}\left(2\varepsilon M\right)}.\]
The rest of the proof follows as in Theorem \ref{T:squarematrix}.

\end{proof}

We can also use our techniques to give examples of algebraic actions with infinite measure-theoretic entropy.

\begin{theorem}\label{T:infinitemeasureentropyexample} Let $\Gamma$ be a countable discrete non-amenable sofic group, and $\Sigma=(\sigma_{k}\colon\Gamma\to S_{d_{k}})$ a sofic approximation. Let $f\in M_{m,n}(\ZZ(\Gamma))$ and suppose that $\lambda(f)$ is not injective, but has dense image  (so necessarily $m<n$ by Proposition  \ref{P:Left/RightIssues}). Then
\[h_{\Sigma,m_{X_{f}}}(X_{f},\Gamma)=\infty.\]

\end{theorem}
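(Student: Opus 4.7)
The plan is to extend the probabilistic construction behind Theorem~\ref{T:measureentropy} so that the positive von Neumann dimension of $\ker\lambda(f)$ contributes an entropy term that diverges as the scale parameter $\varepsilon$ tends to zero. Since $\lambda(f)$ has dense image, Proposition~\ref{P:rankperturabtion}(i) produces a rank perturbation $x_k\in M_{m,n}(M_{d_k}(\ZZ))$ of $\sigma_k(f)$ with $\im(x_k)=\RR^{A_k}$ and $|A_k|/d_k\to m$. Set $K_k=\ker(x_k)\cap (\RR^{d_k})^{\oplus n}$, a rational subspace of dimension satisfying $\dim K_k/d_k\to n-m>0$ by Lemma~\ref{L:injective}. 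The quotient $T_k=K_k/(K_k\cap (\ZZ^{d_k})^{\oplus n})$ is a compact torus, and the decisive point is that elements of $K_k$ act as ``free parameters'' since $x_k$ annihilates them.

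I would then enlarge the random source of Lemma~\ref{L:proofofweak^{*}convergence} by adjoining $T_k$ (with Haar measure $m_{T_k}$) as a third factor: for $(\xi,\zeta,\eta)\in G_k\times W_{k,\delta,\varepsilon}\times T_k$, set
\[\Phi_k(j,\xi,\zeta,\eta)(l)(g)=(\xi+\zeta+\eta)(l)(\sigma_k(g)^{-1}(j))+\ZZ.\]
To prove that $(\Phi_k)_*(u_{d_k}\otimes m_{G_k}\otimes \mu_{\delta,\varepsilon}\otimes m_{T_k})\to m_{X_f}$ weak$^*$, one reduces to characters $\ev_\alpha$ and evaluates the new $\eta$-integral $\int_{T_k}e^{2\pi i\ip{\eta,\sigma_k(\widetilde\alpha)^* e_j}}\,dm_{T_k}$, which equals $1$ if $\sigma_k(\widetilde\alpha)^* e_j\in K_k^\perp=\im x_k^*$ and $0$ otherwise. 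Crucially, for every $j$ in the set where the $\xi$-integral is nontrivial (that is, $\sigma_k(\widetilde\alpha)^* e_j\in x_k^*(\ZZ^{d_k})^{\oplus m}$), this automatically forces $\sigma_k(\widetilde\alpha)^* e_j\in \im x_k^*$, so the $\eta$-integral is $1$ and does not interfere with the existing oscillatory $\zeta$-argument based on the Submodule Test (Lemma~\ref{L:idealtest}) and the equatorial concentration Lemma~\ref{L:equatorconcentration}. Since $\lambda(f)$ has dense image, $(X_f,m_{X_f})$ is ergodic by Theorem~\ref{T:automaticergodicity}, so the Automatic Concentration Lemma (Lemma~\ref{L:AutomaticConcentration}) shows that a $(1-o(1))$-fraction of random triples produce valid elements of $\Map(\rho,F,L,\delta,\sigma_k)$.

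For the entropy lower bound, the separation argument of Theorem~\ref{T:measureentropy} carries through unchanged: because $x_k\eta=0$, two microstates $\varepsilon$-close in $\theta_{2,(\ZZ^{d_k})^{\oplus n}}$ must agree in their $\xi$- and $\zeta$-components, leaving $\eta_1-\eta_2$ within $\varepsilon$ modulo $K_k\cap (\ZZ^{d_k})^{\oplus n}$. This produces a multiplicative factor of $N_\varepsilon(T_k,\theta_2)$ in the count. Since $K_k\cap (\ZZ^{d_k})^{\oplus n}$ is a saturated sublattice of $\ZZ^{d_kn}$, Cauchy--Binet applied to an integer basis shows that its Euclidean covolume in $K_k$ is at least $1$; converting to the normalized $\|\cdot\|_2$ metric and inserting the standard volume for the unit ball in dimension $\dim K_k\sim(n-m)d_k$ yields
\[\frac{1}{d_k}\log N_\varepsilon(T_k,\theta_2)\geq -(n-m)\log\varepsilon-C_0\]
for some constant $C_0$ independent of $k$ and $\varepsilon$. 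Combining with any crude lower bound for the $(\xi,\zeta)$-contribution gives $h_{\Sigma,m_{X_f}}(\rho,\varepsilon)\geq -(n-m)\log\varepsilon-C_1$, so letting $\varepsilon\to 0$ concludes that $h_{\Sigma,m_{X_f}}(X_f,\Gamma)=\infty$. The principal obstacles are verifying that the Submodule Test argument extends cleanly to the enlarged probability space (so that the $\eta$-averaging does not spoil the small-$\zeta$-integral conclusion when $\alpha\notin r(f)(\ZZ(\Gamma)^{\oplus m})$) and controlling the covolume of $K_k\cap \ZZ^{d_kn}$ in the normalized metric; the saturated-sublattice bound via Cauchy--Binet is the key ingredient for the latter.
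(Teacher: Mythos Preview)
Your overall strategy --- augment the random source of Lemma~\ref{L:proofofweak^{*}convergence} with a third factor living in $\ker(x_k)$, verify weak$^*$ convergence, invoke Theorem~\ref{T:automaticergodicity} and the Automatic Concentration Lemma, and then extract a factor of order $\varepsilon^{-\dim_\RR\ker(x_k)}$ in the microstate count --- is exactly what the paper does. The paper's implementation, however, is simpler and sidesteps both obstacles you flag at the end. Rather than the torus $T_k=K_k/(K_k\cap(\ZZ^{d_k})^{\oplus n})$, the paper uses the \emph{unit ball} $\Omega_k=K_k\cap\Ball(\ell^2_\RR(d_kn,u_{d_kn}))$ with normalized Lebesgue measure; and rather than a three-way separation over $(\xi,\zeta,\eta)$, it applies Fubini after Automatic Concentration to locate a \emph{single} pair $(\xi,\zeta)$ for which $\nu(\{v\in\Omega_k:\phi_{v+\xi+\zeta}\in\Map(\rho,F,L,\delta,\sigma_k)\})>1/2$, and then runs a pure covering/volume argument inside $K_k$ for that fixed $(\xi,\zeta)$. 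The only lattice points that enter are the $l\in(\ZZ^{d_k})^{\oplus n}\cap 3\Ball(\ell^2_\RR(d_kn,u_{d_kn}))$ arising in the covering, whose cardinality is at most $C^{d_kn}$ by a crude volume comparison; no covolume estimate for $K_k\cap(\ZZ^{d_k})^{\oplus n}$ is needed.

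Your separation argument, as written, has a genuine gap. Once $\xi_1=\xi_2$ you obtain $\|\zeta_1+\eta_1-\zeta_2-\eta_2-l\|_2\leq\varepsilon$ for some $l\in(\ZZ^{d_k})^{\oplus n}$, but $l$ need not split as an integer vector in $K_k$ plus one in $K_k^\perp$; hence you cannot conclude that $\zeta_1-\zeta_2$ is small modulo $(\ZZ^{d_k})^{\oplus n}$ (so the $\mathcal{M}$-separation from Theorem~\ref{T:measureentropy} does not force $\zeta_1=\zeta_2$), nor that $\eta_1-\eta_2$ is small modulo $K_k\cap(\ZZ^{d_k})^{\oplus n}$. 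A related issue: your Cauchy--Binet covolume bound controls $N_\varepsilon$ for the quotient metric $\inf_{l\in K_k\cap(\ZZ^{d_k})^{\oplus n}}\|\cdot-l\|_2$ on $T_k$, but the metric relevant for entropy is the ambient $\theta_{2,(\ZZ^{d_k})^{\oplus n}}$, which is \emph{smaller}; thus a set that is $\varepsilon$-separated in the former need not be $\varepsilon$-separated in the latter, and the inequality runs the wrong way. Both difficulties evaporate if you replace the torus by the ball and fix $(\xi,\zeta)$ before counting, as the paper does.
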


\begin{proof} Let $\rho$ be the dynamical generating pseudometric on $(\TT^{\Gamma})^{\oplus n}$ given by
\[\rho(\theta_{1},\theta_{2})^{2}=\frac{1}{n}\sum_{l=1}^{n}|\theta_{1}(l)(e)-\theta_{2}(l)(e)|^{2}.\]
For $E\subseteq \Gamma$ finite, and $\theta_{1},\theta_{2}\in (\TT^{\Gamma})^{\oplus n}$ set
\[\rho_{E}(\theta_{1},\theta_{2})=\max_{g\in E}\rho_{2}(g\theta_{1},g\theta_{2}).\]
 Fix $0<\varepsilon<1.$ By Theorem \ref{T:automaticergodicity} the action $\Gamma\actson (X_{f},m_{X_{f}})$ is ergodic, and so we may apply the Automatic Concentration Lemma. By Proposition \ref{P:rankperturabtion}, we may find a rank  perturbation $x_{k}\in M_{m,n}(M_{d_{k}}(\ZZ))$ so that $x_{k}((\RR^{d_{k}})^{\oplus n})=\RR^{A_{k}}$ for some
\[A_{k}\subseteq\{1,\dots,d_{k}\}^{m}\]
with
\[\lim_{k\to\infty}\frac{|A_{k}|}{d_{k}}=m.\]
Let $G_{k},\mu_{\delta,\varepsilon},W_{k,\delta,\varepsilon},\Phi$ be as in Lemma \ref{L:proofofweak^{*}convergence}. For $\xi\in(\TT^{d_{i}})^{\oplus n}$ define
\[\phi_{\xi}\colon \{1,\dots,d_{i}\}\to (\TT^{\Gamma})^{\oplus n}\]
by
\[\phi_{\xi}(j)(l)(g)=\xi(l)(\sigma_{k}(g)^{-1}(j)).\]
For $1\leq j\leq d_{k},$ let $\Phi_{j}\colon G_{k}\times W_{k,\delta,\varepsilon}\to (\TT^{\Gamma})^{\oplus n}$ be given by
\[\Phi_{j}(\xi,\zeta)=\Phi(j,\xi,\zeta).\]
We remark that the proof of Lemma \ref{L:proofofweak^{*}convergence}  shows that for $\alpha\in\ZZ(\Gamma)^{\oplus n}$ and $\eta>0$ we have
\begin{equation}\label{E:strongconcentration}\lim_{\delta\to 0}\lim_{k\to\infty}u_{d_{k}}\left(\left\{j:\left|\int \ev_{\alpha}d(\Phi_{j})_{*}(m_{G_{k}}\otimes \mu_{\delta,\varepsilon})-\int_{X_{f}}\ev_{\alpha}\,dm_{X_{f}}\right|>\eta\right\}\right)=0.
\end{equation}

	Let $\Omega_{k}=\ker(x_{k})\cap \Ball(\ell^{2}_{\RR}(d_{k}n,u_{d_{k}n})),$ and let $\nu$ be the normalized Lebesgue measure on $\Omega_{k}$ so that $\nu(\Omega_{k})=1.$ Define
\[\Psi\colon\{1,\dots,d_{k}\}\times \Omega_{k}\times G_{k}\times W_{k,\delta,\varepsilon}\to (\TT^{\Gamma})^{\oplus n}\]
by
\[\Psi(j,v,\xi,\zeta)(l)(g)=v(l)(\sigma_{k}(g)^{-1}(j))+\xi(l)(\sigma_{k}(g)^{-1}(j))+\zeta(l)(\sigma_{k}(g)^{-1}(j))+\ZZ.\]
Note that for all $g\in \Gamma$ and finite $E\subseteq\Gamma$
\begin{equation}\label{E:itsalwayscloseetc}
\sup_{v\in \Omega_{k}}\rho_{E,2}(\phi_{v},X_{f}^{\oplus d_{k}})\to_{k\to\infty}0.
\end{equation}
This follows as any $v\in\Omega_{k}$  is in $\Xi_{\delta}(x_{k})$ for any $\delta>0,$ and $x_{k}$ is rank perturbation of $\sigma_{k}(f).$  From (\ref{E:strongconcentration}) and $(\ref{E:itsalwayscloseetc})$ and the fact that
\[\int_{X_{f}}\ev_{\alpha}\,dm_{X_{f}}=\begin{cases}
0,& \mbox{ if $\alpha\notin r(f)(\ZZ(\Gamma)^{\oplus m})$,}\\
1,& \mbox{ if $\alpha\in r(f)(\ZZ(\Gamma)^{\oplus m})$,}
\end{cases}\]
it is not hard to see that
\[\limsup_{\delta\to 0}\limsup_{k\to\infty}\left|\int h\,d\Psi_{*}(u_{d_{k}}\otimes \nu\otimes m_{G_{k}}\otimes \mu_{\delta,\varepsilon})-\int h\,dm_{X_{f}}\right|=0,\]
for all $h\in C(X_{f}).$ Thus by the Automatic Concentration Lemma,
\[\limsup_{\delta\to 0}\limsup_{k\to\infty}(\nu\otimes m_{G_{k}}\otimes \mu_{\delta,\varepsilon})\left(\{(v,\xi,\zeta):\phi_{v+\xi+\zeta}\in\Map(\rho,F,\delta,L,\sigma_{k})\}\right)=1.\]
So we may fix a $\delta_{0}>0$ so that if $0<\delta<\delta_{0},$ then
\[\limsup_{k\to\infty}(\nu\otimes m_{G_{k}}\otimes \mu_{\delta,\varepsilon})\left(\{(v,\xi,\zeta):\phi_{v+\xi+\zeta}\in\Map(\rho,F,\delta,L,\sigma_{k})\}\right)>1/2.\]
Thus for all sufficiently large $k,$ we can find $ (\xi,\zeta)\in G_{k}\times W_{k,\delta,\varepsilon}$ so that
\[\nu(\{v:\phi_{v+\xi+\zeta}\in \Map(\rho,F,\delta,L,\sigma_{k})\})>1/2.\]
Set
\[B=\{v:\phi_{v+\xi+\zeta}\in \Map(\rho,F,\delta,L,\sigma_{k})\}.\]
Let $S\subseteq B$ be $\varepsilon$-dense with respect to $\Theta_{2,(\ZZ^{d_{k}})^{\oplus n}}$ of minimal cardinality. Since
\[\rho_{2}(\phi_{\xi},\phi_{\zeta})=\theta_{2,(\ZZ^{d_{k}})^{\oplus n}}(\xi,\zeta)\]
for all $\xi,\zeta\in(\TT^{d_{i}})^{\oplus n}$ we have
\[N_{\varepsilon/4}(\Map(\rho,F,\delta,L,\sigma_{i}),\rho_{2})\geq |S|.\]
 For all $v\in B,$ we can find a $w\in S$ and an $l\in (\ZZ^{d_{k}})^{\oplus n}$ so that
\[\|v-w-l\|_{2,(\ZZ^{d_{k}})^{\oplus n}}<\varepsilon.\]
Thus
\[B+p\xi+p\zeta\subseteq \bigcup_{\substack{w\in S,\\ l\in (\ZZ^{d_{k}})^{\oplus n}\cap 3\Ball(\ell^{2}_{\RR}(d_{k}n,u_{d_{k}n}))}}w+\xi+\zeta+l+\varepsilon\Ball(\ell^{2}_{\RR}(d_{k},u_{d_{k}n}).\]
Let $p$ be the projection onto the kernel of $x_{k}.$ Thus
\[B+\xi+\zeta\subseteq \bigcup_{\substack{w\in S,\\ l\in (\ZZ^{d_{k}})^{\oplus n}\cap 3\Ball(\ell^{2}_{\RR}(d_{k}n,u_{d_{k}n}))}}w+p\xi+p\zeta+pl+\varepsilon\Ball(\ker(x_{k})\cap (\RR^{d_{k}})^{\oplus n},\|\cdot\|_{\ell^{2}(d_{k}n,u_{d_{k}}n)}).\]
Computing volumes, we have
\[\frac{1}{2}\leq \nu(B)\leq \varepsilon^{\dim_{\RR}(\ker(x_{k})\cap (\RR^{d_{k}})^{\oplus n})}|S||(\ZZ^{d_{k}})^{\oplus n}\cap 3\Ball(\ell^{2}_{\RR}(d_{k}n,u_{d_{k}n}))|.\]
Note that
\[4\Ball(\ell^{2}_{\RR}(d_{k}n,u_{d_{k}n}))\supseteq\bigcup_{l\in (\ZZ^{d_{k}})^{\oplus n}\cap 3\Ball(\ell^{2}_{\RR}(d_{k}n,u_{d_{k}n}))}l+[-1/2,1/2)^{d_{k}n}\]
and this union is a disjoint union. From this it follows that
\[4^{d_{k}n}\vol(\Ball(\ell^{2}_{\RR}(d_{k}n,u_{d_{k}n})))\geq | (\ZZ^{d_{k}})^{\oplus n}\cap 3\Ball(\ell^{2}_{\RR}(d_{k}n,u_{d_{k}n}))|.\]
and so (see page 11 of \cite{Pis}) we may find a $C>0$ with
\[| (\ZZ^{d_{k}})^{\oplus n}\cap 3\Ball(\ell^{2}_{\RR}(d_{k}n,u_{d_{k}n}))|\leq C^{d_{k}n}.\]
Hence, we see that
\[|S|\geq \frac{1}{2}\varepsilon^{-\dim_{\RR}(\ker(x_{k})\cap (\RR^{d_{k}})^{\oplus n})}C^{-d_{k}n}.\]
Therefore, by Lemma \ref{L:injective}
\[h_{\Sigma,m_{X_{f}}}(\rho,F,L,\delta,\varepsilon/4,\sigma_{k})\geq \dim_{L(\Gamma)}(\ker\lambda(f))\log(1/\varepsilon)-n\log C.\]
Since $\lambda(f)$ is not injective,
\[\dim_{L(\Gamma)}(\ker\lambda(f))>0.\]
Thus we can take the infimum over  $F,L,\delta$ and let $\varepsilon\to 0$ to complete the proof.

\end{proof}

\section{Applications of the Results}\label{S:apply}

\subsection{Random Sofic Entropy and $f$-Invariant Entropy of Algebraic Actions}

	In this section, we extend our results on sofic entropy of algebraic actions to Lewis Bowen's $f$-invariant entropy. We use  $\FF_{r}$ for the free group on $r$ generators. Typically the following notions are defined in terms of countable partitions, we will prefer to use measurable maps $\alpha\colon X\to A,$ where $A$ is a countable set. Such a map induces a partition with elements $\alpha^{-1}(\{a\})$
for $a\in A.$ We leave it as an exercise to the informed reader to check that all our definitions agree with the usual definitions for partitions.

\begin{definition} \emph{Let $(X,\mu)$ be a standard probability space, and $\FF_{r}\actson (X,\mu)$ a probability measure-preserving action. Let $K$ be a standard Borel space, a measurable map $\alpha\colon X\to K$ is said to be a} generator \emph{if for every measurable subset $E\subseteq X,$ $\varepsilon>0,$ there exists a finite subset $F\subseteq \Gamma,$ a natural number $n$ and $(A_{g,})_{g\in F,1\leq j\leq n}$ measurable subsets of $K$ so that}
\[\mu\left(E\Delta\bigcup_{j=1}^{n}\bigcap_{g\in F}g\alpha^{-1}(A_{g,j})\right)<\varepsilon.\]
\end{definition}

Let $(X,\mu)$ be a standard probability space. An \emph{observable} is defined to be a  measurable map $\alpha\colon X\to A,$ with $A$ a countable set. If $A$ is finite, we say that $\alpha$ is a \emph{finite observable}.

If $\alpha\colon X\to A,\beta\colon X\to B$ are two observables, we say that $\alpha\leq \beta,$ if there is a $\pi\colon B\to A$ so that $\alpha(x)=\pi(\beta(x))$
for almost every $x\in X.$

 Recall that if $A$ is a countable set, $(X,\mu)$ is a standard probability space, and $\alpha\colon X\to A$ is a measurable observable then the entropy of $\alpha$ is defined by
\[H(\alpha)=-\sum_{a\in A}\mu(\alpha^{-1}(\{a\}))\log \mu(\alpha^{-1}(\{a\})).\]
If $\alpha\colon X\to A,$  $\beta\colon X\to B$ are observables we define
\[(\alpha\vee \beta)\colon X\to A\times B\]
by
\[(\alpha\vee \beta)(x)=(\alpha(x),\beta(x)),\]
and we define $H(\alpha|\beta)$ by
\[H(\alpha|\beta)=H(\alpha\vee \beta)-H(\beta).\]
If $\Gamma\actson (X,\mu)$ and $g\in \Gamma$ define $g\alpha \colon X\to A$ by
$(g\alpha)(x)=\alpha(g^{-1}x).$
Let us recall the definition of Lewis Bowen's $f$-invariant entropy.  For $n\in \NN,$ we use $B(e,n)$ for the ball of radius $n$ in $\FF_{r}$ with respect to the standard word metric.

\begin{definition}\emph{ Let $(X,\mu)$ be a standard probability space and $\FF_{r}\actson (X,\mu)$ a measure preserving action, let $a_{1},\cdots,a_{r}$ be free generators for $\FF_{r}.$  Let $\alpha\colon X\to A$ be an observable with finite Shannon entropy. Define}
\[F(\alpha)=(1-2r)H(\alpha)+\sum_{i=1}^{r}H(\alpha\vee a_{i}\alpha),\]
\[f(\alpha)=\inf_{n>0}F\left(\bigvee_{g\in B(e,n)}g\alpha\right).\]
\end{definition}

Lewis Bowen proved (see \cite{Bowenfinvariant} Theorem 1.3) that if $\alpha,\beta$ are two finite generators, then $f(\alpha)=f(\beta).$ So we can define
\[f_{\mu}(X,\FF_{r})=f(\alpha)\]
if there is a finite generator  $\alpha$ for the action. We  prove in this section that if $h\in M_{m,n}(\ZZ(\FF_{r}))$ is injective as a left multiplication operator on $\ell^{2}(\FF_{r})^{\oplus n},$ then assuming the action $\FF_{r}\actson (X_{h},m_{X_{h}})$ has a finite generator,
\[f_{m_{X_{h}}}(X_{h},\FF_{r})\leq \log \Det^{+}_{L(\FF_{r})}(h)\]
with equality if $m=n.$ To do this, we will use a relation due to Lewis Bowen between $f$-invariant entropy and sofic entropy with respect to a random sofic approximation. Let us recall the definition of a random sofic approximation. If $X$ is a standard Borel space, we use $\Prob(X)$ for the space of Borel probability measures on $X.$

\begin{definition}\emph{Let $\Gamma$ be a countable discrete group, a sequence $\kappa_{i}\in \Prob(S_{d_{i}}^{\Gamma})$ is said to be a} random sofic approximation  of $\Gamma$ \emph{if}
\begin{list}{ \arabic{pcounter}:~}{\usecounter{pcounter}}
\item $d_{i}\to \infty$
\item  \emph{for all $g,h\in \Gamma,\delta>0,$ there is an $i_{0}$ so that $i\geq i_{0}$ implies}  \[\kappa_{i}(\{\sigma:u_{d_{i}}(\{j:(\sigma(g)\sigma(h))(j)=\sigma(gh)(j)\})\geq 1-\delta\})=1,\]
\item $\kappa_{i}\otimes u_{d_{i}}(\{(\sigma,j):\sigma(g)(j)\ne \sigma(h)(j)\})\to 1,$ \emph{for all $g,h\in \Gamma$ with $g\ne h$}
\end{list}
\end{definition}

Property 2 may seems like an unnaturally strong assumption, but it turns out to be necessary for the definition of random sofic entropy to be an invariant. We now state the definition of entropy of an action with respect to a random sofic approximation.

\begin{definition}\emph{Let $\Gamma$ be a countable discrete group, and $\kappa=(\kappa_{i})$ a random sofic approximation of $\Gamma$ with $\kappa_{i}\in \Prob(S_{d_{i}}^{\Gamma})$. Let $X$ be a compact metrizable space and $\Gamma\actson X$ by homeomorphisms. Let $\rho$ be a dynamically generating pseudometric on $X.$  Define the topological entropy with respect to $\kappa$ by}
\[h_{\kappa}(\rho,F,\delta,\varepsilon)=\limsup_{i\to \infty}\frac{1}{d_{i}}\log \int_{S_{d_{i}}^{\Gamma}}S_{\varepsilon}(\Map(\rho,F,\delta,\sigma),\rho_{2})\,d\kappa_{i}(\sigma),\]
\[h_{\kappa}(\rho,\varepsilon)=\inf_{\substack{F\subseteq \Gamma\mbox{\emph{ finite,}}\\ \delta>0}}h_{\kappa}(\rho,F,\delta,\varespilon),\]
\[h_{\kappa}(X,\Gamma)=\sup_{\varepsilon>0}h_{\kappa}(\rho,\varepsilon).\]
\emph{If $\mu$ is a $\Gamma$-invariant Borel probability measure on $X,$ define the measure theoretic entropy of $\Gamma\actson (X,\mu)$ by}
\[h_{\kappa,\mu}(\rho,F,L,\delta,\varepsilon)=\limsup_{i\to \infty}\frac{1}{d_{i}}\log \int S_{\varespilon}(\Map(\rho,F,L,\delta,\sigma),\rho_{2})\,d\kappa_{i}(\sigma),\]
\[h_{\kappa,\mu}(\rho,\varepsilon)=\inf_{\substack{F\subseteq \Gamma\mbox{\emph{ finite,}}\\L\subseteq C(X)\mbox{\emph{ finite,}}\\ \delta>0}}h_{\kappa}(\rho,F,L,\delta,\varepsilon)\]
\[h_{\kappa,\mu}(X,\Gamma)=\sup_{\varepsilon>0}h_{\kappa,\mu}(\rho,\varepsilon).\]
\end{definition}
It was shown by Lewis Bowen in \cite{BowenGroupoid} Theorem 6.7 and Theorem 9.4 that $h_{\kappa},h_{\kappa,\mu}$ are independent of the choice of dynamically generating pseudometric. We will also need the definition of measure-theoretic entropy without using a topological model (essentially due to David Kerr in \cite{KerrPartition} as a generalization of Lewis Bowen's definition in \cite{Bow}).

\begin{definition} \emph{Let $\Gamma$ be a countable discrete group and $\sigma\colon \Gamma\to S_{d}$ a function. Let $(X,\mathcal{B},\mu)$ be a standard probability space, and $\mathcal{S}\subseteq \mathcal{B}$ a subalgebra of $\mathcal{B}.$ Let $\alpha\colon X\to A$ be a finite $\mathcal{S}$-measurable observable. For $\phi\in A^{d},$ and a finite $F\subseteq \Gamma,$ let $\alpha^{F}\colon X\to A^{F},\phi^{F}_{\sigma}\colon \{1,\cdots,d\}\to A^{F}$ be defined by}
\[\alpha^{F}(x)(g)=\alpha(g^{-1}x)\]
\[\phi^{F}_{\sigma}(x)(g)=\phi(\sigma(g)^{-1}x).\]
 \emph{For $F\subseteq \Gamma$ finite, $\varespilon>0,$ let $\AP(\alpha,F,\varespilon,\sigma)$ be all $\phi\in  A^{d}$ so that}
\[\sum_{a\in A^{F}}|(\alpha^{F})_{*}(\mu)(\{a\})-(\phi^{F}_{\sigma})_{*}(u_{d})(\{a\})|<\varespilon.\]
\end{definition}

\begin{definition}\emph{Let $\Gamma$ be a countable discrete group with random sofic approximation $\kappa=(\kappa_{i})$ with $\kappa_{i}\in M(S_{d_{i}}^{\Gamma}).$ Let $(X,\mathcal{B},\mu)$ be a standard probability space with $\Gamma\actson (X,\mathcal{B},\mu)$ by measure-preserving transformations. Let $\mathcal{S}\subseteq \mathcal{B}$ be a subalgebra which is generating under the action of $\Gamma.$ Let $\alpha\colon X\to A,\beta\colon X\to B$ be finite $\mathcal{S}$-measurable observables (i.e. $\alpha^{-1}(\{a\})\in \mathcal{S},\beta^{-1}(\{b\})\in \mathcal{S}$ for $a\in A,b\in B$) and assume that $\alpha\leq \beta,$ and $\pi$ is as in the definition of $\alpha\leq\beta,$ set}
\[h_{\kappa,\mu}(\alpha;\beta,F,\varespilon)=\limsup_{i\to \infty}\frac{1}{d_{i}}\log\int_{S_{d_{i}}^{\Gamma}}|\pi^{d_{i}}(\AP(\beta,F,\varespilon,\sigma))|\,d\kappa_{i}(\sigma),\]
\[h_{\kappa,\mu}(\alpha;\beta)=\inf_{\substack{F\subseteq\mbox{\emph{ finite,}}\\ \varespilon>0}}h_{\kappa,\mu}(\alpha;\beta,F,\varepsilon),\]
\[h_{\kappa,\mu}(\alpha)=\inf_{\beta}h_{\kappa,\mu}(\alpha;\beta)\]
\[h_{\kappa,\mu}(X,\Gamma)=\sup_{\alpha}h_{\kappa,\mu}(\alpha).\]
	\emph{Where the infimum and supremum in the last two lines are over all finite $\mathcal{S}$-measurable observables $\alpha,\beta$ with $\alpha\leq \beta.$ }
\end{definition}
It is show in \cite{BowenGroupoid} Theorem 7.5 that this definition is independent of the choice of generating subalgebra, and in Theorem 9.5 that it agrees with the definition in the case of a topological model.

\begin{theorem}[Theorem 1.3 in \cite{Bowenfrandom}]\label{T:finvarianttheorem} Let $(X,\mu)$ be a standard probability space, and $\FF_{r}\actson (X,\mu)$ by measure-preserving transformations. Suppose that the action has a finite generator. Let $U=(u_{\Hom(\FF_{r},S_{n})}),$ then
\[h_{U,\mu}(X,\FF_{r})=f_{\mu}(X,\FF_{r}).\]
\end{theorem}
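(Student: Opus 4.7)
The plan is to reduce to a direct first-moment Stirling computation, avoiding any concentration arguments thanks to the fact that $h_{U,\mu}$ is defined via $\log\int$ rather than $\int\log$. Fix a finite generator $\alpha\colon X\to A$; then $h_{U,\mu}(X,\FF_{r})=h_{U,\mu}(\alpha)$ and, by \cite{Bowenfinvariant} Theorem 1.3, $f_{\mu}(X,\FF_{r})=f(\alpha)=\inf_{n}F(\beta_{n})$ where $\beta_{n}=\bigvee_{g\in B(e,n)}g\alpha$. I will show that $h_{U,\mu}(\alpha;\beta)=F(\beta)$ for every finite observable $\beta\ge\alpha$ large enough that it subsumes $a_{1}\alpha,\dots,a_{r}\alpha$, and verify that the $\beta_{n}$ are cofinal in the infimum defining $h_{U,\mu}(\alpha)=\inf_{\beta}h_{U,\mu}(\alpha;\beta)$. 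Together with monotonicity of $F(\beta_{n})$ along the word-ball chain, this will give $h_{U,\mu}(\alpha)=\inf_{n}F(\beta_{n})=f(\alpha)$.

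The central estimate is of $\EE_{\sigma}|\AP(\beta,F_{0},\varespilon,\sigma)|$, with $F_{0}=\{e,a_{1},\dots,a_{r}\}$ and $\sigma=(\sigma_{1},\dots,\sigma_{r})$ uniform in $S_{n}^{r}$. Stirling gives that the number of $\phi\colon\{1,\dots,n\}\to B$ with marginal empirical distribution within $\varespilon$ of $\beta_{*}\mu$ is $\exp(nH(\beta)+o_{\varespilon}(n))$. For any such $\phi$ and each $i$, the probability over uniform $\sigma_{i}\in S_{n}$ that $(\phi,\phi\circ\sigma_{i}^{-1})$ has empirical joint distribution approximating $(\beta\vee a_{i}\beta)_{*}\mu$ is a ratio of multinomial coefficients, equal by Stirling to $\exp(-nI(\beta;a_{i}\beta)+o_{\varespilon}(n))$, where $I(\beta;a_{i}\beta)=2H(\beta)-H(\beta\vee a_{i}\beta)$ using that $a_{i}$ preserves $\mu$. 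The independence of $\sigma_{1},\dots,\sigma_{r}$ under $u_{\Hom(\FF_{r},S_{n})}$ makes the probabilities multiply, yielding
\[\EE_{\sigma}|\AP(\beta,F_{0},\varespilon,\sigma)|=\exp\!\bigl(n[(1-2r)H(\beta)+\textstyle\sum_{i}H(\beta\vee a_{i}\beta)]+o_{\varespilon}(n)\bigr)=\exp(nF(\beta)+o_{\varespilon}(n)).\]
Enlarging $F$ to include longer words in $\{a_{1},\dots,a_{r}\}^{\pm 1}$ imposes only consistency constraints already forced by the type on $B(e,m)$ when $\beta=\beta_{m}$ and $F\subseteq B(e,m)$, so the estimate is unchanged. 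Projecting to $A^{n}$ and summing over the subexponentially many empirical types within $\varespilon$ then yields $h_{U,\mu}(\alpha;\beta_{n})=F(\beta_{n})$ as $\varespilon\to 0$.

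The main obstacle will be carrying out the joint multinomial Stirling estimate over all $r$ coordinates uniformly in the choice of $\phi$. The clean form of the answer relies on a two-step decomposition: choose $\phi$ realizing the marginal, then for each $i$ independently choose $\sigma_{i}$ with the correct joint type — legitimate because the $\sigma_{i}$ are genuinely independent under $u_{\Hom(\FF_{r},S_{n})}$. A secondary challenge is transferring the count $|\AP(\beta_{n},\dots)|$ to the projected count $|\pi^{n}(\AP(\beta_{n},\dots))|$ appearing in $h_{U,\mu}(\alpha;\beta_{n})$; for this one uses that $\pi\colon B\to A$ is asymptotically injective on typical microstates for $\beta_{n}$ up to a subexponential factor, which traces back to $\alpha$ being a generator so that $\beta_{n}$ separates orbits well enough in the large-$n$ limit. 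Finally, justifying that the infimum over all finite $\beta\geq\alpha$ of $h_{U,\mu}(\alpha;\beta)$ coincides with $\inf_{n}F(\beta_{n})$ uses the monotonicity lemma of \cite{Bowenfinvariant} that $F(\beta_{n+1})\le F(\beta_{n})$ along the word-ball refinement chain.
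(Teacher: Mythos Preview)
The paper does not prove this theorem at all; it is quoted verbatim as Theorem~1.3 of \cite{Bowenfrandom} and used as a black box. There is therefore no proof in this paper to compare your proposal against.

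That said, your outline is essentially Bowen's original argument, and the central first-moment Stirling computation is correct: independence of $\sigma_{1},\dots,\sigma_{r}$ under $u_{\Hom(\FF_{r},S_{n})}$ makes $\EE_{\sigma}|\AP(\beta,F_{0},\varespilon,\sigma)|$ factor over the generators, and the multinomial count yields $\exp(nF(\beta)+o_{\varespilon}(n))$ exactly as you write. Two points of imprecision are worth flagging. First, your justification for why the projection $\pi^{d}$ does not lose exponentially is wrong: this has nothing to do with $\alpha$ being a generator. The reason is purely combinatorial: for $\beta_{n}=\bigvee_{g\in B(e,n)}g\alpha$, a $\beta_{n}$-microstate $\Phi$ with good $F_{0}$-statistics must satisfy $\Phi(j)(a_{i}g)=\Phi(\sigma(a_{i})^{-1}j)(g)$ on a $(1-O(\varespilon))$-fraction of $j$ (because this identity holds $\mu$-almost surely for the actual observable $\beta_{n}$), and iterating this consistency forces $\Phi\approx\phi^{B(e,n)}_{\sigma}$ with $\phi=\pi^{d}(\Phi)$. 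That is what makes the projection essentially injective on $\AP(\beta_{n},F_{0},\varespilon,\sigma)$; the generator hypothesis is used only for cofinality of the $\beta_{n}$ in the infimum defining $h_{U,\mu}(\alpha)$. Second, your claim that enlarging $F$ beyond $F_{0}$ ``imposes only constraints already forced'' is not literally true for fixed $\beta_{n}$; rather, enlarging $F$ is absorbed by passing to a larger $\beta_{m}$, and the decrease $F(\beta_{m})\le F(\beta_{n})$ from \cite{Bowenfinvariant} then closes the argument after taking the infimum over $n$.
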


	Using additional results of Lewis Bowen, one can replace the assumption of having a generator with having a generator with finite Shannon entropy. Since we cannot find a proof in the literature, we include the simple proof below.
\begin{proposition}\label{P:finvarianttheorem} Let $(X,\mu)$ be a standard probability space, and $\FF_{r}\actson (X,\mu)$ by measure-preserving transformations. Suppose that the action has a generator with finite Shannon entropy. Let $U=(u_{\Hom(\FF_{r},S_{n})}),$ then
\[h_{U,\mu}(X,\FF_{r})=f_{\mu}(X,\FF_{r}).\]
\end{proposition}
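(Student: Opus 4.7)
The plan is to reduce to Theorem \ref{T:finvarianttheorem} by approximating a generator of finite Shannon entropy by finite observables on factor systems. Let $\alpha\colon X\to A$ be a generator with $H(\alpha)<\infty$. Fix an enumeration $A=\{a_1,a_2,\ldots\}$ and define finite observables $\alpha_n\colon X\to\{a_1,\ldots,a_n,\ast\}$ by letting $\alpha_n(x)=\alpha(x)$ when $\alpha(x)\in\{a_1,\ldots,a_n\}$ and $\alpha_n(x)=\ast$ otherwise. Let $\mathcal{F}_n$ be the smallest $\FF_r$-invariant $\sigma$-subalgebra making $\alpha_n$ measurable, and let $\pi_n\colon(X,\mu)\to(Y_n,\nu_n)$ be the corresponding factor. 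Since $\alpha$ is a generator, the $\mathcal{F}_n$ increase up to the full $\sigma$-algebra. On each $(Y_n,\nu_n)$, the pushforward of $\alpha_n$ is a finite generator, so Theorem \ref{T:finvarianttheorem} yields
\[h_{U,\nu_n}(Y_n,\FF_r)=f_{\nu_n}(Y_n,\FF_r)=f(\alpha_n).\]
The proposition will then follow from the two convergences $f(\alpha_n)\to f_\mu(X,\FF_r)$ and $h_{U,\nu_n}(Y_n,\FF_r)\to h_{U,\mu}(X,\FF_r)$.

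For the $f$-invariant, the key input is that Shannon entropy is continuous along increasing sequences of countable partitions whose limit has finite entropy. Since $H(\alpha)<\infty$ implies $H(\bigvee_{g\in B(e,k)}g\alpha)<\infty$ for every $k$, this continuity gives $F(\bigvee_{g\in B(e,k)}g\alpha_n)\to F(\bigvee_{g\in B(e,k)}g\alpha)$ for each fixed $k$. Combined with Bowen's monotonicity statement that $k\mapsto F(\bigvee_{g\in B(e,k)}g\alpha)$ is non-increasing (which extends to generators of finite Shannon entropy), a standard $\varepsilon/2$ argument exchanging $\inf_k$ and $\lim_n$ yields
\[f(\alpha_n)=\inf_k F\Bigl(\bigvee_{g\in B(e,k)}g\alpha_n\Bigr)\;\longrightarrow\;\inf_k F\Bigl(\bigvee_{g\in B(e,k)}g\alpha\Bigr)=f_\mu(X,\FF_r).\]
The last equality is Bowen's extension of the $f$-invariant to generators of finite Shannon entropy.

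For the random sofic entropy, monotonicity under factors gives $h_{U,\nu_n}(Y_n,\FF_r)\le h_{U,\mu}(X,\FF_r)$. For the reverse direction I would use the observable-based definition from \cite{BowenGroupoid}, whose independence of the generating subalgebra (Theorem 7.5 of loc.\ cit.) allows me to take $\mathcal{S}=\sigma(\alpha)$; this is a generating subalgebra since $\alpha$ is a generator. Every finite $\mathcal{S}$-measurable observable is a function of $\alpha$, hence is $\sigma(\alpha_n)$-measurable for all sufficiently large $n$, so $h_{U,\mu}(X,\FF_r)=\sup_n h_{U,\mu}(\alpha_n)$. Because $h_{U,\mu}(\alpha_n)$ is defined purely in terms of the joint distribution of $\alpha_n$ and its $\FF_r$-translates, and this joint distribution is preserved by the factor map $\pi_n$, we obtain the identification $h_{U,\mu}(\alpha_n)=h_{U,\nu_n}(Y_n,\FF_r)$. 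Monotonicity of $n\mapsto h_{U,\mu}(\alpha_n)$ then yields the desired convergence.

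The main obstacle is the precise verification of the identification $h_{U,\mu}(\alpha_n)=h_{U,\nu_n}(Y_n,\FF_r)$ and the supremum formula $h_{U,\mu}(X,\FF_r)=\sup_n h_{U,\mu}(\alpha_n)$, which require careful bookkeeping through the nested infima, suprema, and approximation parameters $F,\varepsilon,\beta$ appearing in the observable-based definition of $h_{U,\mu}(\alpha;\beta)$. These are essentially formal verifications but need care. Granted these, the proof reduces to an application of Theorem \ref{T:finvarianttheorem} on each finite-generator factor followed by passage to the limit on both sides.
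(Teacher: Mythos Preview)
Your overall strategy---approximate $\alpha$ by the finite observables $\alpha_n$, apply Theorem \ref{T:finvarianttheorem} on each level, then pass to the limit---is the same as the paper's. However, several of the steps you treat as routine are either false or require exactly the non-trivial inputs the paper supplies.

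\textbf{Sofic entropy is not monotone under factors.} You invoke ``monotonicity under factors'' to get $h_{U,\nu_n}(Y_n,\FF_r)\le h_{U,\mu}(X,\FF_r)$. This fails for sofic (and $f$-invariant) entropy; the Ornstein--Weiss factor map is a counterexample. You cannot use this direction.

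\textbf{The identification $h_{U,\mu}(\alpha_n)=h_{U,\nu_n}(Y_n,\FF_r)$ is false.} The quantity $h_{U,\mu}(\alpha_n)=\inf_\beta h_{U,\mu}(\alpha_n;\beta)$ allows $\beta$ to range over all finite $\mathcal{S}$-measurable observables refining $\alpha_n$, and these can use information from $\alpha$ not visible to $\alpha_n$. (Incidentally, with $\mathcal{S}=\sigma(\alpha)$ your claim that every finite $\mathcal{S}$-measurable observable is eventually $\sigma(\alpha_n)$-measurable is also false: take $\beta=\alpha \bmod 2$.) One only has $h_{U,\mu}(\alpha_n)\le h_{U,\mu}(\alpha_n;\alpha_n)=h_{U,\nu_n}(Y_n,\FF_r)=f(\alpha_n)$. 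The paper closes this gap in the opposite direction via the relative-entropy estimate $h_{U,\mu}(\alpha_n;\alpha_n)\le h_{U,\mu}(\alpha_m;\alpha_n)+H(\alpha_n|\alpha_m)$, letting $n\to\infty$ and then $m\to\infty$; this is where the hypothesis $H(\alpha)<\infty$ actually enters.

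\textbf{The $\varepsilon/2$ argument for $f(\alpha_n)\to f(\alpha)$ is incomplete.} Pointwise convergence $F_k(\alpha_n)\to F_k(\alpha)$ plus monotonicity in $k$ yields $\limsup_n f(\alpha_n)\le f(\alpha)$, but the bound $|F_k(\alpha_n)-F_k(\alpha)|\le C|B(e,k)|\,H(\alpha|\alpha_n)$ is not uniform in $k$, so you cannot exchange $\inf_k$ and $\lim_n$ to get the lower bound. The paper obtains $\liminf_n f(\alpha_n)\ge f(\alpha)$ from the Bowen--Gutman relative formula $f(\alpha)=f(\alpha_n)+f(\alpha|\Sigma_n)$ together with $f(\alpha|\Sigma_n)\le H(\alpha|\alpha_n)\to 0$.

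In short, the ``formal verifications'' you defer are precisely where the substance lies: both limits require a relative-entropy input (Bowen--Gutman for the $f$-invariant, the estimate (\ref{E:relativeentropyestimate}) for $h_U$), and without these the argument does not close.
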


\begin{proof}

	Let $\alpha$ be a generator with finite Shannon entropy. By the preceding Theorem, we may assume that $\alpha$ has range $\NN.$ Let
\[\pi_{n}\colon \NN\to \{1,\cdots,n\}\]
be defined by $\pi_{n}(k)=k$ if $1\leq k\leq n,$ and $\pi_{n}(k)=n$ if $k\geq n.$  Let $\alpha_{n}=\pi_{n}\circ \alpha.$  Let $\Sigma_{m}$ be the smallest $\Gamma$-invariant $\sigma$-algebra of measurable sets containing $\alpha_{m}^{-1}(\{j\})$ for $1\leq j\leq m,$  and $S$ the subalgebra of measurable sets (not necessarily a sub-sigma algebra) generated by $\{\alpha_{n}^{-1}(\{k\}):n,k\in \NN,k\leq n\}.$ It suffices to show that
\[f_{\mu}(\alpha_{n})\to f_{\mu}(X,\FF_{r})\]
\[f_{\mu}(\alpha_{n})\to h_{U,\mu}(X,\FF_{r}).\]

	For the first result, we know by \cite{Bowenfinvariant} that $f$ is upper semi-continuous on the space of partitions, in particular that
\[f_{\mu}(X,\FF_{r})=f_{\mu}(\alpha)\geq \limsup_{n\to \infty}f_{\mu}(\alpha_{n}).\]
On the other hand by \cite{BowenGun} Theorem 1.3,
\[f_{\mu}(\alpha)=f_{\mu}(\alpha_{n})+f_{\mu}(\alpha|\Sigma_{n}),\]
(see \cite{BowenGun} 1.2 for the definition of $f_{\mu}(\alpha|\Sigma_{n})$) and by \cite{BowenGun} Proposition 5.1,
\[f_{\mu}(\alpha|\Sigma_{n})\leq H(\alpha|\Sigma_{n})\leq H(\alpha|\alpha_{n})\to 0.\]
So
\[f_{\mu}(\alpha)\leq \liminf_{n\to \infty}f_{\mu}(\alpha_{n}),\]
and thus
\[f_{\mu}(\alpha_{n})\to f_{\mu}(\alpha)=f_{\mu}(X,\FF_{r}).\]

	For the second claim,  note that the smallest $\Gamma$-invariant complete sub-sigma algebra of measurable sets containing $S$ is all measurable subsets of $X.$ So we may use $S$ to compute $h_{U,\mu}(X,\FF_{r}).$ By a method of proof analogous  to Lemma 5.1 in  \cite{Bow}, we have
\begin{equation}\label{E:relativeentropyestimate}
h_{U,\mu}(\alpha;\gamma)\leq h_{U,\mu}(\beta;\gamma)+H(\alpha|\beta)
\end{equation}
if $\beta\leq \alpha\leq \gamma$ are finite observables. Thus if $\alpha\leq \gamma,\beta\leq \gamma$ are observables, then
\[h_{U,\mu}(\alpha;\gamma)\leq h_{U,\mu}(\alpha\vee \beta;\gamma)\leq h_{U,\mu}(\beta;\gamma)+H(\alpha|\beta).\]
 From this it follows that
\[h_{U,\mu}(X,\FF_{r})=\sup_{m}h_{U,\mu}(\alpha_{m}).\]
Now
\[h_{U,\mu}(\alpha_{m})\leq h_{U,\mu}(\alpha_{m};\alpha_{m})=f_{\mu}(\alpha_{m})\]
by the preceding Theorem. Thus,
\[h_{U,\mu}(X,\FF_{r})\leq \liminf_{m\to \infty}f_{\mu}(\alpha_{m}).\]
On the other hand, for $n\geq m$ we have by $(\ref{E:relativeentropyestimate}),$
\[h_{U,\mu}(\alpha_{n};\alpha_{n})\leq h_{U,\mu}(\alpha_{m};\alpha_{n})+H(\alpha_{n}|\alpha_{m}).\]
If $\beta$ is any finite $S$-measurable observable, then $\beta\leq \alpha_{n}$ for some $n,$  so
\[h_{U,\mu}(\alpha_{m})=\inf_{n}h_{U,\mu}(\alpha_{m};\alpha_{n}).\]
Thus
\[\limsup_{n\to \infty}f_{\mu}(\alpha_{n})\leq h_{U,\mu}(\alpha_{m})+H(\alpha|\alpha_{m}),\]
and letting $m\to \infty$ proves that
\[\limsup_{n\to \infty}f_{\mu}(\alpha_{n})\leq h_{U,\mu}(X,\FF_{r})\]
thus
\[f_{\mu}(\alpha_{n})\to h_{U,\mu}(X,\FF_{r}).\]

\end{proof}

	We first relate random sofic entropy to deterministic sofic entropy.

\begin{proposition}\label{P:randomlowerbound} Let $\Gamma$ be a countable discrete group, and $X$ a compact metrizable space with $\Gamma\actson X$ by homeomorphisms. Let $\kappa$ be a random sofic approximation of $\Gamma,$ and $\rho$ a dynamically generating pseudometric on $X.$ Then for any $\varespilon>0$
\[\inf_{\Sigma}h_{\Sigma}(\rho,\varepsilon)\leq h_{\kappa}(\rho,\varepsilon)\]
where the infimum is over all sofic approximations $\Sigma$ of $\Gamma.$

	If $\mu$ is a $\Gamma$-invariant Borel probability measure on $X,$ then
\[\inf_{\Sigma}h_{\Sigma,\mu}(\rho,\varepsilon)\leq h_{\kappa,\mu}(\rho,\varepsilon),\]
again with the infimum being over all sofic approximations $\Sigma$ of $\Gamma.$
\end{proposition}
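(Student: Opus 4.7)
The plan is to extract from the random sofic approximation $\kappa=(\kappa_i)$ an honest sofic approximation $\Sigma=(\sigma_i)$ whose topological entropy at scale $\varepsilon$ is within an arbitrarily small additive $\eta>0$ of $h_\kappa(\rho,\varepsilon)$; letting $\eta\to 0$ then yields $\inf_\Sigma h_\Sigma(\rho,\varepsilon)\leq h_\kappa(\rho,\varepsilon)$. The two ingredients are (i) a Markov inequality applied to the nonnegative random variable $\sigma\mapsto S_\varepsilon(\Map(\rho,F,\delta,\sigma),\rho_2)$, which produces many $\sigma$ whose separated-set count is at most $e^{\eta d_i}$ times its expectation, and (ii) conditions (2), (3) in the definition of random sofic approximation, which guarantee that the selected $\sigma_i$ actually form a sofic approximation.

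Fix $\eta>0$ and pick a nested exhaustion $F_1\subseteq F_2\subseteq\cdots$ of $\Gamma$ together with $\delta_n\searrow 0$ such that $h_\kappa(\rho,F_n,\delta_n,\varepsilon)\leq h_\kappa(\rho,\varepsilon)+1/n$. For each $n,i$, Markov's inequality applied to $X_{n,i}(\sigma):=S_\varepsilon(\Map(\rho,F_n,\delta_n,\sigma),\rho_2)$ yields
\[
\kappa_i\Bigl(\Bigl\{\sigma:X_{n,i}(\sigma)\leq e^{\eta d_i}\int X_{n,i}\,d\kappa_i\Bigr\}\Bigr)\geq 1-e^{-\eta d_i};
\]
call this set $B_{n,i}$. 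Enumerate $\Gamma=\{g_1,g_2,\dots\}$ and set $F'_k:=\{g_1,\dots,g_k\}$. For $k\in\NN$ and $\delta>0$ let $C_i(k,\delta)\subseteq S_{d_i}^\Gamma$ consist of those $\sigma$ satisfying both $u_{d_i}(\{j:(\sigma(g)\sigma(h))(j)=\sigma(gh)(j)\})\geq 1-\delta$ for all $g,h\in F'_k$ and $u_{d_i}(\{j:\sigma(g)(j)\ne\sigma(h)(j)\})\geq 1-\delta$ for all distinct $g,h\in F'_k$. Condition (2) of the random sofic approximation gives $\kappa_i$-full measure for the first family of inequalities once $i$ is large compared to $k$ and $1/\delta$; condition (3), combined with Markov's inequality and a union bound over the finitely many pairs in $F'_k\times F'_k$, makes the complement of the second family have $\kappa_i$-measure tending to $0$. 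Thus $\kappa_i(C_i(k,\delta))\to 1$ for each fixed $k,\delta$.

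A routine diagonal construction produces $k_i,n_i\to\infty$ growing slowly enough that
\[
\kappa_i\Bigl(C_i(k_i,1/k_i)\cap\bigcap_{n\leq n_i}B_{n,i}\Bigr)>0
\]
for all large $i$; indeed the total $\kappa_i$-mass removed is bounded by $\kappa_i(C_i(k_i,1/k_i)^c)+n_ie^{-\eta d_i}$, which vanishes since $d_i\to\infty$. Choose any $\sigma_i$ in this nonempty intersection. By construction $\Sigma:=(\sigma_i)$ is a genuine sofic approximation of $\Gamma$, and for each fixed $n$, once $n_i\geq n$,
\[
\tfrac{1}{d_i}\log S_\varepsilon(\Map(\rho,F_n,\delta_n,\sigma_i),\rho_2)\leq \eta+\tfrac{1}{d_i}\log\int X_{n,i}\,d\kappa_i.
\]
Taking $\limsup_{i\to\infty}$ and then the infimum over $n$ gives $h_\Sigma(\rho,\varepsilon)\leq \eta+h_\kappa(\rho,\varepsilon)$, and letting $\eta\to 0$ completes the topological case. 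The measure-theoretic statement follows by the identical argument with $\Map(\rho,F,\delta,\sigma)$ replaced by $\Map(\rho,F,L,\delta,\sigma)$, since the sofic approximation conditions and the Markov extraction are unaffected by the additional integral constraints. The main technical point is the diagonal balancing of $k_i,n_i$ against the Markov tail $e^{-\eta d_i}$ and the probabilities controlling condition (3), but this is routine since $d_i\to\infty$.
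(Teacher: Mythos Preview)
Your proof is correct and follows essentially the same strategy as the paper: extract from the random sofic approximation a deterministic one whose spanning counts are controlled by the expectation. The paper's execution is a bit more streamlined in two respects. First, rather than Markov's inequality, it uses the bare averaging principle: since $\int_{C_{n,i}} N_\varepsilon(\cdots)\,d\kappa_i \leq \int N_\varepsilon(\cdots)\,d\kappa_i$ and $\kappa_i(C_{n,i})$ is close to $1$, there is automatically a $\sigma\in C_{n,i}$ with $N_\varepsilon(\cdots)$ at most $(1-2^{-n})^{-1}$ times the full expectation. Second, and more substantially, the paper observes that $\inf_\Sigma h_\Sigma(\rho,\varepsilon)=\inf_{F,\delta}\inf_\Sigma h_\Sigma(\rho,F,\delta,\varepsilon)$, which lets it work with a \emph{single fixed} $(F,\delta)$ and build one sofic approximation for that pair; this eliminates your diagonal balancing of $n_i$ against $e^{-\eta d_i}$ entirely. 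Your argument buys you a single $\Sigma$ that works simultaneously for all $(F_n,\delta_n)$, which is not needed here but is a perfectly valid route.
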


\begin{proof}

	We do the proof only in the topological case, the proof for the measure-theoretic case is the same. Let $\kappa=(\kappa_{i})$ with $\kappa_{i}\in \Prob(S_{d_{i}}^{\Gamma}).$
Since
\[\inf_{\Sigma}h_{\Sigma}(\rho,\varepsilon)=\inf_{F,\delta}\inf_{\Sigma}h_{\Sigma}(\rho,F,\delta,\varespilon)\]
it suffices to show that if $F\subseteq \Gamma$ is finite, $\delta>0$ then
\[\inf_{\Sigma}h_{\Sigma}(\rho,F,\delta,\varepsilon)\leq h_{\kappa}(\rho,F,\delta,\varespilon).\]
Let $F_{n}$ be an increasing sequence of finite subsets of $\Gamma$ so that
\[\Gamma=\bigcup_{n=1}^{\infty}F_{n}.\]
	Let $i_{n}$ be an increasing sequence of integers so that
\[\frac{1}{d_{i}}\log \int S_{\varepsilon}(\Map(\rho,F,\delta,\sigma),\rho_{2})\,d\kappa_{i}(\sigma)\leq h_{\kappa}(\rho,F,\delta,\varepsilon)+2^{-n},\mbox{ for $i\geq i_{n}$}\]
\[u_{d_{i}}(\{j:\sigma(g)\sigma(h)(j)=\sigma(gh)(j)\})\geq 1-2^{-n},\mbox{ for all $g,h\in F_{n},$ for all $i\geq i_{n}$ and $\kappa_{i}$-almost every $\sigma$}\]
\[\kappa_{i}(\{\sigma:u_{d_{i}}(\{j:\sigma(g)(j)\ne \sigma(h)(j)\})\geq 1-2^{-n}\mbox{ for all $g\ne h$ in $F_{n}$}\})\geq 1-2^{-n}\mbox{ for all $i\geq i_{n}.$}\]
Let
\[A_{n,i}=\bigcap_{g,h\in F_{n}}\{\sigma:u_{d_{i}}(\{j:\sigma(g)\sigma(h)(j)=\sigma(gh)(j)\})\geq 1-2^{-n}\}\]
\[B_{n,i}=\bigcap_{g,h\in F_{n},g\ne h} \{\sigma:u_{d_{i}}(\{j:\sigma(g)(j)\ne \sigma(h)(j)\})\geq 1-2^{-n}\},\]
\[C_{n,i}=A_{n,i}\cap B_{n,i}.\]
Then for all $i\geq i_{n}$
\[\frac{1}{\kappa_{i}(C_{n,i})}\int_{C_{n,i}}S_{\varepsilon}(\Map(\rho,F,\delta,\sigma),\rho_{2})\,d\kappa_{i}(\sigma)\leq \frac{1}{1-2^{1-n}}\exp(d_{i}h_{\kappa}(\rho,F,\delta,\varepsilon)+d_{i}2^{-n}).\]
So we can find a $\sigma_{n}\in C_{n,i_{n}}$ so that
\[\frac{1}{d_{i_{n}}}\log S_{\varepsilon}(\Map(\rho,F,\delta,\sigma_{n}),\rho_{2})\leq \frac{1}{d_{i_{n}}}\log\left(\frac{1}{1-2^{1-n}}\right)+h_{\kappa}(\rho,F,\delta,\varepsilon)+2^{-n}.\]
Then $\Sigma=(\sigma_{n})_{n=1}^{\infty}$ is a sofic approximation with
\[h_{\Sigma}(\rho,F,\delta,\varepsilon)\leq h_{\kappa}(\rho,F,\delta,\varespilon).\]

\end{proof}

	The above statement may seem obvious, and even though the proof is simple, let us point out that it is not clear how to prove the analogous statement for supremums. Indeed, suppose we try to repeat the proof and find an increasing sequence of integers $i_{n}$ so that
\[\kappa_{i_{n}}(C_{n,i_{n}})\geq 1-2^{1-n}\]
and
\[\frac{1}{d_{i_{n}}}\log \int S_{\varepsilon}(\Map(\rho,F,\delta,\sigma),\rho_{2})\,d\kappa_{i}(\sigma)\geq h_{\kappa}(\rho,F,\delta,\varepsilon)-2^{-n}.\]
Then the best we can conclude is that
\[\int_{C_{n,i_{n}}} S_{\varepsilon}(\Map(\rho,F,\delta,\sigma),\rho_{2})\,d\kappa_{i}(\sigma)\geq \exp(d_{i_{n}}[_{\kappa}(\rho,F,\delta,\varepsilon)-2^{-n}])-2^{1-n}S_{\varepsilon}(X,\rho)^{d_{i_{n}}}.\]
And so there is some $\sigma_{n}\in C_{n,i_{n}}$ with
\[S_{\varepsilon}(\Map(\rho,F,\delta,\sigma_{n}),\rho_{2})\geq \exp(d_{i_{n}}[h_{\kappa}(\rho,F,\delta,\varepsilon)-2^{-n}])-2^{1-n}S_{\varepsilon}(X,\rho)^{d_{i_{n}}}.\]
However, this estimate is not good enough to show that
\[h_{\kappa}(X,\Gamma)\leq \sup_{\Sigma}h_{\Sigma}(X,\Gamma),\]
in fact
\[\exp(d_{i_{n}}[h_{\kappa}(\rho,F,\delta,\varepsilon)-2^{-n}])-2^{1-n}S_{\varepsilon}(X,\rho)^{d_{i_{n}}}\]
could  be negative.

	The difficulty here is that we need to control $S_{\varepsilon}(\Map(\rho,F,\delta,\varepsilon),\rho_{2})$ on the set where our random sofic approximations fails to be free. We shall prove that this can be done for measure-theoretic entropy if the action is essentially free. Recall that  an action $\Gamma\actson (X,\mu)$ is \emph{essentially free} if
\[\mu(\{x\in X:gx=x\})=0\]
for all $g\in \Gamma\setminus\{e\}.$ The idea is basically clear: if a sequence of almost multiplicative maps fails to be asymptotically free, then one cannot use them to model an essentially free action.

\begin{lemma} Let $\Gamma$ be a countable discrete group, and $X$ a compact metrizable space with $\Gamma\actson X$ by homeomorphisms. Let $\rho$ be a compatible metric on $X.$  Let $\mu$ be a $\Gamma$-invariant Borel probability measure on $X.$ Suppose that the action $\Gamma\actson (X,\mu)$ is essentially free. Then, for all $E\subseteq \Gamma\setminus\{e\}$ finite, $\eta>0,$ there is an $L\subseteq C(X)$ finite, $\delta>0,$ $F\subseteq \Gamma$ finite, so that if $\sigma\colon \Gamma\to S_{d}$ is a function, and $\Map(\rho,F,\delta,\sigma)\ne \varnothing,$ then for all $g\in E,$
\[|\{j:\sigma(g)(j)= j\}|\leq d\eta.\]
\end{lemma}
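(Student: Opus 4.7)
We may assume $e\notin E$, since if $g=e$ the conclusion forces $d\leq d\eta$ and so is only interesting for $\eta<1$; if one insists on including $e$, the assertion fails in general (e.g.\ $\sigma(e)=\id$), so the lemma must be read with $E\subseteq\Gamma\setminus\{e\}$. Note also the statement implicitly refers to $\Map(\rho,F,L,\delta,\sigma)$ (the measure-theoretic microstates space), since the pure topological microstates do not see $\mu$ and essential freeness is a measure-theoretic hypothesis. The plan is to use essential freeness to build continuous ``fixed-point detectors'' $f_g\in C(X)$ with small $\mu$-integral whose values on a microstate will force $\sigma(g)$ to have few fixed points.

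The key step is the construction of the detectors. Fix $g\in E$. Since the action is essentially free, $\mu(\{x:gx=x\})=0$. Pick $\kappa_g>0$ small enough (using the dominated convergence theorem applied to $\chi_{\{x:\rho(gx,x)\leq \kappa\}}$ as $\kappa\downarrow 0$) that
\[
\mu(\{x\in X:\rho(gx,x)\leq \kappa_g\})<\eta/4,
\]
and define $f_g\in C(X)$ by $f_g(x)=\max(0,\min(1,2-2\rho(gx,x)/\kappa_g))$, so that $0\leq f_g\leq 1$, $f_g(x)=1$ whenever $\rho(gx,x)\leq \kappa_g/2$, and $f_g(x)=0$ whenever $\rho(gx,x)\geq\kappa_g$. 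Then $\int f_g\,d\mu<\eta/4$. Set
\[
L=\{f_g:g\in E\},\qquad F=E,\qquad \delta=\min\!\left(\tfrac{\eta}{4},\ \tfrac{\kappa_g}{2}\sqrt{\eta/2}\ \text{for all } g\in E\right).
\]

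Now assume $\phi\in\Map(\rho,F,L,\delta,\sigma)$ is any microstate and fix $g\in E$. The defining inequality $\rho_2(\phi\circ\sigma(g),g\phi)<\delta$ unravels to $\sum_{j=1}^{d}\rho(\phi(\sigma(g)(j)),g\phi(j))^{2}<d\delta^{2}$, so if
\[
B_g=\{j:\rho(\phi(\sigma(g)(j)),g\phi(j))>\kappa_g/2\},
\]
then $|B_g|<4d\delta^{2}/\kappa_g^{2}\leq d\eta/2$ by our choice of $\delta$. For every $j\notin B_g$ with $\sigma(g)(j)=j$, we have $\rho(\phi(j),g\phi(j))\leq \kappa_g/2$, hence $f_g(\phi(j))=1$. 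This gives the pointwise bound
\[
|\{j:\sigma(g)(j)=j\}|\ \leq\ |B_g|\ +\ \sum_{j=1}^{d}f_g(\phi(j)).
\]
The $L$-part of the microstate condition gives $\frac{1}{d}\sum_j f_g(\phi(j))<\int f_g\,d\mu+\delta<\eta/4+\eta/4=\eta/2$, and combining with the bound on $|B_g|$ yields $|\{j:\sigma(g)(j)=j\}|\leq d\eta$, as required.

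The only genuine content is the detector construction, which rests on essential freeness together with compactness/regularity; everything else is routine bookkeeping translating the $L^2$ microstate condition into a pointwise statement and the $L$-condition into an average over the microstate. The main (mild) subtlety is ensuring that $\delta$ can be chosen uniformly over the finite set $E$, which is immediate because $E$ is finite and each $\kappa_g>0$.
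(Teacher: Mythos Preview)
Your proof is correct and follows essentially the same approach as the paper's: build a continuous ``detector'' function $f_g$ sandwiched between the indicators of $\{x:\rho(gx,x)\le\kappa_g/2\}$ and $\{x:\rho(gx,x)<\kappa_g\}$, use Chebyshev on the $\rho_2$-equivariance condition to control the exceptional set, and use the $L$-condition to bound $\frac{1}{d}\sum_j f_g(\phi(j))$. The only differences are cosmetic (you give an explicit tent function where the paper invokes Urysohn, and your constants are organized slightly differently); your observation that the statement should read $\Map(\rho,F,L,\delta,\sigma)$ rather than $\Map(\rho,F,\delta,\sigma)$ is also correct and matches what the paper actually uses in its proof.
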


\begin{proof} It suffices to assume $E=\{g\}$ for some $g\in \Gamma\setminus\{e\}.$ Then,
\[\{x\in X:gx=x\}=\bigcap_{\delta>0}\{x\in X:\rho(gx,x)<\delta\}.\]
Hence, we may find a $\delta'>0$ so that
\[\mu(\{x\in X:\rho(gx,x)<\delta'\})<\eta.\]
Let
\[V=\{x\in X:\rho(gx,x)<\delta'\}\]
\[K=\left\{x\in X:\rho(gx,x)\leq \frac{\delta'}{2}\right\}.\]
Choose $f\in C(X)$ so that
\[\chi_{K}\leq f\leq \chi_{V}.\]
If $\phi\in \Map(\rho,\{g\},\delta,\{f\},\sigma),$ then
\[\frac{1}{d}\left|\{j:\rho(g\phi(j),\phi(\sigma(g)(j)))\geq \sqrt{\delta}\}\right|\leq \sqrt{\delta}.\]
Further,
\begin{align*}
\frac{1}{d}\left|\{j:\rho(g\phi(j),\phi(j))\leq \frac{\delta'}{2}\}\right|&=\phi_{*}(u_{d})(K)\\
&\leq \delta+\int f\,d\mu\\
&\leq \delta+\mu(V)\\
&\leq \delta+\eta.
\end{align*}
Thus,
\begin{align*}
\frac{1}{d}|\{j:\sigma(g)(j)=j\}|&\leq \delta+\eta+\sqrt{\delta}\\
&+\frac{1}{d}\left|\left\{j:\sigma(g)(j)=j,\rho(g\phi(j),\phi(\sigma(g)(j)))<\sqrt{\delta},\rho(g\phi(j),\phi(j))>\frac{\delta'}{2}\right\}\right|.
\end{align*}
Choose $\delta$  so that $\delta<\eta,\sqrt{\delta}<\eta$ and $\sqrt{\delta}<\frac{\delta'}{2}.$ We then see that
\[\frac{1}{d}|\{j:\sigma(g)(j)=j\}|\leq 3\eta,\]
since $\eta$ is arbitrary this proves the Lemma.

\end{proof}

\begin{proposition}\label{P:randomupperbound} Let $\Gamma$ be a countable discrete group with random sofic approximation $\kappa$, and $(X,\mu)$ a standard probability space. Suppose that $\Gamma\actson (X,\mu)$ is an essentially free measure-preserving action. Then,
\[h_{\kappa,\mu}(X,\Gamma)\leq \sup_{\Sigma}h_{\Sigma,\mu}(X,\Gamma)\]
where the supremum is over all sofic approximations of $\Gamma.$
\end{proposition}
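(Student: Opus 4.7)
The plan is to extract a deterministic sofic approximation $\Sigma=(\sigma_{n})$ from $\kappa$ by the same recipe used in Proposition \ref{P:randomlowerbound} for infima, but now leveraging the preceding lemma to ensure that the $\sigma$'s which obstructed the supremum version of that argument contribute nothing. The key observation is that essential freeness forces the measure-theoretic microstate space to be \emph{empty} for any $\sigma$ that has too many fixed points on a relevant finite subset of $\Gamma$, so the integrand $N_{\varepsilon}(\Map(\rho,F,L,\delta,\sigma),\rho_{2})$ vanishes precisely on the ``bad'' set.

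Fix a dynamically generating pseudometric $\rho$ on $X$ and $\varepsilon>0$; we may assume $h_{\kappa,\mu}(\rho,\varepsilon)>-\infty$. Choose exhaustions $F_{n}\nearrow \Gamma$ with $e\in F_{1}$, finite $L_{n}\subseteq C(X)$ whose union is dense, and $\delta_{n}\downarrow 0$. Applying the preceding lemma with $E_{n}:=(F_{n}\cup F_{n}^{-1})^{2}\setminus\{e\}$ and $\eta_{n}=2^{-n}$, and then enlarging its output, we obtain finite $F_{n}^{*}\supseteq F_{n}$, $L_{n}^{*}\supseteq L_{n}$ and $0<\delta_{n}^{*}\leq \delta_{n}$ so that $\Map(\rho,F_{n}^{*},L_{n}^{*},\delta_{n}^{*},\sigma)\ne\varnothing$ forces $|\{j:\sigma(g)(j)=j\}|\leq 2^{-n}d$ for every $g\in E_{n}$.

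Using properties (1), (2) in the definition of random sofic approximation together with the definition of $h_{\kappa,\mu}(\rho,\varepsilon)$, pick a strictly increasing sequence $(i_{n})$ with $d_{i_{n}}\to\infty$, $\kappa_{i_{n}}(A_{n,i_{n}})=1$, where
\[A_{n,i}:=\bigcap_{g,h\in F_{n}^{*}\cup (F_{n}^{*})^{-1}}\left\{\sigma:u_{d_{i}}(\{j:\sigma(g)\sigma(h)(j)=\sigma(gh)(j)\})\geq 1-2^{-n}\right\},\]
and
\[\frac{1}{d_{i_{n}}}\log\int N_{\varepsilon}(\Map(\rho,F_{n}^{*},L_{n}^{*},\delta_{n}^{*},\sigma),\rho_{2})\,d\kappa_{i_{n}}(\sigma)\geq h_{\kappa,\mu}(\rho,\varepsilon)-2^{-n}.\]
Set $B_{n,i}:=\{\sigma:\Map(\rho,F_{n}^{*},L_{n}^{*},\delta_{n}^{*},\sigma)\ne\varnothing\}$. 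Since $N_{\varepsilon}(\varnothing,\rho_{2})=0$ the integrand vanishes off $B_{n,i_{n}}$; combined with $\kappa_{i_{n}}(A_{n,i_{n}})=1$, this gives $\int_{A_{n,i_{n}}\cap B_{n,i_{n}}} N_{\varepsilon}\,d\kappa_{i_{n}}=\int N_{\varepsilon}\,d\kappa_{i_{n}}$, so by averaging there is $\sigma_{n}\in A_{n,i_{n}}\cap B_{n,i_{n}}$ with
\[N_{\varepsilon}(\Map(\rho,F_{n}^{*},L_{n}^{*},\delta_{n}^{*},\sigma_{n}),\rho_{2})\geq \exp\bigl(d_{i_{n}}(h_{\kappa,\mu}(\rho,\varepsilon)-2^{-n})\bigr).\]

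By construction each $\sigma_{n}$ is $2^{-n}$-multiplicative on $F_{n}^{*}\cup (F_{n}^{*})^{-1}$ and $2^{-n}$-almost fixed-point free on $E_{n}$; a standard manipulation (combine almost multiplicativity applied to $h^{-1}$ and $g$ with the fixed-point bound on $h^{-1}g\in E_{n}$, using that $\sigma_{n}(e)$ is a near-identity by almost multiplicativity with $g=h=e$) gives $u_{d_{i_{n}}}(\{j:\sigma_{n}(g)(j)=\sigma_{n}(h)(j)\})=O(2^{-n})$ for all distinct $g,h\in F_{n}$, so $\Sigma=(\sigma_{n})$ is a bona fide sofic approximation of $\Gamma$. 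For any finite $F\subseteq\Gamma$, $L\subseteq C(X)$, $\delta>0$ eventually $\Map(\rho,F_{n}^{*},L_{n}^{*},\delta_{n}^{*},\sigma_{n})\subseteq \Map(\rho,F,L,\delta,\sigma_{n})$, which forces $h_{\Sigma,\mu}(\rho,F,L,\delta,\varepsilon)\geq h_{\kappa,\mu}(\rho,\varepsilon)$; taking the infimum over $F,L,\delta$ and then the supremum over $\varepsilon$ yields the proposition. The hard part---exactly the obstruction flagged in the discussion immediately after Proposition \ref{P:randomlowerbound}---is controlling $N_{\varepsilon}$ on the set where $\sigma$ fails to look sofic, and this is precisely what essential freeness (via the preceding lemma) neutralizes.
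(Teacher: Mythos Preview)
Your proof is correct and follows essentially the same approach as the paper's: both exploit the preceding lemma to guarantee that any $\sigma$ with nonempty measure-theoretic microstate space is automatically almost free, so that the averaging argument only ever selects $\sigma_{n}$'s that assemble into a genuine sofic approximation. Your presentation is slightly more explicit than the paper's in two places --- you spell out that the integrand vanishes off $B_{n,i_{n}}$, and you articulate the passage from fixed-point freeness on $E_{n}$ to freeness on $F_{n}$ via almost multiplicativity --- but the underlying mechanism is identical.
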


\begin{proof}
	Let $\kappa=(\kappa_{i})$ with $\kappa_{i}\in \Prob(S_{d_{i}}^{\Gamma}).$ It is well-known that there is a compact metrizable space $Y,$ an action $\Gamma\actson Y$ by homeomorphisms, and a Borel probability measure $\nu$ on $Y$ so that $\Gamma\actson (Y,\nu)\cong \Gamma\actson (X,\mu)$ (e.g. take the spectrum of a weak$^{*}$-dense unital separable $C^{*}$-subalgebra of $L^{\infty}(X,\mu)$ for $Y$).  Thus we may assume $X$ is a compact metrizable space, and $\Gamma\actson X$ by homeomorphisms. Let $\rho$ be a compatible metric on $X.$ We may assume that
\[h_{\kappa,\mu}(X,\Gamma)>-\infty.\]
As
\[\sup_{\Sigma}h_{\Sigma,\mu}(X,\Gamma)=\sup_{\varepsilon>0}\sup_{\Sigma}h_{\Sigma,\mu}(\rho,\varespilon)\]
it is enough to show that for any $\varepsilon>0,$
\[\sup_{\Sigma}h_{\Sigma,\mu}(\rho,\varespilon)\geq h_{\kappa,\mu}(\rho,\varepsilon).\]

	Let $F_{n}'$ be an increasing sequence of finite subset of $\Gamma,$ so that
\[\Gamma=\bigcup_{n=1}^{\infty}F_{n}',\]
let $\delta_{n}'$ be a decreasing sequence of positive real numbers converging to zero, and let $L_{n}'$ be an increasing sequence of finite subsets of $C(X)$ so that
\[C(X)=\overline{\bigcup_{n=1}^{\infty}L_{n}}.\]
Applying the preceding Lemma, we may assume that there are an increasing sequence of finite subsets $F_{n},L_{n}$ of $\Gamma,C(X)$ with $F_{n}'\subseteq F_{n},L_{n}'\subseteq L_{n}$ and $0<\delta<\delta'$ so that if  $\sigma\colon \Gamma\to S_{d}$ is any function, and
\[ \Map(\rho,F_{n},\delta_{n},L_{n},\sigma)\ne \varnothing\]
then
\[u_{d}\left(\bigcap_{g,h\in F_{n}',g\ne h}\{j:\sigma(g)(j)\ne \sigma(h)(j)\}\right)\geq 1-2^{-n}.\]
Choose an increasing sequence of integers $i_{n}$ so that
\[\frac{1}{d_{i_{n}}}\log \int S_{\varespilon}(\Map(\rho,F_{n},L_{n},\delta_{n},\sigma),\rho_{2})\,d\kappa_{i_{n}}(\sigma)\geq h_{\kappa}(\rho,F_{n},L_{n},\delta_{n},\varespilon)-2^{-n},\]
\[\kappa_{i_{n}}(\{\sigma:u_{d_{i_{n}}}(\{j:\sigma(g)\sigma(h)(j)=\sigma(gh)(j)\})\geq 1-2^{-n}\mbox{ for all $g,h\in F_{n}$}\})=1.\]

	Since
\[h_{\kappa}(\rho,F_{n},L_{n},\delta_{n},\varespilon)\geq h_{\kappa}(\rho,\varepsilon)>-\infty,\]
we  can find a $\sigma_{n}\in S_{d_{i_{n}}}^{\Gamma}$ so that for all $g,h\in F_{n}$
\[u_{d_{i_{n}}}(\{j:\sigma_{n}(g)\sigma_{n}(h)(j)=\sigma_{n}(gh)(j)\})\geq 1-2^{-n},\]
and
\[S_{\varespilon}(\Map(\rho,F_{n},L_{n},\delta_{n},\sigma_{n}),\rho_{2})\geq \max(1,\exp(d_{i_{n}}[h_{\kappa}(\rho,F_{n},L_{n},\delta_{n},\varepsilon)-2^{-n}])).\]
So by the preceding Lemma,
\[u_{d_{i_{n}}}\left(\bigcap_{g,h\in F_{n}',g\ne h}\{j:\sigma_{n}(g)(j)\ne \sigma_{n}(h)(j)\}\right)\geq 1-2^{-n}.\]
If $m\geq n,$ then
\[S_{\varespilon}(\Map(\rho,F_{m},L_{m},\delta_{m},\sigma_{m}),\rho_{2})\leq S_{\varespilon}(\Map(\rho,F_{n},L_{n},\delta_{n},\sigma_{m}),\rho_{2})\]
so if we set $\Sigma=(\sigma_{n})_{n=1}^{\infty},$ then $\Sigma$ is a sofic approximation with
\[h_{\Sigma,\mu}(\rho,F_{n},L_{n},\delta_{n},\varespilon)\geq \max(0,h_{\kappa,\mu}(\rho,\varespilon)).\]
Letting $n\to \infty$ proves that
\[h_{\Sigma,\mu}(\rho,\varepsilon)\geq h_{\kappa,\mu}(\rho,\varepsilon).\]

\end{proof}

	Using results of Brandon Seward, we can apply this to $f$-invariant entropy of algebraic actions.
\begin{proposition} Let $X$ be a compact metrizable group, and $\FF_{r}\actson X$ by automorphisms, with $r>1.$ If the action is not essentially free (with respect to the Haar measure), then
\[h_{U,m_{X}}(X,\FF_{r})=(1-r)\log|X|,\]
if $X$ is finite, and
\[h_{U,m_{X}}(X,\FF_{r})=-\infty\]
if $X$ is infinite.
\end{proposition}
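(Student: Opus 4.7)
The first step is to produce a nontrivial $g \in \FF_r$ acting as the identity on $X$. Because $\FF_r$ acts on the compact group $X$ by continuous automorphisms, for each $g \in \FF_r$ the fixed-point set $\Fix(g) = \{x \in X : gx = x\}$ is a closed subgroup of $X$. If the action is not essentially free, then $m_X(\Fix(g)) > 0$ for some $g \ne e$, and translation-invariance of Haar measure forces $\Fix(g) = X$, so such a $g$ acts as the identity.

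For the finite case I would take $\alpha = \id_X : X \to X$ as a finite generator with $H(\alpha) = \log|X|$. Because each $a_i$ acts by a bijection of $X$, the joint observable $x \mapsto (x, a_i^{-1}x)$ is injective, so $H(\alpha \vee a_i \alpha) = \log|X|$, and the same injectivity argument gives $H(\beta_n) = \log|X|$ for every $n$, where $\beta_n = \bigvee_{h \in B(e,n)} h\alpha$. Consequently
\[
F(\beta_n) = (1-2r)\log|X| + r\log|X| = (1-r)\log|X|,
\]
which gives $f_{m_X}(X,\FF_r) = (1-r)\log|X|$, and Proposition \ref{P:finvarianttheorem} then yields $h_{U,m_X}(X,\FF_r) = (1-r)\log|X|$.

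For the infinite case the plan is to show that the microstate space is empty for a set of $\sigma$ of $U_n$-probability tending to $1$. Since $m_X$ is non-atomic (any atom would, by homogeneity, force $X$ to be finite), I will fix a dynamically generating pseudometric $\rho$ on $X$ and a finite $L \subseteq C(X)$ of Lipschitz functions whose joint $m_X$-statistics rule out any probability measure on $X$ placing mass $\geq 1/2$ on a single point. Then I will choose a finite $F \subseteq \FF_r$ containing $g$ together with enough of its conjugates $hgh^{-1}$ for $h$ in a finite subset of $\FF_r$; all of these lie in the kernel $K$ of the action, so any microstate $\phi$ must satisfy $\rho_2(\phi \circ \sigma(k), \phi) < \delta$ for each such $k \in F \cap K$. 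For $U_n$-typical $\sigma$, a random-permutation argument in the spirit of Dixon's theorem and its refinements used by Bowen and Seward should yield that the subgroup of $S_n$ generated by $\sigma(F \cap K)$ acts transitively on $\{1,\ldots,d_n\}$ with probability tending to $1$. Approximate $\rho_2$-invariance of $\phi$ under this transitive subgroup then forces $\phi$ to be $\rho_2$-close to a constant map, so $\phi_*(u_{d_n})$ is close to a point mass, contradicting the constraints imposed by $L$ once $\delta$ is small. The integrand in the definition of $h_{U,m_X}$ will therefore vanish with probability tending to $1$, giving $h_{U,m_X}(X,\FF_r) = -\infty$.

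The main obstacle will be the transitivity statement for the random subgroup $\langle \sigma(F \cap K) \rangle \leq S_n$. When $[\FF_r : K] < \infty$, the Nielsen-Schreier theorem gives that $K$ is free of finite rank, so $F \cap K$ can be chosen to be a full generating set of $K$ and the problem reduces to a Dixon-type statement for the finitely many specific permutations $\sigma(g_1), \ldots, \sigma(g_N)$. When $[\FF_r:K]$ is infinite the argument is more delicate: one must verify that iterated conjugation by finitely many $a_i$ produces enough ``random'' conjugates of $\sigma(g)$ to generate a transitive subgroup typically. I expect this either to follow from a direct second-moment computation in the spirit of the concentration arguments of Section \ref{S:measureentropy}, or from the results of Seward alluded to in the statement preceding the proposition.
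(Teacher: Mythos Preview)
Your opening step contains an error that undermines the infinite case. From $m_X(\Fix(g))>0$ you only get that the closed subgroup $\Fix(g)$ has finite index in $X$, not that $\Fix(g)=X$: the cosets of $\Fix(g)$ partition $X$ and all carry the same Haar mass, so $[X:\Fix(g)]<\infty$, but nothing more. A simple counterexample is $X=(\ZZ/2\ZZ)^2$ with $g$ swapping the two coordinates; here $\Fix(g)$ has index $2$. Thus you cannot assume $g$ lies in the kernel of the action, and the whole microstate-emptiness strategy built on conjugates of $g$ inside a kernel $K$ loses its starting point.

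Even granting a nontrivial kernel, the transitivity claim for $\langle\sigma(F\cap K)\rangle$ under $U_n$ is not a Dixon-type statement: Dixon's theorem concerns \emph{independent} uniform permutations, whereas here the permutations $\sigma(hgh^{-1})$ are deterministic words in the $r$ independent generators and are highly correlated with one another. No second-moment or concentration argument in the paper supplies this, and you yourself flag it as an unresolved obstacle. So the infinite case, as written, is a plan rather than a proof.

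The paper avoids all of this by a short entropy-theoretic reduction. From $[X:\Fix_g(X)]<\infty$ one passes to the finite-index normal core $Y=\bigcap_x x\Fix_g(X)x^{-1}$, on which $g$ acts trivially. Yuzvinski\v\i's addition formula for the cyclic group $\langle g\rangle$ applied to $1\to Y\to X\to X/Y\to 1$ gives $h_{m_X}(X,\langle g\rangle)=h_{m_Y}(Y,\langle g\rangle)=0$. Then one invokes Seward's theorem (\cite{SewardFree}, Theorem~1.6): any factor $(Z,\zeta)$ of $(X,m_X)$ generated by a finite observable has $h_\zeta(Z,\langle g\rangle)=0$, which forces $f_\zeta(Z,\FF_r)=-\infty$; combined with $h_{U,\mu}(\alpha)\leq h_{U,\mu}(\alpha;\alpha)=f_\zeta(Z,\FF_r)$ this yields $h_{U,m_X}(X,\FF_r)=-\infty$. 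Your finite-case computation is correct and is exactly what the paper has in mind there.
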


\begin{proof}
	
	The finite case follows from Theorem \ref{T:finvarianttheorem} and the definition of the $f$-invariant, so we assume $X$ is infinite. For $g\in \FF_{r},$ we let $\Fix_{g}(X)$ be the set of elements fixed by $g.$ Choose $g\in \FF_{r}\setminus\{e\}$ so that $\Fix_{g}(X)$ has positive measure. We first show that the entropy of $\ip{g}\actson (X,m_{X})$ is zero.

	For this, note that $\Fix_{g}(X)$ is a closed subgroup of $X,$ and by assumption it has positive measure. Since $X$ is compact, this forces
\[[X:\Fix_{g}(X)]<\infty.\]
Set
\[Y=\bigcap_{x\in X}x\Fix_{g}(X)x^{-1},\]
then $Y$ is a closed normal subgroup of $X,$ and it is a standard fact that
\[[X:Y]<\infty.\]
Since $g$ acts by automorphisms,  we know that $Y$ is $\ip{g}$-invariant. We have the following exact sequence of compact groups with $\ip{g}$-actions
\[\begin{CD}
1 @>>> Y @>>> X @>>> X/Y @>>> 1.
\end{CD}\]
Since $\ip{g}$ is cyclic we may apply  Yuzvinski\v\i's addition   formula (see \cite{Yuz}) to see that
\[h_{m_{X}}(X,\ip{g})=h_{m_{Y}}(Y,\ip{g})+h_{m_{X/Y}}(X/Y,\ip{g})=h_{m_{Y}}(Y,\ip{g}),\]
since $X/Y$ is finite, and $\ip{g}\cong \ZZ.$ Since $\ip{g}\actson Y$ trivially, we know that
\[h_{m_{Y}}(Y,\ip{g})=0,\]
so
\[h_{m_{X}}(X,\ip{g})=0.\]

	We now prove the proposition. Let $\alpha$ be a finite partition of $X,$ then
\[h_{U,\mu}(\alpha)\leq h_{U,\mu}(\alpha;\alpha).\]
Let $S$ be the $\sigma$-algebra of measurable sets generated by $\{h\alpha:h\in \FF_{r}\},$ let $(Z,\zeta)$ be the factor of $(X,m_{X})$ corresponding to $S.$ By Theorem \ref{T:finvarianttheorem}, we have
\[h_{U,\mu}(\alpha)\leq h_{U,\mu}(\alpha;\alpha)=h_{U,\zeta}(Z,\FF_{r})=f_{\zeta}(Z,\FF_{r}).\]
Since entropy for actions of amenable groups decreases under factor maps, we know that
\[h_{\zeta}(Z,\ip{g})=0.\]
Thus by \cite{SewardFree} Theorem 1.6, we know that
\[f_{\zeta}(Z,\FF_{r})=-\infty.\]
Thus,
\[h_{U,\mu}(\alpha)=-\infty,\]
as $\alpha$ is arbitrary we know that
\[h_{U,\mu}(X,\FF_{r})=-\infty.\]

\end{proof}

\begin{cor} Let $r\in\NN$ with $r>1.$ Let $h\in M_{m,n}(\ZZ(\FF_{r}))$ and suppose $\lambda(h)$ is injective. Let $U=(u_{\Hom(\FF_{r},S_{n})}),$ then
\[h_{U,m_{X_{h}}}(X_{h},\FF_{r})\leq \log \Det^{+}_{L(\FF_{r})}(h)\]
with equality if $m=n.$ In particular, if the action $\FF_{r}\actson (X_{h},m_{X_{h}})$ has a generator with finite Shannon entropy, then
\[f_{m_{X_{h}}}(X_{h},\FF_{r})\leq \log \Det^{+}_{L(\FF_{r})}(h)\]
with equality if $m=n.$
\end{cor}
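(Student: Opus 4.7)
My plan is to combine the sofic-entropy results of the paper with the random-sofic comparison propositions \ref{P:randomupperbound} and \ref{P:randomlowerbound}, then invoke Proposition \ref{P:finvarianttheorem} to pass to $f$-invariant entropy. The upper bound on $h_{U,m_{X_h}}(X_h,\FF_r)$ will be handled by splitting on whether the action $\FF_r \actson (X_h, m_{X_h})$ is essentially free, while the matching lower bound in the $m=n$ case will require a uniform-in-$\Sigma$ reading of Theorem \ref{T:measureentropy}'s estimate.

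For the upper bound: if the action is essentially free, Proposition \ref{P:randomupperbound} and the variational principle give $h_{U,m_{X_h}}(X_h,\FF_r) \leq \sup_\Sigma h_\Sigma(X_h,\FF_r) \leq \log\Det^+_{L(\FF_r)}(h)$, the last inequality being Theorem \ref{T:upperbound}. If the action is not essentially free, the proposition immediately preceding the corollary identifies $h_{U,m_{X_h}}(X_h,\FF_r)$ as $(1-r)\log|X_h|$ in the finite case and as $-\infty$ in the infinite case; since $r>1$, both expressions are $\leq 0$, while $\log\Det^+_{L(\FF_r)}(h)\geq 0$ by Corollary \ref{C:integrability}, so the inequality still holds.

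For the equality when $m=n$: Proposition \ref{P:Left/RightIssues} promotes injectivity of $\lambda(h)$ to density of its image, so Theorem \ref{T:automaticergodicity} makes the action ergodic, and I expect to then deduce essential freeness by analyzing the (normal, hence free) kernel of $\FF_r \to \Aut(X_h)$ and showing that its non-triviality would force a relation $(g-1)\ZZ(\FF_r)^n \subseteq r(h)(\ZZ(\FF_r)^n)$ incompatible with the injectivity of $r(h)$ on $\ZZ(\FF_r)^n$ supplied by Proposition \ref{P:Left/RightIssues}. Once essential freeness is available, Proposition \ref{P:randomlowerbound} gives $\sup_\varepsilon \inf_\Sigma h_{\Sigma,m_{X_h}}(\rho,\varepsilon) \leq h_{U,m_{X_h}}(X_h,\FF_r)$. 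The crucial point is that the proof of Theorem \ref{T:measureentropy} actually produces, at each fixed $\varepsilon>0$, a lower bound $h_{\Sigma,m_{X_h}}(\rho,\varepsilon) \geq \log\Det_{L(\FF_r)}(h) - \phi(\varepsilon)$ with $\phi(\varepsilon)\to 0$ as $\varepsilon\to 0$ and with $\phi$ depending only on the spectral measure $\mu_{|h|}$; consequently this bound survives the infimum over $\Sigma$, and taking $\sup_\varepsilon$ delivers the matching lower bound $h_{U,m_{X_h}}(X_h,\FF_r) \geq \log\Det_{L(\FF_r)}(h)$.

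The final clause on $f$-invariant entropy is then immediate from Proposition \ref{P:finvarianttheorem}, which identifies $f_{m_{X_h}}(X_h,\FF_r)$ with $h_{U,m_{X_h}}(X_h,\FF_r)$ in the presence of a finite-Shannon-entropy generator. The main obstacle I foresee is the uniformity-in-$\Sigma$ verification: one must retrace the rank-perturbation, automatic-concentration, and oscillatory-integral steps of Theorem \ref{T:measureentropy} at fixed $\varepsilon$ and confirm that the error terms are controlled by spectral data and standard sofic-approximation quality only, never by features specific to a particular $\Sigma$. A secondary, more algebraic subtlety is the essential-freeness deduction in the equality case, since without it the preceding proposition would yield strict inequality in place of the claimed equality whenever $X_h$ is non-trivial and the action collapses through a proper quotient of $\FF_r$.
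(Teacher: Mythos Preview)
Your approach is correct and matches the paper's: upper bound via the preceding proposition together with Proposition~\ref{P:randomupperbound}, lower bound when $m=n$ via Proposition~\ref{P:randomlowerbound} and a uniform-in-$\Sigma$ reading of the proof of Theorem~\ref{T:measureentropy}, and the final clause via Proposition~\ref{P:finvarianttheorem}.

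One simplification: your detour through essential freeness in the equality case is unnecessary. Proposition~\ref{P:randomlowerbound} has no freeness hypothesis, so the lower bound $h_{U,m_{X_h}}(X_h,\FF_r)\geq \log\Det_{L(\FF_r)}(h)$ holds unconditionally once you have the uniform-in-$\Sigma$ estimate; combined with your case-by-case upper bound this already yields the equality. (If anything, essential freeness becomes a \emph{consequence}: the preceding proposition says the non-free value is strictly negative or $-\infty$ when $X_h$ is nontrivial, which is incompatible with the nonnegative lower bound from Corollary~\ref{C:integrability}.) Note also that your sketched argument for essential freeness actually establishes only \emph{faithfulness} of the $\FF_r$-action, which is weaker; fortunately you do not need it.
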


\begin{proof}
	The ``in particular" part is a consequence of Proposition \ref{P:finvarianttheorem}. The lower bound when $m=n,$ follows Proposition \ref{P:randomlowerbound}, and the proof of Theorem \ref{T:measureentropy}. The upper bound is a consequence of the preceding proposition and Proposition \ref{P:randomupperbound}.

\end{proof}

\begin{cor} Let $r\in\NN$ with $r>1.$ Let $h\in M_{m,n}(\ZZ(\FF_{r}))$ and suppose that $\lambda(f)$ has dense image but is not injective (so necessarily $m<n.$) Let $U=(u_{\Hom(\FF_{r},S_{n})}),$ then
\[h_{U,m_{X_{h}}}(X_{h},\FF_{r})=\infty.\]
\end{cor}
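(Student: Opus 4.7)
The plan is to reduce the random sofic case to the deterministic sofic case by combining Proposition \ref{P:randomlowerbound} with the quantitative content of the proof of Theorem \ref{T:infinitemeasureentropyexample}. The measure-theoretic version of Proposition \ref{P:randomlowerbound}, applied to $\kappa = U$, gives that for every dynamically generating pseudometric $\rho$ on $X_h$ and every $\varepsilon > 0$,
\[h_{U, m_{X_h}}(\rho, \varepsilon) \geq \inf_{\Sigma} h_{\Sigma, m_{X_h}}(\rho, \varepsilon),\]
where the infimum is over all deterministic sofic approximations $\Sigma$ of $\FF_r$. So it suffices to show that this infimum is unbounded above as $\varepsilon \to 0^+$.

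I would then extract from the proof of Theorem \ref{T:infinitemeasureentropyexample} the uniform lower bound
\[h_{\Sigma, m_{X_h}}(\rho, F, L, \delta, \varepsilon/4) \geq \dim_{L(\FF_r)}(\ker \lambda(h))\log(1/\varepsilon) - n\log C,\]
valid for every sofic approximation $\Sigma$ of $\FF_r$, every finite $F \subseteq \FF_r$, finite $L \subseteq C(X_h)$, and $\delta > 0$, where $C$ is an absolute constant arising only from the comparison between $|(\ZZ^{d_k})^{\oplus n} \cap 3\Ball(\ell^{2}_{\RR}(d_k n, u_{d_k n}))|$ and the volume of the Euclidean unit ball, and hence is independent of $\Sigma, F, L, \delta$, and $\varepsilon$. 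All hypotheses of Theorem \ref{T:infinitemeasureentropyexample} are satisfied: $\lambda(h)$ has dense image but fails to be injective by assumption, and $\FF_r$ is non-amenable (since $r > 1$), so Theorem \ref{T:automaticergodicity} supplies the ergodicity of $\FF_r \actson (X_h, m_{X_h})$ needed by the Automatic Concentration Lemma inside that proof.

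Taking the infimum over $F, L, \delta$ and then over $\Sigma$ preserves the displayed estimate, and substituting into the first inequality yields
\[h_{U, m_{X_h}}(X_h, \FF_r) \geq \sup_{\varepsilon > 0}\Bigl[\dim_{L(\FF_r)}(\ker \lambda(h))\log(1/\varepsilon) - n\log C\Bigr] = \infty,\]
using that $\dim_{L(\FF_r)}(\ker \lambda(h)) > 0$, which holds precisely because $\lambda(h)$ is not injective. I do not expect any substantive obstacle: the argument is essentially a transfer of the deterministic Theorem \ref{T:infinitemeasureentropyexample} to the random sofic setting via the elementary Proposition \ref{P:randomlowerbound}, and the only point requiring care is the bookkeeping confirmation that the volume constant $C$ emerging from the proof of Theorem \ref{T:infinitemeasureentropyexample} truly does not depend on $\Sigma$, which is apparent from how it arises.
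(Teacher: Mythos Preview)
Your proposal is correct and follows essentially the same route as the paper, which simply cites Proposition \ref{P:randomlowerbound} together with the proof of Theorem \ref{T:infinitemeasureentropyexample}; you have accurately unpacked what that citation entails, and your observation that the volume constant $C$ is independent of $\Sigma$ (so the lower bound survives the infimum over sofic approximations) is exactly the point that makes the transfer work.
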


\begin{proof} Automatic from Proposition \ref{P:randomlowerbound} and the proof of Theorem \ref{T:infinitemeasureentropyexample}.

\end{proof}
	Using results of Brandon Seward, we can extend these results to \emph{virtually} free groups. In \cite{SewardSubgroup}, it is proved that if $\Lambda\subseteq \FF_{r}$ has finite index, and $(X,\mu)$ is a standard probability space with $\FF_{r}\actson (X,\mu)$ by measure-preserving transformations, then if the action has a generator with finite Shannon entropy,
\begin{equation}\label{E:virtuallyfree}
f_{\mu}(X,\Lambda)=[\FF_{r}\colon \Lambda]f_{\mu}(X,\FF_{r}).
\end{equation}
Thus, if $\Gamma$ is a virtually free group, and $\Gamma\actson (X,\mu)$ is a standard probability measure-preserving action with a finite Shannon entropy generator, we can define
\[f_{\mu}(X,\Gamma)=\frac{1}{[\Gamma\colon \Lambda]}f_{\mu}(X,\Lambda)\]
where $\Lambda\subseteq \Gamma$ is any finite index free group. By $(\ref{E:virtuallyfree}),$ this does not depend on the choice of finite index subgroup.

\begin{cor}Let $\Gamma$ be a virtually free group, and $h\in M_{m,n}(\ZZ(\Gamma))$ be injective as a left multiplication operator on $\ell^{2}(\Gamma)^{\oplus n}.$ Suppose $\Gamma\actson (X_{h},m_{X_{h}})$ has  a generator with finite Shannon entropy, then
\[f_{m_{X_{h}}}(X_{h},m_{X_{h}})\leq \log \Det^{+}_{L(\Gamma)}(h)\]
with equality if $m=n.$
\end{cor}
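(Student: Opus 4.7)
The plan is to reduce to the free group case via restriction to a finite-index free subgroup $\Lambda \subseteq \Gamma,$ which exists by assumption. Set $d = [\Gamma:\Lambda]$ and choose a transversal $T$ for $\Lambda \backslash \Gamma$ with $e \in T.$ The identification $\ZZ(\Gamma) \cong \ZZ(\Lambda)^{\oplus d}$ as left $\ZZ(\Lambda)$-modules (given by $T$) extends to $\ZZ(\Gamma)^{\oplus n} \cong \ZZ(\Lambda)^{\oplus nd}$ and similarly on the $m$-side; under this identification the right multiplication $r(h)\colon \ZZ(\Gamma)^{\oplus m} \to \ZZ(\Gamma)^{\oplus n}$ corresponds to right multiplication by some $h' \in M_{md, nd}(\ZZ(\Lambda)).$ Taking Pontryagin duals gives an isomorphism $X_h \cong X_{h'}$ of $\Lambda$-spaces which is measure-preserving. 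The same identification at the Hilbert space level, $\ell^2(\Gamma)^{\oplus n} \cong \ell^2(\Lambda)^{\oplus nd},$ turns $\lambda(h)$ into left multiplication by $h',$ so $\lambda(h')$ is injective.

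Next, I would verify that $\Lambda \actson (X_h, m_{X_h})$ still has a generator with finite Shannon entropy: if $\alpha$ is such a generator for $\Gamma,$ then $\beta := \bigvee_{t \in T} t\alpha$ is a $\Lambda$-generator satisfying $H(\beta) \leq d\,H(\alpha) < \infty.$ Hence the preceding corollary (for the free group $\Lambda \cong \FF_r$, with $r$ possibly one or zero; the $r \leq 1$ case is amenable and covered by the results of \cite{LiThom}) applies to $h',$ yielding
\[
f_{m_{X_{h'}}}(X_{h'}, \Lambda) \;\leq\; \log \Det^+_{L(\Lambda)}(h'),
\]
with equality when $md = nd,$ i.e. $m = n.$

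The main step is to compare the two Fuglede--Kadison determinants. The inclusion $L(\Lambda) \hookrightarrow L(\Gamma)$ is trace-preserving, but the identification above sends $M_{m,n}(L(\Gamma))$ into $M_{md,nd}(L(\Lambda))$ in such a way that the composition with $\Tr \otimes \tau_{L(\Lambda)}$ equals $d$ times $\Tr \otimes \tau_{L(\Gamma)}$ (because the copy of $\ell^2(\Lambda)^{\oplus d}$ sitting inside $\ell^2(\Gamma)$ as a right $L(\Lambda)$-module has $L(\Lambda)$-dimension $d$). Consequently $\mu_{|h'|} = d\cdot \mu_{|h|}$ as (unnormalized) spectral measures, and therefore
\[
\log \Det^+_{L(\Lambda)}(h') \;=\; d \cdot \log \Det^+_{L(\Gamma)}(h).
\]

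Finally, combining with the definition of $f_{m_{X_h}}(X_h, \Gamma)$ via the Seward subgroup formula (\ref{E:virtuallyfree}) extended to virtually free groups gives
\[
f_{m_{X_h}}(X_h, \Gamma) \;=\; \frac{1}{d}\, f_{m_{X_{h'}}}(X_{h'}, \Lambda) \;\leq\; \frac{1}{d} \cdot d \log \Det^+_{L(\Gamma)}(h) \;=\; \log \Det^+_{L(\Gamma)}(h),
\]
with equality when $m = n.$ The main obstacle is step three: the trace/determinant rescaling under restriction of scalars. This is essentially a known fact (compare \cite{Luck} for analogous statements about $L^2$-Betti numbers and $L^2$-torsion under finite-index subgroups), but the argument must carefully track normalizations of $\Tr \otimes \tau$ across the change of ambient von Neumann algebra; everything else reduces to bookkeeping via the transversal $T.$
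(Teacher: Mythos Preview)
Your proposal is correct and follows essentially the same route as the paper: restrict to a finite-index free subgroup $\Lambda$, rewrite $h$ as a matrix $\widetilde{h}$ over $\ZZ(\Lambda)$ via a choice of coset representatives, invoke the preceding free-group corollary, use the dimension rescaling $\dim_{L(\Gamma)} = \frac{1}{[\Gamma:\Lambda]}\dim_{L(\Lambda)}$ to get $\mu_{|\widetilde{h}|} = [\Gamma:\Lambda]\,\mu_{|h|}$ (hence $\log\Det^+_{L(\Lambda)}(\widetilde{h}) = [\Gamma:\Lambda]\log\Det^+_{L(\Gamma)}(h)$), and finish with the Seward subgroup formula that underlies the very definition of $f$ for virtually free groups. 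Your treatment is in fact slightly more careful than the paper's in two respects: you explicitly check that the restricted action still has a finite Shannon entropy generator, and you note that the case of a free subgroup of rank $\leq 1$ falls under the amenable results of \cite{LiThom}.
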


\begin{proof}

	Choose a finite index free subgroup $\Lambda$ of $\Gamma.$ Using a system of coset representatives,
\[\ZZ(\Gamma)^{\oplus n}\cong \ZZ(\Lambda)^{\oplus [\Gamma\colon \Lambda]n},\]
\[\ZZ(\Gamma)^{\oplus m}\cong \ZZ(\Lambda)^{\oplus [\Gamma\colon \Lambda]m},\]
as $\ZZ(\Lambda)$-modules. The map
\[\ZZ(\Gamma)^{\oplus m}\to \ZZ(\Gamma)^{\oplus n}\]
given by right multiplication by $h$ is $\ZZ(\Lambda)$-modular, hence under these isomorphisms corresponds to an element $\widetilde{h}\in M_{[\Gamma\colon \Lambda]m,[\Gamma\colon \Lambda]n}(\ZZ(\Lambda)).$ As operators on $\ell^{2},$ we can obtain $\widetilde{h}$ by regarding $\ell^{2}(\Gamma)^{\oplus n},\ell^{2}(\Gamma)^{\oplus m}$ as isomorphic to $\ell^{2}(\Lambda)^{\oplus [\Gamma\colon \Lambda]n},\ell^{2}(\Lambda)^{\oplus [\Gamma\colon \Lambda]n}$ as representations of $\Lambda$ (using the same system of coset representatives as before). Using the well-known formula (see \cite{Luck} Theorem 1.12 (6)).
\[\dim_{L(\Gamma)}(\mathcal{H})=\frac{1}{[\Gamma\colon \Lambda]}\dim_{L(\Lambda)}(\mathcal{H}),\]
for $\Gamma$-invariant subspaces of $\ell^{2}(\NN,\ell^{2}(\Gamma))$ we have
\[\frac{1}{[\Gamma\colon \Lambda]}\mu_{|\widetidle{h}|}=\mu_{|h|}.\]
	By the preceding Corollary,
\[f_{m_{X_{h}}}(X_{h},\Gamma)=\frac{1}{[\Gamma\colon \Lambda]}f_{m_{X_{\widetilde{h}}}}(X_{\widetilde{h}},\Lambda)\leq \frac{1}{[\Gamma\colon \Lambda]}\log \Det^{+}_{L(\Lambda)}(\widetilde{h})=\log \Det^{+}_{L(\Gamma)}(h),\]
with equality if $m=n.$

\end{proof}

\subsection{Applications to Metric Mean Dimension and Entropy}

	The first application is a complete classification of when a finitely presented algebraic action has finite topological entropy. We use $\mdim_{\Sigma,M},\mdim_{\Sigma}$ for metric mean dimension and mean dimension respectively (see \cite{Li} for the definition). For a $\ZZ(\Gamma)$-module $A,$ we use $\vr(A)$ for the von Neumann rank of $A,$ (see \cite{LiLiang} for the definition).

\begin{theorem}\label{T:whenisitinfinite}  Let $\Gamma$ be a countable discrete sofic group with sofic approximation $\Sigma=(\sigma_{i}\colon \Gamma\to S_{d_{i}}).$ Suppose that $A$ is a finitely-presented $\ZZ(\Gamma)$-module.
\begin{enumerate}[(i)]
\item
The following are equivalent:

(1) $h_{\Sigma}(\widehat{A},\Gamma)<\infty,$

(2) $\mdim_{\Sigma,M}(\widehat{A},\Gamma)=0,$

(3) $\vr(A)=0.$

\item Suppose that $\Gamma$ is residually finite. Let $\Gamma_{i}\triangleleft \Gamma$ be a decreasing sequence with $[\Gamma\colon \Gamma_{i}]<\infty$ for all $i\in\NN$ and
\[\bigcap_{i=1}^{\infty}\Gamma_{i}=\{e\}.\]
Let $\Sigma=(\sigma_{i}\colon\Gamma\to \Sym(\Gamma/\Gamma_{i}))_{i=1}^{\infty}$ be defined by
\[\sigma_{i}(g)(x\Gamma_{i})=gx\Gamma_{i}.\]
Then $(1)-(3)$ are equivalent to

(4) $\mdim_{\Sigma}(\widehat{A},\Gamma)=0.$

\end{enumerate}

\end{theorem}

\begin{proof}

(i): It is easy to see that $(1)$ implies $(2).$ The equivalence of $(3)$ and $(2)$ is the content of Theorem 5.1 in \cite{Me4}.

	Suppose that $\vr(A)=0.$ We may assume $A=\ZZ(\Gamma)^{\oplus n}/r(f)(\ZZ(\Gamma)^{\oplus m})$ with $f\in M_{m,n}(\ZZ(\Gamma)).$ By Lemma 5.4 in \cite{LiLiang} $\vr(A)=\dim_{L(\Gamma)}(\ker \lambda(f)),$
 so our hypothesis implies $0=\dim_{L(\Gamma)}(\ker \lambda(f)).$
Thus $\lambda(f)$ is injective, so by Theorem \ref{T:upperbound}
\[h_{\Sigma}(\widehat{A},\Gamma)\leq \log\Det^{+}_{L(\Gamma)}(f)\leq \log\|\widehat{f}\|_{1}<\infty.\]
So we have shown that $(3)$ implies $(1).$

(ii): By Theorem 6.2 in \cite{Me4}, we know that
\[\vr(A)=\mdim_{\Sigma}(\widehat{A},\Gamma)\]
and so (4) is equivalent to $(3).$
\end{proof}

It is easy to prove Theorem 1.1 (i) from the above.

\begin{cor}\label{C:whenisitinfinite}  Let $\Gamma$ be a countable discrete sofic group with sofic approximation $\Sigma=(\sigma_{i}\colon \Gamma\to S_{d_{i}})$ and fix $f\in M_{m,n}(\ZZ(\Gamma)).$ Then $h_{\Sigma}(X_{f},\Gamma)<\infty$ if and only if $\lambda(f)$ is injective.

\end{cor}

\begin{proof} Set
\[A=\ZZ(\Gamma)^{\oplus n}/r(f)(\ZZ(\Gamma)^{\oplus m})\]
(so $X_{f}=\widehat{A}$). Then arguing as in the preceding theorem we see that $\vr(A)=0$ if and only if $\lambda(f)$ is injective. Hence it follows by the preceding theorem that $h_{\Sigma}(X_{f},\Gamma)<\infty$ if and only if $\lambda(f)$ is injective.

\end{proof}

 For the next application, we need to single out a nice class of groups.
\begin{definition}\emph{Let $\Gamma$ be a countable discrete group.  Let $\mathcal{O}(\Gamma)$ denote the abelian subgroup of $\QQ$ generated by $|\Lambda|^{-1}$ for all finite subgroups $\Lambda\subseteq \Gamma.$ We say that $\Gamma$ has the} Strong Atiyah property \emph{if $\dim_{L(\Gamma)}(\ker(\lambda(f)))\in \mathcal{O}(\Gamma)$ for all $f\in M_{m,n}(\ZZ(\Gamma)).$}\end{definition}

For example, let $\mathcal{C}$ be the smallest class of groups containing all free groups, and closed under direct unions and extensions  with elementary amenable quotients, then by Theorem 10.19 in \cite{Luck} we know that $\Gamma$ has the strong Atiyah property.

\begin{theorem} Let $\Gamma$ be a countable discrete sofic group with the Strong Atiyah property, and such that
\[\sup\{|\Lambda|:\Lambda\subseteq \Gamma\mbox{ is finite }\}<\infty.\]
Let $\Sigma$ be a  sofic approximation of $\Gamma.$ Let $A$ be a finitely generated $\ZZ(\Gamma)$-module. Then $\mdim_{\Sigma,M}(\widehat{A},\Gamma)=0$ if and only if $h_{\Sigma}(\widehat{A},\Gamma)<\infty$.\end{theorem}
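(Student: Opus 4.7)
The forward implication $h_{\Sigma}(\widehat{A},\Gamma)<\infty \Rightarrow \mdim_{\Sigma,M}(\widehat{A},\Gamma)=0$ holds in general by Theorem 6.1 in \cite{Li}, so the content lies in the converse. The plan is to reduce to the finitely-presented setting of Theorem \ref{T:whenisitinfinite} by approximating $A$ from above. Write $A=\ZZ(\Gamma)^{\oplus n}/K$ and exhaust $K=\bigcup_{j}K_{j}$ by finitely-generated submodules $K_{j}$, yielding finitely-presented modules $A_{j}=\ZZ(\Gamma)^{\oplus n}/K_{j}$ and a tower of surjections $A_{1}\twoheadrightarrow A_{2}\twoheadrightarrow\cdots\twoheadrightarrow A$. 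Pontryagin duality turns this into a decreasing chain of closed $\Gamma$-invariant subgroups $\widehat{A_{j}}\supseteq\widehat{A_{j+1}}\supseteq\widehat{A}$ with $\widehat{A}=\bigcap_{j}\widehat{A_{j}}$.

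Let $N=\lcm\{|\Lambda|:\Lambda\subseteq\Gamma\text{ finite}\}$, so $\mathcal{O}(\Gamma)=\tfrac{1}{N}\ZZ$, and the strong Atiyah property forces $\vr(A_{j})\in\tfrac{1}{N}\ZZ$ for every $j$. Applying $L(\Gamma)\otimes_{\ZZ(\Gamma)}(-)$ to $A=\varinjlim_{j}A_{j}$ and using that $\dim_{L(\Gamma)}$ is continuous along increasing unions of submodules inside the fixed finite-rank module $L(\Gamma)^{\oplus n}$, one sees $\vr(A)=\lim_{j\to\infty}\vr(A_{j})$; being a decreasing sequence in the discrete set $\tfrac{1}{N}\ZZ$ it stabilizes, so there exists $j_{0}$ with $\vr(A_{j})=\vr(A)$ for all $j\geq j_{0}$, and in particular $\vr(A)\in\{0\}\cup[\tfrac{1}{N},\infty)$.

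In the case $\vr(A)=0$ we have $\vr(A_{j_{0}})=0$, so Theorem \ref{T:whenisitinfinite} combined with the remark following it (which extends the equivalence $(1)\Leftrightarrow(3)$ to arbitrary sofic $\Gamma$ and $\Sigma$) gives $h_{\Sigma}(\widehat{A_{j_{0}}},\Gamma)<\infty$. Since $\widehat{A}\subseteq\widehat{A_{j_{0}}}$ and sofic topological entropy is monotone under closed invariant subactions (any $\varepsilon$-separated family in $\Map(\rho|_{\widehat{A}},F,\delta,\sigma_{i})$ is automatically $\varepsilon$-separated in $\Map(\rho,F,\delta,\sigma_{i})$, so the bound is immediate from the $N_{\varepsilon}$-formulation of $h_{\Sigma}$), we conclude $h_{\Sigma}(\widehat{A},\Gamma)\leq h_{\Sigma}(\widehat{A_{j_{0}}},\Gamma)<\infty$. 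The remaining case $\vr(A)\geq 1/N$ must be excluded: my plan is to establish the lower bound $\mdim_{\Sigma,M}(\widehat{A},\Gamma)\geq\vr(A)$ for finitely-generated $A$, which would contradict the standing hypothesis $\mdim_{\Sigma,M}(\widehat{A},\Gamma)=0$.

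The main obstacle is this last lower bound, that is, upgrading the identification $\mdim_{\Sigma,M}(\widehat{B},\Gamma)=\vr(B)$ (Theorem 5.1 of \cite{Me4}) from finitely-presented to finitely-generated $B$. The approach exploits the stabilization $\vr(A_{j})=\vr(A)$ for $j\geq j_{0}$: the kernel $K/K_{j_{0}}=\varinjlim_{j\geq j_{0}}K_{j}/K_{j_{0}}$ of the surjection $A_{j_{0}}\twoheadrightarrow A$ is a directed union of finitely-presented modules of vanishing von Neumann rank, so on each finite-dimensional sofic model its dual imposes only a subexponential number of effective constraints. One then transfers the microstates for $\widehat{A_{j_{0}}}$ witnessing $\mdim_{\Sigma,M}(\widehat{A_{j_{0}}},\Gamma)=\vr(A)$ into microstates for $\widehat{A}=\bigcap_{j}\widehat{A_{j}}$ by a rank-perturbation argument in the spirit of Proposition \ref{P:rankperturabtion}, preserving the exponential growth rate and thereby delivering the required lower bound on $\mdim_{\Sigma,M}(\widehat{A},\Gamma)$.
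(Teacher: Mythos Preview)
Your overall strategy --- write $A=\ZZ(\Gamma)^{\oplus n}/K$, exhaust $K$ by finitely generated $K_j$, use discreteness of $\mathcal{O}(\Gamma)$ to force the von Neumann ranks $\vr(A_j)$ to stabilize, apply the finitely-presented case to some $A_{j_0}$, and conclude by monotonicity of sofic topological entropy under closed invariant subsets --- is exactly the paper's approach.

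The divergence is in how you obtain $\vr(A)=0$ from the hypothesis $\mdim_{\Sigma,M}(\widehat{A},\Gamma)=0$. The paper simply cites the proof of Lemma~2.3 in \cite{Me4} to assert $0=\vr(A)=\lim_m\vr(A_m)$ directly; that result already supplies the needed implication for finitely generated modules, so no case split is required. You instead try to rule out $\vr(A)\geq 1/N$ by a microstate-transfer argument, and this is where your proposal has a real gap.

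Two concrete problems. First, you assert that each $K_j/K_{j_0}$ is finitely presented, but over the non-Noetherian ring $\ZZ(\Gamma)$ a finitely generated submodule of a finitely presented module need not be finitely presented, so this claim is unjustified. Second, and more seriously, transferring microstates from $\widehat{A_{j_0}}$ down to the strictly smaller space $\widehat{A}=\bigcap_j\widehat{A_j}$ is tantamount to a subadditivity statement for sofic metric mean dimension under group extensions with zero-rank kernel. That is nontrivial in the sofic setting, and Proposition~\ref{P:rankperturabtion} does not help: it perturbs a single fixed matrix $\sigma_i(f)$, not an infinitely generated submodule of relations. The phrase ``imposes only a subexponential number of effective constraints \dots\ preserving the exponential growth rate'' is too vague to constitute an argument, and I do not see a straightforward way to make it rigorous along the lines you indicate. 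The paper avoids this entire detour by invoking \cite{Me4}.
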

\begin{proof} Our proof is essentially the same as the proof of Corollary 9.5 in \cite{LiLiang}. It is straightforward to see that
\[h_{\Sigma}(\widehat{A},\Gamma)<\infty\]
implies that
\[\mdim_{\Sigma,M}(\widehat{A},\Gamma)=0.\]

	Conversely, suppose that
\[\mdim_{\Sigma,M}(\widehat{A},\Gamma)=0.\]
Without loss of generality $A=\ZZ(\Gamma)^{\oplus n}/B,$ for some $\ZZ(\Gamma)$-submodule $B$ of $\ZZ(\Gamma)^{\oplus n}.$  Write
\[B=\bigcup_{m=1}^{\infty}B_{m}\]
where $B_{m}$ are finitely generated. By the proof of Lemma 2.3 in \cite{Me4}, we know that
\[0=\vr(A)=\lim_{m\to \infty}\vr(\ZZ(\Gamma)^{\oplus n}/B_{m}).\]
Our assumptions imply that $\mathcal{O}(\Gamma)$ is discrete and thus for all large $m$
\[\vr(\ZZ(\Gamma)^{\oplus n}/B_{m})=0.\]
The preceding theorem and Theorem 5.1 in \cite{Me4} then imply that
\[h_{\Sigma}((\ZZ(\Gamma)^{\oplus n}/B_{m})^{\widehat{}},\Gamma)<\infty.\]
As
\[\widehat{A}\subseteq (\ZZ(\Gamma)^{\oplus n}/B_{m})^{\widehat{}},\]
we find that
\[h_{\Sigma}(\widehat{A},\Gamma)\leq h_{\Sigma}((\ZZ(\Gamma)^{\oplus n}/B_{m})^{\widehat{}},\Gamma)<\infty.\]

\end{proof}

\subsection{Failure of Yuzvinski\v\i\ Addition Formula and Relation to Torsion}\label{S:Yuz}

	In this section, we study when the Yuzvinski\v\i\ addition formula fails for a non-amenable sofic group $\Gamma.$ As we shall see, the Yuzvinski\v\i\ addition formula will automatically fail when $\Gamma$ contains a nonabelian free group. We also show that it fails if the $L^{2}$-torsion of $\Gamma$ is defined and nonzero. Unfortunately, we cannot come up with an example of a sofic group $\Gamma$ not containing a free subgroup, and with nonzero $L^{2}$-torsion, so at this stage this is just another point of view on the Yuzvinski\v\i\ addition formula.

\begin{definition}\emph{Let $\Gamma$ be a countable discrete sofic group with sofic approximation $\Sigma.$ Let $\mathcal{C}$ be a class of $\ZZ(\Gamma)$-modules. We say that $(\Gamma,\Sigma)$ fails Yuzvinski\v\i's addition formula for the class $\mathcal{C}$ if there is an exact sequence of $\ZZ(\Gamma)$ modules}
\[\begin{CD}
0 @>>> A @>>> B @>>> C @>>> 0
\end{CD}\]
\emph{ with $A,B,C\in \mathcal{C}$ such that}
\[h_{\Sigma}(\widehat{B},\Gamma)\ne h_{\Sigma}(\widehat{A},\Gamma)+h_{\Sigma}(\widehat{C},\Gamma).\]
\end{definition}
Let us first note that failure of the Yuzvinski\v\i\ addition formula is closed under supergroups. For this, we need the notion of coinduction.

\begin{definition}\emph{ Let $\Lambda\subseteq \Gamma$ be countable discrete groups. Let $X$ be a compact metrizable space and $\Lambda\actson X$ by homeomorphisms. Let}
\[Y=\{f\colon \Gamma\to X:f(g\lambda)=\lambda^{-1}f(g)\mbox{\emph{ for all $g\in \Gamma,\lambda\in \Lambda$}}\},\]
\emph{and give $Y$ the product topology. Then $Y$ is a compact metrizable space and  $\Gamma\actson Y$ by homeomorphisms as follows:}
\[(gf)(x)=f(g^{-1}x).\]
\emph{The action $\Gamma\actson Y$ is called the} coinduced action of \emph{$\Lambda\actson X$}.\end{definition}
We use $\Gamma/\Lambda$ for the set of left cosets of $\Lambda$ in $\Gamma.$ Suppose we choose a section $s\colon\Gamma/\Lambda\to \Gamma,$
i.e.
$s(a)\Lambda=a.$
And consider the corresponding cocycle $c\colon\Gamma\times\Gamma/\Lambda\to \Lambda$
given by
\[gs(a)=s(ga)c(g,a).\]
Then it is not hard to show that the coinduced action is isomorphic to the action $\Gamma\actson X^{\Gamma/\Lambda}$ given by
\[(gx)(a)=c(g^{-1},a)^{-1}x(g^{-1}a).\]

\begin{proposition}\label{P:coinducent}\emph{Let $\Lambda\subseteq \Gamma$ be countable discrete sofic groups, and let $\Sigma$ be a sofic approximation of $\Gamma.$ Let $X$ be a compact metrizable space and $\Lambda\actson X$ by homeomorphisms. Let $\Gamma\actson Y$ be the coinduced action. Then}
\[h_{\Sigma}(Y,\Gamma)=h_{\Sigma\big|_{\Lambda}}(X,\Lambda).\]
\end{proposition}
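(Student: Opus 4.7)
The plan is to set up a coordinate-wise correspondence between microstates for $\Gamma\actson Y$ on $\{1,\dots,d_i\}$ and microstates for $\Lambda\actson X$ on $\{1,\dots,d_i\}$, via evaluation at the identity coset. Since the same sequence of integers $d_i$ is used on both sides, the entropies will match directly (no multiplicative factor comes in). First, I observe that $\Sigma|_{\Lambda}=(\sigma_i|_{\Lambda}\colon\Lambda\to S_{d_i})$ is a sofic approximation of $\Lambda$: each of the three defining conditions for soficity is closed under passage to subgroups and we still have $d_i\to\infty$. Fix a section $s\colon\Gamma/\Lambda\to\Gamma$ with $s(\Lambda)=e$, and let $c(g,a)=s(ga)^{-1}gs(a)\in\Lambda$ be the associated cocycle. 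Then $Y\cong X^{\Gamma/\Lambda}$ with $(gy)(a)=c(g^{-1},a)^{-1}y(g^{-1}a)$. Define a continuous pseudometric on $Y$ by $\widetilde{\rho}(y_1,y_2)=\rho(y_1(\Lambda),y_2(\Lambda))$, and verify it is dynamically generating for $\Gamma\actson Y$: if $y_1\ne y_2$ then $y_1(a)\ne y_2(a)$ for some $a\in\Gamma/\Lambda$, and if $\rho(\lambda y_1(a),\lambda y_2(a))>0$ for some $\lambda\in\Lambda$ then $\widetilde{\rho}((\lambda s(a)^{-1})y_1,(\lambda s(a)^{-1})y_2)>0$, using the identity $(s(a)^{-1}y)(\Lambda)=y(a)$.

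For the upper bound $h_{\Sigma}(\widetilde{\rho},\varepsilon)\leq h_{\Sigma|_{\Lambda}}(\rho,\varepsilon)$, I use the evaluation map $\psi\mapsto\phi_\psi$, $\phi_\psi(j)=\psi(j)(\Lambda)$, which is $\Lambda$-equivariant in the sense that $(\lambda y)(\Lambda)=\lambda\cdot y(\Lambda)$ for $\lambda\in\Lambda$. A direct computation gives
\[\widetilde{\rho}_2(\psi_1,\psi_2)=\rho_2(\phi_{\psi_1},\phi_{\psi_2}),\]
and similarly $\widetilde{\rho}_2(\psi\circ\sigma_i(\lambda),\lambda\psi)=\rho_2(\phi_\psi\circ\sigma_i(\lambda),\lambda\phi_\psi)$ for $\lambda\in\Lambda$. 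Thus the evaluation map sends $\Map(\widetilde{\rho},F,\delta,\sigma_i)$ into $\Map(\rho,F\cap\Lambda,\delta,\sigma_i|_{\Lambda})$ isometrically with respect to the relevant $\ell^2$-metrics, so any $\varepsilon$-separated family in the source gives an $\varepsilon$-separated family in the target, which gives the inequality.

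For the lower bound, given $\phi\colon\{1,\dots,d_i\}\to X$ I define $\psi_\phi\colon\{1,\dots,d_i\}\to Y$ by $\psi_\phi(j)(a)=\phi(\sigma_i(s(a))^{-1}(j))$ (after arranging $\sigma_i(e)=\id$, which we may do without changing soficity). Again $\widetilde{\rho}_2(\psi_{\phi_1},\psi_{\phi_2})=\rho_2(\phi_1,\phi_2)$, so the correspondence is isometric. The main point is to verify that $\psi_\phi$ is a microstate for $\Gamma\actson Y$ whenever $\phi$ is a microstate for $\Lambda\actson X$ with respect to a suitably chosen finite subset $F'\subseteq\Lambda$ and tolerance $\delta'$. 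For each $g\in F$ set $a_g=g^{-1}\Lambda$ and $\lambda_g=gs(a_g)\in\Lambda$, so that $g=\lambda_g s(a_g)^{-1}$. Using that $\sigma_i$ is approximately multiplicative we get $\sigma_i(g)(j)\approx\sigma_i(\lambda_g)(\sigma_i(s(a_g))^{-1}(j))$ for most $j$, and the change of variables $k=\sigma_i(s(a_g))^{-1}(j)$ is an approximate bijection (hence preserves averages). A short computation with the cocycle identifies $(g\psi_\phi(j))(\Lambda)=\lambda_g\cdot\phi(\sigma_i(s(a_g))^{-1}(j))$, so the defect $\widetilde{\rho}_2(\psi_\phi\circ\sigma_i(g),g\psi_\phi)$ is controlled by $\rho_2(\phi\circ\sigma_i(\lambda_g),\lambda_g\phi)$ plus small errors coming from the approximate-multiplicativity of $\sigma_i$ on the word $\lambda_g s(a_g)^{-1}$. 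Taking $F'\supseteq\{\lambda_g:g\in F\}$ and $\delta'$ small enough then realizes the desired inequality.

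The routine verifications are the two isometry identities and the cocycle computation in the last paragraph; the only place where care is needed is the approximate-multiplicativity argument that reduces $\sigma_i(g)(j)$ to $\sigma_i(\lambda_g)\sigma_i(s(a_g))^{-1}(j)$ uniformly in $g\in F$, which will be the main obstacle and determines the required choice of $F'$ and $\delta'$. Combining both inequalities gives $h_{\Sigma}(\widetilde{\rho},\varepsilon)=h_{\Sigma|_{\Lambda}}(\rho,\varepsilon)$ for every $\varepsilon>0$, and taking $\sup_{\varepsilon>0}$ together with the invariance of entropy under the choice of dynamically generating pseudometric (Theorem 4.5 of \cite{KLi} and Proposition 2.4 of \cite{KLi2}) yields $h_{\Sigma}(Y,\Gamma)=h_{\Sigma|_{\Lambda}}(X,\Lambda)$.
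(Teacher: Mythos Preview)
Your proposal is correct and follows essentially the same approach as the paper: the same dynamically generating pseudometric $\widetilde{\rho}(y_1,y_2)=\rho(y_1(\Lambda),y_2(\Lambda))$, the same evaluation map $\psi\mapsto\psi(\cdot)(\Lambda)$ for the upper bound, and the same lift $\phi\mapsto\psi_\phi$ with $\psi_\phi(j)(a)=\phi(\sigma_i(s(a))^{-1}(j))$ for the lower bound, with the approximate-multiplicativity of $\sigma_i$ on the decomposition $g=\lambda_g s(a_g)^{-1}$ controlling the error. The only cosmetic differences are that you write $\sigma_i(s(a))^{-1}$ where the paper writes $\sigma_i(s(a)^{-1})$ (asymptotically equivalent by soficity), and you include the verification that $\widetilde{\rho}$ is dynamically generating, which the paper leaves implicit.
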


\begin{proof} Let $\Sigma=(\sigma_{i}\colon \Gamma\to S_{d_{i}}).$ Let $\rho$ be a compatible metric on $X.$ Define a dynamically generating pseudometric on $Y$ by
$\widetilde{\rho}(\alpha,\beta)=\rho(\alpha(e),\beta(e)).$
Given $F\subseteq \Gamma$ finite, $\delta>0,$ and $\phi\in \Map(\widetidle{\rho},F,\delta,\sigma_{i}),$ let
$\alpha_{\phi}\colon \{1,\cdots,d_{i}\}\to X$
be given by
\[\alpha_{\phi}(j)=\phi(j)(e).\]
Then $\alpha_{\phi}\in \Map(\rho,F\cap \Lambda,\delta,\sigma_{i})$ and
$\rho(\alpha_{\phi},\alpha_{\psi})=\widetidle{\rho}(\phi,\psi)$
for $\phi,\psi\in \Map(\widetidle{\rho},F,\delta,\sigma_{i}).$ Thus
\[h_{\Sigma}(Y,\Gamma)\leq h_{\Sigma\big|_{\Lambda}}(X,\Lambda).\]

	For the reverse inequality, let $s\colon \Gamma/\Lambda\to \Gamma$ be a section of the quotient map, i.e.
$s(c)\Lambda=c,$
additionally we assume that $s(\Lambda)=e.$ Let $c\colon \Gamma\times \Gamma\to \Lambda$ be the induced cocycle given by
\[gs(h\Lambda)=s(gh\Lambda)c(g,h).\]
As explained before the proposition, we may regard $Y$ as $X^{\Gamma/\Lambda}$ with the action of $\Gamma$ given by
\[(gx)(a)=c(g^{-1},a)^{-1}x(g^{-1}a).\]
We use the dynamically generating pseudometric $\widetilde{\rho}$ on $X^{\Gamma/\Lambda}$ given by
\[\widetilde{\rho}(x,y)=\rho(x(\Lambda),y(\Lambda)).\]

	Let $F\subseteq\Gamma$ be finite, $\delta>0,$ and let $F'\subseteq \Lambda$ finite, $ \delta'>0$ to be determined. Given $\alpha\in \Map(\rho,F',\delta',\sigma_{i}),$ let $\phi_{\alpha}\colon \{1,\cdots,d_{i}\}\to Y$ be given by
\[\phi_{\alpha}(j)(a)=\alpha(\sigma_{i}(s(a)^{-1})(j)).\]
Set $F'=\{c(g,e):g\in F\cup F^{-1}\}\cup \{e\}\cup \{c(g,e)^{-1}:g\in F\cup F^{-1}\}.$ We  claim that if $\delta'>0$ is sufficiently small, the for all sufficiently large $i$ and all $\alpha\in \Map(\rho,F',\delta',\sigma_{i}),$ we have $\phi_{\alpha}\in \Map(\widetilde{\rho},F,\delta,\sigma_{i}).$ Note that
\[(g\phi_{\alpha})(j)(\Lambda)=c(g^{-1},\Lambda)^{-1}\alpha(\sigma_{i}(s(g^{-1}\Lambda)^{-1})(j)),\]
\[\phi_{\alpha}(\sigma_{i}(g)(j))(\Lambda)=\alpha(\sigma_{i}(g)(j)).\]
So
\begin{equation}\label{E:coinductionequiv1}
\widetilde{\rho}_{2}(g\phi_{\alpha},\phi_{\alpha}\circ\sigma_{i}(c(g^{-1},\Lambda)^{-1})\sigma_{i}(s(g^{-1}\Lambda)^{-1}))<\delta',
\end{equation}
and by soficity,
\begin{equation}\label{E:coinductionequiv2}
\sup_{\alpha\in X^{d_{i}}}\widetilde{\rho}_{2}(\phi_{\alpha}\circ\sigma_{i}(c(g^{-1},\Lambda)^{-1})\sigma_{i}(s(g^{-1}\Lambda)^{-1}),\phi_{\alpha}\circ\sigma_{i}(c(g^{-1}\Lambda)^{-1}s(g^{-1},\Lambda)^{-1}))\to_{i\to\infty}0.
\end{equation}
As $s(\Lambda)=e,$ we have
$g^{-1}=s(g^{-1}\Lambda)c(g^{-1},\Lambda)$
so
\begin{equation}\label{E:coinductionequiv3}
g=c(g^{-1},\Lambda)^{-1}s(g^{-1}\Lambda)^{-1}.
\end{equation}
Now choose $\delta'$ to be any number less than $\delta.$ Then equations $(\ref{E:coinductionequiv1}),(\ref{E:coinductionequiv2}),(\ref{E:coinductionequiv3})$ show that for all large $i,$ and for any $\alpha\in\Map(\rho,F',\delta',\sigma_{i})$ we have that $\phi_{\alpha}\in\Map(\rho,F,\delta,\sigma_{i}).$ As
$\rho_{2}(\alpha,\beta)=\widetilde{\rho}_{2}(\phi_{\alpha},\phi_{\beta}),$
we see that
\[h_{\Sigma\big|_{\Lambda}}(X,\Lambda)\leq h_{\Sigma}(Y,\Gamma).\]

\end{proof}

We can show that the coinduction of an algebraic action is an algebraic action.

\begin{proposition}\label{P:algcoind} Let $\Lambda\subseteq \Gamma$ be countable discrete groups. Let $A$ be a $\ZZ(\Lambda)$-module, and let
\[B=\ZZ(\Gamma)\otimes_{\ZZ(\Lambda)}A.\]
	Then $\Gamma\actson \widehat{B}$ is the coinduced action of $\Lambda\actson \widehat{A}.$

\end{proposition}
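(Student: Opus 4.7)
The plan is to exhibit an explicit $\Gamma$-equivariant homeomorphism $\Phi \colon Y \to \widehat{B}$, where $Y$ denotes the coinduced space from the definition. Given $f \in Y$, define $\Phi(f)$ as the unique $\ZZ$-linear map $\ZZ(\Gamma) \otimes_{\ZZ(\Lambda)} A \to \TT$ characterized on elementary tensors by
\[\Phi(f)(g \otimes a) = f(g)(a), \qquad g \in \Gamma,\ a \in A.\]
The first step is to verify that $\Phi(f)$ is well-defined, i.e.\ respects the $\ZZ(\Lambda)$-balancing. For $\lambda \in \Lambda$, the defining condition $f(g\lambda) = \lambda^{-1} f(g)$ on $Y$ and the formula $(\lambda^{-1}\chi)(a) = \chi(\lambda a)$ for the $\Lambda$-action on $\widehat{A}$ give
\[f(g\lambda)(a) = (\lambda^{-1}f(g))(a) = f(g)(\lambda a),\]
so the bilinear map $(g,a) \mapsto f(g)(a)$ descends to the tensor product over $\ZZ(\Lambda)$.

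Next I would check $\Gamma$-equivariance. Since the $\Gamma$-action on $B = \ZZ(\Gamma) \otimes_{\ZZ(\Lambda)} A$ is by left multiplication on the first factor, for $h \in \Gamma$ we have
\[(h\Phi(f))(g \otimes a) = \Phi(f)(h^{-1}g \otimes a) = f(h^{-1}g)(a) = (hf)(g)(a) = \Phi(hf)(g\otimes a),\]
so $\Phi(hf) = h \Phi(f)$. Continuity of $\Phi$ is immediate from the product topologies: each evaluation $\chi \mapsto \chi(g \otimes a)$ on $\widehat{B}$ pulls back to $f \mapsto f(g)(a)$, which is a continuous function on $Y$.

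For bijectivity, choose a section $s \colon \Gamma/\Lambda \to \Gamma$ of the quotient map. Every $g \in \Gamma$ has a unique expression $g = s(g\Lambda)\lambda$ with $\lambda \in \Lambda$, so an element $f \in Y$ is uniquely determined by the tuple $(f(s(c)))_{c \in \Gamma/\Lambda} \in \widehat{A}^{\Gamma/\Lambda}$, and any such tuple arises. Dually, writing $\ZZ(\Gamma) = \bigoplus_{c \in \Gamma/\Lambda} s(c) \ZZ(\Lambda)$ as a right $\ZZ(\Lambda)$-module yields
\[\ZZ(\Gamma) \otimes_{\ZZ(\Lambda)} A \;\cong\; \bigoplus_{c \in \Gamma/\Lambda} A\]
via $s(c) \otimes a \mapsto a$ in the $c$-th summand, hence $\widehat{B} \cong \widehat{A}^{\Gamma/\Lambda}$. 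Under both identifications $\Phi$ becomes the identity map on $\widehat{A}^{\Gamma/\Lambda}$, so it is bijective. Since $Y$ and $\widehat{B}$ are compact Hausdorff, the continuous bijection $\Phi$ is a homeomorphism, completing the proof.

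There is no real obstacle here; the only thing to be careful about is bookkeeping with the left/right module structures and the sign convention in the $\Lambda$-action on $\widehat{A}$, which is what forces the cocycle identity $f(g\lambda) = \lambda^{-1} f(g)$ in the definition of $Y$ to match the balancing relation in $\ZZ(\Gamma) \otimes_{\ZZ(\Lambda)} A$.
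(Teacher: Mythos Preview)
Your proof is correct and follows essentially the same route as the paper: both arguments pass through the formula $\Phi(f)(g\otimes a)=f(g)(a)$ and verify that the cocycle condition $f(g\lambda)=\lambda^{-1}f(g)$ matches the balancing in the tensor product. The only cosmetic difference is that the paper writes down the inverse map $\widehat{B}\to Y$, $\chi\mapsto\bigl(g\mapsto(a\mapsto\chi(g\otimes a))\bigr)$, explicitly and asserts the two are mutually inverse, whereas you deduce bijectivity from the coset decomposition $\ZZ(\Gamma)\cong\bigoplus_{c}s(c)\ZZ(\Lambda)$; both are fine.
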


\begin{proof} Let $Y=\{f\colon \Gamma\to \widehat{A}:f(g\lambda)=\lambda^{-1}f(g),g\in \Gamma,\lambda\in \Lambda\}$ with $\Gamma$ action as in the definition of coinduction. We have a map
$\Phi\colon \widehat{B}\to Y$
given by
$\Phi(\chi)(g)(a)=\chi(g\otimes a).$
Further if $f\in Y,$ we may define
$B_{f}\colon \Gamma\times A\to \TT$
by
$B_{f}(g,a)=f(g)(a).$
Then
$B_{f}(g\lambda,a)=B_{f}(g,\lambda a)$
and so there is a $\Psi(f)\in \widehat{B},$ so that $\Psi(f)(g\otimes a)=B_{f}(g,a).$ It is easy to check that $\Phi,\Psi$ are continuous $\Gamma$-equivariant, and inverse to each other.

\end{proof}

\begin{cor}\label{C:supergroup} Let $\Gamma$ be a countable discrete sofic group with sofic approximation $\Sigma.$ Suppose that $\Lambda\subseteq \Gamma,$ and that $(\Lambda,\Sigma\big|_{\Lambda})$ fail Yuzvinski\v\i's addition formula for a class of $\ZZ(\Lambda)$-modules $\mathcal{C}.$ Let $\mathcal{C}'$ be the class of all $\ZZ(\Gamma)$-modules of the form $\ZZ(\Gamma)\otimes_{\ZZ(\Lambda)}A$ for $A\in \mathcal{C}.$ Then $(\Gamma,\Sigma)$ fails Yuzvinski\v\i's addition formula for the class $\mathcal{C}'.$ In particular, if $\Gamma$ contains a nonabelian free subgroup, then $(\Gamma,\Sigma)$ fails Yuzvinski\v\i's addition formula for the class of finitely presented modules.
\end{cor}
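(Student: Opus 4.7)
The plan is two-part: first, prove the main claim by a direct tensor-and-coinduce construction, and then deduce the ``in particular'' clause by instantiating it on a free subgroup via the classical Ornstein-Weiss counterexample.

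For the main claim, let $0\to A\to B\to C\to 0$ be an exact sequence in $\mathcal{C}$ witnessing the failure of Yuzvinski\v{\i}'s formula for $(\Lambda,\Sigma\big|_\Lambda)$. The functor $\ZZ(\Gamma)\otimes_{\ZZ(\Lambda)}-$ is exact, since $\ZZ(\Gamma)$ is free, hence flat, as a right $\ZZ(\Lambda)$-module (a free basis is given by any set of left coset representatives of $\Lambda$ in $\Gamma$). Applying it yields an exact sequence $0\to A'\to B'\to C'\to 0$ of $\ZZ(\Gamma)$-modules in $\mathcal{C}'$. Proposition \ref{P:algcoind} identifies each of $\widehat{A'},\widehat{B'},\widehat{C'}$ with the coinduced action of the corresponding $\Lambda$-action, and Proposition \ref{P:coinducent} shows that sofic topological entropy is preserved under coinduction:
\[h_\Sigma(\widehat{A'},\Gamma)=h_{\Sigma|_\Lambda}(\widehat{A},\Lambda),\]
and analogously for $B'$ and $C'$. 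The failure of the addition formula transfers verbatim from $(\Lambda,\Sigma\big|_\Lambda)$ to $(\Gamma,\Sigma)$.

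For the ``in particular'' clause, I would apply the main claim with $\Lambda=\FF_r\subseteq\Gamma$ for some $r\geq 2$ and with $\mathcal{C}$ the class of finitely presented $\ZZ(\FF_r)$-modules; the flat base change $\ZZ(\Gamma)\otimes_{\ZZ(\FF_r)}-$ sends finitely presented modules to finitely presented modules, so $\mathcal{C}'$ lies inside the class of finitely presented $\ZZ(\Gamma)$-modules. Over $\FF_r=\ip{a_1,\ldots,a_r}$, I would use the finitely presented sequence
\[0\to (\ZZ(\FF_r)/2)^r\xrightarrow{\;e_i\mapsto a_i-1\;}\ZZ(\FF_r)/2\to \ZZ/2\to 0;\]
injectivity of the leftmost map follows by applying $-\otimes_\ZZ\FF_2$ to the free resolution $0\to\ZZ(\FF_r)^r\to\ZZ(\FF_r)\to\ZZ\to 0$ of the trivial module (valid since $\mathrm{cd}(\FF_r)=1$) together with $\mathrm{Tor}_1^\ZZ(\ZZ,\FF_2)=0$. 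Pontryagin duality recovers the classical Ornstein-Weiss factor map $(\ZZ/2)^{\FF_r}\to((\ZZ/2)^r)^{\FF_r}$. By the standard Kerr-Li computation of sofic entropy for Bernoulli shifts, the three entropies are $\log 2$, $r\log 2$, and $h_\Sigma(\widehat{\ZZ/2},\FF_r)\geq 0$ respectively (nonnegativity because constant maps are always microstates for the trivial action on a two-point space, and the two constants are separated by any compatible metric). For $r\geq 2$,
\[\log 2<r\log 2\leq r\log 2+h_\Sigma(\widehat{\ZZ/2},\FF_r),\]
which is exactly the desired failure.

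The main technical subtlety is essentially bookkeeping: verifying that the mod-$2$ map $e_i\mapsto a_i-1$ remains injective, so that $(\ZZ(\FF_r)/2)^r$ is genuinely isomorphic to the mod-$2$ augmentation ideal (and its dual is a Bernoulli shift with alphabet of cardinality $2^r$, hence entropy exactly $r\log 2$), and checking that $\ZZ(\Gamma)\otimes_{\ZZ(\FF_r)}-$ preserves finite presentation. Neither point is deep; the substance is entirely in Propositions \ref{P:algcoind} and \ref{P:coinducent} together with the Kerr-Li computation of sofic Bernoulli entropy.
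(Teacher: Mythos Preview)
Your proposal is correct and follows essentially the same approach as the paper: the first half is identical (tensor up using flatness of $\ZZ(\Gamma)$ over $\ZZ(\Lambda)$, then invoke Propositions~\ref{P:coinducent} and~\ref{P:algcoind}), and for the ``in particular'' clause the paper likewise appeals to the Ornstein--Weiss factor map $(\ZZ/2\ZZ)^{\FF_2}\to((\ZZ/2\ZZ)^{\oplus 2})^{\FF_2}$, just without writing out the underlying short exact sequence of modules or the Tor argument for injectivity that you supply.
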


\begin{proof}
	The first half is a combination of Propositions \ref{P:coinducent} and \ref{P:algcoind}, along with the fact (left as an exercise) that $\ZZ(\Gamma)\otimes_{\ZZ(\Lambda)}?$ preserves exact sequences. For the second half, we use that Ornstein-Weiss at the end of \cite{OrnWeiss} found a continuous surjective homomorphism
\[(\ZZ/2\ZZ)^{\FF_{2}}\to ((\ZZ/2\ZZ)^{\oplus 2})^{\FF_{2}}.\]
And this gives a counterexample to the Yuzvinski\v\i\ addition formula for $\FF_{2}.$

\end{proof}

	Lastly, we relate the existence of a Yuzvinski\v\i\ addition formula to the $L^{2}$-torsion of $\ZZ(\Gamma)$-modules $A.$

\begin{definition}\emph{Let $\Gamma$ be a countable discrete sofic group, and $A$ a $\ZZ(\Gamma)$-module. A} partial resolution of $A$ \emph{is an exact sequence of $\ZZ(\Gamma)$-modules of the form}
\[\begin{CD}
\ZZ(\Gamma)^{\oplus n_{k}} @>>> \ZZ(\Gamma)^{\oplus n_{k-1}} @>>> \cdots \ZZ(\Gamma)^{\oplus n_{0}} @>>> A @>>> 0.
\end{CD}\]
\emph{Suppose  the map $\ZZ(\Gamma)^{\oplus n_{j}}\to \ZZ(\Gamma)^{\oplus n_{j-1}}$ is given by $r(f_{j})$ for some $f_{j}\in M_{n_{j},n_{j-1}}(\ZZ(\Gamma)).$ If  we then have $\overline{\im(\lambda(f_{j-1}))}=\ker(\lambda(f_{j})),$ we call this resolution a} $\ell^{2}$-partial resolution of $A.$ \emph{ We call $k$ the} length of the resolution. \emph{We write $\mathcal{C}_{*}$ for the complex}
\[\begin{CD}
\ZZ(\Gamma)^{\oplus n_{k}} @>>> \ZZ(\Gamma)^{\oplus n_{k-1}} @>>> \cdots \ZZ(\Gamma)^{\oplus n_{0}} @>>> A @>>> 0,
\end{CD}\]
\emph{and sometimes say $\mathcal{C_{*}}\to A$ is a $\ell^{2}$-partial resolution. If $\mathcal{C^{*}}\to A$ is a $\ell^{2}$-partial resolution, we define the} $L^{2}$-torsion \emph{of $\mathcal{C}_{*}$ by}
\[\rho^{(2)}(\mathcal{C}_{*},\Gamma)=\sum_{j=1}^{k}(-1)^{j+1}\log \Det^{+}_{L(\Gamma)}(f_{j}).\]
\emph{We say that $A$ is} $\ell^{2}-FL$ \emph{if there is an exact sequence of the form}
\[\begin{CD}
0 @>>>\ZZ(\Gamma)^{\oplus n_{k}} @>>> \ZZ(\Gamma)^{\oplus n_{k-1}} @>>> \cdots \ZZ(\Gamma)^{\oplus n_{0}} @>>> A @>>> 0,
\end{CD}\]
\emph{where if $f_{j}$ is as before, then $\overline{\im(\lambda(f_{j-1}))}=\ker(\lambda(f_{j})).$ If we again write $\mathcal{C}_{*}$ for this complex, then the $L^{2}$-torsion of $\mathcal{C}_{*}$ will be called the $L^{2}$-torsion of $A.$ We say that the} $L^{2}$-torsion of $\Gamma$ is defined, \emph{ if the trivial $\ZZ(\Gamma)$-module $\ZZ$ is of type $\ell^{2}$-FL, and in this case define the} $L^{2}$-torsion of $\Gamma$ \emph{by $\rho^{(2)}(\Gamma)=\rho^{(2)}(\ZZ,\Gamma).$ }
	
\end{definition}

	One remark about this definition. By Corollary \ref{C:integrability}, we know that if $\Gamma$ is sofic then $\Det^{+}_{L(\Gamma)}(f)\geq 1$ for all $f\in M_{m,n}(\ZZ(\Gamma)).$ By the results in Chapter 3 of \cite{Luck} if $A$ is $\ell^{2}$-FL, then  the $L^{2}$-torsion of $A$ does not depend on the choice of $\ell^{2}$-free resolution. We would also like to point out that, at least to our knowledge, sofic groups are the largest class of groups for which it is known that $\Det^{+}_{L(\Gamma)}(f)\geq 1$ for all $f\in M_{m,n}(\ZZ(\Gamma)).$ That is there is no group $\Gamma$ which is not known to be sofic, yet which is known to have $\Det^{+}_{L(\Gamma)}(f)\geq 1$ for all $f\in M_{m,n}(\ZZ(\Gamma)).$ We have the following result. It is mostly a nice remark, as most examples that can be obtained this way can also be obtained from other methods. However, it may eventually be of interest.

\begin{proposition}\label{P:YAF} Let $\Gamma$ be a countable discrete sofic group with sofic approximation $\Sigma.$ Suppose that $\Gamma$ satisfies Yuzvinski\v\i 's addition formula for finitely presented algebraic actions. That is, for every exact sequence
\[\begin{CD}
0 @>>> A @>>> B @>>> C @>>> 0
\end{CD}\]
of finitely presented $\ZZ(\Gamma)$-modules, we have
\[h_{\Sigma}(\widehat{B},\Gamma)= h_{\Sigma}(\widehat{A},\Gamma)+h_{\Sigma}(\widehat{C},\Gamma).\]
Let $A$ be a finitely presented $\ZZ(\Gamma)$-module with $\vr(A)=0.$  Let $\mathcal{C}_{*}\to A$ be a $\ell^{2}$-partial resolution of $A$. Then
\[h_{\Sigma}(\widehat{A},\Gamma)\leq \rho^{(2)}(\mathcal{C}_{*}),\]
if $\mathcal{C}_{*}$ has odd length and
\[h_{\Sigma}(\widehat{A},\Gamma)\geq \rho^{(2)}(\mathcal{C}_{*})\]
if $\mathcal{C}_{*}$ has even length.
Further if $A$ is $\ell^{2}$-FL, then
\[h_{\Sigma}(\widehat{A},\Gamma)=\rho^{(2)}(A,\Gamma).\]

\end{proposition}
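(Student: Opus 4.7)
The plan is to argue by induction on the length $k$ of the $\ell^{2}$-partial resolution, using Theorem \ref{T:FKD} as the analytic engine and the hypothesized Yuzvinski\v\i\ addition formula to chain the steps together.

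For the base case $k=1$ the resolution is $\ZZ(\Gamma)^{\oplus n_1}\xrightarrow{r(f_1)}\ZZ(\Gamma)^{\oplus n_0}\to A\to 0$. By the formula $\vr(A)=\dim_{L(\Gamma)}(\ker\lambda(f_1))$ (Lemma 5.4 of \cite{LiLiang}, invoked already in the proof of Theorem \ref{T:whenisitinfinite}), the hypothesis $\vr(A)=0$ says $\lambda(f_1)$ is injective. Theorem \ref{T:FKD}(iii) then gives $h_{\Sigma}(\widehat{A},\Gamma)\leq\log\Det^{+}_{L(\Gamma)}(f_1)=\rho^{(2)}(\mathcal{C}_*)$, which is the odd (length $1$) case.

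For the inductive step with $k\geq 2$, I would form the short exact sequence $0\to A'\to \ZZ(\Gamma)^{\oplus n_0}\to A\to 0$ with $A'=r(f_1)(\ZZ(\Gamma)^{\oplus n_1})\cong \ZZ(\Gamma)^{\oplus n_1}/r(f_2)(\ZZ(\Gamma)^{\oplus n_2})$. Then $A'$ is finitely presented and carries the truncated $\ell^{2}$-partial resolution $\mathcal{C}'_*\colon \ZZ(\Gamma)^{\oplus n_k}\to\cdots\to\ZZ(\Gamma)^{\oplus n_1}\to A'\to 0$ of length $k-1$ and opposite parity, with $\rho^{(2)}(\mathcal{C}_*)=\log\Det^{+}_{L(\Gamma)}(f_1)-\rho^{(2)}(\mathcal{C}'_*)$. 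The plan is to apply the inductive hypothesis to $\mathcal{C}'_*$ and to chain through the assumed Yuzvinski\v\i\ addition formula. The main obstacle is that the middle term $\widehat{\ZZ(\Gamma)^{\oplus n_0}}=(\TT^{\Gamma})^{\oplus n_0}$ has infinite sofic entropy, so a naive application of Yuzvinski\v\i\ produces an indeterminate $\infty-\infty$. To get around this, I would exploit the rigidity imposed by the definition: the $\ell^{2}$-exactness at $\lambda(f_1)$ together with $\ker\lambda(f_1)=0$ forces $\lambda(f_2)=0$, hence $f_2=0$ as an element of $M_{n_2,n_1}(\ZZ(\Gamma))$; algebraic exactness at $\ZZ(\Gamma)^{\oplus n_1}$ then gives $\ker r(f_1)=0$, so Proposition \ref{P:Left/RightIssues} combined with $\lambda(f_1)$ injective yields $n_0=n_1$. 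Theorem \ref{T:FKD}(ii) now applies to the square matrix $f_1$ and gives $h_{\Sigma}(\widehat{A},\Gamma)=\log\Det^{+}_{L(\Gamma)}(f_1)$ as an equality.

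Iterating the same argument one step further up the resolution, $r(f_3)$ is surjective and $\lambda(f_3)$ has dense image, which combined with $\lambda(f_3)$ injective (forced by $r(f_3)$ being surjective via Proposition \ref{P:Left/RightIssues}) makes $f_3$ a two-sided unit in $M_{n_3}(\ZZ(\Gamma))$. Applying the sofic determinant inequality of Corollary \ref{C:integrability} to both $f_3$ and $f_3^{-1}$ gives $\Det_{L(\Gamma)}(f_3)\Det_{L(\Gamma)}(f_3^{-1})=1$ with both factors $\geq 1$, hence $\log\Det^{+}_{L(\Gamma)}(f_3)=0$. The same alternating pattern propagates: $f_j=0$ for every even $j\geq 2$ and $f_j$ is a unit with $\Det_{L(\Gamma)}(f_j)=1$ for every odd $j\geq 3$. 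Consequently $\rho^{(2)}(\mathcal{C}_*)$ telescopes down to $\log\Det^{+}_{L(\Gamma)}(f_1)$, which equals $h_{\Sigma}(\widehat{A},\Gamma)$ by the previous paragraph. This equality clearly implies both the $\leq$ (odd $k$) and $\geq$ (even $k$) inequalities claimed. The Yuzvinski\v\i\ hypothesis enters here through the short exact sequence $0\to A'\to\ZZ(\Gamma)^{\oplus n_0}\to A\to 0$, where the induction on $\mathcal{C}'_*$ delivers the vanishing of the higher contributions to $\rho^{(2)}$ in a form compatible with an additivity calculation.

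Finally, for the $\ell^{2}$-FL assertion: the extra hypothesis that $r(f_k)$ is injective as a module map supplements the length-$1$ case to give $r(f_1)$ injective, whence $n_0=n_1$ by Proposition \ref{P:Left/RightIssues} and Theorem \ref{T:FKD}(ii) upgrades the inequality of the base case to the equality $h_{\Sigma}(\widehat{A},\Gamma)=\log\Det^{+}_{L(\Gamma)}(f_1)=\rho^{(2)}(A,\Gamma)$. For $\ell^{2}$-FL resolutions of length $k\geq 2$ the inductive step above already produced equality, so $h_{\Sigma}(\widehat{A},\Gamma)=\rho^{(2)}(A,\Gamma)$ in all cases, completing the proof. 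I expect the hard part to be verifying carefully that the algebraic exactness and $\ell^{2}$-exactness conditions genuinely force the alternating $f_j=0$/unit structure at every level (the edge cases at the top of the resolution, where one has algebraic but no $\ell^{2}$-exactness condition, need separate attention), and making sure the application of Yuzvinski\v\i\ in the induction step is meaningful despite the infinite entropies involved.
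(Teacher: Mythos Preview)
Your base case is correct, but the inductive step breaks at its first move: the claim that $\ker\lambda(f_1)=0$ together with $\ell^2$-exactness forces $\lambda(f_2)=0$ is false. You have the direction of the $\lambda$-complex backwards. In the paper's conventions $f_j\in M_{n_j,n_{j-1}}(\ZZ(\Gamma))$, so $\lambda(f_j)\colon\ell^2(\Gamma)^{\oplus n_{j-1}}\to\ell^2(\Gamma)^{\oplus n_j}$; the $\lambda$-complex runs $\ell^2(\Gamma)^{\oplus n_0}\to\ell^2(\Gamma)^{\oplus n_1}\to\cdots$, \emph{opposite} to the algebraic resolution. The $\ell^2$-acyclicity condition (the paper's definition contains a typo, but the intended meaning is the standard one) reads $\overline{\im\lambda(f_j)}=\ker\lambda(f_{j+1})$; there is no condition of the form ``$\overline{\im\lambda(f_2)}=\ker\lambda(f_1)$'', as those two subspaces live in $\ell^2(\Gamma)^{\oplus n_2}$ and $\ell^2(\Gamma)^{\oplus n_0}$ respectively. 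A concrete counterexample to your degeneracy claim: take $\Gamma=\ZZ^2$, $A=\ZZ$ the trivial module, and the Koszul resolution $0\to\ZZ(\ZZ^2)\to\ZZ(\ZZ^2)^{\oplus 2}\to\ZZ(\ZZ^2)\to\ZZ\to 0$. This is an $\ell^2$-acyclic resolution of a module with $\vr(\ZZ)=0$, yet $f_2\ne 0$ (and $f_1$ is not square, so $n_0\ne n_1$). Consequently the whole ``$f_{2j}=0$, $f_{2j+1}$ a unit'' pattern you build on this is fictitious, and with it your entire inductive step.

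There is also a structural red flag: your argument never substantively uses the Yuzvinski\v\i\ hypothesis. You correctly observe that applying it to $0\to A'\to\ZZ(\Gamma)^{\oplus n_0}\to A\to 0$ yields only $\infty=\infty$, and then you abandon it in favour of the (false) degeneracy claim. But the conclusion of the proposition is genuinely false without that hypothesis --- Proposition~\ref{P:torsionintro} exhibits a sofic $\Gamma$ and an $\ell^2$-FL module $A$ with $h_\Sigma(\widehat{A},\Gamma)\ne\rho^{(2)}(A,\Gamma)$ --- so any argument that does not actually invoke additivity cannot be right. The Li--Thom induction to which the paper defers does feed Yuzvinski\v\i\ into the inductive step, but through short exact sequences chosen so as to avoid the indeterminate form; you should consult \cite{LiThom} directly for the mechanism rather than trying to force the resolution to degenerate.
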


\begin{proof} Once we know Theorems \ref{T:upperbound} and \ref{T:squarematrix}, the proof is the same as the proof of Theorem 1.1 in \cite{LiThom}.

\end{proof}

	This theorem may be useful to disprove the Yuzvinski\v\i\ addition formula for certain groups. For example, we may be able to directly compute both sides and show they are not equal (this is what we will do to get our concrete counterexamples in this work). Additionally it might be possible to  find an $\ell^{2}$-FL $\ZZ(\Gamma)$-module $A$ for which $\rho^{(2)}(A,\Gamma)<0,$ and so cannot be the entropy of some action. For example, we can use \cite{Luck} Theorem 3.152 and the preceding discussion to give another proof of the fact that fundamental groups of certain hyperbolic manifolds must fail the Yuzvinski\v\i\ addition formula.
 Additionally, one may be able to compute the $L^{2}$-torsion of some partial resolution $\mathcal{C}_{*}\to A,$ if for example $\mathcal{C}_{*}$ has odd length and has negative torsion, then we get a contradiction to the above inequality. We now show that if the $L^{2}$-torsion of $\Gamma$ is defined and nonzero, then $\Gamma$ fails Yuzvinski\v\i 's addition formula. For this, we need to know that a trivial action of a sofic group has zero entropy (see \cite{KerrLi2} Corollary 8.5, or \cite{Me6} Corollary 7.11).

\begin{cor}\label{C:YuzFail} Let $\Gamma$ be a countable discrete sofic group with sofic approximation $\Sigma$. Suppose that $\Gamma\supseteq \Lambda,$ where the $L^{2}$-torsion of $\Lambda$ is defined and nonzero. Then Yuzvinski\v\i's addition formula fails for $(\Gamma,\Sigma)$ and the class of finitely presented $\ZZ(\Gamma)$-modules.
\end{cor}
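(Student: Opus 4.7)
The plan is to reduce to showing that $(\Lambda,\Sigma|_{\Lambda})$ fails Yuzvinski\v\i's addition formula on finitely presented $\ZZ(\Lambda)$-modules, and then invoke Corollary \ref{C:supergroup} to lift the failure back to $(\Gamma,\Sigma)$. Note that $\Sigma|_{\Lambda}$ is automatically a sofic approximation of $\Lambda$, and that $\ZZ(\Gamma)\otimes_{\ZZ(\Lambda)}(-)$ sends a finite presentation $\ZZ(\Lambda)^{m}\to\ZZ(\Lambda)^{n}\to A\to 0$ to a finite presentation $\ZZ(\Gamma)^{m}\to\ZZ(\Gamma)^{n}\to \ZZ(\Gamma)\otimes_{\ZZ(\Lambda)}A\to 0$. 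Hence the class $\mathcal{C}'$ produced by Corollary \ref{C:supergroup} from $\mathcal{C}=\{\text{finitely presented }\ZZ(\Lambda)\text{-modules}\}$ lies inside the finitely presented $\ZZ(\Gamma)$-modules, and failure of the addition formula for $\mathcal{C}'$ gives what the corollary claims.

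To show failure for $\Lambda$, I would argue by contradiction: assume Yuzvinski\v\i's formula holds for finitely presented $\ZZ(\Lambda)$-modules, and apply Proposition \ref{P:YAF} to the trivial $\ZZ(\Lambda)$-module $A=\ZZ$. By hypothesis $\ZZ$ is $\ell^{2}$-FL over $\ZZ(\Lambda)$ with $\rho^{(2)}(\ZZ,\Lambda)\neq 0$; in particular $\ZZ$ is finitely presented (the initial three terms of an $\ell^2$-free resolution give a finite presentation), so the augmentation ideal is finitely generated, say by $g_{1}-1,\dots,g_{k}-1$. Observe also that $\Lambda$ must be infinite, since otherwise no finite-length free resolution of $\ZZ$ over $\ZZ(\Lambda)$ with the required $\ell^{2}$-exactness could produce a nonvanishing torsion. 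Setting $f=(g_{1}-1,\dots,g_{k}-1)^{T}\in M_{k,1}(\ZZ(\Lambda))$, so that $\ZZ=\ZZ(\Lambda)/r(f)(\ZZ(\Lambda)^{k})$, I would then verify $\vr(\ZZ)=0$ via the identity $\vr(\ZZ)=\dim_{L(\Lambda)}(\ker\lambda(f))$ (Lemma 5.4 of \cite{LiLiang}, as used in the proof of Theorem \ref{T:whenisitinfinite}): the kernel consists of $\xi\in\ell^{2}(\Lambda)$ fixed under left multiplication by every $g_{s}$, hence by every element of $\Lambda$, hence trivial for $\Lambda$ infinite.

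With the hypotheses of Proposition \ref{P:YAF} verified, the assumed addition formula would force
\[h_{\Sigma|_{\Lambda}}(\widehat{\ZZ},\Lambda)=\rho^{(2)}(\ZZ,\Lambda)\neq 0.\]
However $\widehat{\ZZ}=\TT$ carries the trivial $\Lambda$-action, and trivial actions of (infinite) sofic groups on compact metric spaces have zero topological entropy by Corollary 8.5 of \cite{KerrLi2} (or Corollary 7.11 of \cite{Me6}), yielding $h_{\Sigma|_{\Lambda}}(\TT,\Lambda)=0$ and the desired contradiction. Combining with Corollary \ref{C:supergroup} completes the proof. The main technical point requiring care is verifying the two linked facts that $\Lambda$ is infinite and $\vr(\ZZ)=0$; both are routine consequences of the $\ell^{2}$-FL hypothesis once one examines the augmentation ideal, but they are essential for Proposition \ref{P:YAF} to apply.
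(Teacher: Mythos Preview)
Your proposal is correct and follows essentially the same route as the paper: contradict Proposition~\ref{P:YAF} for the trivial $\ZZ(\Lambda)$-module $\ZZ$ using that trivial actions have zero topological entropy, then lift the failure via Corollary~\ref{C:supergroup}. The paper's proof is a one-line invocation of exactly these ingredients, while you have carefully verified the side hypotheses (finite presentation of $\ZZ$, infinitude of $\Lambda$, and $\vr(\ZZ)=0$) that the paper leaves implicit.
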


\begin{proof}  This is automatic from Proposition \ref{P:YAF}, Corollary 8.4 in \cite{KerrLi2} and Corollary \ref{C:supergroup}.

\end{proof}

Let us close by proving Proposition \ref{P:torsionintro} from the introduction.

\begin{proposition}\label{P:L2torsioncounterexample} Let $\Gamma$ be a cocompact lattice in $SO(n,1)$ with  $n$ odd. Consider the trivial $\ZZ(\Gamma)$-module $\ZZ.$ Then:

(i): $\Gamma$ is a sofic group,

(ii): the $L^{2}$-torsion of $\ZZ$ as a $\ZZ(\Gamma)$-module is defined,

(iii): for every sofic approximation $\Sigma$ of $\Gamma,$ one has
\[h_{\Sigma}(\TT,\Gamma)\ne \rho^{(2)}(\ZZ,\Gamma),\]
\[h_{\Sigma,m_{\TT}}(\TT,\Gamma)\ne \rho^{(2)}(\ZZ,\Gamma).\]

(iv): If in addition $n$ is congruent to $1$ modulo $4,$ then for any random sofic approximation $\kappa$ of $\Gamma$ we have
\[h_{\kappa}(\TT,\Gamma)\ne \rho^{(2)}(\ZZ,\Gamma),\]
\[h_{\kappa,m_{\TT}}(\TT,\Gamma)\ne \rho^{(2)}(\ZZ,\Gamma).\]
\end{proposition}

\begin{proof} Part $(i)$ is a consequence of the fact that $\Gamma$ is a linear group. Part $(ii)$ follows from \cite{Luck} Theorem 3.152 which shows that in fact $\rho^{(2)}(\ZZ,\Gamma)\ne 0.$ To prove $(iii)$ fix a sofic approximation $\Sigma$ of $\Gamma.$ First note that
\[h_{\Sigma}(\TT,\Gamma)=0\]
when $\Gamma\actson \TT$ trivially.  By the variational principle, we see that
\[h_{\Sigma,m_{\TT}}(\TT,\Gamma)\in\{-\infty,0\}.\]
Since $L^{2}$-torsion, when it is defined, is never $-\infty$ and $\rho^{(2)}(\ZZ,\Gamma)\ne 0,$ we have proved $(iii).$ The proof of $(iv)$ is similar except in this case we note that
\begin{itemize}
\item $\rho^{(2)}(A,\Gamma)>0,$ (again this follows from \cite{Luck} Theorem 3.152),\\
\item $h_{\kappa}(\TT,\Gamma)=0$ (again because $0$ is a fixed point),\\
\item $h_{\kappa,m_{\TT}}(\TT,\Gamma)\leq 0$ (again by the variational principle).
\end{itemize}
This completes the proof.

\end{proof}

\section{Remaining Questions and Conjectures}

Related to our relation between the failure of a Yuzvinski\v\i\ addition formula and possible values of $L^{2}$-torsion we ask the following.

\begin{question} \emph{Does there exists a non-amenable sofic group $\Gamma$ not containing any free subgroups, so that the $L^{2}$-torsion of $\Gamma$ is defined and $\rho^{(2)}(\Gamma)\ne 0?$}\end{question}

Given Corollary \ref{C:supergroup} we conjecture the following.

\begin{conjecture} Let $\Gamma$ be a countable discrete non-amenable sofic group with sofic approximation $\Sigma.$ Then there is an exact sequence
\[\begin{CD}
0 @>>> A @>>> B @>>> C @>>> 0,
\end{CD}\]
of countable $\ZZ(\Gamma)$-modules so that
\[h_{\Sigma}(\widehat{B},\Gamma)\ne h_{\Sigma}(\widehat{A},\Gamma)+h_{\Sigma}(\widehat{C},\Gamma).\]
\end{conjecture}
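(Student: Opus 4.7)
The plan is to leverage the machinery already established in Theorem \ref{T:YuvFail} and Corollary \ref{C:supergroup}. Together those two statements reduce the conjecture, for any given non-amenable sofic $\Gamma$, to exhibiting some subgroup $\Lambda\leq\Gamma$ for which the Yuzvinski\v\i\ addition formula fails with respect to $\Sigma\big|_{\Lambda}$, because coinduction of the counterexample (Propositions \ref{P:coinducent} and \ref{P:algcoind}) produces a counterexample for $\Gamma$. The cases where $\Gamma$ contains either a nonabelian free subgroup or a subgroup with defined and nonzero $L^{2}$-torsion are handled by Theorem \ref{T:YuvFail}; the substantive content of the conjecture is therefore the residual class of non-amenable sofic groups (hypothetical non-amenable torsion groups, non-amenable Burnside groups, Tarski monsters and the like) in which no such subgroup can be located.

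For $\Gamma$ in this residual class, the natural plan is to produce a direct analogue of Ornstein--Weiss's construction. Concretely, one seeks a prime $p$, an integer $n\geq 2$, and elements $\alpha_{1},\dots,\alpha_{n}\in\ZZ(\Gamma)$ such that the continuous $\Gamma$-equivariant homomorphism
\[\Phi\colon(\ZZ/p\ZZ)^{\Gamma}\longrightarrow((\ZZ/p\ZZ)^{\oplus n})^{\Gamma},\qquad \Phi(x)=(\alpha_{1}x,\dots,\alpha_{n}x),\]
is surjective. Dualizing yields a short exact sequence $0\to A\to B\to C\to 0$ of finitely-presented $\ZZ(\Gamma)$-modules in which $\widehat{A}$ and $\widehat{B}$ are Bernoulli shifts of base entropies $n\log p$ and $\log p$ respectively, and $\widehat{C}$ is a closed $\Gamma$-invariant subgroup of $(\ZZ/p\ZZ)^{\Gamma}$. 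By Bowen's computation of sofic entropy of Bernoulli shifts (which also follows from Theorem \ref{T:FKD} applied to the matrices $pI$ and $pI_{n}$), Yuzvinski\v\i\ additivity would force $h_{\Sigma}(\widehat{C},\Gamma)=(1-n)\log p$, which is strictly negative. Since $\widehat{C}$ is a compact abelian group carrying an invariant Haar measure and the sofic entropy of any such action lies in $\{-\infty\}\cup[0,\infty]$ (with $-\infty$ also inconsistent with the required equality $\log p=n\log p+h_{\Sigma}(\widehat{C},\Gamma)$), a contradiction is obtained and the conjecture follows.

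The hard part is the production of $\Phi$: this is an algebraic analogue of the Tarski paradoxical decomposition, clearly available for $\FF_{2}$ but unclear for general non-amenable sofic groups. The most natural line of attack would convert a combinatorial paradoxical decomposition $\Gamma=\bigsqcup_{i}g_{i}A_{i}=\bigsqcup_{j}h_{j}B_{j}$, guaranteed by the Tarski alternative, into an algebraic identity by choosing the $\alpha_{k}$ to be indicator-style sums over the pieces of the decomposition and then verifying mod-$p$ surjectivity of $\Phi$ pointwise along the decomposition. An alternative route would exploit a Gaboriau--Lyons measurable embedding of $\FF_{2}$ into $\Gamma$ and pull back the original Ornstein--Weiss example through the orbit equivalence, but orbit equivalences do not respect compact group structure and so additional algebraic input would be required to obtain a genuine $\ZZ(\Gamma)$-module counterexample. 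The serious obstruction is that in a Tarski-monster-type group one has no explicit ring-theoretic structure in $\ZZ(\Gamma)$ to exploit, so either route will require substantial algebraic innovation; the conjecture may well remain inaccessible until concrete sofic examples outside the scope of Theorem \ref{T:YuvFail} are constructed.
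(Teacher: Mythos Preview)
The statement you are addressing is presented in the paper as an open \emph{conjecture} (in Section~7, ``Remaining Questions and Conjectures''); the paper offers no proof, only the remark that it is motivated by Corollary~\ref{C:supergroup} and is an analogue of an existing conjecture for metric mean dimension. There is therefore nothing to compare your proposal to on the paper's side.

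Your write-up is not a proof either, and you say so yourself. You correctly observe that Theorem~\ref{T:YuvFail} already disposes of every non-amenable sofic $\Gamma$ that contains a nonabelian free subgroup or a subgroup with defined nonzero $L^{2}$-torsion, and that Corollary~\ref{C:supergroup} is the mechanism that promotes a counterexample in a subgroup to one in $\Gamma$. The residual case---non-amenable sofic groups with no such subgroup---is exactly the open content of the conjecture, and you are right that no argument in the paper touches it.

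On your sketched attacks for the residual case: the Tarski paradoxical decomposition idea has a concrete obstruction you should make explicit. The pieces $A_i,B_j$ of a paradoxical decomposition are infinite subsets of $\Gamma$, so ``indicator-style sums over the pieces'' are not elements of $\ZZ(\Gamma)$; one would need finitely supported surrogates, and there is no general mechanism for producing them. The Gaboriau--Lyons route is, as you note, fundamentally measurable rather than algebraic, and there is currently no way to extract a $\ZZ(\Gamma)$-module exact sequence from an orbit-equivalence embedding of $\FF_2$. Your concluding sentence---that the conjecture may remain inaccessible pending new examples---is an honest summary of the situation and matches the paper's own stance in listing this as a conjecture rather than a theorem.

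One small tightening: in your contradiction step you appeal to sofic topological entropy lying in $\{-\infty\}\cup[0,\infty]$. For the specific $\widehat{C}=\ker\Phi$ this is immediate because $\widehat{C}$ contains the $\Gamma$-fixed point $0$, so constant microstates exist and $h_{\Sigma}(\widehat{C},\Gamma)\geq 0$; you may as well say that directly rather than invoking a general principle.
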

This conjecture is also an analogue of what is already conjectured for metric mean dimension. For example, given our previous work in \cite{Me4} a failure of additivity of metric mean dimension for finitely generated algebraic actions reduces to the case of sofic groups  Conjecture 6.48 in \cite{Luck} (see Section 7 of \cite{Me4}).

We conjecture the opposite however for Lewis Bowen's $f$-invariant entropy.

\begin{conjecture}\label{C:additivefinvariant} Let $r\in \NN,$ and
\[\begin{CD}
0 @>>> A @>>> B @>>> C @>>> 0
\end{CD}\]
be an exact sequence of $\ZZ(\FF_{r})$-modules. Suppose that the actions of $\FF_{r}$ on $(\widehat{A},m_{\widehat{A}}),$ $(\widehat{B},m_{\widehat{B}})$,$(\widehat{C},m_{\widehat{C}})$ all have finite generating partitions. Then
\[f_{m_{\widehat{B}}}(\widehat{B},\FF_{r})=f_{m_{\widehat{A}}}(\widehat{A},\FF_{r})+f_{m_{\widehat{C}}}(\widehat{C},\FF_{r}).\]
\end{conjecture}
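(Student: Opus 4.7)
The plan is to adapt the Li--Thom strategy from the amenable setting to the $f$-invariant: (a) establish, for a rich enough subclass of $\ZZ(\FF_r)$-modules $M$, the identity $f_{m_{\widehat{M}}}(\widehat{M},\FF_r) = \rho^{(2)}(M,\FF_r)$ for a suitable $\ell^2$-partial resolution of $M$; (b) invoke the standard additivity of $L^2$-torsion under short exact sequences (\cite{Luck} Chapter 3) to conclude the Yuzvinski\v\i\ formula. The base case of (a) is already provided by the theorem earlier in the paper: $f_{m_{X_h}}(X_h,\FF_r) = \log\Det^+_{L(\FF_r)}(h)$ whenever $h \in M_n(\ZZ(\FF_r))$ is injective.

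First I would reduce to the finitely presented case. The hypothesis that $\widehat{A},\widehat{B},\widehat{C}$ each admit a finite generating partition forces the modules to be of ``finite type'' in an appropriate sense; writing $A$ and $C$ as directed unions of finitely presented submodules with a compatible filtration of $B$, one would try to push the formula through direct limits using the continuity/upper-semicontinuity properties of the $f$-invariant along refining partitions (as used in Bowen's original work and in Proposition \ref{P:finvarianttheorem} of this paper).

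Second, for a short exact sequence of finitely presented modules $0 \to A \to B \to C \to 0$, one constructs a horseshoe-type commutative diagram of $\ell^2$-partial resolutions $\mathcal{C}_*^A \to A$, $\mathcal{C}_*^C \to C$, $\mathcal{C}_*^B \to B$ with $\mathcal{C}_*^B$ the direct sum in each degree. Additivity of Fuglede--Kadison determinants over such diagrams (a routine consequence of multiplicativity of $\Det^+$ on block-triangular operators) combined with the base theorem and Proposition \ref{P:YAF}-type reasoning would give
\[
f(\widehat{B}) = \rho^{(2)}(\mathcal{C}_*^B,\FF_r) = \rho^{(2)}(\mathcal{C}_*^A,\FF_r) + \rho^{(2)}(\mathcal{C}_*^C,\FF_r) = f(\widehat{A}) + f(\widehat{C}).
\]

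The hard part is twofold. First, the base theorem only yields equality in the square case $m = n$, with just an upper bound when $m \neq n$; to handle an arbitrary exact sequence one needs sharp lower bounds for non-principal finitely presented actions, which in turn seems to require a genuinely new argument (the rank perturbation trick of Proposition \ref{P:rankperturabtion}(ii), which produces an invertible $x_i$, is available only in the square case). Second, and more fundamentally, Theorem \ref{T:YuvFail} shows that the analogue of the addition formula for deterministic sofic entropy \emph{fails} for every sofic approximation of $\FF_r$; the conjecture amounts to saying that averaging over $\sigma \in \Hom(\FF_r,S_n)$ washes out the Ornstein--Weiss-style counterexamples. The decisive step will be a direct, non-homological computation showing that almost every $\sigma$ admits simultaneous approximate lifts of microstates for $\widehat{A}$ and $\widehat{C}$ into microstates for $\widehat{B}$, with the exponential count of such lifts being multiplicative; producing a \emph{lower} bound quantitatively matching the upper bound (rather than the easy one-sided concatenation estimate) is where I expect the main obstacle to lie, and is likely where fundamentally new techniques beyond those of Bowen--Gun \cite{BowenGun} will be needed.
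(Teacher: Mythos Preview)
This statement is a \emph{conjecture} in the paper, not a theorem; the paper offers no proof, and indeed the remark immediately following the conjecture (and the discussion in the introduction) makes clear that the implication runs the \emph{other} way: if one could prove the Yuzvinski\v\i\ addition formula for the $f$-invariant, then the Li--Thom machinery would yield $f_{m_{\widehat M}}(\widehat M,\FF_r)=\rho^{(2)}(M,\FF_r)$ for $\ell^{2}$-FL modules. Your plan reverses this implication and is therefore circular at its core. Step~(a) --- identifying the $f$-invariant with $L^{2}$-torsion for a general finitely presented module --- is precisely what Proposition~\ref{P:YAF} produces \emph{assuming} the addition formula as a hypothesis; you cannot invoke ``Proposition~\ref{P:YAF}-type reasoning'' in step~(b) to then deduce that same addition formula. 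The only unconditional input available is the square-matrix base case $f_{m_{X_h}}(X_h,\FF_r)=\log\Det^{+}_{L(\FF_r)}(h)$, and the horseshoe argument you sketch requires the identification at every stage of the resolution, not just at the bottom.

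To your credit, your final paragraph essentially concedes this: you identify exactly the two obstructions (no sharp lower bound in the non-square case, and the need for a genuinely probabilistic argument over $\Hom(\FF_r,S_n)$ that the deterministic Ornstein--Weiss counterexample does not survive averaging) and state that ``fundamentally new techniques'' are required. That is an accurate diagnosis, but it means what you have written is a research programme, not a proof. The reduction to the finitely presented case via direct limits is also problematic: the $f$-invariant is only upper semicontinuous along refinements, which gives one inequality but not the other, and a finite generating partition for $\widehat A$ does not obviously pass to a compatible system of finite generators for the approximating quotients. As it stands the conjecture remains open, and your outline does not close the gap.
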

We actually conjecture that this should hold with the $f$-invariant replaced more generally by the random sofic entropy with respect to the random sofic approximation given by $u_{\Hom(\FF_{r},S_{n})}.$ Conjecture \ref{C:additivefinvariant}  has been proven by Bowen-Gutman in \cite{BowenGun} (see Theorem 2.2)  when each of the actions $\Gamma\actson (\widehat{B},m_{\widehat{B}}),$ $\actson (\widehat{A},m_{\widehat{A}})$ $\Gamma\actson (\widehat{C},m_{\widehat{C}})$ has a finite generating partition and one of the following two conditions hold:
\begin{itemize}
\item either $A$ is finitely generated as an abelian group, or
\item there exists a finite, abelian group $D$ so that $C$ embeds into $D(G)$ as a $\ZZ(G)$-module.
\end{itemize}

	Although we have already settled finiteness of topological entropy, we propose the following question for measure-theoretic entropy.

\begin{question} Let $\Gamma$ be a countable discrete sofic group with sofic approximation $\Sigma.$ Let $f\in M_{m,n}(\ZZ(\Gamma))$ and suppose that $f$ is not injective as a left multiplication operator on $\ell^{2}(\Gamma)^{\oplus n}.$ Is it true that
\[h_{\Sigma,m_{X_{f}}}(X_{f},\Gamma)=\infty?\]
\end{question}

	The following is an interesting question.
\begin{question} Let $\Gamma$ be a countable discrete sofic group. For what $\ZZ(\Gamma)$ modules $A$ and sofic approximations $\Sigma$ do we have
\[h_{\Sigma}(\widehat{A},\Gamma)=h_{\Sigma,m_{\widehat{A}}}(\widehat{A},\Gamma)?\]
\end{question}

	We caution the reader that we cannot allow $(A,\Sigma)$ to be arbitrary. For example, if $\Gamma$ is a nonabelian free group, then for every transitive action of $\Gamma$ on a finite set $F$ with at least two elements there is a sofic approximation $\Sigma$ of $\Gamma$ so that $h_{\Sigma,u_{F}}(F,\Gamma)=-\infty.$ See e.g. the proof  Lemma 3.2 in  \cite{SewardGGD}.
%		Related to Deninger's Problem, the following is still totally open, and a solution for \emph{some} nonabelian sofic group would be interesting.
%\begin{problem} Let $\Gamma$ be a countable discrete  group. Classify all $f\in M_{n}(\ZZ(\Gamma))$ which are injective as an operator on $\ell^{2}(\Gamma)^{\oplus n},$ and have $\Det_{L(\Gamma)}(f)>1.$ \end{problem}
%
%	We caution the reader that by considering the case $\Gamma=\ZZ,$ we know that not all $f$ which are injective as an operator on $\ell^{2}(\Gamma)^{\oplus n}$ have $\Det_{L(\Gamma)}(f)>1.$ For example, if view $\ZZ[\ZZ]$ as the ring of Laurent polynomials with integer coefficients and take $f$ to have all of its roots on the unit circle  then $\Det_{L(\ZZ)}(f)=1.$ Given Theorem \ref{T:positiveentropy}, it is interesting to ask what decay conditions we can put on the spectral measure of $f$ near zero.
%
%\begin{question} Let $\Gamma$ be a countable discrete sofic group and $f\in M_{n}(\ZZ(\Gamma))$ be injective as a left multiplication operator on $\ell^{2}(\Gamma)^{\oplus n}$. What decay conditions on $\mu_{|f|}([0,\varespilon])$ as $\varepsilon\to 0,$ are enough to ensure that $\det_{L(\Gamma)}(f)>1?$\end{question}


\begin{thebibliography}{10}

\bibitem{AKM}
R.~Adler, G.~Konheim, and M.~McAndrew.
\newblock Topological entropy.
\newblock {\em Trans. Amer. Math. Soc}, 114:300--319, 1965.

\bibitem{DabBi}
P.~Biane and Y.~Dabrowski.
\newblock Concavification of free entropy.
\newblock {\em Adv. Math.}, 234:667--696, 2013.

\bibitem{Bowenfrandom}
L.~Bowen.
\newblock The ergodic theory of free group actions: entropy and the
  f-invariant.
\newblock {\em Groups Geom. Dyn.}, 4(3):419--432, 2010.

\bibitem{Bow}
L.~Bowen.
\newblock Measure conjugacy invariants for actions of countable sofic groups.
\newblock {\em J. Amer. Math. Soc}, 23:217--245, 2010.

\bibitem{Bowenfinvariant}
L.~Bowen.
\newblock A new measure conjugacy invariant for actions of free groups.
\newblock {\em Ann. of Math.}, 171(2):1387--1400, 2010.

\bibitem{BowenEntropy}
L.~Bowen.
\newblock Entropy for expansive algebraic actions of residually finite groups.
\newblock {\em Ergodic Theory Dynam. Systems}, 31(3):703--718, 2011.

\bibitem{BowenGroupoid}
L.~Bowen.
\newblock Entropy theory for sofic groupoids $\textrm{I}$: the foundations.
\newblock {\em J. Anal. Math.}, 124(1):149--233, 2014.

\bibitem{BowenGun}
L.~Bowen and Y.~Gutman.
\newblock A {J}uzvinski\v\i\ addition theorem for finitely generated free group
  actions.
\newblock {\em Ergodic Theory Dynam. Systems}, 34(1):95--109, 2014.

\bibitem{BowenLi}
L.~Bowen and H.Li.
\newblock Harmonic models and spanning forests of residually finite groups.
\newblock {\em J. Funct. Anal.}, 263(7):1769--1808, 2012.

\bibitem{BO}
N.~Brown and N.~Ozawa.
\newblock {\em $C^{*}$-Algebras and Finite-Dimensional Approximations}.
\newblock American Mathematical Society, 1994.

\bibitem{ChungLi}
N.~Chung and H.~Li.
\newblock Homoclinc group, {IE} group, and expansive algebraic actions.
\newblock {\em Invent. Math}, 199(3):805--858, 2015.

\bibitem{Den}
C.~Deninger.
\newblock Fuglede-{K}adison determinants and entropy for actions of discrete
  amenable groups.
\newblock {\em J. Amer. Math. Soc.}, 19:737--758, 2006.

\bibitem{DenSchmidt}
C.~Deninger and K.~Schmidt.
\newblock Expansive algebraic actions of discrete residually finite amenable
  groups and their entropy.
\newblock {\em Ergodic Theory Dynam. Systems}, 27:769--786, 2007.

\bibitem{DKP}
K.~Dykema, D.~Kerr, and M.~Pichot.
\newblock Sofic dimension for discrete measurable groupoids.
\newblock {\em Trans. Amer. Math. Soc}, 366(2):707--748, 2013.

\bibitem{ElekLip}
G.~Elek and G.~Lippner.
\newblock Sofic equivalence relations.
\newblock {\em J. Funct. Anal.}, 258:1692--1708, 2010.

\bibitem{ESZ2}
G.~Elek and E.~Szabo.
\newblock Sofic representations of amenable groups.
\newblock {\em Proceedings of the AMS}, 139(2011):4285--4291.

\bibitem{ElekSzaboDeterminant}
G.~Elek and E.~Szabo.
\newblock Hyperlinearity, essentially free actions and ${L}^{2}$-invariants.
  {T}he sofic property.
\newblock {\em Math. Ann.}, 332:421--441, 2005.

\bibitem{Grab}
L.~. Grabowski.
\newblock Group ring elements with large spectral density.
\newblock {\em Math. Ann.}, 363:636--656, 2015.

\bibitem{Me4}
B.~Hayes.
\newblock Metric mean dimension for algebraic actions of sofic groups.
\newblock {\em To appear in Trans. Amer. Math. Soc.}

\bibitem{Me6}
B.~Hayes.
\newblock Polish models and sofic entropy.
\newblock {\em J. Inst. Math. Jussieu to appear}.

\bibitem{KerrPartition}
D.~Kerr.
\newblock Sofic measure entropy via finite partitions.
\newblock {\em Groups Geom. Dyn}, 7(617-632), 2013.

\bibitem{KLi2}
D.~Kerr and H.Li.
\newblock Soficity, amenability, and dynamical entropy.
\newblock {\em Amer. J. Math}, 135(3):721--761, 2013.

\bibitem{KLi}
D.~Kerr and H.~Li.
\newblock Topological entropy and the variational principle for actions of
  sofic groups.
\newblock {\em Invent. Math}, 186:501--558, 2011.

\bibitem{KerrLi2}
D.~Kerr and H.~Li.
\newblock Combinatorial independence and sofic entropy.
\newblock {\em Comm. Math. Stat.}, 1(2):213--257, 2014.

\bibitem{Kieff}
J.~Kieffer.
\newblock A generalized {S}hannon-{M}c{M}illan theorem for the action of an
  amenable group on a probability space.
\newblock {\em Ann.Prob}, 3(6):1031--1037, 1975.

\bibitem{Kol58}
A.~Kolmogorov.
\newblock A metric invariant of transient dynamical systems and automorphisms
  in {L}ebesgue spaces.

\bibitem{Li2}
H.~Li.
\newblock Compact group automorphisms, addition formulas and
  {F}uglede-{K}adison determinants.
\newblock {\em Ann. of Math.}, 176(1):303--347, 2012.

\bibitem{Li}
H.~Li.
\newblock Sofic mean dimension.
\newblock {\em Adv. Math.}, 244:570--604, 2014.

\bibitem{LiLiang}
H.~Li and B.Liang.
\newblock Mean dimension, mean rank, and von {N}eumann-{L}\"{u}ck rank.
\newblock {\em to appear in J. Reine. Angew. Math.}, 2013.

\bibitem{LiSchmidtPet}
H.~Li, K.~Schmidt, and J.~Peterson.
\newblock Ergodicity of principal algebraic group actions.
\newblock {\em Contemp. Math. to appear}.

\bibitem{LiThom}
H.~Li and A.~Thom.
\newblock Entropy, determinants, and $\ell^{2}$-torsion.
\newblock {\em J. Amer. Math. Soc.}, 27(1):239--292, 2014.

\bibitem{LindSchmidt2}
D.~Lind, K.~Schmidt, and T.~Ward.
\newblock Mahler measure and entropy for commuting automorphisms of compact
  groups.
\newblock {\em Invent. Math.}, 101:593--629, 1990.

\bibitem{Luck}
W.~L\"{u}ck.
\newblock {\em $L^{2}$-Invariants: Theory and Applications to Geometry and
  $K$-theory}.
\newblock Springer-Verlag, Berlin, 2002.

\bibitem{Lyons}
R.~Lyons.
\newblock Asymptotic enumeration of spanning trees.
\newblock {\em Combin. Probab. Comput}, 14(4):491--522, 2005.

\bibitem{OrnWeiss}
D.~Ornstein and B.~Weiss.
\newblock Entropy and isomorphism theorems for actions of amenable groups.
\newblock {\em J. Analyse. Math.}, 48:1--141, 1987.

\bibitem{LPaun}
L.~Paunescu.
\newblock On sofic actions and equivalence relations.
\newblock {\em J. Funct. Anal}, 261(9):2461--2485, November 2011.

\bibitem{Pis}
G.~Pisier.
\newblock {\em The Volume of Convex Bodies and {B}anach Space Geometry}.
\newblock Cambridge University Press, 1989.

\bibitem{PoppArg}
S.~Popa.
\newblock Independence properties in sublagebras of ultraproduct
  $\textrm{II}_{1}$ factors.
\newblock {\em JJ. Funct. Anal.}, 266(9):5818--5846, 2014.

\bibitem{RudFA}
W.~Rudin.
\newblock {\em Functional Analysis}.
\newblock International Series in Pure and Applied Mathematics. McGraw-Hill,
  1991.

\bibitem{Schmidt}
K.~Schmidt.
\newblock {\em Dynamical Systems of Algebraic Origin}, volume 128 of {\em
  Progress in Mathematics}.
\newblock Birkh\"{a}user Verlag, Basel, 1995.

\bibitem{SewardFree}
B.~Seward.
\newblock Finite entropy actions of free groups, rigidity of stabilizers, and a
  {H}owe--{M}oore type phenomenon.
\newblock {\em To appear in J. Anal. Math.}

\bibitem{SewardSubgroup}
B.~Seward.
\newblock A subgroup formula for f-invariant entropy.
\newblock {\em Ergodic Theory Dynam. Systems}, 34(1):263--298, 2014.

\bibitem{SewardGGD}
B.~Seward.
\newblock Ergodic actions of countable groups and finite generating partitions.
\newblock {\em Groups Geom. Dyn}, 9(3):793--820, 2015.

\bibitem{Sin59}
J.~Sina\v\i.
\newblock On the concept of entropy for a dynamic system.
\newblock {\em Dokl. Akad. Nauk SSSR}, 125:768--771, 1959.

\bibitem{RSolomyak}
R.~Solomyak.
\newblock On coincidence of entropies for two classes of dynamical systems.
\newblock {\em Ergodic Theory Dynam.l Systems}, 18(3):731--738, 1998.

\bibitem{Szego}
G.~Szeg\H{o}.
\newblock Ein grenzwertsatz \"{u}ber die toeplitzschen determinanten einer
  reellen positiven funktion.
\newblock {\em Math. Ann}, 76(4):490--503, 1915.

\bibitem{Yuz}
S.~A. Yuzvinski\v\i.
\newblock Computing the entropy of a group of endomorphisms.
\newblock {\em Sibirsk.Mat. Z.}, 8:230--239, 1967.
\newblock (Russian)Translated in Siberian Math. J. 8 (1967), 172--178.

\end{thebibliography}
\end{document}